\definecolor{brightlavender}{rgb}{0.75, 0.58, 0.89}
\definecolor{carnelian}{rgb}{0.7, 0.11, 0.11}
\tikzset{anchorbase/.style={baseline={([yshift=-0.5ex]current bounding box.center)}},
  int/.style={thick},
  cross line/.style={preaction={draw=white,line width=6pt,-}},
  wall/.style={thin,double,blue},
  middlearrow/.style={postaction=decorate,decoration={markings,mark=at
    position .55 with {\arrow{stealth};}}},
  middlearrowrev/.style={postaction=decorate,decoration={markings,mark=at
    position .55 with {\arrowreversed{stealth};}}},
  ev/.style={shape=rectangle, draw}
}
\newcommand{\ptr}{\operatorname{ptr}}
\newtheorem{Df}{Definition}[section]
\newtheorem{definition}[Df]{Definition}
\newtheorem{theorem}[Df]{Theorem}
\newtheorem{prop}[Df]{Proposition}
\newtheorem{proposition}[Df]{Proposition}
\newtheorem{lemma}[Df]{Lemma}
\newtheorem{corollary}[Df]{Corollary}
\newtheorem{proposal}[Df]{Proposal}
\newtheorem{thm}{Theorem}
\theoremstyle{definition}
\newtheorem{remark}[Df]{Remark}
\newenvironment{example}
{\pushQED{\qed}\examplex}
{\popQED\endexamplex}
\newcommand{\tcoev}{\stackrel{\longleftarrow}{\operatorname{coev}}}
\newcommand{\tev}{\stackrel{\longleftarrow}{\operatorname{ev}}}
\newcommand{\ev}{\stackrel{\longrightarrow}{\operatorname{ev}}}
\newcommand{\coev}{\stackrel{\longrightarrow}{\operatorname{coev}}}
\newcommand{\p}[1]{\ensuremath{\bar {#1}}}
\newcommand{\cat}{\mathcal{C}}
\newcommand{\cath}{\mathcal{C}}
\newcommand{\catq}{\mathcal{D}^{q}}
\newcommand{\catInt}{\mathcal{D}^{q,\textnormal{int}}}
\newcommand{\C}{\ensuremath{\mathbb{C}}}
\newcommand{\Z}{\ensuremath{\mathbb{Z}}}
\newcommand{\R}{\ensuremath{\mathbb{R}}}
\newcommand{\slt}{\ensuremath{\mathfrak{sl}(2)}}
\newcommand{\gloo}{\ensuremath{\mathfrak{gl}(1\vert 1)}}
\newcommand{\psl}{\ensuremath{\mathfrak{psl}(1 \vert 1)}}
\newcommand{\pgl}{\ensuremath{\mathfrak{pgl}(1 \vert 1)}}
\newcommand{\Uoo}{\ensuremath{U(1 \vert 1)}}
\newcommand{\GLoo}{\ensuremath{GL(1 \vert 1)}}
\newcommand{\Uhgloo}{\ensuremath{U_{h}(\gloo)}}
\newcommand{\Uq}{\ensuremath{U_{q}^{E}(\gloo)}}
\newcommand{\UqB}{\ensuremath{U_{q}^{E, \geq 0}(\gloo)}}
\newcommand{\Uqsl}{\ensuremath{U_{q}^{H}(\slt)}}
\newcommand{\End}{\operatorname{End}}
\newcommand{\Hom}{\operatorname{Hom}}
\newcommand{\unit}{\ensuremath{\mathbb{I}}}
\newcommand{\Id}{\operatorname{Id}}
\newcommand{\qdim}{\operatorname{qdim}}
\newcommand{\qd}{\operatorname{\mathsf{d}}}
\newcommand{\Gr}{\operatorname{\mathsf{G}}}
\newcommand{\ve}{\varepsilon}
\newcommand{\ms}[1]{\mbox{\tiny$#1$}}
\newcommand{\coh}{\omega}
\newcommand{\kk}{\Bbbk}
\newcommand{\ideal}{\mathcal{I}} 
\newcommand{\mt}{\operatorname{\mathsf{t}}}
\newcommand{\catb}{\mathcal{B}} 
\newcommand{\Zt}{\ensuremath{\mathsf{Z}}}
\newcommand{\XX}{\ensuremath{\mathsf{X}}}
\newcommand{\spvs}{\operatorname{span}}
\newcommand{\Proj}{{\mathcal{P}}}
\newcommand{\D}{{\mathscr{D}}}
\newcommand{\Cob}{\mathsf{C}\mathsf{ob}^{\textnormal{ad}}}
\newcommand{\Vect}{\mathsf{V}\mathsf{ect}}
\newcommand{\ZVect}{\mathsf{V}\mathsf{ect}^{\Zt\operatorname{-}\textnormal{gr}}}
\newcommand{\CobExt}{\check{\mathsf{C}}\mathsf{ob}^{\mathsf{ad}}}
\newcommand{\ZCat}{\check{\mathsf{C}}\mathsf{at}^{\Zt\operatorname{-}\textnormal{gr}}}
\newcommand{\state}{\mathsf{V}}
\newcommand{\TQFT}{\mathcal{Z}}
\newcommand{\hS}{\widehat{S}}
\newcommand{\Co}{\mathsf{Col}}
\newcommand{\spin}{\operatorname{spin}}
\newcommand{\PP}{W}
\newcommand{\wre}{\textnormal{wr}}
\newcommand{\Ztwo}{\Z \slash 2 \Z}
\newcommand{\CGP}{{\rm CGP}}
\newcommand{\CS}{{\mathcal{S}}}
\newcommand{\CM}{{\mathcal M}}
\newcommand{\epsh}[2]
         {\begin{array}{c} \hspace{-1.3mm}
        \raisebox{-4pt}{\epsfig{figure=#1,height=#2}}
        \hspace{-1.9mm}\end{array}}
\newcounter{exo} \newcounter{numexercice}
\renewcommand{\theexo}{\arabic{exo}}
\begin{document}

\title[$U_q(\mathfrak{gl}(1 \vert 1))$ and $U(1 \vert 1)$ Chern--Simons theory]{Three dimensional topological quantum field theory from $U_q(\mathfrak{gl}(1 \vert 1))$ and $U(1 \vert 1)$ Chern--Simons theory}

\author[N. Geer]{Nathan Geer}
\address{Department of Mathematics and Statistics \\
  Utah State University \\
  Logan, Utah 84322, USA}
\email{nathan.geer@gmail.com}

\author[M.\,B. Young]{Matthew B. Young}
\address{Department of Mathematics and Statistics \\ Utah State University\\
Logan, Utah 84322 \\ USA}
\email{matthew.young@usu.edu}

\date{\today}

\keywords{Topological quantum field theory. Chern--Simons theory. Representation theory of quantum supergroups.}
\subjclass[2010]{Primary: 81T45; Secondary 20G42.}

\begin{abstract}
We introduce an unrolled quantization $\Uq$ of the complex Lie superalgebra $\gloo$ and use its categories of weight modules to construct and study new three dimensional non-semisimple topological quantum field theories. These theories are defined on categories of cobordisms which are decorated by ribbon graphs and cohomology classes and take values in categories of graded super vector spaces. Computations in these theories are enabled by a detailed study of the representation theory of $\Uq$, both for generic and root of unity $q$. We argue that by restricting to subcategories of integral weight modules we obtain topological quantum field theories which are mathematical models of Chern--Simons theories with gauge supergroups $\psl$ and $\Uoo$ coupled to background flat $\C^{\times}$-connections, as studied in the physics literature by Rozansky--Saleur and Mikhaylov. In particular, we match Verlinde formulae and mapping class group actions on state spaces of non-generic tori with results in the physics literature. We also obtain explicit descriptions of state spaces of generic surfaces, including their graded dimensions, which go beyond results in the physics literature.
\end{abstract}

\maketitle
\setcounter{tocdepth}{1}
\tableofcontents

\section*{Introduction}

This paper constructs and studies new three dimensional topological quantum field theories from non-semisimple categories of representations of the unrolled quantum group of the complex Lie superalgebra $\gloo$ and establishes a relationship between these theories and various supergroup Chern--Simons theories studied in the physics literature. Before stating our results in more detail, we provide some context.

\subsection*{Background and motivation}

Chern--Simons theory is a three dimensional quantum gauge theory which was introduced by Witten to give a physical realization of the Jones polynomial \cite{witten1989}. The input data is a compact Lie group $G$, the gauge group, and a class $k \in H^4(BG ; \Z)$, the level, satisfying a non-degeneracy condition. At the physical level of rigor, Chern--Simons theory produces invariants of links in closed oriented $3$-manifolds which are local in the sense that they can be computed using cutting and gluing techniques. Witten also argued that calculations in Chern--Simons theory can be made using its boundary conformal field theory, a Wess--Zumino--Witten theory with target $G$, thereby importing techniques from the theories of rational vertex operator algebras and affine Lie algebras to knot theory and $3$-manifold topology. Since the physical definition of Chern--Simons theory relies on path integrals, it cannot, at present, be used to give a mathematical construction of the theory. Motivated by this, Reshetikhin and Turaev constructed from a modular tensor category $\cat$ a three dimensional topological quantum field theory $\TQFT_{\cat} : \mathsf{Cob}_{\cat} \rightarrow \Vect_{\C}$ which, in particular, encodes invariants of $\mathcal{C}$-colored links in $3$-manifolds with the expected locality properties \cite{reshetikhin1990,reshetikhin1991,Tu}. When $G$ is simple and simply connected, in which case $k$ is an integer, there is a modular tensor category $\mathcal{C}(G,k)$ of semisimplified representations of the quantum group $U_q(\mathfrak{g}_{\C})$ at a $k$-dependent root of unity $q$ and $\TQFT_{\mathcal{C}(G,k)}$ is a mathematical model of Chern--Simons theory \cite{reshetikhin1991,andersen1992,turaev1993,sawin2006}. Crucial to the construction of $\TQFT_{\cat}$ is that modular tensor categories are semisimple, have only finitely many isomorphism classes of simple objects and have the property that simple objects have non-zero quantum dimension.

Extensions of Chern--Simons theory to more general classes of gauge groups have been proposed in the physics literature. This includes gauge groups which are non-compact Lie groups and complex reductive groups \cite{witten1991,barnatan1991,gukov2005,dimofte2009} and Lie supergroups \cite{witten1989b,horne1990,rozansky1992,rozansky1993,rozansky1994,kapustin2009b,GaiottoWitten-Janus,Mik2015,mikhaylov2015b}. Such extensions are expected to have applications to many areas, including the Volume Conjecture, logarithmic conformal field theory and three dimensional quantum gravity. Mathematical constructions of these extensions have largely been obstructed by technical and conceptual difficulties which appear when moving beyond compact gauge groups. For example, the Chern--Simons/Wess--Zumino--Witten correspondence is fundamentally unclear in these extensions, thereby preventing the use of recent advances in the theory of logarithmic vertex operator algebras \cite{Gotz:2006qp, Quella:2007hr,Creutzig:2011cu,creutzig2022}. Since Chern--Simons theories with non-compact gauge groups involve categories of line operators which are non-semisimple, have infinitely many isomorphism classes of simple objects and have simple objects with vanishing quantum dimension, there are serious obstructions to applying the Reshetikhin--Turaev  construction.

It is therefore of interest to extend Reshetikhin--Turaev-type constructions beyond modular tensor categories. Early approaches to such extensions are given in the works of Hennings \cite{hennings1996} and Kerler and Lyubashenko \cite{kerler2001}. More recently, the first author and collaborators have created a theory of renormalized quantum invariants of low dimensional manifolds \cite{GPT,CGP14,BCGP,derenzi2020,derenzi2022}. Central categorical structures of this theory include relative pre-modular categories, non-degenerate relative pre-modular categories and relative modular categories which produce invariants of links, invariants of closed $3$-manifolds and three dimensional topological quantum field theories, respectively. In this paper we focus on relative modular categories, the strongest of these structures, which are generalizations of modular tensor categories that allow for non-semisimplicity, infinitely many simple objects and simple objects with vanishing quantum dimension. Roughly speaking, a relative modular category $\cat$ is a ribbon category which is compatibly graded by an abelian group $\Gr$ and carries a degree $0$ monoidal action of a group $\Zt$. It is required that there exists a sufficiently small subset $\XX \subset \Gr$ such that the full subcategories $\cat_g \subset \cat$, $g \in \Gr \setminus \XX$, are semisimple and have only finitely many isomorphism classes of simple objects modulo $\Zt$. The associated three dimensional topological quantum field theory $\TQFT_{\cat}: \Cob_{\cat} \rightarrow \ZVect_{\C}$, constructed by De Renzi \cite{derenzi2022}, is defined on a category of admissible decorated three dimensional cobordisms and takes values in a braided monoidal category of $\Zt$-graded complex vector spaces. When $\cat$ is in fact a modular tensor category, the theory $\TQFT_{\cat}$ reduces to that of Reshetikhin and Turaev. In general, $\TQFT_{\cat}$ enjoys many new features not shared by modular theories, including the ability to distinguish homotopy classes of lens spaces and produce representations of mapping class groups with interesting properties, such as having Dehn twists act with infinite order.

Categories of weight modules over unrolled quantum groups of complex simple Lie algebras are relative modular and their associated topological quantum field theories have been the subject of recent interest \cite{BCGP,derenzi2020,creutzig2021,derenzi2022}. The case of unrolled quantum groups of complex Lie superalgebras is more subtle. For example, depending on the precise class of weight modules being considered, the Lie superalgebras $\mathfrak{sl}(m \vert n)$, $m \neq n$, produce categories which are relative modular or only non-degenerate pre-relative modular \cite{ha2018,AGP,geer2021,ha2022}. The resulting topological quantum field theories have not been studied. This paper presents the first systematic study of topological quantum field theories arising from quantum supergroups and suggests that examples arising from higher rank Lie superalgebras admit natural physical realizations, in contrast to the original expectations of Mikhaylov and Witten \cite{mikhaylov2015b}. Further examples of (non-degenerate) relative pre-modular categories, some of which are conjectured to extend to relative modular categories, and their applications to knot theory and $3$-manifold topology can be found in \cite{geer2007,GP1,AGP}.

\subsection*{Main results}

We construct new examples of relative modular categories using the representation theory of an unrolled quantization of the complex Lie superalgebra $\gloo$. We study in detail the resulting topological quantum field theories and connect them to $\psl$ and $\Uoo$ Chern--Simons theories and $\Uoo$ Wess--Zumino--Witten theory, as studied in the physics literature by Rozansky and Saleur \cite{rozansky1992,rozansky1993,rozansky1994} and Mikhaylov \cite{Mik2015}. We also connect our work to mathematical results on the quantum topology of $\gloo$ \cite{frohman1991,kauffman1991,reshetikhin1992,Viro,sartori2015,bao2022}. In the remainder of this introduction we outline the structure of the paper and state the main results.

We begin in Section \ref{sec:preliminaries} by establishing our conventions for relative modular categories and recalling how these categories can be used to define invariants of links and $3$-manifolds and, ultimately, three dimensional topological quantum field theories. Our first main result asserts finite dimensionality of the state spaces of these field theories under the assumption that the input relative modular category is TQFT finite in the sense of Definition \ref{def:relModFinite}. TQFT finiteness is a relatively weak condition and is straightforward to verify in concrete examples. For example, a relative modular category which is locally finite abelian with finitely many projective indecomposable objects modulo $\Zt$ in each degree $g \in \Gr$ is TQFT finite. 

\begin{thm}[{Theorem \ref{thm:finDimTQFT}}]
\label{thm:finDimTQFTIntro}
Let $\cat$ be a relative modular category which is TQFT finite. Then for each decorated surface $\CS \in \Cob_{\cat}$, the state space $\TQFT_{\cat}(\CS) \in \ZVect_{\C}$ is finite dimensional.
\end{thm}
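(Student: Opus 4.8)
The plan is to reduce the finite-dimensionality of $\TQFT_{\cat}(\CS)$ to a finiteness property of a generating set of morphisms, exploiting the standard universal construction underlying $\TQFT_{\cat}$. Recall that for a decorated surface $\CS$, the state space $\TQFT_{\cat}(\CS)$ is constructed as a quotient of the free vector space on the set of decorated cobordisms $M : \emptyset \to \CS$ by the kernel of a sesquilinear pairing coming from the closed $3$-manifold invariant (gluing $M$ to $\p{M'}$ along $\CS$). Thus $\TQFT_{\cat}(\CS)$ is automatically a quotient of a possibly infinite-dimensional space, and the content is to produce a \emph{finite} spanning set. First I would fix a convenient handlebody $H$ with $\partial H = \CS$ (together with a compatible decoration: a ribbon graph inside $H$ whose legs meet the marked points of $\CS$, plus a compatible cohomology class), and recall from the construction in \cite{derenzi2022} that every vector in $\TQFT_{\cat}(\CS)$ is represented by such a handlebody with an admissible $\cat$-colored ribbon graph inside it, so that $\TQFT_{\cat}(\CS)$ is spanned by the images of the colored graphs in $H$.

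The next step is to cut $H$ along a system of meridian disks to obtain a single $3$-ball, reducing an arbitrary colored ribbon graph in $H$ to a colored $(1,1)$- or $(n,n)$-tangle in the ball whose endpoints lie on the cutting disks and on $\CS$. By the usual ``coupon'' normal form for morphisms in a ribbon category together with semisimplicity of the relevant graded pieces $\cat_g$ for $g \in \Gr \setminus \XX$ --- which is exactly where the relative modular structure enters --- any such colored tangle can be rewritten, modulo the TQFT relations, as a linear combination of ``basis graphs'' in which each edge is colored by one of a \emph{fixed finite list} of simple objects (finitely many simple objects modulo $\Zt$ in each generic degree, and the $\Zt$-action is absorbed into the graded structure of the target category $\ZVect$) and each coupon is an element of a finite-dimensional $\Hom$-space between tensor products of these. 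The TQFT-finiteness hypothesis of Definition \ref{def:relModFinite} is what guarantees that the set of degrees actually occurring, and hence the relevant index set, is finite; combined with finite-dimensionality of each multiplicity space $\Hom_{\cat}(\unit, S_1 \otimes \cdots \otimes S_k)$, this yields a finite spanning set for $\TQFT_{\cat}(\CS)$.

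Concretely, the key steps in order are: (i) recall the universal/skein-theoretic presentation of $\TQFT_{\cat}(\CS)$ as the span of decorated handlebodies with boundary $\CS$; (ii) fix one handlebody and a meridian disk system, and show every such decorated handlebody is equivalent to one with the same underlying $H$; (iii) cut along the disks and put the internal ribbon graph in coupon normal form; (iv) use semisimplicity of $\cat_g$ for generic $g$, together with the TQFT-finiteness hypothesis controlling which $g$ appear, to rewrite the coloring in terms of a finite set of simple objects and finitely many basis coupons; (v) conclude that the resulting finite collection of basis graphs spans $\TQFT_{\cat}(\CS)$.

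The main obstacle I anticipate is step (iv): unlike the modular tensor category case, here there are infinitely many simple objects overall, infinitely many grading degrees $g \in \Gr$, and simple objects of vanishing quantum dimension, so one cannot naively insert the identity $\Id_{\unit} = \sum_S \frac{1}{\qdim(S)}(\ldots)$ as in the semisimple theory. The argument must instead carefully use that the cohomology-class decoration on $\CS$ pins down the grading degrees on the boundary, that the compatibility of the ribbon graph with this decoration constrains the interior degrees to a finite set once the handlebody is fixed (this is precisely the role of TQFT-finiteness), and that the modified trace / modified dimension replaces the ordinary quantum dimension in the semisimple decompositions available in the generic degrees. Verifying that these constraints genuinely cut the problem down to finitely many colorings --- rather than merely countably many --- is the crux, and is where Definition \ref{def:relModFinite} must be invoked in an essential way.
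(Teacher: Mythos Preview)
Your overall architecture (universal construction, fix a handlebody, cut along meridian disks, put the graph into coupon normal form, reduce colors to a finite list) matches the paper's. But step (iv) as you describe it has a genuine gap, and it is exactly the point you flag as the main obstacle without actually resolving it.

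You propose to use generic semisimplicity to replace edge colors by simple objects from the finite sets $\Theta(g)$. The problem is that the degrees on the edges of the spine are the values $\coh(m_e)\in\Gr$, and nothing forces these to lie in $\Gr\setminus\XX$. For a non-generic cohomology class, some (or all) meridians land in $\XX$ and the corresponding $\cat_g$ are not semisimple, so there is no completely reduced dominating set of simples to appeal to. TQFT finiteness (Definition~\ref{def:relModFinite}) does not say anything about which degrees occur or about simples; its conditions are entirely about \emph{projective} objects.

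The paper's fix is to work with projective indecomposables rather than simples throughout. The key trick, which your sketch misses, is that the state space is defined as $\TQFT(\CS)=\bigoplus_{k\in\Zt}\state(\CS\sqcup\hS_k)$, and $\hS_k$ carries the projective object $V_{g_0}$. Before cutting, one first isotopes the ribbon graph so that the $V_{g_0}$-strand passes through every meridian disk; then the tensor product of colors at each disk is projective. Property~\ref{ite:finCat3} decomposes it into projective indecomposables; property~\ref{ite:finCat1} plus $\sigma$-equivalence (sliding $\sigma(k')$-curves along pairs of edges) reduces the colors on the $e_i$ to the finite sets $\{P_j\}_{j\in J_g}$; and property~\ref{ite:finCat2} then bounds the remaining edge colors and coupon Hom-spaces. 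Your proposal invokes neither the entangling-with-$V_{g_0}$ step nor projectives, so as written it does not go through in the non-generic case.

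There is a second point you underplay: because $\TQFT(\CS)$ is an a priori infinite direct sum over $k\in\Zt$, it is not enough that each $\state(\CS\sqcup\hS_k)$ be finite dimensional; one must also show only finitely many $k$ contribute. In the paper this again comes from property~\ref{ite:finCat2}, applied to the coupon adjacent to the $\sigma(k)$-leg on $\hS_k$. Your remark that ``the $\Zt$-action is absorbed into the graded structure of $\ZVect$'' does not address this.
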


Theorem \ref{thm:finDimTQFTIntro} provides a general reason for the observed finite dimensionality of state spaces in all known examples, namely those arising from relative modular categories of modules over unrolled quantum groups of complex simple Lie (super)algebras \cite{BCGP,derenzi2020,AGP,ha2022} and those of this paper. Theorem \ref{thm:finDimTQFTIntro} is proved by exhibiting an explicit, combinatorially defined spanning set of $\TQFT_{\cat}(\CS)$ using special $\cat$-colorings of a fixed spine of $\CS$.

In Section \ref{sec:relModgloo} we introduce a non-standard quantization $\Uq$ of $\gloo$. The algebra $\Uq$ is an unrolled version of standard quantizations of $\gloo$ \cite{kulish1989,khoroshkin1991,reshetikhin1992}, in the sense of \cite{CGP2}. Fix $q \in \C \setminus \{0, \pm 1\}$. The superalgebra $\Uq$ is generated by even Cartan generators $E$, $G$, $K^{\pm 1}$ and odd Serre generators $X$, $Y$. The generator $E$ should be viewed as a logarithm of $K$, but this relation is not imposed at the level of the algebra. Instead, we consider the category $\catq$ of all weight $\Uq$-modules on which $K$ acts by $q^E$. Let also $\catInt \subset \catq$ be the full subcategory of weight modules whose $G$-weights are integral; no integrality of $E$-weights is assumed. A natural Hopf superalgebra structure on $\Uq$ gives $\catq$ and $\catInt$ the structure of rigid monoidal categories. We study $\catq$ and $\catInt$ in detail, obtaining complete descriptions of their simple and projective indecomposable objects. The culmination of our results in Section \ref{sec:relModgloo} is summarized as follows.

\begin{thm}[{Theorems \ref{thm:relModArb}, \ref{thm:relModROUOdd}, \ref{thm:relModROUEven}, \ref{thm:relModIntArb}, \ref{thm:relModIntROUOdd}}]
\label{thm:relModIntro}
The categories $\catq$ and $\catInt$ admit relative modular structures which depend on whether or not $q$ is a root of unity and, if so, the parity of the order of the root of unity. In particular, $\catq$ and $\catInt$ are generically semisimple ribbon categories. Moreover, $\catq$ and $\catInt$ are TQFT finite with respect to any of the above relative modular structures.
\end{thm}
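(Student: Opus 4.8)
The plan is to deduce this statement by establishing, case by case, the five relative modular structures recorded in Theorems~\ref{thm:relModArb}, \ref{thm:relModROUOdd}, \ref{thm:relModROUEven}, \ref{thm:relModIntArb} and \ref{thm:relModIntROUOdd}, all of which I would fit into a single framework built on the classification of simple and projective indecomposable weight modules obtained earlier in Section~\ref{sec:relModgloo}. That classification is the real engine: it exhibits $\catq$ and $\catInt$ as locally finite abelian categories and shows that, away from a small locus of atypical weights, every simple module is projective and tensor products of such modules are semisimple. The ribbon structure comes from the super Hopf algebra structure on $\Uq$: an $R$-matrix of the usual triangular shape together with a Cartan-type correction factor of the form $q^{E \otimes G + G \otimes E}$ supplies the braiding on weight modules, and a ribbon element supplies the twist. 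Verifying the ribbon axioms is a routine (if lengthy) computation; the only genuine subtlety is that the Cartan factor is well defined precisely because we have restricted to weight modules, where $E$ and $G$ act diagonalizably.

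Next I would specify the grading group $\Gr$, the periodicity group $\Zt$ together with its monoidal action, the pairing relating them, and the critical set $\XX$. The $\Gr$-grading is read off from the weight data: two simple modules lie in the same component when their highest weights agree modulo the sublattice generated by the weights occurring in tensor powers of the generating modules (with the parity sector folded in), so $\Gr$ is the corresponding quotient of $\C^2$, and additivity $\catq_g \otimes \catq_h \subseteq \catq_{g+h}$ is immediate from weight considerations. The group $\Zt$ acts by tensoring with a distinguished family of invertible one-dimensional weight modules; invertibility makes the action monoidal and free on isomorphism classes, and the double braiding with these objects is governed by a bilinear pairing $\Gr \times \Zt \to \C^\times$, which is exactly the compatibility data required. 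The critical set $\XX \subset \Gr$ is the image of the atypical locus — the weights at which the relevant simple modules fail to be projective or at which the would-be S-matrix degenerates — and from the classification this is a finite union of cosets of a proper (``codimension one'') subgroup, hence small in the technical sense.

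The heart of the argument is the verification of the two properties named in the statement. Generic semisimplicity: for $g \in \Gr \setminus \XX$ every object of $\catq_g$ is a direct sum of typical simples, which are projective, so $\catq_g$ is semisimple, and the simple objects in a fixed such degree form a single $\Zt$-orbit (the fiber of the map to $\Gr$), so there are finitely many modulo $\Zt$; the same works verbatim for $\catInt$. Relative modularity: one must show the pairing $\catq_g \times \catq_{-g} \to \End(\unit)$ assembled from the double braiding and the \emph{modified} trace is non-degenerate for generic $g$. This requires first establishing (or invoking) the existence of the modified trace on the ideal of projectives — typical modules have vanishing ordinary quantum dimension, so the renormalized dimension is indispensable — and then computing the relevant S-matrix entries explicitly and exhibiting a $g$-parametrized family in which they do not uniformly vanish, which forces non-degeneracy once the $\Zt$-symmetry is quotiented out. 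This explicit S-matrix computation, together with checking that the twist is compatible with the grading/periodicity pairing, is where I expect the main difficulty; everything else is bookkeeping on top of the Section~\ref{sec:relModgloo} classification.

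Finally, the root-of-unity cases run along the same lines but with modified input: when $q$ has order $\ell$ the classification of simples and projectives changes (extra small modules, larger projective covers), so $\Gr$ and $\XX$ must be re-read off accordingly, and for $\ell$ even one must resolve an extra square-root ambiguity entering the ribbon element — which is why the even-order case is stated separately and only for $\catq$ — but the structural checks (ribbon axioms, additivity, smallness of $\XX$, generic semisimplicity, S-matrix non-degeneracy) proceed in parallel. TQFT finiteness in each case is then immediate from the criterion recalled after Theorem~\ref{thm:finDimTQFTIntro}, since the classification presents $\catq$ and $\catInt$ as locally finite abelian categories with only finitely many projective indecomposables modulo $\Zt$ in each degree $g \in \Gr$.
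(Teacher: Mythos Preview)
Your overall architecture matches the paper's: build the ribbon structure from an explicit $R$-matrix with Cartan correction, read off the $\Gr$-grading and free realisation $\Zt$ from the weight data, invoke the modified trace on projectives, verify generic semisimplicity from the classification of simples, and check the relative modularity condition by computing the double-braiding operators $\Phi$ explicitly. The TQFT finiteness argument is also as you say.

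However, two of your expectations about the root-of-unity cases are wrong and would mislead you in carrying out the details. First, the module classification for $\Uq$ does \emph{not} change at roots of unity: there are no ``extra small modules'' or ``larger projective covers'' as there would be for $\mathfrak{sl}(2)$. The simples are always the one-dimensional $\ve$'s and the two-dimensional typical $V(\alpha,a)$'s, and the projective covers $P$ are always four-dimensional; what changes is only the locus $\frac{\pi\sqrt{-1}}{\hbar}\Z$ of atypical $E$-weights, which becomes a rational lattice. The genuine difference at roots of unity is that one can enlarge the grading group $\Gr$ to $\C/\Z \times \C/\Z$ (odd order) or $\C/2\Z \times \C/\Z$ (even order), so that $\Theta(g)$ becomes a set of $r^2$ simples rather than a singleton, and the Kirby color is correspondingly richer --- this is what drives the different modularity parameter $\zeta = -r^2$. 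Second, the even-order case is stated separately because the grading group and small set $\XX$ differ from the odd case, not because of any square-root ambiguity in the ribbon element; there is none here. Relatedly, in the arbitrary-$q$ case the grading is by $\Gr = \C$ (the $E$-weight alone), with the $G$-weight absorbed into $\Zt = \C \times \Z_2$ --- not a quotient of $\C^2$ as you suggest.

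Finally, the paper's verification of the modularity condition is more specific than ``S-matrix non-degeneracy'': the key step is to identify $V(\alpha,0)\otimes V(\alpha,0)^* \simeq P(0,0)$, compute $\Phi_{\Omega_\beta, P(0,0)}$ as a multiple of the nilpotent endomorphism $x_{0,0,\bar 0}$, and then recognize this (via an explicit lemma) as the image of $\tcoev \circ \ev$, which is exactly what equation~\eqref{eq:mod} demands. Your sketch gestures at this but underestimates how structured the computation is.
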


More precisely, $\catq$ and $\catInt$ admit relative modular structures for any $q \in \C \setminus \{0,\pm 1\}$, which we refer to as the case of arbitrary $q$, and admit second, distinct, relative modular structures when $q$ is a root of unity. Denote by $\cat$ either of the categories $\catq$ and $\catInt$ with any of the relative modular categories of Theorem \ref{thm:relModIntro} and by $\TQFT_{\cat}: \Cob_{\cat} \rightarrow \ZVect_{\C}$ the associated topological quantum field theory. In all cases, the braided category $\ZVect_{\C}$ is a graded version of complex super vector spaces.

In the remainder of the paper we study in detail $\TQFT_{\cat}$ (Sections \ref{sec:TQFTArb}-\ref{sec:intTQFT}) and their relationship to $\psl$ and $\Uoo$ Chern--Simons and Wess--Zumino--Witten theories (Section \ref{sec:physCompare}). To avoid cumbersome statements, in the introduction we state precise results only for $\cat=\catq$ with $q$ a primitive $r$\textsuperscript{th} root of unity, $r \geq 3$ odd. In this case, the category $\catq$ is graded by $\Gr = \C \slash \Z \times \C \slash \Z$, corresponding to $(E,G)$-weights modulo $\Z \times \Z$, with $\XX= \frac{1}{2} \Z \slash \Z \times \{\p 0\}$ and $\Zt = \Z \times \Z \times \Ztwo$. In the body of the paper we treat all cases of Theorem \ref{thm:relModIntro}.

Our first series of results concerns the $\Zt$-graded vector space $\TQFT_{\cat}(\CS) = \bigoplus_{k \in \Zt} \TQFT_{\cat,k}(\CS)$ assigned to a decorated surface $\CS \in \Cob_{\cat}$. Part of the data of $\CS$ is a cohomology class $\coh \in H^1(\CS_0; \Gr)$ on the underlying closed surface $\CS_0$ of $\CS$. The description of $\TQFT_{\cat}(\CS)$ simplifies considerably when $\coh$ is generic in the sense that $\coh(\gamma) \in \Gr \setminus \XX$ for some simple closed curve $\gamma \subset \CS_0$.
 
\begin{thm}[{Theorems \ref{thm:verlindeArb} and \ref{thm:verlindeROU}}]
\label{thm:verlindeIntro}
Let $\CS$ be a decorated connected surface of genus $g \geq 1$ without marked points and with generic cohomology class $\coh$. For any $(\p \beta, \p b) \in \Gr$, the partition function of $\CS \times S^1_{(\p \beta,\p b)}$, the closed decorated $3$-manifold obtained by crossing $\CS$ with $S^1$ and extending $\coh$ to $\coh \oplus (\p \beta, \p b)$, is
\[
\TQFT_{\catq}(\CS \times S^1_{(\p \beta,\p b)})
=
(-1)^{g+1} r^{2g-1}\sum_{i=0}^{r-1}(q^{\p{\beta}+i}-q^{-\p{\beta}-i})^{2g-2}.
\]
Moreover, the partition function $\TQFT_{\catq}(\CS \times S^1_{(\p \beta,\p b)})$ and state space $\TQFT_{\catq}(\CS)$ are related through the Verlinde formula
\[
\TQFT_{\catq}(\CS \times S^1_{(\p \beta, \p b)})
=
\sum_{(n,n^{\prime}) \in \Z^2} \chi(\TQFT_{\catq,(n,n^{\prime},\bullet)}(\CS)) q^{-2r(\overline{\beta} n^{\prime} + \overline{b} n)},
\]
where $\chi(\TQFT_{\catq,(n,n^{\prime},\bullet)}(\CS))$ denotes the Euler characteristic of the $\Ztwo$-graded subspace of $\TQFT_{\catq}(\CS)$ consisting of vectors with $\Zt$-degree of the form $(n,n^{\prime},\bullet)$.
\end{thm}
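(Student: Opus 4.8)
The plan is to establish the two displayed identities essentially separately: the explicit formula by a direct evaluation of the closed-manifold invariant, and the Verlinde formula from the behaviour of $\TQFT_{\catq}$ on mapping tori. For the explicit formula, fix a simple closed curve $\gamma \subset \CS$ with $\coh(\gamma) \in \Gr \setminus \XX$, which exists because $\coh$ is generic. Present $\CS \times S^1$ by surgery on a framed link in $S^3$ (for $g = 1$, the $0$-framed Borromean rings); the class $\coh \oplus (\p\beta, \p b)$ fixes the degrees of the Kirby colours of the components, and genericity of $\coh$ places the computation in degrees in which, by Theorem~\ref{thm:relModIntro}, $\catq$ is semisimple with only finitely many simple objects modulo $\Zt$. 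The renormalized invariant then collapses, exactly as in the semisimple Verlinde calculus, to a sum over a set $\{V_i\}_{i=0}^{r-1}$ of $\Zt$-orbit representatives of the simple objects of the relevant degree, each contributing $\qd(V_i)^{2-2g}$, times a global normalization assembled from the stabilization coefficients and the $\Zt$-bookkeeping recorded in Section~\ref{sec:relModgloo}. Inserting the modified dimensions computed there, which are nonzero scalar multiples of $(q^{\beta+i} - q^{-\beta-i})^{-1}$, together with the normalization, yields $(-1)^{g+1} r^{2g-1} \sum_i (q^{\beta+i} - q^{-\beta-i})^{2g-2}$; the same argument, with the semisimple degrees and modified dimensions appropriate to the relative modular structure in force, handles arbitrary $q$ and the even root of unity case.

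For the Verlinde formula, note that $\CS \times S^1_{(\p\beta, \p b)}$ is the mapping torus of $\mathrm{id}_\CS$ decorated so that the cohomological holonomy around the $S^1$ factor is $(\p\beta, \p b)$. Cutting along a copy of $\CS$ and invoking the gluing and duality axioms of the functor $\TQFT_{\catq} : \Cob_{\catq} \to \ZVect$ writes the partition function as the categorical trace, in $\ZVect$, of the endomorphism of the finite-dimensional (by Theorem~\ref{thm:finDimTQFTIntro}) space $\TQFT_{\catq}(\CS)$ induced by the decorated cylinder $\CS \times [0,1]$. For trivial holonomy this endomorphism is the identity, whose categorical trace is, by the Koszul sign rule for the $\Z_2$-factor of $\Zt$ underlying the braiding of $\ZVect$, the Euler characteristic $\chi(\TQFT_{\catq}(\CS))$. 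For general holonomy the class does not extend over $\CS \times [0,1]$ with trivial holonomy in the interval direction; transporting it around the $S^1$ weights the $\Zt$-homogeneous summand $\TQFT_{\catq,k}(\CS)$ by the value $\langle k, (\p\beta, \p b) \rangle$ of the canonical pairing $\Zt \times \Gr \to \C^\ast$ belonging to the relative modular structure, which here is $\langle (n,n^{\prime},\epsilon), (\p\beta, \p b) \rangle = q^{-2r(\p\beta n^{\prime} + \p b n)}$ (well defined since $\p\beta, \p b \in \C \slash \Z$, and independent of $\epsilon$). Summing the Koszul-signed contributions over the parity $\epsilon$ replaces $\dim$ by $\chi$ in each $(n,n^{\prime})$-block, so the categorical trace equals $\sum_{(n,n^{\prime}) \in \Z^2} q^{-2r(\p\beta n^{\prime} + \p b n)} \chi(\TQFT_{\catq,(n,n^{\prime},\bullet)}(\CS))$, the asserted Verlinde formula; together with the explicit formula this presents the partition function as the claimed sum over the state space.

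The main obstacle is the precise identification of the holonomy-insertion operator: checking that transporting the $\Gr$-class around the $S^1$ scales the degree-$(n,n^{\prime},\epsilon)$ part of $\TQFT_{\catq}(\CS)$ by exactly $q^{-2r(\p\beta n^{\prime} + \p b n)}$, with the correct factor $2r$ and the correct pairing of $\p\beta$ with $n^{\prime}$ and of $\p b$ with $n$. This requires unwinding De Renzi's gluing formalism \cite{derenzi2022} and the definition of the $\Zt$-grading on state spaces in terms of the relative modular data fixed in Section~\ref{sec:relModgloo}. A secondary, purely computational point, needed for the explicit formula, is the bookkeeping of the global constant $(-1)^{g+1} r^{2g-1}$, which demands careful tracking of the stabilization coefficients $\Delta_\pm$, the parameter $\zeta$, the cardinality of the relevant set of $\Zt$-orbits, and the $(2-2g)$-th powers of the modified dimensions --- all of which were pinned down in establishing the relative modular structures of Theorem~\ref{thm:relModIntro}. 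No step requires ideas beyond those already developed in Sections~\ref{sec:preliminaries}--\ref{sec:relModgloo}.
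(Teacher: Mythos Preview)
Your proposal is correct and follows essentially the same approach as the paper. For the explicit formula, the paper uses the surgery presentation you describe and iteratively applies the relative modularity axiom (equation~\eqref{eq:mod}) to collapse each handle pair $(\Omega_{\alpha_i},\Omega_{\beta_i})$, leaving the $\Omega_{(\p\beta,\p b)}$-colored unknot whose evaluation is the sum of $\qd(V_i)^{2-2g}$ over simples; for the Verlinde formula, the paper writes $\CS \times S^1_{(\p\beta,\p b)} = \cap_{\CS}\circ\tau_{\sqcup}\circ(\Id_{\CS}^{(\p\beta,\p b)}\sqcup\Id_{\overline{\CS}})\circ\cup_{\CS}$ and computes exactly the weighted categorical trace you describe, with the action of $\Id_{\CS}^{(\p\beta,\p b)}$ on the degree-$k$ summand given by $\psi((\p\beta,\p b),k)$ as set up in Section~\ref{sec:CGPTQFT}. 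One small imprecision: in the root-of-unity case the Kirby colour $\Omega_{(\p\beta,\p b)}$ is a sum over $r^2$ simples indexed by $(i,j)\in\{0,\dots,r-1\}^2$, not $r$; the $j$-sum contributes an overall factor of $r$ in the absence of marked points, which accounts for part of the $r^{2g-1}$.
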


Theorem \ref{thm:verlindeIntro} is proved using an explicit surgery presentation of trivial circle fibrations and the representation theoretic results of Section \ref{sec:relModgloo}. The strategy of proof is similar to its counterpart for topological quantum field theories arising from the unrolled quantum group $\Uqsl$ \cite{BCGP}. In the body of the paper we allow $\CS$ to carry marked points, in which case $\TQFT_{\catq}(\CS \times S^1_{(\p \beta,\p b)})$ depends also on $\p b$.

To obtain a more detailed understanding of $\TQFT_{\cat}(\CS)$, we first prove in Theorems \ref{thm:genusgStateSpaceArb} and \ref{thm:genusgStateSpaceROU} that, in the present class of examples and under the assumed genericity of $\coh$, the general spanning set of $\TQFT_{\cat}(\CS)$ constructed in Theorem \ref{thm:finDimTQFTIntro} can be reduced to a much smaller set. Theorems \ref{thm:genusgStateSpaceArb} and \ref{thm:genusgStateSpaceROU} can be seen as vanishing results, asserting that $\TQFT_{\cat}(\CS)$ is concentrated in a restricted set of $\Zt$-degrees. Using these results, we prove that
\[
\lim_{\overline{\beta} \rightarrow \p{\frac{1}{4}}} \TQFT_{\catq}(\CS \times S^1_{(\p \beta, \p b)}) = \dim_{\C} \TQFT_{\catq}(\CS).
\]
Together with Theorem \ref{thm:verlindeIntro}, this leads to the following explicit description of state spaces of generic surfaces.

\begin{thm}[{Corollaries \ref{cor:stateBasisArb} and \ref{cor:stateBasisROU}}]
\label{cor:stateBasisIntro}
Let $\CS$ be a decorated connected surface of genus $g \geq 1$ without marked points and with generic cohomology class. Then $\TQFT_{\catq,-k}(\CS)$ is trivial unless $k=(0,d, \p d)$ for some $d \in [-(g-1),g-1] \cap r\Z$, in which case it is of dimension $r^{2g}{2g-2 \choose g-1-\vert d \vert}$. In particular, the total dimension of $\TQFT_{\catq}(\CS)$ is
\[
\dim_{\C} \TQFT_{\catq}(\CS)
=
r^{2g} \sum_{n^{\prime}=-\left \lfloor{\frac{g-1}{r}}\right \rfloor}^{\left \lfloor{\frac{g-1}{r}}\right \rfloor} {2g-2 \choose g-1-\vert n^{\prime} \vert r}.
\]
\end{thm}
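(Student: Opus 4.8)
The plan is to combine Theorem~\ref{thm:verlindeIntro} (the explicit partition function and Verlinde formula) with the vanishing results of Theorems~\ref{thm:genusgStateSpaceArb} and \ref{thm:genusgStateSpaceROU}, which restrict the $\Zt$-degrees in which $\TQFT_{\catq}(\CS)$ is supported. First I would record the shape of the support: from the vanishing theorems, $\TQFT_{\catq,-k}(\CS)$ is nonzero only for $k$ of the form $(0,d,\p d)$ with $d\in\Z$, and moreover genericity of $\coh$ together with the structure of the relative modular category forces $|d|\le g-1$. This reduces the Verlinde formula of Theorem~\ref{thm:verlindeIntro} to a sum over the single integer parameter $d$, since the degrees $(n,n^\prime,\bullet)$ contributing are exactly $(0,d,\bullet)$ and the Euler characteristic $\chi(\TQFT_{\catq,(0,d,\bullet)}(\CS))$ equals $\pm\dim_\C\TQFT_{\catq,(0,d,\p d)}(\CS)$ (with a sign governed by the fixed $\Z_2$-degree $\p d$, hence $(-1)^d$).

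Next I would extract the individual dimensions by a limiting argument. Substituting the reduced support into the Verlinde formula gives
\[
\TQFT_{\catq}(\CS\times S^1_{(\p\beta,\p b)})
=
\sum_{d=-(g-1)}^{g-1}(-1)^d\,\dim_\C\TQFT_{\catq,(0,d,\p d)}(\CS)\,q^{-2rd\,\overline{\beta}}.
\]
On the other hand, by Theorems~\ref{thm:genusgStateSpaceArb} and \ref{thm:genusgStateSpaceROU} one has $\lim_{\overline\beta\to\p{1/4}}\TQFT_{\catq}(\CS\times S^1_{(\p\beta,\p b)})=\dim_\C\TQFT_{\catq}(\CS)$, and this limit makes both sides finite and independent of $\p b$. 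I would then analyze the closed-form partition function $(-1)^{g+1}r^{2g-1}\sum_{i=0}^{r-1}(q^{\beta+i}-q^{-\beta-i})^{2g-2}$ directly: expanding $(q^{\beta+i}-q^{-\beta-i})^{2g-2}$ by the binomial theorem produces terms $q^{2(\beta+i)(g-1-j)}$ for $0\le j\le 2g-2$, and summing over $i=0,\dots,r-1$ kills every term unless $r\mid (g-1-j)$, by the standard geometric-sum vanishing $\sum_{i=0}^{r-1}q^{2mi}=r\,[\,r\mid m\,]$ valid since $q$ is a primitive $r$\textsuperscript{th} root of unity with $r$ odd. Matching the surviving exponents $q^{2\beta(g-1-j)}$ with $q^{-2rd\overline\beta}$ forces $g-1-j=-rd$, i.e.\ $j=g-1+rd$ and $d\in r\Z\cap[-(g-1),g-1]$; this already yields the first assertion that $\TQFT_{\catq,-k}(\CS)$ vanishes unless $k=(0,d,\p d)$ with $d\in[-(g-1),g-1]\cap r\Z$. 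Comparing coefficients then gives
\[
(-1)^d\dim_\C\TQFT_{\catq,(0,d,\p d)}(\CS)
=
(-1)^{g+1}r^{2g-1}\cdot r\cdot(-1)^{j}\binom{2g-2}{j},
\qquad j=g-1+rd,
\]
and since $j\equiv g-1+d\pmod 2$ (as $r$ is odd) the signs cancel, leaving $\dim_\C\TQFT_{\catq,(0,d,\p d)}(\CS)=r^{2g}\binom{2g-2}{g-1-|d|}$, using $\binom{2g-2}{g-1+rd}=\binom{2g-2}{g-1-|rd|}$ and writing $d=n^\prime r$.

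Finally I would sum over $d=n^\prime r$ with $n^\prime$ ranging over $|n^\prime|\le\lfloor(g-1)/r\rfloor$ to obtain the total-dimension formula, cross-checking consistency by verifying that $\dim_\C\TQFT_{\catq}(\CS)$ so obtained indeed equals $\lim_{\overline\beta\to\p{1/4}}\TQFT_{\catq}(\CS\times S^1_{(\p\beta,\p b)})$ as predicted. I expect the main obstacle to be the bookkeeping of signs: one must carefully track the parity of the $\Z_2$-component of the $\Zt$-degree against the binomial sign $(-1)^j$ and the global sign $(-1)^{g+1}$ coming from the partition function, and confirm that the Euler characteristic $\chi(\TQFT_{\catq,(0,d,\bullet)}(\CS))$ used in the Verlinde formula is $(-1)^d$ times an honest dimension — this is exactly where Theorems~\ref{thm:genusgStateSpaceArb} and \ref{thm:genusgStateSpaceROU} are used, since they guarantee the state space in degree $(0,d,\bullet)$ is concentrated in the single $\Z_2$-degree $\p d$, so that no cancellation occurs within a fixed $(n,n^\prime)$. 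The rest is the routine binomial and geometric-sum manipulation sketched above.
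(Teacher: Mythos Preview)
Your approach is valid but genuinely different from the paper's. The paper proves Corollaries~\ref{cor:stateBasisArb} and~\ref{cor:stateBasisROU} by a sandwich argument: Propositions~\ref{prop:dimVanArb} and~\ref{prop:dimVanROU} produce, in each allowed degree, an explicit linearly independent family of vectors of the claimed cardinality (via a pairing computation on dual handlebodies), and Examples~\ref{ex:VerlindeArb} and~\ref{ex:VerlindeROU} compute the \emph{total} dimension as the $\overline\beta\to\p{1/4}$ limit of the partition function. Equality of the total cardinality with the total dimension then forces the linearly independent family to be a basis, which in turn pins down each graded piece. You instead bypass the linear-independence computation entirely: you expand the partition function binomially, use the geometric sum over $i$ to isolate the surviving exponents, and read off the individual graded Euler characteristics by matching Fourier coefficients (which is legitimate since the characters $q^{-2r\overline\beta n'}$ are linearly independent as functions on $\C/\Z$). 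The vanishing theorems guarantee that each $(0,n',\bullet)$-piece is concentrated in a single $\Z_2$-degree, so these Euler characteristics are signed dimensions and nothing is lost. Your route is more analytic and avoids the pairing calculation; the paper's route is more constructive and yields an explicit basis.

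One bookkeeping issue to clean up: you use $d$ inconsistently. When you match $q^{2\beta(g-1-j)}$ against $q^{-2rd\overline\beta}$ with $d$ the second $\Zt$-component, the conclusion $g-1-j=-rd$ gives $|d|\le\lfloor(g-1)/r\rfloor$, not $d\in r\Z\cap[-(g-1),g-1]$; the latter is the statement in the paper's convention where the label is the $G$-weight $n'r$ of $\sigma(k)$ rather than $n'$ itself. Correspondingly, your final formula should read $r^{2g}\binom{2g-2}{g-1-|rd|}$ before you re-index (your ``writing $d=n'r$'' at the end conflicts with your earlier use of $d$). This is exactly the bookkeeping hazard you flagged, not a gap in the logic; once the two conventions are separated the argument is complete.
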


When the cohomology class $\coh$ is not generic, the vector space $\TQFT_{\cat}(\CS)$ is considerably more complicated. In this setting we restrict attention to the torus, where we again obtain a complete description of $\TQFT_{\cat}(\CS)$. We prove that $\TQFT_{\catq}(\CS)=\TQFT_{\cat,0}(\CS)$ is two dimensional for arbitrary $q$ (Propositions \ref{prop:nonGenCohTorusArb} and \ref{prop:nonGenCohTorusHighDegArb}) and that $\TQFT_{\catq}(\CS)=\TQFT_{\cat,0}(\CS)$ is $r^2+1$ dimensional for $q$ a primitive $r$\textsuperscript{th} root of unity (Proposition \ref{prop:nonGenCohTorusROU}). The result for arbitrary $q$ is particularly surprising since its contrasts the conjectured behavior of TQFTs constructed from $\Uqsl$ \cite{BCGP}. We construct explicit bases of $\TQFT_{\cat}(\CS)$ to prove the following result.

\begin{thm}[{Theorems \ref{thm:MCGArb} and \ref{thm:MCGROU}}]
\label{thm:MCGROUIntro}
Let $\CS$ be a decorated connected surface of genus one without marked points and with non-generic cohomology class $\coh$. The mapping class group action of $SL(2, \Z)$ on $\TQFT_{\catq}(\CS)$ admits an explicit description which, in particular, shows that the Dehn twist acts with infinite order.
\end{thm}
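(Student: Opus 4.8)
The plan is to compute the $SL(2,\Z)$-action explicitly on an adapted basis and then read off the order of the Dehn twist from the resulting matrices. First I would set up the state space $\TQFT_{\catq}(\CS)$ for the genus-one surface with a fixed non-generic cohomology class $\coh$, using the relative modular formalism of Section~\ref{sec:preliminaries}: the torus state space is computed from the colored surgery presentation of $T^2 \times [0,1]$ together with the skein-theoretic description of $\TQFT_{\cat}$, so a spanning set is indexed by $\cat$-colorings of the core circle of the standard solid torus compatible with $\coh$. When $\coh$ is non-generic, $\coh(\gamma) \in \XX$ for every simple closed curve $\gamma$, which forces the relevant colors to lie in the finitely many non-semisimple degrees; concretely this is where the projective covers studied in Section~\ref{sec:relModgloo} enter, and the dimension count ($2$ for arbitrary $q$, $r^2+1$ at a primitive $r$\textsuperscript{th} root of unity) that is proved in Propositions~\ref{prop:nonGenCohTorusArb} and~\ref{prop:nonGenCohTorusROU} tells me exactly which colorings survive. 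I would fix a basis $\{v_i\}$ accordingly: one part coming from the ``semisimple-like'' colors and one distinguished vector coming from the non-semisimple block (the class of a suitable projective object, or the image of a nontrivial map between a simple and its projective cover).

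Next I would compute the generators $S$ and $T$ of $SL(2,\Z)$ on this basis. The operator $T$ (the Dehn twist) acts by the ribbon twist $\theta$ on the colors, so on each basis vector it multiplies by the corresponding twist scalar $\theta_V$, \emph{except} on the distinguished vector attached to the non-semisimple block, where the non-diagonalizability of the twist on an indecomposable projective forces $T$ to act by a nontrivial Jordan block — this is precisely the mechanism by which the Dehn twist acquires infinite order. I would extract the twist eigenvalues from the explicit ribbon structure on $\catq$ fixed in Section~\ref{sec:relModgloo} (they are monomials in $q$ times a sign coming from parity), and compute the off-diagonal entry of the Jordan block from the interaction of $\theta$ with the (co)evaluation morphisms of the projective cover — this is a finite-rank linear-algebra computation, completely analogous to the $\Uqsl$ computation in \cite{BCGP}. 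For $S$, I would use the standard presentation of $S$ as the operator induced by the diffeomorphism exchanging the two cycles of $T^2$, realized inside $\TQFT_{\cat}$ via the Hopf link / colored surgery, which expresses $\langle S v_i, v_j\rangle$ in terms of the modularity pairing (the $S$-matrix) of the relevant degrees of $\cat$ together with the modified trace on the non-semisimple block; the required values of this pairing are again available from the relative modular structure. Having both matrices, I would verify the relations $S^4 = \mathrm{id}$ (up to the expected central/anomaly factor) and $(ST)^3 = S^2$ to confirm that the formulas assemble into an honest projective $SL(2,\Z)$-representation, matching the general output of $\TQFT_{\cat}$.

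Finally I would read off the order of $T$: the diagonal entries are roots of unity (powers of $q$), hence of finite order, but the Jordan block $\begin{pmatrix} \lambda & \mu \\ 0 & \lambda\end{pmatrix}$ with $\mu \neq 0$ has infinite order since its powers are $\begin{pmatrix} \lambda^n & n\lambda^{n-1}\mu \\ 0 & \lambda^n\end{pmatrix}$, whose $(1,2)$-entry never vanishes. This shows the Dehn twist acts with infinite order, as claimed; the same argument works for both the arbitrary-$q$ case (Theorem~\ref{thm:MCGArb}) and the root-of-unity case (Theorem~\ref{thm:MCGROU}), the only difference being the size of the semisimple part of the basis and the precise twist scalars. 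The main obstacle I anticipate is not the linear algebra but the bookkeeping: pinning down the non-generic torus state space precisely — i.e.\ identifying the correct projective object whose class spans the non-semisimple part of $\TQFT_{\catq}(\CS)$, and tracking the $\Zt$-grading through the surgery computation so that the $S$- and $T$-matrices are expressed in a single coherent basis. Once that identification is made, extracting the Jordan form of $T$ and hence the infinitude of its order is routine, and the comparison with Rozansky--Saleur and Mikhaylov in Section~\ref{sec:physCompare} then proceeds by matching these explicit matrices.
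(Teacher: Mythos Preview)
Your proposal is correct and follows essentially the same approach as the paper: the state space is spanned by colorings of the core of the solid torus by the relevant projective indecomposables (and, at a root of unity, also the typical simples $V(i,j)_{\p 0}$), the Dehn twist acts via the ribbon twist $\theta_{P(0,j)_{\p 0}} = \Id + (q-q^{-1})x_{0,j,\p 0}$, and the resulting unipotent Jordan block forces infinite order. The only refinement to note is that for the full $SL(2,\Z)$-action the paper specializes to $\coh = 0$ (so that the mapping class group fixes $\coh$), and the $S$-matrix entries are extracted not from an abstract modularity pairing but by explicitly computing the $\CGP$-invariants of the closed $3$-manifolds obtained by pairing each basis vector against the dual basis of Propositions~\ref{prop:nonGenCohTorusArb} and~\ref{prop:nonGenCohTorusROU}.
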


Mapping class group actions with properties similar to those of Theorem \ref{thm:MCGROUIntro} for arbitrary $q$ are obtained using the representation theory of $\Uqsl$ at a root of unity in \cite{BCGP}.

For a given $q$, the theories $\TQFT_{\catq}$ and $\TQFT_{\catInt}$ are closely related. \emph{A priori}, the significant difference in the gradings of these categories- the grading group for $\catInt$ is much smaller than that of $\catq$- could lead to significant differences in $\TQFT_{\catq}$ and $\TQFT_{\catInt}$. However, the constraints on the $\Zt$-support of $\TQFT_{\catq}$, as in Theorem \ref{cor:stateBasisIntro}, show that this is not the case. In particular, Theorems \ref{thm:verlindeIntro}, \ref{cor:stateBasisIntro} and \ref{thm:MCGROUIntro} hold for the relative modular categories $\catInt$, with essentially the same proofs.

Finally, in Section \ref{sec:physCompare} we connect our results with the physics literature. When $q$ is arbitrary, Proposal \ref{proposal:psl} states that $\TQFT_{\catInt}$ is Chern--Simons theory with gauge Lie superalgebra $\psl$, the two dimensional purely odd Lie algebra, as studied by Mikhaylov \cite{Mik2015}. We observe that $\TQFT_{\catInt}$ is effectively independent of $q$. This reflects the physical expectation that, because $\psl$ is purely odd, there is no quantization of the level. If instead $q$ is a primitive $r$\textsuperscript{th} root of unity, then Proposal \ref{proposal:uoo} states that $\TQFT_{\catInt}$ is $U(1 \vert 1)$ Chern--Simons theory at level $r$, as studied by Rozansky--Saleur and Mikhaylov \cite{rozansky1992,rozansky1993,rozansky1994,Mik2015}. More precisely, since $\psl$ and $\Uoo$ Chern--Simons theories arise as topological twists of supersymmetric quantum field theories \cite{rozansky1997,GaiottoWitten-Janus,kapustin2009b}, they are expected to be examples of topological quantum field theories valued in derived or differential graded categories and we expect the theories $\TQFT_{\catInt}$ to be their homological truncations. Proposals \ref{proposal:psl} and \ref{proposal:uoo} use in an essential way the group $\C^{\times}$ of global symmetries of these supergroup Chern--Simons theories which allows them to be coupled to background flat $\C^{\times}$-connections. As evidence for these proposals, we match the Verlinde formulae (Theorem \ref{thm:verlindeIntro}) and dimension formulae for generic state spaces (Theorem \ref{cor:stateBasisIntro}) with physical predictions \cite{rozansky1993,rozansky1994,Mik2015}. We also match the mapping class group action of Theorem \ref{thm:MCGROUIntro} with that obtained using $\Uoo$ Wess--Zumino--Witten theory \cite{rozansky1993}. Theorem \ref{thm:MCGROUIntro} is very similar to mapping class group actions obtained using $\Uoo$ Chern--Simons theory \cite{Mik2015} and combinatorial quantization \cite{aghaei2018}; we give a precise comparison in Section \ref{sec:UCompare}. The connection between $\TQFT_{\catq}$ and $\TQFT_{\catInt}$ and the Alexander polynomial, discussed in Section \ref{sec:alexPoly}, matches the expected connection for $\Uoo$ Wess--Zumino--Witten theory \cite{rozansky1992,rozansky1993} and $\psl$ and $\Uoo$ Chern--Simons theories \cite{rozansky1994,Mik2015}. However, the results of this paper go beyond what has appeared in the physics literature. This includes the construction of a topological quantum field theory and an explicit description of the state spaces of generic surfaces of all genera. As discussed above, the final point is strictly stronger than the Verlinde formula in isolation.

Creutzig, Dimofte, Garner and the first author proposed in \cite{creutzig2021} that the topological quantum field theory associated to the relative modular category of weight modules over the unrolled quantum group $U_q^H(\mathfrak{sl}(n))$ admits a physical realization as the homological truncation of a topological $A$-twist of $3$d $\mathcal{N}=4$ Chern--Simons-matter theory with gauge group $SU(n)$. According to Kapustin and Saulina \cite{kapustin2009b}, Chern--Simons theory with gauge group a Lie supergroup can be realized as a gauged affine Rozansky--Witten theory and is therefore a $B$-twisted mirror of the theories studied in \cite{creutzig2021}. Implications of $3$d mirror symmetry at the level of topological quantum field theories are conjectured in \cite{creutzig2021}. The results of \cite{BCGP} and this paper provide mathematical foundations and calculations for the $A$-sides and $B$-sides, respectively, of these conjectures. It would be very interesting to use these results to study concrete instances of these conjectures.

In a different direction, it would be very interesting to study the relationship between the topological quantum field theories of this paper, constructed from the representation theory of $\Uq$, with decategorifications of the Heegaard Floer theory of \cite{manion2019,manion2020}, constructed from the categorical representation theory of $U_q(\gloo^+)$.

\subsubsection*{Acknowledgements}
The authors thank C. Blanchet, F. Costantino, M. De Renzi, T. Dimofte, J. Kujawa, A. Manion, B. Patureau-Mirand, M. Rupert, J.-L. Spellmann and C. Stroppel for discussions related to this paper. N.G.\ is partially supported by NSF grants DMS-1664387 and DMS-2104497.  He would also like to thank the Max Planck Institute for Mathematics in Bonn for its hospitality during work on this paper.  M.B.Y. is partially supported by a Simons Foundation Collaboration Grant for Mathematicians (Award ID 853541).

\section{Preliminary material}\label{sec:preliminaries}

Let $\kk$ be an algebraically closed ground field. 

\subsection{Ribbon categories}
\label{sec:ribbCat}
We refer the reader to \cite{etingof2015} for background on monoidal categories.

Let $\cat$ be a $\kk$-linear monoidal category. Throughout the paper, we assume that the functor $\otimes : \cat \times \cat \rightarrow \cat$ is $\kk$-bilinear, the monoidal unit $\mathbb{I}$ is simple and the $\kk$-algebra map $\kk \to \End_\cat(\unit), k \mapsto k \cdot \Id_\unit$, is an isomorphism. If, in addition, $\cat$ is rigid, braided and has a compatible twist $\theta=\{\theta_V: V \rightarrow V\}_{V \in \cat}$, then $\cat$ is called a \emph{$\kk$-linear ribbon category}. The left and right duality structure maps are denoted
\begin{equation*}
\overleftarrow{\mathrm{ev}}_V:V^* \otimes V \to \unit,
\qquad \overleftarrow{\mathrm{coev}}_V:\unit \to V  \otimes V^*
\end{equation*}
\and
\begin{equation*}
\qquad \overrightarrow{\mathrm{ev}}_V:V\otimes V^*  \to \unit,
\qquad \overrightarrow{\mathrm{coev}}_V: \unit \to V^*  \otimes V, \end{equation*}
respectively, and the braiding is $c=\{c_{V,W}: V \otimes W \rightarrow W \otimes V\}_{V,W \in \cat}$. The \emph{quantum dimension} of $V \in \cat$ is
\[
\qdim V =\ev_V \circ \tcoev_V \in \End_{\cat}(\unit).
\]
An object $V\in\cat$ is called \emph{regular} if $\tev_V$ is an epimorphism.

Our conventions for diagrammatic computations with ribbon categories are that diagrams are read from left to right, bottom to top and
\[
\Id_V
=
\begin{tikzpicture}[anchorbase]
\draw[->,thick] (0,0) -- node[left] {\small$V$} (0,1);
\end{tikzpicture}
\qquad \qquad
\Id_{V^*}
=
\begin{tikzpicture}[anchorbase]
\draw[<-,thick] (0,0) -- node[left] {\small$V$} (0,1);
\end{tikzpicture}
\]
\[
\tev_V
=
\begin{tikzpicture}[anchorbase]
\draw[->,thick] (0,0)  arc (0:180:0.5 and 0.75);
\node at (-1.4,0)  {$V$};
\end{tikzpicture}
\qquad, \qquad
\tcoev_V
=
\begin{tikzpicture}[anchorbase]
\draw[<-,thick] (0,0)  arc (180:360:0.5 and 0.75);
\node at (-0.5,-0.1)  {$V$};
\end{tikzpicture}
\]
\[
\ev_V
=
\begin{tikzpicture}[anchorbase]
\draw[<-,thick] (0,0)  arc (0:180:0.5 and 0.75);
\node at (0.4,0)  {$V$};
\end{tikzpicture}
\qquad, \qquad
\coev_V
=
\begin{tikzpicture}[anchorbase]
\draw[->,thick] (0,0)  arc (180:360:0.5 and 0.75);
\node at (1.5,-0.1)  {$V$};
\end{tikzpicture}
\]
\[
c_{V,W}
=
\begin{tikzpicture}[anchorbase]
\draw[->,thick] (0.5,0) -- node[right,near start] {\small $W$} (0,1);
\draw[->,thick,cross line] (0,0) -- node[left,near start] {\small$V$} (0.5,1);
\end{tikzpicture}
\qquad, \qquad
\theta_V
=
\begin{tikzpicture}[anchorbase]
\draw[->,thick,rounded corners=8pt] (0.25,0.25) -- (0,0.5) -- (0,1);
\draw[thick,rounded corners=8pt,cross line] (0,0) -- (0,0.5) -- (0.25,0.75);
\draw[thick] (0.25,0.75) to [out=30,in=330] (0.25,0.25);
\node at (-0.2,0.2)  {$V$};
\end{tikzpicture}.
\]
A morphism
$f:V_1\otimes{\cdots}\otimes V_n \rightarrow W_1\otimes{\cdots}\otimes W_m$ in $\cat$ is represented by the diagram
$$ 
\xymatrix{\;\\
 *+[F]\txt{ \: \; $f$ \; \;} 
 \ar@< 8pt>[u]^{W_1}_{... \hspace{1pt}}
  \ar@< -8pt>[u]_{W_m}\\
 \ar@< 8pt>[u]^{V_1}_{... \hspace{1pt}}
  \ar@< -8pt>[u]_{V_n}
  }
$$
whose box is called a {\it coupon}. Following Turaev \cite[\S I.2]{Tu}, a \emph{ribbon graph} in an oriented manifold $M$ is a compact oriented surface embedded in $M$ which decomposes into elementary pieces, consisting of bands, annuli and coupons, and is the thickening of an oriented graph. In particular, the vertices of the graph which lie in the interior $\mathring{M} =M \setminus\partial M$ are thickened to coupons. A \emph{$\cat$-colored ribbon graph} is a ribbon graph whose (thickened) edges are colored by objects of $\cat$ and whose coupons are colored by morphisms of~$\cat$. The intersection of a $\cat$-colored ribbon graph with $\partial M$ is required to be empty or consist of univalent vertices. In diagrams, we represent the induced framing of the core of the ribbon graph using the blackboard framing.

Associated to a $\kk$-linear ribbon category $\cat$ is the \emph{Reshetikhin--Turaev ribbon functor} $F_{\cat} : \mathcal{R}_{\cat} \rightarrow \cat$, where $\mathcal{R}_{\cat}$ is the ribbon category of $\cat$-colored ribbon graphs in $\R^2 \times [0,1]$ \cite[Theorem I.2.5]{Tu}. Given a $(1,1)$-tangle $T_V$ whose open strand is colored by a simple object $V \in \cat$, define $\langle T_V \rangle \in \kk$ by the following equality in $\End_{\cat}(V)$:
$$ F_{\cat}(T_V)=\langle T_{V} \rangle \cdot \Id_{V}. $$
Two formal $\kk$-linear combinations of $\cat$-colored ribbon graphs are called \emph{skein equivalent} if their images under $F_{\cat}$ agree. The corresponding equivalence relation is denoted $\dot{=}$.

\subsection{Modified traces}
\label{sec:mTrace}

Let $\cat$ be a $\kk$-linear ribbon category.

\begin{definition}
An {\em ideal} in $\cat$ is a full subcategory $\ideal \subset \cat$ which
\begin{enumerate}
\item is stable under retracts: if $W \in \ideal$ and $V \in \cat$ and there exist morphisms $f:V\to W$ and $g:W\to V$ such that $g \circ f=\Id_{V}$, then $V\in \ideal$, and

\item absorbs tensor products: if $U\in\ideal$ and $V \in \cat$, then $U\otimes V \in \ideal$.
\end{enumerate}
\end{definition}

Since $\cat$ is ribbon, an ideal also absorbs tensor products from the left.

\begin{example}
An object $P \in \cat$ is called \emph{projective} if every epimorphism $V \rightarrow P$ admits a section. The full subcategory $\Proj \subset \cat$ of projective objects is an ideal.
\end{example}

Let $V, W \in \cat$. Recall that the right partial trace $\ptr_W : \End_{\cat}(V \otimes W) \rightarrow \End_{\cat}(V)$ is defined by
\[
\ptr_W(f) = (\Id_V \otimes \ev_W) \circ (f \otimes \Id_{W^*}) \circ (\Id_V \otimes \tcoev_W).
\]

\begin{definition}[{\cite[\S 3]{GKP1}}]\label{def:mtrace}
\begin{enumerate}
\item A \emph{modified trace} (or \emph{m-trace}) on an ideal $\ideal \subset \cat$ is a family of $\kk$-linear functions 
$\mt=\{\mt_V:\End_\cat(V)\to\kk\}_{V\in\ideal}$ which satisfies the following:
\begin{enumerate}
 \item \emph{Cyclicity property}: $\mt_V(f \circ g)=\mt_W(g \circ f)$ for all $V,W \in \ideal$ and $f \in \Hom_{\cat}(W,V)$ and $g \in \Hom_{\cat}(V,W)$.
\item \emph{Partial trace property}: 
$\mt_{V\otimes W}(f)=\mt_V(\ptr_W(f))$ for all $V \in \ideal$, $W \in \cat$ and $f\in\End_{\cat}(V\otimes W)$.
\end{enumerate}

\item Given an m-trace $\mt$ on $\ideal$, the {\em modified dimension} of $V\in \ideal$ is $\qd(V)=\mt_V(\Id_V) \in \kk$.
\end{enumerate}
\end{definition}

\subsection{Relative modular categories}\label{sec:relModDef}

We recall a number of definitions from \cite{CGP14,derenzi2022}.

\begin{definition}  
Let $\catb$ be a $\kk$-linear category. 
\begin{enumerate}
\item A set $\mathcal{E}=\{ V_i \mid i \in J \}$ of objects of $\catb$ is \emph{dominating} if for any $V \in \catb$ there exist $\{i_1,\dots,i_m \} \subseteq J $ and morphisms $\iota_k \in \Hom_{\catb}(V_{i_k},V)$ and $s_k \in \Hom_{\catb}(V,V_{i_k})$ such that $\Id_{V}=\sum_{k=1}^m \iota_k \circ s_k$.

\item A dominating set $\mathcal{E}$ is \emph{completely reduced} if $\dim_\kk \Hom_{\catb}(V_i,V_j)=\delta_{i,j}$ for all $i,j \in J$.
\end{enumerate}
\end{definition}

Let $\cat$ be a $\kk$-linear ribbon category and $\Zt$ an additive abelian group. We often view $\Zt$ as a discrete monoidal category with object set $\Zt$.

\begin{definition}
\label{def:free}
A \emph{free realisation of $\Zt$ in $\cat$} is a monoidal functor $\sigma: \Zt \rightarrow \cat$ such that
\begin{enumerate}
  \item $\sigma(0)=\unit$,
  \item $\qdim \sigma(k) \in \{\pm 1\}$ for all $k\in \Zt$,
  \item $\theta_{\sigma(k)}=\Id_{\sigma(k)} \text{ for all } k \in \Zt $, and
\item for any simple object $V \in \cat$, we have $V\otimes \sigma(k) \simeq V$ if and only if $k=0$.  
\end{enumerate}
\end{definition}

We often identify a free realisation $\sigma : \Zt \rightarrow \cat$ with a collection of objects $\{\sigma(k)\}_{k \in \Zt}$, omitting from the notation the monoidal coherence data $\sigma(k_1) \otimes \sigma(k_2) \xrightarrow[]{\sim} \sigma(k_1+k_2)$, $k_1,k_2 \in \Zt$.

\begin{remark}
The definition of a free realisation was first given in \cite[\S 4.3]{CGP14} with the condition that $\qdim \sigma(k) =1$ for all $k\in \Zt$. Definition \ref{def:free} first appeared in \cite[\S 1.3]{derenzi2022}.
\end{remark}

\begin{definition}
\label{def:Gstr}
Let $\Gr$ be an additive abelian group.  A \emph{$\Gr$-grading on $\cat$} is an equivalence of $\kk$-linear categories $\cat \simeq \bigoplus_{g \in \Gr} \cat_g$, where $\{ \cat_g \}_{g \in \Gr}$ are full subcategories of $\cat$ satisfying the following conditions:
  \begin{enumerate}
  \item $\unit \in \cat_0$,
  \item  if $V\in\cat_g$,  then  $V^{*}\in\cat_{-g}$, and
  \item  if $V\in\cat_g$ and $V^{\prime}\in\cat_{g'}$, then $V\otimes
    V^{\prime}\in\cat_{g+g'}$.
    \end{enumerate}
\end{definition}

It follows from the definition that if $V\in\cat_g$, $V^{\prime}\in\cat_{g'}$ and $\Hom_\cat(V,V^{\prime})\neq 0$, then $g=g'$. Note also that the full subcategory $\cat_0 \subset \cat$ is ribbon.

\begin{definition}\label{def:smallsymm}
A subset $\XX$ of an abelian group $\Gr$ is
\begin{enumerate}
\item \emph{symmetric} if $\XX=-\XX$ and
\item \emph{small} if $\bigcup_{i=1}^n (g_i+\XX) \neq \Gr$ for all $g_1,\ldots ,g_n\in \Gr$.
\end{enumerate}
\end{definition}

 \begin{definition}
 \label{def:preMod}
 Let $\Gr$, $\Zt$ be abelian groups, $\XX \subset \Gr$ a small symmetric subset and $\cat$ a $\kk$-linear ribbon category with the following data:
 \begin{enumerate}
 \item a $\Gr$-grading on $\cat$,
 \item a free realisation $\sigma$ of $\Zt$ in $\cat_0$, and
 \item a non-zero m-trace $\mt$ on the ideal of projective objects of $\cat$.
 \end{enumerate}
 A category $\cat$ with this data is called a \emph{pre-modular $\Gr$-category relative to $(\Zt,\XX)$} if it has the following properties:
 \begin{enumerate}
\item \emph{Generic semisimplicity}: \label{def:genSS} For every $g \in \Gr \setminus \XX$, there exists a finite set of regular simple objects $\Theta(g):=\{ V_i \mid i \in I_g  \}$ such that
$$\Theta(g) \otimes \sigma(\Zt):=\{ V_i \otimes \sigma(k) \mid i \in I_g, \; k\in \Zt  \}$$
is a completely reduced dominating set for $\cat_g$.

\item \emph{Compatibility}: \label{def:compat}
There exists a bicharacter $\psi: \Gr \times \Zt \rightarrow \kk^{\times}$ such that
  \begin{equation}
    \label{eq:psi}
    c_{\sigma(k),V}\circ c_{V,\sigma(k)}= \psi(g,k) \cdot  \Id_{V \otimes \sigma(k)}
  \end{equation}
for any $g\in \Gr$, $V \in \cat_g$ and $k \in \Zt$.
\end{enumerate}
 \end{definition}


\begin{definition}
\label{def:ndeg}
Let $\cat$ be a pre-modular $\Gr$-category relative to $(\Zt,\XX)$.
\begin{enumerate}
\item For each $g \in \Gr \setminus \XX$, the \emph{Kirby color of index $g$} is the formal $\kk$-linear combination of objects $$\Omega_g:= \sum_{i \in I_g}\qd(V_i) \cdot V_i.$$ 
\item For each $g \in \Gr \setminus \XX$ and $V\in \cat_g$, the \emph{stabilization coefficients} $\Delta_\pm\in \kk$ are defined by the skein equivalences
$$\epsh{Fig-nondeg}{20ex}
$$
\item The pre-modular $\Gr$-category $\cat$ is called \emph{non-degenerate} if $\Delta_{+}\Delta_{-}\neq 0$. 
\end{enumerate}

\end{definition}

As the notation suggests, $\Delta_{\pm}$ are independent of $g$ and $V \in \cat_g$ \cite[Lemma 5.10]{CGP14}.


\begin{definition}
\label{def:modG}
A \emph{modular $\Gr$-category relative to $(\Zt,\XX)$} is a pre-modular $\Gr$-category $\mathcal{C}$ relative to $(\Zt,\XX)$ for which there exists a scalar $\zeta \in \kk^{\times}$, called the \emph{relative modularity parameter}, such that for any $g,h \in \Gr \setminus \XX$ and $i,j \in I_g$ the skein equivalence
  \begin{equation}\label{eq:mod}
    \epsh{relative_modularity}{12ex}\vspace*{4ex}
  \end{equation}
holds.
\end{definition}

The relative modularity parameter satisfies $\zeta = \Delta_+ \Delta_-$ \cite[Proposition 1.2]{derenzi2022}. In particular, modular $\Gr$-categories are non-degenerate pre-modular.

\subsection{Invariants of closed $3$-manifolds}
\label{sec:CGPInvt}

Let $\cat$ be a non-degenerate pre-modular $\Gr$-category relative to $(\Zt,\XX)$. The data $\cat$ was used in \cite{CGP14} to define invariants of decorated $3$-manifolds. We briefly recall this construction. Note that relative modularity in \cite[Definition 4.2]{CGP14} is what is called here relative pre-modularity in Definition \ref{def:preMod}. All manifolds in this paper are assumed oriented and compact.

A \emph{compatible triple} $(M,T, \coh)$ consists of a closed connected $3$-manifold $M$, a $\cat$-colored ribbon graph $T \subset M$ and a cohomology class $\coh\in H^{1}(M\setminus T ; \Gr)$ such that the $\cat$-coloring of $T$ is \emph{$\coh$-compatible}, in the sense that each oriented edge $e$ of $T$ is colored by a non-zero object of $\cat_{\coh(m_e)}$, where $m_e$ is the oriented meridian of $e$. A surgery presentation $L\subset S^3$ for $M$ is called \emph{computable} for $(M,T,\coh)$ if one of the following conditions holds:
  \begin{enumerate}
  \item $L\neq \emptyset$ and $\coh(m_{L_i}) \in \Gr \setminus \XX$ for all connected components $L_i$ of $L$.  
\item $L=\emptyset$ and there exists an edge $e$ of $T$ with $\coh(m_e) \in \Gr \setminus \XX$.
\end{enumerate}

Let $R$ be a $\cat$-colored ribbon graph. Suppose that at least one edge of $R$ is colored by a generic simple object $V \in \cat$ and let $T_V$ be the $(1,1)$-ribbon graph obtained from $R$ by cutting an edge labeled by $V$. Properties of m-traces imply that
$$F^{\prime}_{\cat}(R) :=\mt_V(T_V) =\qd(V) \langle T_V \rangle \in \kk$$ is a well-defined invariant of $R$. See \cite{GPT,GKP1}. In \cite[\S 2]{CGP14} it is shown that $F^{\prime}_{\cat}$ can be used to define a Reshetikhin--Turaev-type diffeomorphism invariant of triples $(M,T,\coh)$. See equation \eqref{eq:cgpInvt} below for a slight generalization.

\subsection{A topological quantum field theory extension}
\label{sec:CGPTQFT}

When $\cat$ is a modular $\Gr$-category relative to $(\Zt,\XX)$, the decorated $3$-manifold invariants of Section \ref{sec:CGPInvt} is part of a once-extended topological quantum field theory (TQFT) \cite{BCGP, derenzi2022}. In this paper, we restrict attention to the truncation of this theory to a non-extended TQFT.

A \emph{decorated surface} $\CS=(\Sigma, \{p_i\}, \coh, \mathcal{L})$ consists of
\begin{itemize}
\item a closed surface $\Sigma$ with a set $*$ of distinguished base points, exactly one for each connected component of $\Sigma$,
\item a (possibly empty) finite set $\{p_i\} \subset \Sigma \setminus *$ of oriented framed $\cat$-colored points, where the coloring of $p_i$ is an object of some $\cat_{g_i}$,
\item a cohomology class $\coh \in H^1(\Sigma\setminus\{p_i\}, * ;\Gr) \simeq H^1(\Sigma\setminus\{p_i\} ;\Gr)$ such that $\omega(m_i) = g_i$, where $m_i$ is the oriented boundary of a regular neighborhood of $p_i$, and
\item a Lagrangian subspace ${\mathcal L}\subset H_1(\Sigma; \R)$.
\end{itemize}

A \emph{decorated cobordism} $\mathcal M = (M,T,\coh,m) : \CS_1 \rightarrow \CS_2$ between decorated surfaces consists of
\begin{itemize}
\item a cobordism $M: \Sigma_1 \rightarrow \Sigma_2$,
\item a $\cat$-colored ribbon graph $T \subset M$ whose coloring is compatible with that of the marked points of $\CS_i$, $i=1,2$,
\item a cohomology class $\coh \in H^1(M\setminus T, *_1 \cup *_2; \Gr)$ for which $T$ is $\coh$-compatible and which restricts to $\omega_i$ on $\Sigma_i$, $i=1,2$, and
\item an integer $m \in\Z$ called the \emph{signature defect}.
\end{itemize}
This data is required to be \emph{admissible} in the sense that for each connected component $M_c$ of $M$ which is disjoint from the incoming boundary $\Sigma_1$, there exists an edge of $T \cap M_c$ colored by a projective object or there exists an embedded closed oriented curve $\gamma \subset M_c$ such that $\coh(\gamma) \in \Gr \setminus \XX$.

When it will not cause confusion, we refer to $\mathcal{S}$ and $\mathcal{M}$ simply as surfaces and cobordisms, respectively. Decorated surfaces and their diffeomorphism classes of decorated cobordisms form a category $\Cob_{\cat}$. Disjoint union gives $\Cob_{\cat}$ a symmetric monoidal structure.

Fix a square root $\D \in \kk^{\times}$ of $\Delta_+ \Delta_-$ and set $\delta=\frac{\D}{\Delta_{-}}$. Given a closed cobordism $\mathcal M=(M,T,\coh,m)$ with computable surgery presentation $L \subset S^3$, define 
\begin{equation}
\label{eq:cgpInvt}
\CGP_\cat(M,T,\coh,m)=\D^{-1-l} \delta^{m-\sigma(L)} F'_{\cat}(L\cup T) \in \kk.
\end{equation}
Here $l$ is the number of connected components of $L$, $\sigma(L)$ is the signature of the linking matrix of $L$ and each component $L_i$ of $L$ is colored by the Kirby color $\Omega_{\coh(m_{L_i})}$. Setting $m=0$ in equation \eqref{eq:cgpInvt} recovers a scalar multiple of the decorated $3$-manifold invariants of \cite{CGP14}. 

Let $\ZVect_{\kk}$ be the monoidal category of $\Zt$-graded vector spaces over $\kk$ and their degree preserving morphisms. Let $\gamma: \Zt \times \Zt \rightarrow \{\pm 1\}$ be the unique pairing which makes the diagrams
\[
\begin{tikzpicture}[baseline= (a).base]
\node[scale=1.0] (a) at (0,0){
\begin{tikzcd}[column sep=7.0em,row sep=2.0em]
\sigma(k_1) \otimes \sigma(k_2) \arrow{r}[above]{c_{\sigma(k_1),\sigma(k_2)}} \arrow{d}[left]{\wr} & \sigma(k_2) \otimes \sigma(k_1) \arrow{d}[right]{\wr} \\
\sigma(k_1+k_2) \arrow{r}[below]{\gamma(k_1,k_2) \cdot \Id_{\sigma(k_1+k_2)}} & \sigma(k_2+k_1)
\end{tikzcd}
};
\end{tikzpicture},
\qquad
k_1, k_2 \in \Zt
\]
commute. The vertical arrows are the monoidal coherence data of the free realisation. The pairing $\gamma$ induces a symmetric braiding on $\ZVect_{\kk}$.

\begin{theorem}[{\cite[Theorem 6.2]{derenzi2022}}]
\label{thm:relModTQFT}
A modular $\Gr$-category $\cat$ relative to $(\Zt,\XX)$ with a choice $\D$ of square root of $\Delta_+ \Delta_-$ defines a symmetric monoidal functor $\TQFT_{\cat}: \Cob_{\cat} \rightarrow \ZVect_{\kk}$ whose values on closed cobordisms coincide with $\CGP_{\cat}$.
\end{theorem}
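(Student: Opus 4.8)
The plan is to run the universal construction of Blanchet--Habegger--Masbaum--Vogel in the renormalised, relatively modular setting, taking the closed invariant $\CGP_\cat$ of Section~\ref{sec:CGPInvt} as given. For a decorated surface $\CS$, form the free $\kk$-vector space $\CV'(\CS)$ spanned by diffeomorphism classes, relative to their boundary, of decorated cobordisms $\emptyset \to \CS$. Gluing along $\CS$ against the orientation-reversed surface $\overline{\CS}$ and applying $\CGP_\cat$ to the resulting closed decorated cobordism yields a bilinear pairing $\langle\,\cdot\,,\cdot\,\rangle_{\CS}\colon \CV'(\CS) \times \CV'(\overline{\CS}) \to \kk$; admissibility of decorated cobordisms ensures that the closed manifolds so obtained carry computable surgery presentations, so the pairing is defined. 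Set $\TQFT_\cat(\CS) := \CV'(\CS)/\ker\langle\,\cdot\,,\cdot\,\rangle_{\CS}$. A decorated cobordism $\mathcal M\colon\CS_1\to\CS_2$ acts by gluing, $[N]\mapsto[\mathcal M\circ N]$; because $\CGP_\cat$ depends only on the closed manifold and gluing of manifolds is strictly associative, this sends the radical over $\CS_1$ into the radical over $\CS_2$, hence descends to a linear map $\TQFT_\cat(\mathcal M)\colon\TQFT_\cat(\CS_1)\to\TQFT_\cat(\CS_2)$.

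I would then install the target structure. The $\Zt$-grading on $\TQFT_\cat(\CS)$ is induced by the free realisation: decorating filling cobordisms by $\sigma(k)$-coloured framed curves, which lie in $\cat_0$ and are therefore $\coh$-compatible for every class $\coh$, and using the compatibility \eqref{eq:psi} together with property~(4) of Definition~\ref{def:free}, produces the decomposition $\TQFT_\cat(\CS) = \bigoplus_{k\in\Zt}\TQFT_{\cat,k}(\CS)$, and one checks cobordisms act by degree-preserving maps. For the monoidal structure, $\CGP_\cat$ is multiplicative under disjoint union — which is anomaly free — so $\TQFT_\cat(\CS_1\sqcup\CS_2)\cong\TQFT_\cat(\CS_1)\otimes\TQFT_\cat(\CS_2)$ and $\TQFT_\cat(\emptyset)=\kk$, the associativity and unit coherences being formal. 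For the symmetry, one verifies that the cobordism underlying the swap diffeomorphism $\CS_1\sqcup\CS_2\to\CS_2\sqcup\CS_1$ is carried to the symmetry of $\ZVect_\kk$ twisted by $\gamma$: tracing the swap out by an isotopy makes the $\sigma(k_1)$- and $\sigma(k_2)$-coloured curves of homogeneous fillings cross, and, using $\qdim\sigma(k)\in\{\pm1\}$ and $\theta_{\sigma(k)}=\Id_{\sigma(k)}$, this crossing contributes exactly the scalar $\gamma(k_1,k_2)$ of the commuting square preceding the theorem. Finally, a closed decorated cobordism $\emptyset\to\emptyset$ induces an endomorphism of $\TQFT_\cat(\emptyset)=\kk$, i.e. a scalar; evaluating the universal construction against the empty filling identifies it with $\CGP_\cat$, which gives the last assertion.

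The main obstacle is the gluing anomaly. The invariant $\CGP_\cat$ fails to be strictly multiplicative under gluing of cobordisms along a surface: the surgery formula~\eqref{eq:cgpInvt} carries the factor $\delta^{m-\sigma(L)}$, and concatenating surgery presentations alters the signature correction by a Maslov index $\mu$ of a triple of Lagrangian subspaces of $H_1(\Sigma;\R)$ (Wall non-additivity of the signature). This is precisely why a decorated cobordism records an integer signature defect $m$ and a decorated surface records a Lagrangian $\mathcal L$: composition in $\Cob_\cat$ must be defined so that signature defects add up with a correction by this Maslov index, and the cocycle identity for $\mu$ must be invoked to see that this composition is associative — so that $\Cob_\cat$ is a genuine category and $\TQFT_\cat(\mathcal M_2\circ\mathcal M_1)=\TQFT_\cat(\mathcal M_2)\circ\TQFT_\cat(\mathcal M_1)$ holds on the nose — while pinning down the identity morphisms requires the analogous self-intersection computation, which is where the normalisations $\D$ and $\delta=\D/\Delta_-$, and in particular the choice of square root $\D$ of $\Delta_+\Delta_-$, are used. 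Establishing this amounts to a careful Kirby-calculus bookkeeping of how $F'_\cat$, the Kirby colours $\Omega_g$, and the signature corrections behave under the moves relating different surgery presentations of a glued cobordism; this bookkeeping is the technical heart of the argument.
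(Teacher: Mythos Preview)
Your outline follows the universal-construction paradigm correctly up to the quotient $\state(\CS)=\CV(\CS)/\ker\langle\,\cdot\,,\cdot\,\rangle$, and your discussion of the gluing anomaly is accurate. However, there is a genuine gap at the monoidality step. You assert that multiplicativity of $\CGP_\cat$ under disjoint union gives $\TQFT_\cat(\CS_1\sqcup\CS_2)\cong\TQFT_\cat(\CS_1)\otimes\TQFT_\cat(\CS_2)$; multiplicativity only furnishes a natural map $\state(\CS_1)\otimes\state(\CS_2)\to\state(\CS_1\sqcup\CS_2)$, and in the relative modular (non-semisimple) setting this map is \emph{not} in general an isomorphism. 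The paper states this explicitly: ``The assignment $\CS \mapsto \state(\CS)$ extends to a functor $\state : \Cob_{\cat} \rightarrow \Vect_{\kk}$ which, however, is not in general monoidal.''

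The remedy, which your proposal misses, is the graded construction of De Renzi: one fixes a simple projective $V_{g_0}\in\cat_{g_0}$, introduces for each $k\in\Zt$ the decorated sphere $\hS_k$ marked by $(V_{g_0},1),(\sigma(k),1),(V_{g_0},-1)$, and defines the state space as $\TQFT_\cat(\CS)=\bigoplus_{k\in\Zt}\state(\CS\sqcup\hS_k)$. It is this $\Zt$-graded object, not $\state(\CS)$ itself, that carries the monoidal structure; the auxiliary projective $V_{g_0}$ is what forces enough non-degeneracy in the pairing to make the K\"unneth map an isomorphism. Your alternative mechanism for the $\Zt$-grading --- inserting $\sigma(k)$-coloured curves into fillings --- produces a grading on $\state(\CS)$ (and indeed something close to this appears later in the paper as the refined $H_0(\Sigma;\Zt)$-grading), but it does not repair the failure of monoidality.
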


The TQFT $\TQFT_{\cat}$, which we denote by $\TQFT$ if it will not cause confusion, can be described as follows. Given a decorated surface $\CS$, let $\mathcal{V}(\CS)$ be the (infinite dimensional) vector space over $\kk$ with basis the set of all decorated cobordisms $\emptyset\to \CS$ and $\mathcal{V}'(\CS)$ the vector space with basis the set of all decorated cobordisms $ \CS\to \emptyset$. Define a pairing
\[
\langle -, - \rangle: \mathcal{V}' (\CS) \otimes_{\kk} \mathcal{V} (\CS) \to \kk,
\qquad
\langle \CM', \mathcal \CM\rangle = \CGP_\cat (\CM' \circ \CM)
\]
and let $\state (\CS)$ be the quotient of $\mathcal{V} (\mathcal  S)$ by the right kernel of $\langle -, - \rangle$:
$$ \state (\CS) :=\mathcal{V}(\CS) / \text{kerR}\langle -, - \rangle. 
$$
The assignment $\CS \mapsto \state(\CS)$ extends to a functor $\state : \Cob_{\cat} \rightarrow \Vect_{\kk}$ which, however, is not in general monoidal. To remedy this, fix $g_0 \in \Gr$ and a simple projective object $V_{g_0} \in \cat_{g_0}$. For each $k\in \Zt$, denote by $\hS_k$ the decorated $2$-sphere
\[
\hS_{k} = (S^2,\{(V_{g_0},1),(\sigma(k),1),(V_{g_0},-1)\},\coh,\{ 0 \}) \in \Cob_{\cat}
\]
determined by the oriented colored points $\{(V_{g_0},1),(\sigma(k),1),(V_{g_0},-1)\}$ and cohomology class $\coh$ uniquely determined by compatibility. See \cite[\S 4.5]{BCGP}, \cite[\S 5.1]{derenzi2022}. The TQFT state space of $\CS$ is defined to be the $\Zt$-graded vector space
\begin{equation}
\label{eq:stateSpace}
\TQFT(\CS)=\bigoplus_{k\in\Zt} \TQFT_k(\CS)
\end{equation}
where $\TQFT_k(\CS)= \state(\CS\sqcup  \hS_k)$. Up to monoidal natural isomorphism, $\TQFT$ is independent of $g_0$ and $V_{g_0}$.

\begin{lemma}[{\cite[Lemmas 4.7 and 4.8]{derenzi2022}}] \label{lem:PropOfCGP}
The state space $\TQFT(\CS)$ satisfies the following properties.
\begin{enumerate}
\item\label{ite:spanning} Let $\Sigma$ be the underlying surface of $\CS$ and $M$ a connected manifold with $\partial M =\Sigma$. Then the cobordisms $\{\mathcal{M}=(M,T, \coh,0): \emptyset\to \CS\}_{T,\coh}$ span $\state(\CS)$.
\item \label{ite:zero} A $\kk$-linear combination $\sum_i a_i  \mathcal{M}_i$ of cobordisms $\mathcal{M}_i: \emptyset \rightarrow \CS$ is zero in $\state(\CS)$ if and only if $\sum_i a_i \langle \mathcal{M}_i, \mathcal{M}\rangle=0$ for all cobordisms $\mathcal{M} : \CS \rightarrow \emptyset$.  
\end{enumerate}
\end{lemma}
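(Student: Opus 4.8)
The plan is to treat the two assertions separately. Statement \eqref{ite:zero} is a formal consequence of the definition of $\state(\CS)$ as the quotient of $\mathcal{V}(\CS)$ by the right radical of $\langle-,-\rangle$, while statement \eqref{ite:spanning} is the genuinely geometric claim and rests on surgery theory together with the surgery invariance of $\CGP_\cat$ established in \cite[\S 2]{CGP14}.

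For \eqref{ite:zero} I would simply unwind definitions. A vector $\sum_i a_i\mathcal{M}_i \in \mathcal{V}(\CS)$ vanishes in $\state(\CS)$ if and only if it lies in the right radical of $\langle-,-\rangle$, i.e.\ is annihilated by $\langle w,-\rangle$ for every $w \in \mathcal{V}'(\CS)$; since the decorated cobordisms $\mathcal{M}\colon\CS\to\emptyset$ form a basis of $\mathcal{V}'(\CS)$, it suffices to test against such $\mathcal{M}$, and by definition of the pairing $\langle \mathcal{M},\sum_i a_i\mathcal{M}_i\rangle = \sum_i a_i\,\CGP_\cat(\mathcal{M}\circ\mathcal{M}_i)$. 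This is exactly the stated criterion, so nothing further is needed.

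For \eqref{ite:spanning}, I start from the fact that, being a quotient of $\mathcal{V}(\CS)$, the space $\state(\CS)$ is spanned by the classes $[\mathcal{N}]$ of \emph{all} decorated cobordisms $\mathcal{N}=(N,T',\coh',m)\colon\emptyset\to\CS$, and I must rewrite each such class in terms of cobordisms supported on the fixed connected manifold $M$. After splitting off any closed components of $N$, which by admissibility are computable and contribute only scalar factors, I may assume $N$ is connected with $\partial N=\Sigma$. I then invoke the classical fact that two compact connected oriented $3$-manifolds with diffeomorphic boundary are related by integral surgery along a framed link in the interior of one of them, writing $N=M(L)$ for a framed link $L\subset\mathring M$. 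Isotoping $L$ off the image of $T'$, arranging that $L$ is homologically unlinked from any auxiliary surgery components, and—using that $\XX\subset\Gr$ is small and that the colours of $T'$ are already $\coh'$-compatible—extending $\coh'$ over $M\setminus(T'\cup L)$ so that its value on each meridian $m_{L_i}$ lies in $\Gr\setminus\XX$, I colour each $L_i$ by the Kirby colour $\Omega_{\coh'(m_{L_i})}$. This produces a formal linear combination $\Xi=(M,T'\cup L_\Omega,\coh',0)$ of cobordisms of exactly the prescribed type.

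It remains to prove $[\mathcal{N}]=c\,[\Xi]$ in $\state(\CS)$ for a scalar $c$, which by \eqref{ite:zero} amounts to comparing $\CGP_\cat(\mathcal{M}\circ\mathcal{N})$ with $\CGP_\cat(\mathcal{M}\circ\Xi)$ for every $\mathcal{M}\colon\CS\to\emptyset$. For such an $\mathcal{M}$, a computable surgery presentation of the closed cobordism $\mathcal{M}\circ\mathcal{N}$ is obtained from one of $\mathcal{M}\circ\Xi$ by adjoining $L$, coloured by its Kirby colours and with generic meridians by construction, since the gluing along $\CS$ occurs away from the surgery region. Comparing the defining formula \eqref{eq:cgpInvt} for the two closed invariants term by term—the count of surgery circles increases by $|L|$, the signature of the linking matrix changes by $\sigma(L)$ (by the unlinking), the signature defect changes in a controlled way, and the value of $F'_\cat$ on the Kirby-coloured diagram is unchanged, this being precisely the surgery invariance of $\CGP_\cat$—yields $\CGP_\cat(\mathcal{M}\circ\mathcal{N})=c\,\CGP_\cat(\mathcal{M}\circ\Xi)$, where $c$ is a product of the nonzero scalars $\D$, $\delta$ and a power of $\delta$ coming from the signature defects. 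The main obstacle, and the step requiring real care, is verifying that this $c$ is \emph{independent of} $\mathcal{M}$: the only potential $\mathcal{M}$-dependence enters through the Wall/Maslov non-additivity correction relating the signature defect of a composite cobordism to those of its factors, and one must check that this correction is a function only of the Lagrangian data attached to the common boundary $\CS$ and of the maps induced on $H_1(\Sigma;\R)$ by $\mathcal{N}$ and by $\Xi$—data that interior surgery along $L$ leaves unchanged—so that a single $c$ works uniformly. Granting this, $[\mathcal{N}]=c\,[\Xi]$ lies in the span of the prescribed cobordisms, establishing \eqref{ite:spanning}.
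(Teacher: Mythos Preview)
The paper does not give its own proof of this lemma; it is quoted verbatim from \cite[Lemmas 4.7 and 4.8]{derenzi2022}. So there is no in-paper argument to compare against. Your outline is the standard one used in \cite{BCGP,derenzi2022}: part~\eqref{ite:zero} is immediate from the definition of $\state(\CS)$ as a quotient by the right radical, and for part~\eqref{ite:spanning} one passes from an arbitrary bounding $N$ to the fixed $M$ by interior surgery on a link $L$, colours $L$ by Kirby colours, and compares the $\CGP$-pairings against an arbitrary $\mathcal{M}$.

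Two steps in your sketch of \eqref{ite:spanning} are genuine gaps rather than routine omissions. First, once you write $N=M(L)$, the value of the extended class on each meridian $m_{L_i}$ is \emph{determined} by $\omega'$ via the identification $M\setminus\nu(L)\cong N\setminus\nu(L^*)$ of link complements; it is not something you may simply ``extend so that'' it lands in $\Gr\setminus\XX$. Forcing genericity typically requires first modifying $L$ by stabilisations and slides, as in the proof that admissible triples admit computable presentations in \cite{CGP14}; you gesture at smallness of $\XX$ but do not explain how it is used. Second, your claim that ``interior surgery along $L$ leaves unchanged'' the maps induced on $H_1(\Sigma;\R)$ is false in general: the kernel Lagrangian $\ker\big(H_1(\Sigma;\R)\to H_1(N;\R)\big)$ can certainly differ from that of $M$ (think of surgery on the core of a solid torus bounding $T^2$). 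What actually makes the scalar $c$ independent of $\mathcal{M}$ is not equality of these Lagrangians but a direct comparison: fixing a presentation $L'\subset S^3$ of the closed manifold underlying $\mathcal{M}\circ\Xi$, adjoining $L$ yields one for $\mathcal{M}\circ\mathcal{N}$, and one checks that the combined change in $\sigma$ and in the Maslov correction depends only on $L$ and on the decoration of $\CS$. You correctly flag this as ``the step requiring real care'', but the justification you offer for it does not hold as stated.
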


Although $\mathcal{V}(\CS)$ is infinite dimensional, $\TQFT(\CS)$ is finite dimensional in all known examples; see Section \ref{SS:FiniteDimTQFT} below. Moreover, Lemma \ref{lem:PropOfCGP} allows one to make finitely many computations to determine $\TQFT(\CS)$.  The main tools used to do such computations are \emph{$\sigma$-equivalence} and \emph{skein equivalence}, described in \cite[\S 4.1]{BCGP} and \cite[\S 4.1]{derenzi2022}.

The $\Zt$-grading of $\TQFT(\CS)$ can be refined to a $H_0(\Sigma; \Zt)$-grading. This refinement, used in Sections \ref{sec:verlindeArb} and \ref{sec:verlindeROU}, is discussed for $\cat$ the category of weight modules over $\Uqsl$ in \cite[\S 4.7]{BCGP} and can be generalized as follows. Given a decorated surface $\CS$ with underlying surface $\Sigma$ and $\varphi \in H^0(\Sigma; \Gr)$, let $\Id_{\CS}^{\varphi} : \CS \rightarrow \CS$ be the morphism obtained from the cylinder $\Sigma \times [0,1]$ by adding to the pullback of the cohomology class of $\CS$ the unique class $\partial \varphi$ which vanishes on cycles of $\Sigma \times [0,1]$ and takes the value $\varphi(*)$ on the path $* \times [0,1]$, where $*$ is the basepoint of a connected component of $\Sigma$. The group $H^0(\Sigma; \Gr)$ acts on $\state(\CS)$ by post-composition with $\state(\Id_{\CS}^{\varphi})$. Denote by
\[
\state(\CS) = \bigoplus_{\mathbf{k} \in H_0(\Sigma; \Zt)} \state_{\mathbf{k}}(\CS)
\]
the associated spectral decomposition, where
\[
\state_{\mathbf{k}}(\CS)
=
\{v \in \state(\CS) \mid \state(\Id_{\CS}^{\varphi}) v = \big( \prod_{c \in \pi_0(\Sigma)} \psi(k_c, \varphi_c) \big) v \; \; \forall \varphi \in H^0(\Sigma; \Gr)\}
\]
and we have identified $H_0(\Sigma; \Zt) \simeq \bigoplus_{c \in \pi_0(\Sigma)} \Zt$ and $H^0(\Sigma; \Gr) \simeq \bigoplus_{c \in \pi_0(\Sigma)} \Gr$. Write $\vert \mathbf{k} \vert$ for $\sum_{c \in \pi_0(\Sigma)} k_c$. This refined grading has the following properties, which can be proved in the same way as \cite[Proposition 4.28]{BCGP}:
\begin{enumerate}
\item If $\state_{\mathbf{k}}(\CS) \neq 0$, then $\vert \mathbf{k} \vert = 0$.

\item If $\state_{(\mathbf{k}, k_{\infty})}(\CS \sqcup \hS_k) \neq 0$, then $k_{\infty}=k$.
\end{enumerate}
For $\mathbf{k} \in H_0(\Sigma; \Zt)$, set $\TQFT_{\mathbf{k}} (\CS) = \state_{(\mathbf{k},- \vert \mathbf{k} \vert)} (\CS)$. Then we have
\[
\TQFT_k(\CS) = \bigoplus_{\substack{\mathbf{k} \in H_0(\Sigma; \Zt) \\ \vert \mathbf{k} \vert = k}} \TQFT_{\mathbf{k}}(\CS).
\]
In particular, if $\Sigma$ is connected, then $H^0(\Sigma; \Gr)$ acts on $\TQFT_k(\CS)$ by $\TQFT(\Id_{\CS}^{\varphi})v = \psi(\varphi,k) v$, $\varphi \in H^0(\Sigma; \Gr)$.

\begin{remark}
As mentioned above, $\CGP_{\cat}$ is part of a once-extended TQFT $\check{\TQFT}_{\cat}: \CobExt_{\cat} \rightarrow \ZCat_{\kk}$. Here $\CobExt_{\cat}$ is the bicategory of closed decorated $1$-manifolds, their decorated admissible $2$-cobordisms and their equivalence classes of decorated $3$-cobordisms with corners and $\ZCat_{\kk}$ is the bicategory of $\Zt$-graded complete $\kk$-linear categories with symmetric monoidal structure determined by $\gamma$ \cite[Theorem 6.1]{derenzi2022}. The value of $\check{\TQFT}_{\cat}$ on the $S^1$ with cohomology class of holonomy $g \in \Gr$ is Morita equivalent to the ideal of projective objects of $\cat_g$ \cite[Proposition 7.1]{derenzi2022}.
\end{remark}

\subsection{Finite dimensionality of TQFT state spaces}\label{SS:FiniteDimTQFT}
\label{sec:finDim}

Let $\cat$ be a modular $\Gr$-category relative to $(\Zt, \XX)$ with associated TQFT $\TQFT$. In this section we prove that, under mild assumptions on $\cat$, the state spaces of $\TQFT$ are finite dimensional.

An object $V \in \cat$ is called a \emph{direct sum} of a finite set of objects $\{V_i\}_{i \in I}$ of $\cat$, denoted $V = \bigoplus_{i\in I} V_i$, if there exist morphisms $\iota_i \in \Hom_{\cat}(V_i,V)$ and $s_i \in \Hom_{\cat}(V,V_i)$ which satisfy $\Id_V = \sum_{i \in I} \iota_i \circ s_i$ and $s_j \circ \iota_i = \delta_{i,j} \Id_{V_i}$. When $\cat$ is additive, this notion of direct sum agrees with that of the additive structure of $\cat$. An object $V \in \cat$ is called \emph{indecomposable} if $V = \bigoplus_{i\in I} V_i$ implies that there is a unique $i \in I$ such that $V \simeq V_i$ and $V_j =0$ if $j \neq i$.

\begin{definition}
\label{def:relModFinite}
A modular $\Gr$-category $\cat$ relative to $(\Zt,\XX)$ is called \emph{TQFT finite} if it has the following properties:
\begin{enumerate}[label=(F\arabic*)]
\item \label{ite:finCat1} For each $g\in \Gr$, there exists a finite set $\{P_j\}_{j\in J_g}$ of projective indecomposables of $\cath_{g}$ such that any projective indecomposable of $\cath_{g}$ is isomorphic to $P_j\otimes\sigma(k)$ for some $j\in J_g$ and $k\in\Zt$.
\item \label{ite:finCat2} For any projective $P \in \cath$, the vector space $\Hom_\cath(\sigma(k),P)$ is finite dimensional for all $k \in \Zt$ and non-zero for only finitely many $k\in \Zt$.
\item \label{ite:finCat3} the subcategory $\Proj \subset \cat$ of projectives is dominated by the set of projective indecomposables.
\end{enumerate}
\end{definition}

Since $\Proj \subset \cat$ is an ideal, property \ref{ite:finCat2} implies that $\Hom_\cath(V,P) \simeq \Hom_{\cat}(\sigma(0), P \otimes V^*)$ is finite dimensional for all $V \in \cat$ and $P \in \Proj$.

It is straightforward to verify that relative modular categories of modules over unrolled quantum groups of complex simple Lie algebras, as studied in \cite{BCGP,derenzi2020}, are TQFT finite. Further examples will be given in Section \ref{sec:relModgloo} below.

\begin{theorem}\label{thm:finDimTQFT}
Let $\cath$ be a TQFT finite relative modular $\Gr$-category. Then for any decorated surface $\CS \in \Cob_{\cat}$, the vector space $\TQFT(\CS)$ is finite dimensional.  
\end{theorem}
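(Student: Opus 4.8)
The plan is to produce an explicit finite spanning set of $\TQFT(\CS)$ by combining the general spanning principle of Lemma \ref{lem:PropOfCGP}\eqref{ite:spanning} with the handlebody structure of a manifold filling $\CS$ and the finiteness hypotheses \ref{ite:finCat1}--\ref{ite:finCat3}. First I would fix a connected surface $\Sigma$ underlying $\CS$ (the disconnected case follows from monoidality, or can be absorbed into the argument by working component by component together with the spheres $\hS_k$), and choose a handlebody $H$ with $\partial H = \Sigma$ of genus $g$. By Lemma \ref{lem:PropOfCGP}\eqref{ite:spanning}, $\state(\CS)$ is spanned by cobordisms $(H, T, \coh, 0): \emptyset \to \CS$ as $T$ and $\coh$ vary. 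Since $H$ deformation retracts onto a wedge of $g$ circles, the class $\coh \in H^1(H \setminus T; \Gr)$ is not free to vary arbitrarily: its restriction to $\Sigma$ is the fixed class $\coh_{\CS}$, and what remains is a choice of holonomies around a spine together with meridians of edges of $T$. The key reduction is that $T$ may be taken to be a ribbon graph supported near the spine of $H$, so that it is built from a bounded number of edges threading the $g$ handles, colored by objects of $\cat$.

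\textbf{Key steps.} Second, I would argue that it suffices to let the colors of $T$ range over a finite set. Using skein equivalence and $\sigma$-equivalence (as in \cite[\S 4.1]{BCGP}, \cite[\S 4.1]{derenzi2022}), any cobordism with $T$ colored by an object $V$ can be rewritten in terms of cobordisms colored by the direct summands of $V$; by \ref{ite:finCat3} together with \ref{ite:finCat1}, when the relevant color is projective it reduces to a finite list $\{P_j\}_{j \in J_g}$ modulo $\Zt$-twists $\sigma(k)$. The $\sigma(k)$ factors, in turn, can be slid off along the spine and absorbed, using the defining properties of a free realisation and the compatibility pairing $\psi$, changing the cobordism only by a scalar and a shift of $\Zt$-degree; this is exactly the mechanism that makes the sum over $k \in \Zt$ collapse. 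To force the colors to actually be projective, I would cap off or cut the spine appropriately: since $H$ is a handlebody with connected nonempty boundary, one can arrange that each edge of $T$ running through a handle can be ``opened'' into a coupon landing in $\Hom_{\cat}(\sigma(k), P \otimes (\text{stuff})^*)$ for projective $P$, and \ref{ite:finCat2} guarantees these Hom-spaces are finite dimensional and nonzero for only finitely many $k$. Concretely, the pairing $\langle -,-\rangle$ of the previous subsection factors through the morphism spaces $\Hom_{\cat}(\unit, P \otimes \cdots)$ attached to the spine, and finiteness of the spanning set of $\state(\CS)$ follows once those morphism spaces are finite dimensional and only finitely many $\Zt$-degrees contribute. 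Finally, $\TQFT_k(\CS) = \state(\CS \sqcup \hS_k)$ and the refined grading properties (if $\state_{(\mathbf k, k_\infty)}(\CS \sqcup \hS_k) \neq 0$ then $k_\infty = k$, and $|\mathbf k| = 0$) show that only finitely many $k \in \Zt$ give nonzero $\TQFT_k(\CS)$, so $\TQFT(\CS) = \bigoplus_k \TQFT_k(\CS)$ is a finite direct sum of finite dimensional spaces, hence finite dimensional.

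\textbf{Main obstacle.} The technical heart is the second step: showing that the colorings of the spine ribbon graph $T$ can be restricted, up to skein and $\sigma$-equivalence, to a finite set of projective objects, with the $\Zt$-worth of twists contributing only scalars. This requires a careful ``normal form'' for cobordisms $(H, T, \coh, 0)$ — essentially an analogue of the genus-$g$ spine colorings used in the $\Uqsl$ case in \cite[\S 4]{BCGP} — and one must check that passing a color through a handle and re-expressing it via \ref{ite:finCat1}--\ref{ite:finCat3} is compatible with the admissibility condition and with the cohomological constraint imposed by $\coh_{\CS}$ (in particular that the edges threading the spine can be taken $\coh$-compatible with projective colors, which is where genericity of some meridian or the presence of a projective edge enters via admissibility). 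I expect the bookkeeping of how $\coh$ on $H \setminus T$ decomposes relative to the fixed boundary class, and ensuring the reduction respects it, to be the most delicate part; once the finite spanning set is in hand, finite dimensionality of $\TQFT(\CS)$ is immediate from Lemma \ref{lem:PropOfCGP}\eqref{ite:zero}.
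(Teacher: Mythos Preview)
Your approach is essentially the paper's: reduce to $\state(\CS\sqcup\hS_k)$, span by cobordisms on a fixed handlebody with a ribbon graph supported near a spine (Lemma \ref{lem:PropOfCGP}\eqref{ite:spanning}), cut along disks transverse to the spine, decompose the tensor product crossing each disk into projective indecomposables via \ref{ite:finCat3}, normalise via $\sigma$-equivalence and \ref{ite:finCat1} to land in a finite list modulo $\Zt$, and then use \ref{ite:finCat2} on the resulting Hom-spaces to bound both the dimensions and the set of $k\in\Zt$ that contribute.

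There is one concrete mechanism you have not pinned down, and your workaround does not quite do the job. At the step ``force the colors to actually be projective'', the issue is that the tensor product of the strands of $T$ crossing a given cutting disk need not be projective, so \ref{ite:finCat3} alone does not let you decompose into projective indecomposables there. Admissibility of the cobordism is not enough: it guarantees a projective edge or a generic curve \emph{somewhere} in each component disjoint from the incoming boundary, not at every disk. The paper's fix is to exploit the extra sphere $\hS_k$: the marked points $(V_{g_0},\pm 1)$ give a strand coloured by the simple projective $V_{g_0}$ which is always present in $\tilde\eta$, and one first isotopes $T$ so that this $V_{g_0}$-strand is entangled with (hence crosses) every cutting disk. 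That forces the tensor product at each disk to contain a projective factor, hence to be projective, and only then does the decomposition into projective indecomposables go through. Your sketch treats $\hS_k$ as bookkeeping appended at the end, whereas in the argument it is the source of the projectivity you need at the very first cutting step.
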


Theorem \ref{thm:finDimTQFT} follows directly from Proposition \ref{prop:FDofTQFT} below and the definition of the state spaces of $\TQFT$, given in equation \eqref{eq:stateSpace}. To formulate Proposition \ref{prop:FDofTQFT}, let $\CS=(\Sigma, \{p_i\}_{i=1}^n, \coh, \mathcal{L}) \in \Cob_{\cat}$. Write $g$ for the genus of $\Sigma$ and $\epsilon_i \in \{ \pm 1\}$ and $X_i \in \cat$ for the orientation and color of the point $p_i$, respectively. Fix $k \in \Zt$. We construct decorated cobordisms $(\tilde{\eta}, \Gamma_c', \coh,0): \emptyset\to \CS\sqcup  \hS_k$, and so vectors in $\TQFT_{k}(\CS)$, as follows. Let $\eta$ be a handlebody bounding $\Sigma$ and $\tilde{\eta} = \eta \setminus \mathring{B}$, where $B \subset \mathring{\eta}$ is a closed $3$-ball. Color the boundary component $\partial B \subset \partial \tilde{\eta}$ by $(V_{g_0},1),(\sigma(k),1)$ and $(V_{g_0},-1)$. When $g=0$, let $\Gamma^{\prime}$ be the ribbon graph consisting of a single coupon with $n+3$ legs attached to the points $\{p_i\}_{i=1}^n$ and $(V_{g_0},1),(\sigma(k),1)$ and $(V_{g_0},-1)$. 
When $g\geq 1$, choose a small $3$-ball $B^3 \subset \eta$ which contains $B$ and $\{p_i\}_{i=1}^n$ and let $\Gamma$ be an oriented spine of $\eta \setminus B$ which is combinatorially equivalent to the following oriented trivalent graph:
\[
\Gamma=
\begin{tikzpicture}[anchorbase] 
\node at (3.5,-2)  {\tiny $e_1$};
\node at (0,-0.6)  {\tiny $e_2$};
\node at (2,-0.6)  {\tiny $e_3$};
\node at (5,-0.6)  {\tiny $e_{g-1}$};
\node at (7,-0.6)  {\tiny $e_g$};

\node at (0,0.6)  {\tiny $f_1$};
\node at (1,0.3)  {\tiny $f_2$};
\node at (2,0.6)  {\tiny $f_3$};
\node at (5,0.6)  {\tiny $f_{2g-5}$};
\node at (6,0.3)  {\tiny $f_{2g-4}$};
\node at (7,0.6)  {\tiny $f_{2g-3}$};

\node at (3.5,0)  {$\cdots$};
        
\draw[thick,decoration={markings, mark=at position 0.275 with {\arrow{>}},mark=at position 0.775 with {\arrow{<}}},postaction={decorate}] (0,0) circle (0.35);
\draw[thick,decoration={markings, mark=at position 0.275 with {\arrow{>}},mark=at position 0.775 with {\arrow{<}}},postaction={decorate}] (2,0) circle (0.35);
\draw[thick,decoration={markings, mark=at position 0.275 with {\arrow{>}},mark=at position 0.775 with {\arrow{<}}},postaction={decorate}] (5,0) circle (0.35);
\draw[thick,decoration={markings, mark=at position 0.275 with {\arrow{>}},mark=at position 0.775 with {\arrow{<}}},postaction={decorate}] (7,0) circle (0.35);
       
\draw[-,thick,decoration={markings, mark=at position 0.5 with {\arrow{>}}},postaction={decorate}] (1.65,0) to (0.35,0);
\draw[-,thick,decoration={markings, mark=at position 0.5 with {\arrow{>}}},postaction={decorate}] (6.65,0) to (5.35,0);
\draw[-,thick,decoration={markings, mark=at position 0.5 with {\arrow{>}}},postaction={decorate}] (-0.35,0) to [out=220,in=-40] (7.35,0);
\end{tikzpicture}.
\]
Let $\Gamma^{\prime}$ the ribbon graph obtained by modifying $\Gamma$ by adding a coupon with $n+4$ legs:
\[
\Gamma^{\prime}=
\qquad
\begin{tikzpicture}[anchorbase] 
\draw[thick,decoration={markings, mark=at position 0.275 with {\arrow{>}},mark=at position 0.775 with {\arrow{<}}},postaction={decorate}] (4.5,0) circle (0.35);
\draw[thick,decoration={markings, mark=at position 0.275 with {\arrow{>}},mark=at position 0.775 with {\arrow{<}}},postaction={decorate}] (6.5,0) circle (0.35);
\node at (4.5,0.6)  {\tiny $f_{2g-5}$};
\node at (5.5,0.3)  {\tiny $f_{2g-4}$};
\node at (6.5,0.6)  {\tiny $f_{2g-3}$};
\node at (4.5,-0.6)  {\tiny $e_{g-1}$};
\node at (6.5,-0.6)  {\tiny $e_g$};

\node at (8.1,-0.3)  {\tiny $e_{g+1}$};
\node at (7.5,-2.0)  {\tiny $e_1$};
\node at (11.2,-0.6)  {\tiny $f_0$};

\draw[-,thick,decoration={markings, mark=at position 0.35 with {\arrow{<}}},postaction={decorate}] (6.85,0) to [out=0,in=90] (10.0,-0.95);
\draw[-,thick,decoration={markings, mark=at position 0.5 with {\arrow{>}}},postaction={decorate}] (6.85,-1.7) to [out=0,in=-90] (10.0,-0.95);
\draw[-,thick,decoration={markings, mark=at position 0.5 with {\arrow{>}}},postaction={decorate}] (6.15,0) to (4.85,0);

\draw[-,thick,decoration={markings, mark=at position 0.5 with {\arrow{>}}},postaction={decorate}] (10.0,-0.95) to [out=0,in=-90] (11.0,0.1);

\draw[thick,draw=black] (10.0,0.1) rectangle ++(1.8,0.3);
\draw[-,thick] (10.1,0.4) to (10.1,1.0);
\draw[-,thick] (10.8,0.4) to (10.8,1.0);
\draw[-,thick,thick,decoration={markings, mark=at position 0.7 with {\arrow{>}}},postaction={decorate}] (11.1,0.4) to (11.1,1.0);
\draw[-,thick,decoration={markings, mark=at position 0.7 with {\arrow{>}}},postaction={decorate}] (11.4,0.4) to (11.4,1.0);
\draw[-,thick,decoration={markings, mark=at position 0.6 with {\arrow{>}}},postaction={decorate}] (11.7,1.0) to (11.7,0.4);
\node at (10.45,0.8)  {$\cdots$};
\node at (10.45,1.3)  {$\overbrace{}^{n}$};

\node at (3.5,0)  {$\cdots$};
\node at (6.5,-1.7)  {$\dots$};
\end{tikzpicture}.
\]
By convention, we set $e_1=e_{g+1}$ when $g=1$. The orientations of $n$ legs of the coupon are determined by the orientations $\{\epsilon_i\}_{i=1}^n$ of $\{p_i\}_{i=1}^n$. There exists a unique element of $H^1(\tilde{\eta} \setminus \Gamma^{\prime}; \Gr)$ whose restriction to $\partial \tilde{\eta}$ is $\coh$ on $\Sigma$ and is compatible with the coloring of $\hS_k$. We denote this cohomology class again by $\coh$, although we stress that it depends on the fixed choice of $k$. 

Finally, with the above notation, a $\cath$-coloring of $\Gamma'$ is called \emph{finite projective} if the following conditions hold:
\begin{enumerate}
\item The coloring is $\coh$-compatible.
\item The edges $e_1,\dots,e_{g+1}, f_0,f_1,\dots,f_{2g-3}$ are colored by projective indecomposables.
\item For each $i=1,\dots,g$, the color of $e_i$ is in the finite set $\{P_j\}_{j \in J_{\coh(m_{e_i})}}$.
\end{enumerate}
Such a coloring $c$ defines a decorated cobordism $(\tilde{\eta}, \Gamma'_c, \coh,0): \emptyset\to \CS\sqcup  \hS_k$.  

\begin{proposition}\label{prop:FDofTQFT}
Let $\cath$ be a TQFT finite modular $\Gr$-category relative to $(\Zt, \XX)$. For any $\CS \in \Cob_{\cat}$, the vector space $\state(\CS\sqcup  \hS_k)$ is finite dimensional for all $k \in \Zt$ and non-zero for only finitely many $k\in \Zt$. Moreover, $\state(\CS\sqcup  \hS_k)$ is spanned by the decorated cobordisms $\{(\tilde{\eta}, \Gamma'_c, \coh,0):  \emptyset\to \CS\sqcup \hS_k\}_c$, where $c$ runs over the finite set of finite projective $\cath$-colorings of $\Gamma'$.  
\end{proposition}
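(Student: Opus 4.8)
The plan is to show that any decorated cobordism $(M, T, \coh, 0): \emptyset \to \CS \sqcup \hS_k$ representing a class in $\state(\CS \sqcup \hS_k)$ is skein-equivalent, up to the relations defining $\state$, to a linear combination of the $(\tilde\eta, \Gamma'_c, \coh, 0)$ with $c$ a finite projective coloring. First I would invoke Lemma \ref{lem:PropOfCGP}\eqref{ite:spanning}: fixing $M = \tilde\eta$ as the chosen connected manifold bounding $\Sigma \sqcup \partial B$, the cobordisms $(\tilde\eta, T, \coh, 0)$ over all $\cath$-colored ribbon graphs $T$ and compatible cohomology classes already span $\state(\CS \sqcup \hS_k)$. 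Next, since $\tilde\eta$ deformation retracts onto the spine $\Gamma$ (together with the coupon handling the $p_i$ and the $\hS_k$-points), any ribbon graph $T \subset \tilde\eta$ can be isotoped into a tubular neighborhood of $\Gamma'$; using the Reshetikhin--Turaev functor $F_{\cath}$ and the structure of $\cath$-colored ribbon graphs, $T$ becomes skein-equivalent to a ribbon graph supported on $\Gamma'$ whose edges carry colors determined by pushing $T$ onto each edge and whose coupons absorb all the local tangle data. Here the $\coh$-compatibility constraint forces the color on each edge $e$ to lie in $\cath_{\coh(m_e)}$.

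The heart of the argument is then a reduction of the edge colors on $\Gamma'$ to finite projective ones. For this I would use two facts. First, the coupon attached along the meridian of $\partial B$ carries a factor of the simple projective $V_{g_0}$, so by the absorbing property of the ideal $\Proj$ and a standard "projective cover" argument — pushing the projective $V_{g_0}$ around the graph and using that $\Proj$ absorbs tensor products — one shows that the whole graph on $\Gamma'$ may be taken with all edges $e_1,\dots,e_{g+1}, f_0,\dots,f_{2g-3}$ colored by projective objects (this is exactly the mechanism behind admissibility of the cobordism). Second, every projective object decomposes as a finite direct sum of projective indecomposables by property \ref{ite:finCat3}, and multilinearity of $F_{\cath}$ in the edge colors lets us replace each projective color by its indecomposable summands; property \ref{ite:finCat1} then says each such indecomposable in degree $\coh(m_{e_i})$ is of the form $P_j \otimes \sigma(k')$ for $j \in J_{\coh(m_{e_i})}$ and $k' \in \Zt$, and the $\sigma(k')$-twists can be slid off the edges $e_1,\dots,e_g$ and absorbed into neighboring coupons using $\sigma$-equivalence (as in \cite[\S 4.1]{BCGP}, \cite[\S 4.1]{derenzi2022}), leaving colors in the finite set $\{P_j\}_{j \in J_{\coh(m_{e_i})}}$. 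This yields a \emph{finite} spanning set, since there are finitely many choices of $e_i$-colors, finitely many $f$- and $e_{g+1}$-edges each admitting finitely many relevant indecomposable colors once $\sigma$-twists are normalized, and finitely many coupon morphisms up to the relevant $\Hom$-spaces, which are finite dimensional by \ref{ite:finCat2} and the remark following Definition \ref{def:relModFinite}.

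For the finiteness-in-$k$ statement: the colored point $(\sigma(k),1)$ on $\partial B$ must be connected through the coupon to a projective edge, so $\state(\CS \sqcup \hS_k) \neq 0$ forces a nonzero morphism $\sigma(k) \to P \otimes (\text{stuff})$ for a projective $P$, hence a nonzero element of some $\Hom_{\cath}(\sigma(k), P')$ with $P'$ projective; by \ref{ite:finCat2} this vanishes for all but finitely many $k$. The main obstacle I expect is the careful bookkeeping in the edge-color reduction: one must verify that the combination of the projective-absorption step, the direct-sum decomposition, and the $\sigma$-sliding can be performed compatibly with \emph{all} the $\coh$-compatibility constraints simultaneously along the cycles of $\Gamma'$ — in particular that sliding $\sigma(k')$-factors around the genus-$g$ handles does not create an infinite regress — and that the resulting coupon colors genuinely range over finite-dimensional $\Hom$-spaces rather than merely being "finitely many up to scalars." This is exactly the point where properties \ref{ite:finCat1}--\ref{ite:finCat3} are used in an essential way, and where the analogous argument of \cite[\S 4]{BCGP} for $\Uqsl$ serves as the template.
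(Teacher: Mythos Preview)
Your proposal is essentially correct and follows the same strategy as the paper: span by arbitrary ribbon graphs in $\tilde\eta$ via Lemma~\ref{lem:PropOfCGP}\eqref{ite:spanning}, push onto the spine, use the projective $V_{g_0}$-strand to force projectivity on every edge, decompose via \ref{ite:finCat3}, and then normalize the $e_i$-colors using $\sigma$-equivalence and \ref{ite:finCat1}. One point of precision: the paper does \emph{not} $\sigma$-normalize the $f$-edges (your phrase ``finitely many $f$- and $e_{g+1}$-edges each admitting finitely many relevant indecomposable colors once $\sigma$-twists are normalized'' overstates this); instead, the $\sigma(k')$ stripped from each $e_i$ is slid onto the adjacent $f$-edge, and the finiteness of the $f$-colors is then established \emph{sequentially} vertex by vertex, using \ref{ite:finCat2} at each trivalent coupon to show that once two incident edge colors are fixed the third has only finitely many possibilities --- this inductive propagation is exactly what resolves the ``infinite regress'' concern you flagged.
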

\begin{proof}
Let $\CS=(\Sigma, \{p_i\}_{i=1}^n, \coh, \mathcal{L})$ and $\tilde{\eta}$ be as above. By part (\ref{ite:spanning}) of Lemma \ref{lem:PropOfCGP}, $\state(\CS\sqcup  \hS_k)$ is spanned by $\{(\tilde{\eta}, T, \coh,0):  \emptyset\to \CS\}_T$, where $T\subset \tilde{\eta}$ runs over all $\coh$-compatible $\cath$-colored ribbon graphs.

Suppose that $g=0$. Let $(\tilde{\eta}, T, \coh,0) \in \state(\CS \sqcup \hS_k)$. Isotope all non-trivial parts of $T$ into a $3$-ball and put it into a coupon. The underlying graph is then $\Gamma^{\prime}$, as described before the proposition, and the coupon is colored by an element of 
\begin{equation}
\label{eq:gen0Coupon}
\Hom_\cath(\unit,  \bigotimes_{i=1}^n X^{\epsilon_i}_i\otimes V_{g_0} \otimes \sigma(k) \otimes V_{g_0}^*).
\end{equation}
The vectors from the statement of the proposition therefore span $\state(\CS \sqcup \hS_k)$. The vector space \eqref{eq:gen0Coupon} is isomorphic to
\[
\Hom_\cath(V_{g_0}, \bigotimes_{i=1}^n X^{\epsilon_i}_i\otimes V_{g_0} \otimes \sigma(k)  )
\simeq
\Hom_\cath(\sigma(-k),\bigotimes_{i=1}^n X^{\epsilon_i}_i\otimes V_{g_0} \otimes V_{g_0}^* ).
\]
Since $V_{g_0}$ is projective, property \ref{ite:finCat2} implies that the vector space on the left hand side of this isomorphism is finite dimensional for all $k \in \Zt$ while that on the right is non-zero for only finitely many $k\in \Zt$. This completes the proof when $g=0$.

Suppose now that $g \geq 1$. Define a cell decomposition of $\tilde{\eta}$ into $3$-balls $B_1,\dots,B_{2g-1}$ and $B_0=B\setminus \mathring {B^3}$, a $3$-ball minus the interior of a smaller $3$-ball, by cutting $\tilde{\eta}$ along $3g-1$ disks $D_{e_1},\dots, D_{e_{g+1}}, D_{f_0}, D_{f_1}, \dots,D_{f_{2g-3}}$ as follows. The disk $D_{f_0}$ is such that cutting along $D_{f_0}$ leaves $B_0$ which contains the points $\{p_i\}_{i=1}^n$ and $(V_{g_0},1),(\sigma(k),1)$, $ (V_{g_0},-1)$ on its boundary. The remaining disks are the obvious bounding disks in $\tilde{\eta}$ which intersect $\Gamma'$ exactly once at the edge by which they are labeled.

First, we show that $\state(\CS\sqcup  \hS_k)$ is spanned by $\{(\tilde{\eta}, \Gamma'_c, \coh,0)\}_{c}$, where $c$ runs over all $\coh$-compatible $\cath$-colorings of $\Gamma'$ in which the edges $e_1,\dots,e_{g+1}, f_0,\dots,f_{2g-3}$ are colored by projective indecomposables. Let $(\tilde{\eta}, T, \coh,0) : \emptyset\to \CS \sqcup \hS_k$ be any decorated cobordism. Let $D$ be one of $D_{e_1},\dots,D_{e_{g+1}}, D_{f_0}, \dots,D_{f_{2g-3}}$. Since $V_{g_0}$ is projective, we can entangle $T$ with itself (more precisely, with the edge colored by $V_{g_0}$) by an isotopy to assume that $T$ intersects $D$ in at least one edge colored by a projective. Next, we modify $T$ in a neighborhood of $D$ as follows. We can assume that $T$ intersects $D$ transversally at $\cath$-colored points forming a sequence $U=((U_1,\delta_1),\ldots,(U_t,\delta_t))$, where $U_i \in \cat_{\coh(m_{e_{U_i}})}$ and $\delta_i\in\{\pm1\}$ is determined by the orientation of the edge $e_{U_i}$. Since at least one $U_i$ is projective, the tensor product $F_\cath(U)$ is projective. By property \ref{ite:finCat3}, there exist projective indecomposables $\PP_s$ and morphisms $f_s:F_\cath(U)\to {\PP}_s$ and $h_s:{\PP}_s\to F_\cath(U)$ such that $\Id_{F_\cath(U)}=\sum_s h_s\circ f_s$.
This implies that a  tubular neighborhood 
of $D$ which consists of a cylinder containing $t$ strands colored by the $U_i$ is skein equivalent to
$$\sum_s\epsh{fig11}{12ex}\put(-18,17){\ms{h_s}}
  \put(-18,-14){\ms{f_s}}\put(-10,0){\ms{{\PP}_s}}\,.$$ 
By applying these transformations at each disk $D$ we obtain a skein equivalent element which is the sum of ribbon graphs $T'$ which meet each $D$ on a unique edge colored by a projective indecomposable. Moreover, up to skein equivalence, we can replace the contents of each $B_0, B_1,\dots,B_{2g-1}$ with a sum of the same $B_i$ with a single coupon. For $j\geq 1$, the ball $B_j$ has a single trivalent coupon while $B_0$ has a coupon with one incoming leg connected to $D_{f_0}$ and $n+3$ outgoing legs connected to points labeled by $\{X^{\epsilon_i}_i\}_{i=1}^n$, $V_{g_0}$, $\sigma(k)$ and $V^*_{g_0}$. It follows that $(\tilde{\eta}, T, \coh,0) \in \state(\CS\sqcup  \hS_k)$ is a linear combination of decorated cobordisms of the form $(\tilde{\eta}, \Gamma_c', \coh,0)$, where $c$ is an $\coh$-compatible $\cath$-coloring such that the colors of the edges $e_1,\dots,e_{g+1}, f_0,\dots,f_{2g-3}$ are projective indecomposables.

Next, we show that we can restrict to finitely many colorings of $\Gamma^{\prime}$ while still preserving the spanning property. Let $c$ be a $\cath$-coloring of $\Gamma'$ as in the previous paragraph. Consider the pair of edges $(e_2, f_1)$ of $\Gamma'$ with corresponding projective indecomposable colors $W_{e_2}$ and $W_{f_1}$. Property \ref{ite:finCat1} implies that $W_{e_2} \simeq P_j\otimes\sigma(k')$ for some $j\in J_{\coh(m_{e_2})}$ and $k'\in\Zt$. Consider an oriented $\sigma(k')$-colored curve close to the edges $e_2$ and $f_1$ in $\Gamma'$, the orientation chosen to be opposite to that of $e_2$. The union of this curve with $\Gamma'$ is $\sigma$-equivalent to $\Gamma'$ and, moreover, is skein equivalent to a coloring of $\Gamma'$ in which the edges $e_2$ and $f_1$ are colored by $W_{e_2}\otimes \sigma(k')^{-1} \simeq P_j$ and $W_{f_1}\otimes \sigma(k')$, respectively. Repeating this process iteratively for each of the pairs $(e_3,f_3), (e_4,f_5),\dots,(e_{g-1},f_{2g-5}), (e_g,f_{2g-3})$, it follows that the vector corresponding to $\Gamma'_c$ is proportional in $\state(\CS\sqcup  \hS_k)$ to a coloring of $\Gamma'$ where the color of each edge $e_i$, $i=2,\dots,g$, is in the finite set $\{P_j\}_{j\in J_{\coh(m_{e_i})}}$. Starting from this last coloring one can also put a $\sigma(k'')$-colored curve near the edge $e_1$ and all the edges $f_1, f_2,\dots,f_{2g-3}$ to find a skein equivalent element in which the color of $e_1$ is in $ \{P_j\}_{j\in J_{\coh(m_{e_1})}}$ and the colors of the edges $e_i$, $i=2,\dots,g$, are unchanged.  
 
Thus, $\state(\CS\sqcup  \hS_k)$ is spanned by $\{(\tilde{\eta}, \Gamma'_c, \coh,0)  \}_{c}$, where $c$ runs over all finite projective $\cath$-colorings of $\Gamma'$. It remains to show that such colorings are finite and non-zero for only finitely many $k\in \Zt$. Let $c$ be such a coloring. We showed above that in the cobordism $(\tilde{\eta}, \Gamma'_c, \coh,0)$ the ball containing the trivalent vertex with the legs $e_1$, $e_2$ and $f_1$ can be represented by a single coupon having one incoming leg labeled with $P_j$, for some $j\in J_{\omega(m_{e_1})}$, and two outgoing legs labeled with projective indecomposables $W_{f_1}$ and $P_{j'}$ for some $j'\in J_{\omega(m_{e_2})}$. Property \ref{ite:finCat1} implies that $W_{f_1} \simeq P_{j''}\otimes \sigma(k')$ for some $j''\in J_{\omega(m_{f_1})}$ and $k'\in \Zt$. Thus, colorings of this coupon are elements of
 $$
 \Hom_{\cath}(P_j,W_{f_1}\otimes P_{j'}) \simeq \Hom_{\cath}(P_j,P_{j''}\otimes \sigma(k') \otimes P_{j'}) \simeq  \Hom_{\cath}(\sigma(-k'), P_{j''}\otimes P_{j'}\otimes P_j^*),
 $$
which is finite dimensional for all $j,j',j'', k'$ and non-zero for only finitely many $j,j',j'', k'$ by property \ref{ite:finCat2}. Thus, there are only finitely many choices for the coloring $W_{f_1}$ of $f_1$. A similar argument now shows that the ball containing the trivalent vertex with legs $f_1$, $f_2$ and $e_2$ has only finitely many colorings of $f_2$ which correspond to non-zero cobordisms. Repeating this process for the edges $f_3,\dots,f_{2g-2}$ and $f_0$, we see that there are only finitely many colorings for each of these edges and all the corresponding colorings of coupons are finite dimensional. Finally, consider $B_0=B\setminus \mathring {B^3}$, which contains a single coupon with one incoming leg and $n+3$ outgoing legs. We have seen that $f_0$ must be colored by a projective indecomposable from the finite set 
$$
\{P_j \otimes \sigma(k_i) \mid j\in  J_{\omega(m_{f_1})} \text{ and } k_i\in \Zt \text{ for } i=1,\dots,t \}.
$$ 
Thus, the coupon in $B_0$ is colored by an element of
$$\Hom_\cath(P_j \otimes \sigma(k_i), \bigotimes_{i=1}^n X^{\epsilon_i}_i\otimes V_{g_0} \otimes \sigma(k) \otimes V_{g_0}^* )
\simeq \Hom_\cath( \sigma(-k), \sigma(-k_i) \otimes P_j^* \otimes \bigotimes_{i=1}^n X^{\epsilon_i}_i \otimes V_{g_0} \otimes  V_{g_0}^* )
$$
for some $ j\in J_{\omega(m_{f_1})}$ and $k_i\in \Zt$, $ i=1,\dots,t$, which is finite dimensional and non-zero for only finitely many $k \in \Zt$ by property \ref{ite:finCat2}. Since there are only finitely many choices of $j$ and $k_i$, it follows that there are only finitely many $k\in \Zt$ where the coloring $\Gamma'_c$ corresponds to a non-zero vector in $\state(\CS\sqcup \hS_k)$.  
\end{proof}

\begin{proposition}\label{P:VSVSS0}
Let $\cath$ be a modular $\Gr$-category relative to $(\Zt, \XX)$. For any $\CS \in \Cob_{\cat}$, the vector spaces $\state(\CS)$ and $\state(\CS\sqcup  \hS_0)$ are  isomorphic. Moreover, if $\cath$ is TQFT finite, then $\state(\CS)$ is spanned by decorated cobordisms $\{(\eta, \Gamma''_c, \coh,0):  \emptyset\to \CS \}_c$, where $\Gamma''$ is obtained from $\Gamma'$ by deleting the three edges attached to $\hS_k$
and $c$ runs over the finite set of all finite projective $\cath$-colorings of $\Gamma''$.  
\end{proposition}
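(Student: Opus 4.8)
The plan is to build the isomorphism $\state(\CS)\simeq\state(\CS\sqcup\hS_0)$ explicitly by a stabilization cobordism made from the base point object $V_{g_0}$, and then to obtain the spanning statement by transporting the spanning set of Proposition~\ref{prop:FDofTQFT} along it. Let $N_0\colon\emptyset\to\hS_0$ be the decorated cobordism whose underlying manifold is a $3$-ball, whose ribbon graph is a single $V_{g_0}$-colored arc joining $(V_{g_0},1)$ and $(V_{g_0},-1)$ (together with the skein-trivial $\unit$-colored edge ending at the $\sigma(0)=\unit$-colored point), with cohomology class forced by compatibility and signature defect $0$; it is admissible since its $V_{g_0}$-colored edge is projective. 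Write $N_0'\colon\hS_0\to\emptyset$ for its orientation reversal, so that $N_0'\circ N_0$ is the closed cobordism $(S^3,V_{g_0}\textnormal{-unknot})$, and set $\lambda=\CGP_{\cat}(S^3,V_{g_0}\textnormal{-unknot})=\D^{-1}\qd(V_{g_0})$, which is nonzero because $V_{g_0}$ is simple and projective. I would then define, on representatives (cobordisms $M\colon\emptyset\to\CS$ and $M'\colon\emptyset\to\CS\sqcup\hS_0$),
\[
\Phi\colon\state(\CS)\to\state(\CS\sqcup\hS_0),\qquad \Phi([M])=[M\sqcup N_0],
\]
\[
\Psi\colon\state(\CS\sqcup\hS_0)\to\state(\CS),\qquad \Psi([M'])=\lambda^{-1}\,[(\Id_{\CS}\sqcup N_0')\circ M'],
\]
where $\Id_{\CS}\colon\CS\to\CS$ is the cylinder cobordism.

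First I would verify that $\Phi$ and $\Psi$ descend to the quotients defining $\state$. Using $\langle M\sqcup N_0,M''\rangle=\CGP_{\cat}(M''\circ(M\sqcup N_0))=\langle M,M''\circ(\Id_{\CS}\sqcup N_0)\rangle$ for all $M''\colon\CS\sqcup\hS_0\to\emptyset$, together with the symmetric identity $\langle(\Id_{\CS}\sqcup N_0')\circ M',M''\rangle=\langle M',M''\circ(\Id_{\CS}\sqcup N_0')\rangle$, part~(\ref{ite:zero}) of Lemma~\ref{lem:PropOfCGP} shows that $\Phi$ and $\Psi$ carry the right kernel of the universal pairing into the right kernel. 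Since $(\Id_{\CS}\sqcup N_0')\circ(M\sqcup N_0)=M\sqcup(N_0'\circ N_0)$, and disjoint union with a closed cobordism $X$ multiplies the class in $\state$ by $\CGP_{\cat}(X)$ (again by part~(\ref{ite:zero}) of Lemma~\ref{lem:PropOfCGP}), one gets $\Psi\circ\Phi=\Id$. In particular $\Phi$ is injective and $\Psi$ surjective; already this, combined with Proposition~\ref{prop:FDofTQFT}, will suffice for the ``moreover'' clause.

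The hard part will be surjectivity of $\Phi$, equivalently $\Phi\circ\Psi=\Id$. By part~(\ref{ite:zero}) of Lemma~\ref{lem:PropOfCGP}, and the same bookkeeping with composition of cobordisms, this reduces to the closed-manifold identity
\[
\CGP_{\cat}\big(\overline{W}\cup_{\Sigma}\overline{W''}\big)\;=\;\lambda\cdot\CGP_{\cat}\big(W\cup_{\Sigma,S^2}W''\big),
\]
where $W,W''$ are the underlying manifolds of arbitrary cobordisms $\emptyset\to\CS\sqcup\hS_0$ and $\CS\sqcup\hS_0\to\emptyset$; on the right one glues along $\Sigma$ and along the $\hS_0$-sphere (matching the two $V_{g_0}$-colored graph ends), while $\overline{W}$ and $\overline{W''}$ denote the results of instead capping that sphere with a copy of $N_0'$. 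I would prove this via the surgery presentation of $\CGP_{\cat}$ from Section~\ref{sec:CGPInvt}: capping an $S^2$-boundary component through which a single $V_{g_0}$-colored strand runs is precisely a connected sum with $(S^3,V_{g_0}\textnormal{-unknot})$ along that strand, so it changes $\CGP_{\cat}$ by the factor $\D^{-1}\qd(V_{g_0})=\lambda$ (equivalently, since $(S^3,V_{g_0}\textnormal{-unknot})$ is a unit for connected sum along a $V_{g_0}$-strand, the compensating normalization factor in \eqref{eq:cgpInvt} is forced to equal $\lambda$). This Kirby-calculus identity is the technical heart of the argument; granting it, $\Phi$ is an isomorphism with inverse $\Psi$, proving the first assertion, and the non-vanishing $\qd(V_{g_0})\neq0$ is exactly what makes it work.

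For the ``moreover'' statement, assume $\cat$ is TQFT finite. By Proposition~\ref{prop:FDofTQFT}, $\state(\CS\sqcup\hS_0)$ is spanned by the finitely many cobordisms $(\tilde{\eta},\Gamma'_c,\coh,0)$ with $c$ a finite projective coloring of $\Gamma'$. I would apply the surjection $\Psi$ and compute its effect: capping $\partial B$ with $N_0'$ turns $\tilde{\eta}=\eta\setminus\mathring{B}$ back into the handlebody $\eta$, deletes the skein-trivial $\unit$-colored leg of the coupon, and joins the two $V_{g_0}$-colored legs of the coupon by the cap arc, which is isotopic inside $\eta$ to a short arc near the coupon; the net effect on the coupon morphism is post-composition with $\ev_{V_{g_0}}$. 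The outcome is a cobordism $(\eta,\Gamma''_{c'},\coh,0)$ whose ribbon graph is $\Gamma''$ and whose coloring $c'$ has the same edge colors as $c$, hence is again a finite projective coloring of $\Gamma''$ (the set of which is finite by the same reasoning as in the proof of Proposition~\ref{prop:FDofTQFT}). Since $\Psi$ is surjective and the $(\tilde{\eta},\Gamma'_c,\coh,0)$ span, the cobordisms $(\eta,\Gamma''_{c'},\coh,0)$ span $\state(\CS)$, and a fortiori so does the larger finite family indexed by all finite projective colorings of $\Gamma''$.
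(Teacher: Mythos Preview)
Your approach is essentially the same as the paper's. Both build the isomorphism from the disk cobordism $N_0=\widehat{D^3_0}\colon\emptyset\to\hS_0$ and its orientation reversal, and both obtain the spanning statement by pushing the spanning set of Proposition~\ref{prop:FDofTQFT} through the resulting map $\state(\CS\sqcup\hS_0)\to\state(\CS)$. The only substantive difference is at what you call the ``hard part'': the paper does not argue the connected-sum identity from scratch but simply invokes \cite[Lemma~5.2]{derenzi2022}, which already records that $\state(\widehat{D^3_0})$ and $\state(\widehat{\bar D}{}^3_0)$ are mutually inverse up to the scalar $\D^{-1}\qd(V_{g_0})$. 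Your Kirby-calculus sketch (cutting along the $S^2$ and inserting $N_0'\cup N_0$ amounts to connected sum with $(S^3,V_{g_0}\text{-unknot})$, which rescales $\CGP_{\cat}$ by $\lambda$) is the correct mechanism behind that lemma; citing it would close the gap you flagged with ``granting it''.
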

\begin{proof}
In \cite[Definition 5.2]{derenzi2022} De Renzi defines decorated cobordisms 
$\widehat{D^3_0} : \emptyset \to\hS_{0}$ and $\widehat{\bar{D}^3_0}:  \hS_{0}\to \emptyset $. The cobordism $\widehat{D^3_0}$ is the closed $3$-ball with an unknotted strand connecting the points $(V_{g_0},1)$ and $ (V_{g_0},-1)$ on its boundary together with the unique compatible cohomology class. The cobordism $\widehat{\bar{D}^3_0}$ is defined similarly, by reversing all orientations involved. Up to a factor of $\D^{-1}\qd(V_{g_0}) \in \kk^{\times}$, the images of $\widehat{D^3_0}$ and $\widehat{\bar{D}^3_0}$ under $\state$ are mutually inverse \cite[Lemma 5.2]{derenzi2022}. It follows that $\Id_{\CS} \sqcup \widehat{D^3_0} : \CS  \to\CS \sqcup \hS_{0}$ and $\Id_{\CS} \sqcup\widehat{\bar{D}^3_0}: \CS \sqcup \hS_{0}\to \CS$ satisfy $\D\qd(V_{g_0})^{-1}\state(\Id_{\CS} \sqcup \widehat{D^3_0})\circ (\Id_{\CS} \sqcup\widehat{\bar{D}}^3_0 ) = \Id_{\CS \sqcup\hS_{0}}$ and thereby establish an isomorphism $\state(\CS) \simeq \state(\CS \sqcup \hS_0)$.

Assume now that $\cath$ is TQFT finite. Let $(\tilde{\eta}, \Gamma'_c, \coh,0):  \emptyset\to \CS\sqcup \hS_0$ be a decorated cobordism, where $c$ is a finite projective $\cath$-coloring of $\Gamma'$.  By Proposition \ref{prop:FDofTQFT}, such cobordisms span $\state(\CS\sqcup  \hS_0)$. The cobordism $(\Id_{\CS} \sqcup\widehat{\bar{D}^3_0})\circ (\tilde{\eta}, \Gamma'_c, \coh,0)$ is skein equivalent to a cobordism of the form $(\eta, \Gamma''_c, \coh,0):  \emptyset\to \CS$, where $c$ is a finite projective $\cath$-coloring of $\Gamma''$. Such colorings therefore span $\state(\CS)$. An argument similar to that from the proof of Proposition \ref{prop:FDofTQFT} shows that the set of finite projective $\cath$-colorings of $\Gamma''$ is finite.  \end{proof}

\section{Relative modular categories from $\Uq$}
\label{sec:relModgloo}

\subsection{Conventions for superalgebra}

Let $\Ztwo = \{\p 0, \p 1\}$ be the additive group of order two. A \emph{super vector space} is a $\Ztwo$-graded vector space $V = V_{\p 0} \oplus V_{\p 1}$. The degree of a homogeneous element $v \in V$ is denoted $\p v \in \Ztwo$. A \emph{morphism of super vector spaces of degree $\p d \in \Ztwo$} is a linear map $f: V \rightarrow W$ which satisfies $\overline{f(v)} = \p v + \p d$ for each homogeneous $v \in V$. A (left) \emph{module} over a superalgebra $A$ is a super vector space $M$ together with a superalgebra homomorphism $A \rightarrow \End_{\kk}(M)$ of degree $\p 0$. We refer the reader to \cite[I-Supersymmetry]{deligne1999} for a detailed discussion of superalgebra.

\subsection{The unrolled quantum group of $\gloo$}
\label{sec:unrolledgloo}

Motivated by standard quantizations of $\gloo$ \cite{kulish1989,khoroshkin1991,reshetikhin1992,Viro} and the definition of the unrolled quantum group $U^H_q(\mathfrak{sl}(2))$ \cite{GPT,CGP2}, we define an unrolled version of the universal enveloping algebra of the complex Lie superalgebra $\gloo$.

Fix $\hbar \in \C \setminus \pi \sqrt{-1} \Z$ and set $q=e^{\hbar} \in \C \setminus \{0, \pm 1\}$. For $z \in \C$, define $q^z = e^{\hbar z}$. Let $\Uq$ be the unital associative superalgebra over $\C$ with generators $E$, $K$, $K^{-1}$, $G$ of degree $\p 0$ and $X$, $Y$ of degree $\p 1$ subject to the relations
\[
KK^{-1}=K^{-1}K=1,
\]
\[
[K^s,E]=[K^s,G]=[K^s,X]=[K^s,Y]=0, \qquad s \in \{\pm 1\}
\]
\[
[E,X]=0, \qquad [E,Y]=0,\qquad [E,G]=0,
\]
\[
[G,X]=X,\qquad [G,Y]=-Y,
\]
\[
XY+YX=\frac{K-K^{-1}}{q-q^{-1}},
\]
\begin{equation*}
\label{E:RelUhgloo}
X^2=Y^2=0.
\end{equation*}
Note that both $E$ and $K$ are central in $\Uq$. Define a counit $\epsilon$, coproduct $\Delta$ and antipode $S$ on $\Uq$ by
\[
\epsilon(E)=\epsilon(G)=\epsilon(X)=\epsilon(Y)=0,
\qquad
\epsilon(K)=1,
\]
\[
\Delta(E)=E\otimes 1 + 1\otimes E,\qquad \Delta(G)= G\otimes 1 + 1\otimes G, \qquad \Delta K = K \otimes K,
\]
\[
\Delta(X)=X\otimes K^{-1} + 1\otimes X,\qquad \Delta(Y)=Y\otimes 1 + K\otimes Y,
\]
\begin{equation*}
\label{E:HopfUh}
S(E)=-E, \qquad S(G)=-G,\qquad S(K)=K^{-1},\qquad S(X)=-XK,\qquad S(Y)=-YK^{-1}
\end{equation*}
and requiring $\epsilon$ and $\Delta$ to be superalgebra homomorphisms and $S$ to be a superalgebra anti-homomorphism. This gives $\Uq$ the structure of a Hopf superalgebra. The subalgebra $U_q(\gloo) \subset \Uq$ generated by $K^{\pm 1}$, $G$, $X$, $Y$ is a standard quantization of $\gloo$.

\subsection{Weight modules}
\label{sec:weightMod}

In this section we describe the category of weight modules over $\Uq$. See \cite[\S 11]{Viro}, \cite[\S 3]{sartori2015} for similar descriptions of module categories for standard quantizations of $\gloo$.

\subsubsection{The category of weight modules}
Let $V$ be a $\Uq$-module. A homogeneous vector $v \in V$ is called a \emph{weight vector} if it is a simultaneous eigenvector of $E$ and $G$, say, $Ev = \lambda_E v$ and $Gv=\lambda_G v$, in which case $\lambda=(\lambda_E,\lambda_G) \in \C^2$ is the \emph{weight} of $v$. A finite dimensional $\Uq$-module $V$ is called a \emph{weight module} if it is a direct sum of weight spaces and $K v=q^{\lambda_E}v$ for any weight vector $v$ of weight $\lambda$. Since the action of $K$ on a weight module can be deduced from that of $E$, we often omit it. Let $\catq$ be the abelian category of weight $\Uq$-modules and their $\Uq$-linear maps of degree $\p 0$. Unless mentioned otherwise, all $\Uq$-modules considered in this paper are assumed to be weight modules.

Given $V \in \catq$, let $V^* \in \catq$ be the $\C$-linear dual of $V$ with $\Uq$-module structure
\[
(x \cdot f)(v) = (-1)^{\p f \p x} f(S(x)v),
\qquad
v \in V, \; f \in V^*, \; x \in \Uq.
\]
Let $\{v_i\}_i$ be a homogeneous basis of $V$ with dual basis $\{v_i^*\}_i$. Define
\begin{equation}\label{E:tcoev}
\tev_V(f \otimes v) = f(v),
\qquad
\tcoev(1)=\sum_i v_i \otimes v_i^*
\end{equation}
and
\begin{equation}\label{E:coev}
\ev_V(v \otimes f) = (-1)^{\p f \p v}f(K v),
\qquad
\coev_V(1) =\sum_i (-1)^{\p v_i}v_i^* \otimes K^{-1} v_i.
\end{equation}

\begin{lemma}\label{lem:catqPivot}
The maps \eqref{E:tcoev} and \eqref{E:coev} define a pivotal structure on $\catq$.
\end{lemma}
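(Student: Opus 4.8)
The plan is to verify directly that the families of maps defined in \eqref{E:tcoev} and \eqref{E:coev} are morphisms in $\catq$ satisfying the axioms of a pivotal structure: the zig-zag (snake) identities for both the left and right duality data, and the compatibility making the canonical monoidal natural isomorphism $V \xrightarrow{\sim} V^{**}$ into a monoidal natural transformation. Concretely, one has to check (a) that $\tev_V, \tcoev_V, \ev_V, \coev_V$ are $\Uq$-linear of degree $\p 0$; (b) the four snake identities $(\tev_V \otimes \Id_V)\circ(\Id_V \otimes \tcoev_V) = \Id_V$ and $(\Id_{V^*}\otimes \tev_V)\circ(\tcoev_V \otimes \Id_{V^*}) = \Id_{V^*}$, together with the analogous pair for $\ev_V, \coev_V$; and (c) that the resulting pivotal element acts as the identity, i.e.\ that the two duality structures are related by the ``pivot'' $K$ in the precise sense that $\coev_V$ and $\ev_V$ are obtained from $\tcoev_V$ and $\tev_V$ by inserting the action of $K^{\pm 1}$, which is exactly how \eqref{E:coev} is written.

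First I would check $\Uq$-linearity. For $\tev_V$ and $\tcoev_V$ this is the standard computation using that $V^*$ carries the antipode-twisted action: $\tev_V$ is $\Uq$-linear because $\epsilon = (\ev\text{-type contraction})\circ(S \otimes \Id)\circ\Delta$ is the counit axiom, and $\tcoev_V$ because $\sum_i x_{(1)} v_i \otimes (S(x_{(2)}))^* v_i^{*}$ collapses via $\sum x_{(1)} S(x_{(2)}) = \epsilon(x)$ (with the appropriate Koszul signs $(-1)^{\p f \p x}$ tracked carefully). For $\ev_V$ and $\coev_V$ the same argument works provided one knows the key identity $S^2(x) = K x K^{-1}$ for all $x \in \Uq$; this is what forces the insertion of $K$ in \eqref{E:coev} and is the algebraic heart of pivotality. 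So a preliminary step is to establish $S^2(x) = KxK^{-1}$ on generators ($E, G, X, Y, K^{\pm1}$) and extend multiplicatively — a short, finite check: $S^2(X) = S(-XK) = -S(K)S(X) = -K^{-1}(-XK) = K^{-1}XK = KXK^{-1}$ using centrality of $K$ and $[K,X]=0$, etc.

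Next come the snake identities. The left-duality pair $(\tev_V, \tcoev_V)$ satisfies the zig-zags by the same bookkeeping as for ordinary finite-dimensional vector spaces: $(\tev_V \otimes \Id_V)(\Id_V \otimes \tcoev_V)(v) = \sum_i \tev_V(v\otimes v_i)\, v_i = \sum_i v_i^*(v)\, v_i$... wait, more precisely $\sum_i v \mapsto \sum_i v\otimes v_i \otimes v_i^*$ — one has to be slightly careful about which tensor factor $\tev$ acts on, but since $\tev_V : V^* \otimes V \to \unit$ and $\tcoev_V : \unit \to V\otimes V^*$, the composite is $v \mapsto \sum_i v\otimes v_i\otimes v_i^* \mapsto \sum_i \langle v_i^*, v\rangle v_i = v$; the other zig-zag is symmetric. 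For the right-duality pair $(\ev_V,\coev_V)$ the computation is identical but produces, along the way, a factor $K K^{-1} = 1$ from the $K$ in $\ev_V$ cancelling the $K^{-1}$ in $\coev_V$ (this is exactly why the $K^{\pm1}$'s are placed where they are), and the sign $(-1)^{\p v_i}$ in $\coev_V$ is arranged to cancel the Koszul sign produced when $\ev_V$ slides past the dual vector. Finally, for the pivotal structure one checks that the two duals $(V^*, \tev, \tcoev)$ and $(V^*, \ev, \coev)$ induce the canonical isomorphism $\delta_V : V \to V^{**}$, $\delta_V(v)(f) = (-1)^{\p v \p f} f(Kv)$, that this $\delta$ is monoidal (which reduces to $\Delta(K) = K\otimes K$ together with the grouplike-ness of $K$), and natural in $V$.

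\textbf{Main obstacle.} I expect the only real difficulty to be the careful management of Koszul signs in the super setting — ensuring that the signs $(-1)^{\p f \p v}$ in $\ev_V$ and $(-1)^{\p v_i}$ in $\coev_V$ are precisely those needed for the zig-zag identities to hold on the nose (rather than up to a sign) and for $\delta_V$ to be monoidal. The underlying conceptual input, $S^2(x) = KxK^{-1}$, is a short finite verification; everything else is a sign-tracking exercise of the type that is routine but error-prone, and it is exactly the place where the specific choices of antipode and coproduct made in Section \ref{sec:unrolledgloo} (in particular $\Delta(X) = X\otimes K^{-1} + 1\otimes X$ and $S(X) = -XK$) are used in an essential way.
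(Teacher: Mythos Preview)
Your proposal is correct and follows essentially the same approach as the paper: verify $\Uq$-linearity and the snake identities directly, then check pivotality using the key fact that $K$ in $\ev_V$ pairs with $K^{-1}$ in $\coev_V$ (underpinned by $S^2(x)=KxK^{-1}$). The only cosmetic difference is that the paper phrases the pivotality check via the equivalent EGNO criterion that left and right duals of morphisms agree and that the two canonical isomorphisms $(V\otimes W)^*\to W^*\otimes V^*$ coincide, rather than via monoidality of $\delta_V:V\to V^{**}$; both reduce to the same short computation with $K$.
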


\begin{proof}
That the maps \eqref{E:tcoev} and \eqref{E:coev} are $\Uq$-linear and satisfy the snake relations is a straightforward calculation. For pivotality, we need to verify that for each morphism $f : V \rightarrow W$ in $\catq$, its left and right duals $W^* \rightarrow V^*$ are equal and that the canonical isomorphisms $(V \otimes W)^* \rightarrow W^* \otimes V^*$ constructed using left and right dualities are equal. See \cite[\S 2.10]{etingof2015}. In both cases, the morphisms constructed from left duality are the standard super vector space such morphisms. That right duality gives the same morphisms follows from the fact that $K$ appears in $\ev$ while $K^{-1}$ appears in $\coev$.
\end{proof}

It follows that the quantum dimension of $V \in \catq$ is $\qdim V = \sum_i (-1)^{\p v_i} v_i^*(K v_i)$.

Let $V,W \in \catq$. Define $\Upsilon_{V,W} \in \End_{\C}(V \otimes W)$ by
\[
\Upsilon_{V,W} (v \otimes w)=q^{-\lambda_E \mu_G-\lambda_G \mu_E}v\otimes w,
\]
where $v \in V$ and $w \in W$ are of weight $\lambda$ and $\mu$, respectively. Define $\tilde{R}_{V,W} \in \End_{\C}(V \otimes W)$ to be left multiplication by $1+(q-q^{-1})(X\otimes Y)(K \otimes K^{-1})$ and $c_{V,W} \in \Hom_{\C}(V \otimes W, W \otimes V)$ by
\[
c_{V,W}=\tau_{V,W} \circ \tilde{R}_{V,W} \circ \Upsilon_{V,W},
\]
where $\tau_{V,W}: V \otimes W \rightarrow W \otimes V$, $v \otimes w \mapsto (-1)^{\p v \p w} w \otimes v$, is the standard symmetric braiding on the category of super vector spaces.

\begin{proposition}
\label{prop:braiding}
The maps $\{c_{V,W} : V \otimes W \rightarrow W \otimes V\}_{V,W \in \catq}$ define a braiding on $\catq$.
\end{proposition}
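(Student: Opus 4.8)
The plan is to verify the braiding axioms for $\{c_{V,W}\}_{V,W}$ directly, exploiting the factorization $c_{V,W}=\tau_{V,W} \circ \tilde{R}_{V,W} \circ \Upsilon_{V,W}$. Since $\tau$ is the standard symmetric braiding on super vector spaces, the content is entirely in the operator $R_{V,W} := \tilde{R}_{V,W} \circ \Upsilon_{V,W} \in \End_{\C}(V \otimes W)$, which should be thought of as the action of a (topological, or formal) universal $R$-matrix $\mathcal{R} = q^{-(E \otimes G + G \otimes E)}\bigl(1 + (q-q^{-1})(XK \otimes YK^{-1})\bigr)$. Here the Cartan part $\Upsilon_{V,W}$ corresponds to $q^{-(E\otimes G + G\otimes E)}$, which makes sense on weight modules because $E$ and $G$ act semisimply with the relevant eigenvalues, and the unipotent part is a genuine element of $\Uq \otimes \Uq$ since $X^2 = Y^2 = 0$.

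First I would check $\Uq$-linearity of each $c_{V,W}$, equivalently that $R_{V,W}$ intertwines $\Delta$ and $\Delta^{\mathrm{op}}$, i.e. $R_{V,W} \circ \Delta(x) = \Delta^{\mathrm{op}}(x) \circ R_{V,W}$ for all $x \in \Uq$. It suffices to check this on the generators $E, G, K^{\pm1}, X, Y$. For the even Cartan generators $E, G$ (and hence $K$) both sides of $\Delta$ are cocommutative, so one only needs that $\Upsilon_{V,W}$ and $\tilde R_{V,W}$ commute with their action — the former is diagonal in the weight decomposition and the latter shifts weights by $(\text{wt of }X) \otimes (\text{wt of }Y)$, and one checks these shifts are compatible. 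For $X$ and $Y$ the computation is the standard hexagon-free verification that the rank-one unipotent term, together with the weight-grading twist $\Upsilon$, conjugates $\Delta(X) = X \otimes K^{-1} + 1 \otimes X$ into $\Delta^{\mathrm{op}}(X) = K^{-1} \otimes X + X \otimes 1$, using $[E,X]=[E,Y]=0$, $[G,X]=X$, $[G,Y]=-Y$, $X^2=Y^2=0$, and $XY+YX = \frac{K-K^{-1}}{q-q^{-1}}$; this is where the precise power $q^{-\lambda_E\mu_G - \lambda_G\mu_E}$ in $\Upsilon$ is forced.

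Next I would verify the two hexagon identities, namely $c_{U \otimes V, W} = (c_{U,W} \otimes \Id_V) \circ (\Id_U \otimes c_{V,W})$ and $c_{U, V \otimes W} = (\Id_V \otimes c_{U,W}) \circ (c_{U,V} \otimes \Id_W)$, which for the universal $R$-matrix correspond to $(\Delta \otimes \Id)(\mathcal{R}) = \mathcal{R}_{13}\mathcal{R}_{23}$ and $(\Id \otimes \Delta)(\mathcal{R}) = \mathcal{R}_{13}\mathcal{R}_{12}$. The Cartan parts of these identities amount to bilinearity of the exponent $(\lambda_E, \lambda_G) \mapsto q^{-\lambda_E \mu_G - \lambda_G \mu_E}$ in each slot, which follows since $E$ and $G$ are primitive; the unipotent parts reduce, after moving the $\Upsilon$-factors past the $XK, YK^{-1}$ terms (again using the commutation relations of $X,Y$ with $E,G,K$), to a short identity in $\Uq^{\otimes 3}$ that is linear, not quadratic, in the $X$'s and $Y$'s precisely because $X^2 = Y^2 = 0$. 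Finally, naturality of $c_{-,-}$ in both arguments is immediate since $\Upsilon$ and $\tilde R$ are defined by fixed elements acting on the underlying super vector spaces and $\Uq$-linear maps are, in particular, linear and weight-preserving.

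The main obstacle I anticipate is bookkeeping rather than conceptual: keeping track of the Koszul signs coming from $\tau$ and from the odd generators $X, Y$ while simultaneously tracking the weight-dependent scalars produced by $\Upsilon$, especially in the hexagon verifications where one must commute $\Upsilon_{U \otimes V, W}$-type factors past the unipotent terms living in different tensor legs. A clean way to organize this is to first prove the statement at the level of a formal/topological universal $R$-matrix $\mathcal{R} \in \Uq \hat\otimes \Uq$ — checking $\mathcal{R}\Delta(x) = \Delta^{\mathrm{op}}(x)\mathcal{R}$ and the two coproduct identities for $\mathcal{R}$ — and then observe that $\mathcal{R}$ acts on any pair of weight modules exactly as $\tilde{R}_{V,W}\circ\Upsilon_{V,W}$, with $\Upsilon$ being the well-defined action of $q^{-(E\otimes G + G \otimes E)}$ because $E,G$ act diagonalizably on weight modules; then Proposition~\ref{prop:braiding} follows from the general fact that the action of a universal $R$-matrix yields a braiding, the signs having been absorbed once and for all into the definition of $\tau$.
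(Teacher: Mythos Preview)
Your proposal is correct and follows essentially the same approach as the paper: verify $\Uq$-linearity of $c_{V,W}$ on the generators (the paper writes out the case of $X$ in full, on weight vectors) and then check the hexagon identities directly (the paper verifies $c_{U,V\otimes W}=(\Id_V\otimes c_{U,W})\circ(c_{U,V}\otimes\Id_W)$ by an explicit computation on weight vectors). Your packaging via a formal universal $R$-matrix $\mathcal{R}=q^{-(E\otimes G+G\otimes E)}\bigl(1+(q-q^{-1})(XK\otimes YK^{-1})\bigr)$ is exactly the motivation the paper acknowledges in its remark following the proof; the computations are the same.
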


\begin{proof}
That $c_{V,W}$ is a $\C$-linear isomorphism follows from the corresponding statements for $\tau_{V,W}$ and $\Upsilon_{V,W}$ and the observation that the inverse of $\tilde{R}_{V,W}$ is left multiplication by $1-(q-q^{-1})(X\otimes Y)(K \otimes K^{-1})$. To verify that $c_{V,W}$ is $\Uq$-linear, we verify that it commutes with the action of the generators of $\Uq$. We give the calculation only for the generator $X$. Fix $v \in V$ of weight $\lambda$ and $w\in W$ of weight $\mu$. We compute
\begin{multline*}
c_{V,W} (X \cdot v \otimes w)
=
(-1)^{(\p v + \p 1) \p w}q^{-\lambda_E \mu_G - (\lambda_G+2) \mu_E} w \otimes X v + (-1)^{\p v \p w} q^{-\lambda_E (\mu_G+1) - \lambda_G \mu_E}Xw \otimes v \\
+(-1)^{\p v (\p w+ \p 1)}(q-q^{-1})q^{-\lambda_E \mu_G - (\lambda_G +1) \mu_E} YX w \otimes Xv
\end{multline*}
and
\[
c_{V,W}(v \otimes w)
=
(-1)^{\p v \p w} q^{-\lambda_E \mu_G - \lambda_G \mu_E} w \otimes v + (-1)^{\p v \p w + \p w + \p 1}q^{-\lambda_E \mu_G - \lambda_G \mu_E}(q-q^{-1})q^{\lambda_E - \mu_E} Yw \otimes Xv
\]
so that
\begin{eqnarray*}
X \cdot c_{V,W}(v \otimes w)
&=&
(-1)^{\p v \p w} q^{-\lambda_E \mu_G - \lambda_G \mu_E-\lambda_E} Xw \otimes v + (-1)^{\p v \p w + \p w} q^{-\lambda_E \mu_G - \lambda_G \mu_E} w \otimes Xv +\\
&& (-1)^{\p v \p w + \p w + \p 1}q^{-\lambda_E \mu_G - \lambda_G \mu_E}(q-q^{-1})q^{\lambda_E - \mu_E - \lambda_E} XYw \otimes Xv \\
&=&
(-1)^{\p v \p w} q^{-\lambda_E(\mu_G +1) - \lambda_G \mu_E} Xw \otimes v + (-1)^{\p v \p w + \p w} q^{-\lambda_E \mu_G - \lambda_G \mu_E} w \otimes Xv +\\
&& (-1)^{\p v \p w + \p w + \p 1}q^{-\lambda_E \mu_G - (\lambda_G +1) \mu_E}(q-q^{-1}) (-YX + \frac{q^{\mu_E}-q^{-\mu_E}}{q-q^{-1}})w \otimes Xv \\
&=&
(-1)^{\p v \p w} q^{-\lambda_E(\mu_G +1) - \lambda_G \mu_E} Xw \otimes v + (-1)^{\p v \p w + \p w} q^{-\lambda_E \mu_G - \lambda_G \mu_E} w \otimes Xv +\\
&& (-1)^{\p v \p w + \p w}q^{-\lambda_E \mu_G - (\lambda_G +1) \mu_E}(q-q^{-1}) YXw \otimes Xv +\\
&& (-1)^{\p v \p w + \p w + \p 1}q^{-\lambda_E \mu_G - (\lambda_G +1) \mu_E}(q^{\mu_E}-q^{-\mu_E})w \otimes Xv,
\end{eqnarray*}
which is equal to $c_{V,W} (X \cdot v \otimes w)$.

Next, consider the hexagon identities, in which we suppress all associators. Let $U,V,W \in \catq$. We verify that $c_{U, V \otimes W} = (\Id_V \otimes c_{U,W}) \circ (c_{U,V} \otimes \Id_W)$; verification of the second hexagon identity is analogous. Fix vectors $u\in U$, $v\in V$ and $w\in W$ of weights $\lambda$, $\mu$ and $\nu$, respectively. We compute
\begin{eqnarray*}
c_{U,V \otimes W}(u \otimes v \otimes w)
&=&
(-1)^{(\p v + \p w) \p u} q^{-\lambda_E(\mu_G + \nu_G) - \lambda_G(\mu_E+\nu_E)} v \otimes w \otimes u  +(-1)^{\p u + (\p u+ \p 1)(\p v + \p 1 + \p w)} \cdot \\
&&
q^{-\lambda_E(\mu_G+\nu_G)-\lambda_G(\mu_E+\nu_E)} (q-q^{-1})q^{\lambda_E-\mu_E-\nu_E} Y v \otimes w \otimes Xu \\
&&
+(-1)^{\p u + \p v + (\p u + \p 1)(\p v+ \p w +1)} q^{-\lambda_E(\mu_G+\nu_G)-\lambda_G(\mu_E+\nu_E)} (q-q^{-1}) \cdot \\
&&q^{\lambda_E-\nu_E} v \otimes Yw \otimes Xu.
\end{eqnarray*}
The image of $u \otimes v \otimes w$ under $(\Id_V \otimes c_{U,W}) \circ (c_{U,V} \otimes \Id_W)$ is
\begin{eqnarray*}
u \otimes v \otimes w
&\xmapsto{c_{U,V} \otimes \Id}&
(-1)^{\p u + (\p u + \p 1)(\p v + \p 1)} q^{-\lambda_E \mu_G - \lambda_G \mu_E} q^{\lambda_E - \mu_E}(q-q^{-1}) Yv \otimes Xu \otimes w \\
&\xmapsto{\Id \otimes c_{U,W}}&
(-1)^{\p u \p v} (-1)^{\p u \p w} q^{-\lambda_E \mu_G - \lambda_G\mu_E} q^{-\lambda_E \nu_G - \lambda_G \nu_E} v \otimes w \otimes u \\
&&
+  (-1)^{\p u \p v}(-1)^{(\p u + \p 1)(\p w + \p 1)} q^{-\lambda_E \mu_G - \lambda_G\mu_E} q^{-\lambda_E \nu_G - \lambda_G \nu_E} q^{\lambda_E - \nu_E}(q-q^{-1}) \cdot \\
&&
q^{-\lambda_E \nu_G - (\lambda_G+1)\nu_E} (-1)^{\p u + \p v} (-1)^{\p v} v \otimes Yw \otimes Xu \\
&&+
(-1)^{\p u} (-1)^{(\p u + \p 1)(\p v + \p 1)} (-1)^{\p v + \p 1)\p w} q^{-\lambda_E \mu_G - \lambda_G \mu_E} q^{-\lambda_E-\mu_E} (q-q^{-1}) \cdot \\
&&
q^{-\lambda_E \nu_G - (\lambda_G+1)\nu_E} Yv \otimes w \otimes X u.
\end{eqnarray*}
Direct comparison shows that this expression agrees with $c_{U,V \otimes W}(u \otimes v \otimes w)$.
\end{proof}

\subsubsection{Simple modules}
\label{sec:UqSimples}

Given $z \in \C$, put $[z]_q = \frac{q^z - q^{-z}}{q-q^{-1}}$. Note that $[z]_q=0$ if and only if $z \in \frac{\pi \sqrt{-1}}{\hbar} \Z$.

Given $(n,b) \in \Z \times \C$ and $\p p \in \Ztwo$, let $\ve(\frac{n \pi \sqrt{-1}}{\hbar},b)_{\p p}$ be the one dimensional $\Uq$-module with basis vector $v$ of degree $\overline{p}$ and module structure
\[
Ev=\frac{n \pi \sqrt{-1}}{\hbar} v, \qquad Gv=bv, \qquad Xv=0, \qquad Yv=0.
\]

Following \cite[\S 11.4]{Viro}, the \emph{quantum Kac module} of weight $(\alpha,a) \in \C^2$ and degree $\p p \in \Ztwo$ is the $\Uq$-module $V(\alpha,a)_{\p p}$ with basis vectors $v$ of degree $\p p$ and $v^{\prime}$ of degree $\p p + \p 1$ and
$$Ev=\alpha v, \qquad Gv=a v, \qquad Xv=0,  \qquad Yv=v^{\prime},$$
$$Ev^{\prime}=\alpha v^{\prime},  \qquad Gv^{\prime}=(a-1) v^{\prime},  \qquad Yv^{\prime}=0,  \qquad Xv^{\prime}=[\alpha]_qv.$$
The structure of $V(\alpha,a)_{\p p}$ is illustrated by the diagram
\[
V(\alpha,a)_{\p p} :
\qquad
\begin{tikzpicture}[baseline= (a).base]
\node[scale=1.0] (a) at (0,0){
\begin{tikzcd}[column sep=5em]
v^{\prime} \arrow[r, bend left, "X={[\alpha]}_q"] \arrow[out=120,in=60,loop,,looseness=3,"a-1"] \arrow[out=-120,in=-60,loop,looseness=3,"\alpha" below] & v \arrow[out=120,in=60,loop,looseness=3,"a"] \arrow[l, bend left, "Y"] \arrow[out=-120,in=-60,loop,looseness=3,"\alpha" below]
\end{tikzcd}.
};
\end{tikzpicture}
\]
In this diagram, and those which follow, the actions of $G$ and $E$ are depicted as loops above and below the weight vectors, respectively, and an arrow labeled by a generator indicates that it acts by the identity in the given basis. The module $V(\alpha,a)_{\p p}$ is simple if and only if $\alpha \in \C \setminus \frac{\pi \sqrt{-1}}{\hbar} \Z$, in which case the dual vector $v^{\prime *} \in V(\alpha,a)_{\p p}^*$ is highest weight and
\begin{equation}
\label{eq:dualIrred}
V(\alpha,a)_{\p p}^*
\simeq
V(-\alpha, -(a-1))_{\p p +\p1}.
\end{equation}
If instead $\alpha = \frac{n \pi \sqrt{-1}}{\hbar}$, $n \in \Z$, then $V(\alpha,a)_{\p p}$ is indecomposable and fits in the exact sequence
\begin{equation}
\label{eq:filtKac}
0 \rightarrow \ve(\frac{n \pi \sqrt{-1}}{\hbar},a -1)_{\p p + \p 1} \rightarrow V(\frac{n \pi \sqrt{-1}}{\hbar},a)_{\p p} \rightarrow \ve(\frac{n \pi \sqrt{-1}}{\hbar},a)_{\p p} \rightarrow 0.
\end{equation}

\begin{proposition}
\label{prop:irrepsgloo}
Every simple object of $\catq$ is isomorphic to exactly one of the modules on the list
\begin{itemize}
\item $\ve(\frac{n \pi \sqrt{-1}}{\hbar},b)_{\p p}$ with $(n,b) \in \Z \times \C$ and $\p p \in \Ztwo$,
\item $V(\alpha,a)_{\p p}$ with $(\alpha,a) \in (\C \setminus \frac{\pi \sqrt{-1}}{\hbar} \Z) \times \C$ and $\p p \in \Ztwo$.
\end{itemize}
\end{proposition}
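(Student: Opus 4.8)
The plan is to classify the simple weight modules by exploiting the central action of $E$ together with the $\mathfrak{sl}(2)$-like relation $XY+YX = \frac{K-K^{-1}}{q-q^{-1}} = [\lambda_E]_q$ on a weight space, and the nilpotency $X^2=Y^2=0$. First I would observe that since $E$ is central and acts on a finite-dimensional module, a simple module $V$ has a single $E$-eigenvalue $\alpha$, hence $K$ acts by the scalar $q^\alpha$. Likewise, fixing a weight vector $v$ of $G$-weight $a$, the relations $[G,X]=X$ and $[G,Y]=-Y$ show that $X$ raises the $G$-weight by $1$ and $Y$ lowers it by $1$. Because $X^2=Y^2=0$, starting from any weight vector the only vectors we can produce are $v$, $Xv$ and $Yv$ (higher products vanish), and one checks $XYv$, $YXv$ lie in $\C v$ via the defining relation. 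So the $G$-weights appearing in $V$ form a set of at most three consecutive integers shifted by $a$; a short argument (using that a simple module is generated by any nonzero vector and that one may choose $v$ to be a $G$-weight vector annihilated by $X$, i.e.\ of top $G$-weight) reduces $V$ to be spanned by $v$ and $Yv$.

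Next I would split into cases according to whether $[\alpha]_q = 0$, i.e.\ whether $\alpha \in \frac{\pi\sqrt{-1}}{\hbar}\Z$. Pick a top $G$-weight vector $v$ (so $Xv=0$) and set $v' = Yv$. Then $Xv' = XYv = (XY+YX)v = [\alpha]_q v$ since $YXv = 0$, and $Yv' = Y^2 v = 0$, $Gv' = (a-1)v'$, $Ev'=\alpha v'$. If $v'=0$, then $\C v$ is a submodule, so $V \cong \ve(\alpha,a)_{\p p}$, and one sees $\alpha$ must then be of the form $\frac{n\pi\sqrt{-1}}{\hbar}$ is \emph{not} forced here — wait, it is not: $\ve(\alpha,a)_{\p p}$ makes sense for any $\alpha$, but the relation $XY+YX=[\alpha]_q$ forces nothing on a one-dimensional module where $X=Y=0$, so $[\alpha]_q$ need not vanish; however such a module is a quotient of $V(\alpha,a)_{\p p}$ and one must check it is only simple-as-listed, i.e.\ the genuinely new one-dimensional simples are all of this shape. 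If $v'\neq 0$, then $\{v,v'\}$ is a basis, $V$ is two-dimensional, and comparing the action with the defining formulas identifies $V \cong V(\alpha,a)_{\p p}$; this is simple precisely when $\alpha \notin \frac{\pi\sqrt{-1}}{\hbar}\Z$ by \eqref{eq:filtKac}, since for $\alpha = \frac{n\pi\sqrt{-1}}{\hbar}$ the submodule $\C v'$ is proper. The remaining task is to pin down which $\ve(\frac{n\pi\sqrt{-1}}{\hbar},b)_{\p p}$ actually arise: a one-dimensional module has $X,Y$ acting by $0$, and the relation $XY+YX = [\lambda_E]_q$ forces $[\lambda_E]_q = 0$, hence $\lambda_E \in \frac{\pi\sqrt{-1}}{\hbar}\Z$, which is exactly the constraint $\alpha = \frac{n\pi\sqrt{-1}}{\hbar}$ in the statement.

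Finally I would verify the "exactly one" clause, i.e.\ that the listed modules are pairwise non-isomorphic: the pair $(\alpha,a)$ is the common $(E,G)$-eigenvalue of the top weight vector and the parity $\p p$ is its $\Z_2$-degree, and these are isomorphism invariants; the two families are distinguished by dimension ($1$ versus $2$) and within each family by $(\alpha, a, \p p)$ directly, with no coincidences since in the $V(\alpha,a)_{\p p}$ family the two weight spaces have distinct $G$-weights $a$ and $a-1$ so the labeling is unambiguous. The main obstacle, I expect, is the bookkeeping in the reduction step — justifying cleanly that a simple weight module is at most two-dimensional and choosing the top $G$-weight vector correctly (one must rule out the a priori possibility that both $Xv\neq 0$ and $Yv \neq 0$ for every weight vector, which uses $X^2=Y^2=0$ together with finite-dimensionality to locate an $X$-highest weight vector). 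Everything after that is a direct comparison with the explicit modules defined above, referring to \cite[\S 11.4]{Viro} for the analogous classification for the non-unrolled quantization.
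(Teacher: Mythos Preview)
Your approach matches the paper's: pick a highest weight vector $v$ (the paper uses $X^2=0$ directly---for any weight vector $w$, either $Xw=0$ or $X(Xw)=0$), then case-split on whether $Yv=0$. However, your writeup contains a contradictory passage: you first assert that on a one-dimensional module with $X=Y=0$ the relation $XY+YX=[\alpha]_q$ ``forces nothing'' so ``$[\alpha]_q$ need not vanish'', then a few lines later correctly deduce $[\lambda_E]_q=0$ from the same relation. The latter is right and is exactly what the paper does: if $Xv=Yv=0$ then $0=(XY+YX)v=\frac{K-K^{-1}}{q-q^{-1}}v=[\lambda_E]_q\, v$, forcing $\lambda_E\in\frac{\pi\sqrt{-1}}{\hbar}\Z$. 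Strike the confused middle passage and your argument is the paper's, just more verbose.
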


\begin{proof}
Let $V \in \catq$ be simple. Since $X^2=0$, there exists a highest weight vector $v \in V$, say of weight $\lambda (\lambda_E,\lambda_G)$, and $V=\Uq \cdot v$. If $Yv=0$, then $(K-K^{-1})v=0$, which implies $\lambda_E \in \frac{\pi \sqrt{-1}}{\hbar} \Z$ and $V \simeq \ve(\lambda_E,\lambda_G)_{\p v}$. If instead $Yv \neq 0$, then $V \simeq V(\lambda_E,\lambda_G)_{\p v}$ and, by simplicity and the discussion above the proposition, $\lambda_E \in \C \setminus \frac{\pi \sqrt{-1}}{\hbar} \Z$.
\end{proof}

\begin{lemma}
\label{lem:locFin}
The category $\catq$ is locally finite.
\end{lemma}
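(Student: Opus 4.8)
The plan is to observe that local finiteness of $\catq$ is forced by the very definition of a weight module. Recall that a $\C$-linear abelian category is locally finite precisely when all its $\Hom$ spaces are finite dimensional over $\C$ and every object has finite length; I would establish these two conditions separately, in that order.

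For finite dimensionality of $\Hom$ spaces, I would simply note that any $V, W \in \catq$ are, by definition, finite dimensional over $\C$, so $\Hom_{\catq}(V,W)$ is a $\C$-subspace of $\Hom_{\C}(V,W)$ and hence finite dimensional. For finite length, given $V \in \catq$ I would check that every $\Uq$-submodule $U \subseteq V$ is again a weight module: since $E$ and $G$ act on $V$ as commuting diagonalizable operators, $U = \bigoplus_\lambda (U \cap V_\lambda)$ is the direct sum of its intersections with the (finitely many, finite dimensional) weight spaces $V_\lambda$ of $V$, and $K$ acts on the weight-$\lambda$ part of $U$ as $q^{\lambda_E}$ because it does so on $V$. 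The same argument applies to quotients, and also shows that kernels and cokernels of morphisms in $\catq$ lie in $\catq$, as is implicitly used in calling $\catq$ abelian. Consequently the poset of subobjects of $V$ embeds into the lattice of $\C$-subspaces of $V$, so every chain of subobjects has length at most $\dim_{\C} V$, giving finite length; if desired, one also obtains the Krull--Schmidt property for free via Fitting's lemma.

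There is no serious obstacle here: the only mildly delicate point is the bookkeeping that submodules and quotients of a weight module are again weight modules, which reduces entirely to the simultaneous diagonalizability of the commuting action of $E$ and $G$ on a weight module together with the fact that the action of $K$ is determined by that of $E$.
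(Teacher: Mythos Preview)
Your proposal is correct, but the route differs from the paper's. You argue abstractly: submodules and quotients of weight modules are again weight modules (via simultaneous diagonalizability of $E$ and $G$), so the lattice of subobjects of $V$ embeds into the lattice of $\C$-subspaces, and any chain has length at most $\dim_{\C} V$. The paper instead builds a Jordan--H\"older filtration explicitly: it picks a highest weight vector $v \in V$, observes that $\langle v \rangle$ is either a one-dimensional simple $\ve(\lambda_E,\lambda_G)_{\p v}$ or a Kac module $V(\lambda_E,\lambda_G)_{\p v}$ (the latter itself having a two-step composition series when $\lambda_E \in \frac{\pi\sqrt{-1}}{\hbar}\Z$), and then recurses on $V/\langle v\rangle$. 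Your argument is more elementary and self-contained---it avoids any appeal to the classification of simples or the structure of Kac modules---whereas the paper's proof yields an explicit composition series and foreshadows the representation-theoretic bookkeeping used later (e.g.\ in Proposition~\ref{prop:genericSS}). Both are perfectly acceptable; yours is the cleaner proof of the bare statement, the paper's is more in the spirit of the surrounding development.
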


\begin{proof}
That morphism spaces in $\catq$ are finite dimensional is clear. To see that $\catq$ has finite length, let $V \in \catq$ be non-zero. There exists a highest weight vector $v \in V$ which generates a submodule $\langle v \rangle$ of the form $\ve(\lambda_E,\lambda_G)_{\p v}$ with $\lambda_E \in \frac{\pi \sqrt{-1}}{\hbar} \Z$ or $V(\lambda_E,\lambda_G)_{\p v}$ with $\lambda_E \in \C$. Using the filtration \eqref{eq:filtKac} of $V(\lambda_E,\lambda_G)_{\p v}$ if $\lambda_E \in \frac{\pi \sqrt{-1}}{\hbar} \Z$ and simplicity of $\langle v \rangle$ otherwise, the problem of finding a Jordan--H\"{o}lder filtration of $V$ reduces to that of $V \slash \langle v \rangle$, whose dimension strictly less than that of $V$. Iterating this argument completes the proof.
\end{proof}

\begin{prop}
\label{prop:genericProjective}
If $\alpha \in \C \setminus \frac{\pi \sqrt{-1}}{\hbar} \Z$, then $V(\alpha,a)_{\p p} \in \catq$ is projective and injective.
\end{prop}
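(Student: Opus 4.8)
### Proof Proposal

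The plan is to show that $V(\alpha,a)_{\p p}$ is projective when $\alpha \in \C \setminus \frac{\pi \sqrt{-1}}{\hbar}\Z$; injectivity will then follow automatically, since $\catq$ is a pivotal (indeed ribbon) category in which taking duals is an exact contravariant self-equivalence, and by \eqref{eq:dualIrred} the dual of $V(\alpha,a)_{\p p}$ is again of the form $V(-\alpha,-(a-1))_{\p p+\p 1}$ with $-\alpha \in \C \setminus \frac{\pi \sqrt{-1}}{\hbar}\Z$. So the whole problem reduces to projectivity of a single such module.

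The most efficient route is to exhibit $V(\alpha,a)_{\p p}$ as a direct summand of a free-type module, or equivalently to check that $\Ext^1_{\catq}(V(\alpha,a)_{\p p}, -)$ vanishes. I would do this as follows. First, by Lemma \ref{lem:locFin} the category $\catq$ has finite length, so it suffices to check $\Ext^1_{\catq}(V(\alpha,a)_{\p p}, S) = 0$ for every simple $S$, and then invoke the long exact sequence. By the classification in Proposition \ref{prop:irrepsgloo}, a simple $S$ is either a one-dimensional module $\ve(\tfrac{n\pi\sqrt{-1}}{\hbar},b)_{\p r}$ or a simple Kac module $V(\beta,b)_{\p r}$ with $\beta \notin \frac{\pi\sqrt{-1}}{\hbar}\Z$. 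In any extension $0 \to S \to M \to V(\alpha,a)_{\p p} \to 0$, the module $M$ is a weight module on which $E$ acts (via its eigenvalues) with spectrum contained in $\{\alpha\} \cup \{\text{eigenvalue of }E\text{ on }S\}$; since $E$ is central, $M$ decomposes as a direct sum over these generalized $E$-eigenvalues. When $S$ has $E$-eigenvalue $\neq \alpha$ this forces the sequence to split. When $S$ has $E$-eigenvalue $\alpha$ — which, since $\alpha \notin \frac{\pi\sqrt{-1}}{\hbar}\Z$, can only happen if $S = V(\alpha,b)_{\p r}$ for some $b$, $\p r$ — I would argue directly: pick a highest weight vector $\tilde v \in M$ lifting the highest weight vector $v \in V(\alpha,a)_{\p p}$, of weight $(\alpha,a)$ and parity $\p p$. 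Because $X\tilde v$ has weight $(\alpha, a+1)$ and $X^2 = 0$, and the weight $(\alpha,a+1)$ does not occur in $S$ nor in $V(\alpha,a)_{\p p}$ (its occurring weights are $(\alpha,a)$ and $(\alpha,a-1)$), one gets $X\tilde v = 0$; similarly $Y^2 \tilde v = 0$, and the key relation $XY + YX = \frac{K - K^{-1}}{q-q^{-1}}$ applied to $\tilde v$ gives $XY\tilde v = [\alpha]_q \tilde v$ with $[\alpha]_q \neq 0$ precisely because $\alpha \notin \frac{\pi\sqrt{-1}}{\hbar}\Z$. Hence $\tilde v$ and $Y\tilde v$ span a submodule of $M$ isomorphic to $V(\alpha,a)_{\p p}$ mapping isomorphically onto the quotient, so the sequence splits.

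An alternative, perhaps cleaner, packaging of the same idea: build an explicit projective cover. Let $A = \Uq$ and consider the left ideal / cyclic module generated by imposing $E = \alpha$, $X = 0$ (a "quantum Verma/Kac" construction). The relations $X^2 = Y^2 = 0$ and $XY+YX = \frac{K-K^{-1}}{q-q^{-1}}$ show this cyclic module is exactly two-dimensional and equals $V(\alpha,a)_{\p p}$. Projectivity in the full weight category then comes from the fact that, after restricting to the block where $E$ acts by the scalar $\alpha$ (a central reduction), the relevant algebra is finite-dimensional and $V(\alpha,a)_{\p p}$ is simple \emph{and} its own projective cover there, because $[\alpha]_q \neq 0$ makes the two-dimensional module the whole indecomposable projective — there is no room for a larger extension by the weight/central considerations above.

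The main obstacle is the $E$-eigenvalue-$\alpha$ case of the $\Ext$ computation, i.e.\ ruling out non-split self-extensions and extensions among Kac modules with the same $\alpha$. This is where one genuinely uses $[\alpha]_q \neq 0$: the relation $XY+YX = \frac{K-K^{-1}}{q-q^{-1}}$ forces the "$Y$ then $X$" composite to be a nonzero scalar on any highest weight vector, which rigidifies the module structure enough to produce a splitting. Everything else (exactness of duality, central decomposition by $E$-eigenvalue, the weight bookkeeping) is routine given the material already developed in the excerpt.
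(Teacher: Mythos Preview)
Your overall strategy is sound: the reduction to $\Ext^1$ with simples via finite length, the central decomposition by $E$-eigenvalue, and the duality argument for injectivity are all fine. But there is a genuine gap in the crucial case.

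You claim that for any weight lift $\tilde v \in M$ of the highest weight vector of $V(\alpha,a)_{\p p}$, one has $X\tilde v = 0$ because the weight $(\alpha,a+1)$ with parity $\p p + \p 1$ ``does not occur in $S$.'' This is false for $S = V(\alpha,a+1)_{\p p + \p 1}$, whose highest weight vector has exactly that weight and parity, and for $S = V(\alpha,a+2)_{\p p}$, whose lowest weight vector has weight $(\alpha,a+1)$ and parity $\p p + \p 1$. In these two cases $X\tilde v$ can be a nonzero element of $S$, so your lift need not be highest weight and the argument that $\tilde v, Y\tilde v$ span a splitting submodule breaks down.

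The fix is precisely the step you skipped: correct the lift by setting $\tilde v' = \tilde v - [\alpha]_q^{-1} YX\tilde v$. Since $X^2 = 0$, applying the anticommutator relation to $X\tilde v$ gives $XYX\tilde v = [\alpha]_q\, X\tilde v$, whence $X\tilde v' = 0$; now $\tilde v'$ is highest weight and your remaining argument goes through. This correction is exactly where the hypothesis $[\alpha]_q \neq 0$ enters, and it is the key idea in the paper's proof as well. The paper simply does this in one stroke: given an arbitrary epimorphism $f:V\to W$ and $\phi:V(\alpha,a)_{\p p}\to W$, it identifies $\phi$ with a highest weight vector $w\in W$ via the induced-module description $V(\alpha,a)_{\p p}\simeq \Uq\otimes_{\UqB}\C(\alpha,a)_{\p p}$, picks any weight preimage $v\in V$, and applies the same correction to produce a highest weight lift. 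Your $\Ext$ framework reaches the same endpoint once patched, but your ``alternative packaging'' does not: after the central reduction $E=\alpha$, the generator $G$ still has no polynomial relation, so the resulting algebra is not finite-dimensional and the claim that $V(\alpha,a)_{\p p}$ is its own projective cover there is exactly what needs proving.
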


\begin{proof}
It follows easily from Lemma \ref{lem:locFin} that $\catq$ is multitensor, whence projective objects are injective \cite[Proposition 6.1.3]{etingof2015}. It therefore suffices to prove that $V(\alpha,a)_{\p p}$ is projective. There is a $\Uq$-module isomorphism
\[
V(\alpha,a)_{\p p} \simeq \Uq \otimes_{\UqB} \C(\alpha,a)_{\p p},
\]
where $\UqB$ is the subalgebra of $\Uq$ generated by $E$, $G$, $K^{\pm 1}$ and $X$ and $\C(\alpha,a)_{\p p}$ is the one dimensional weight $\UqB$-module concentrated in degree $\p p$ of weight $(\alpha,a)$ and on which $X$ acts by zero. Let $f: V \rightarrow W$ be an epimorphism in $\catq$ and $\phi: V(\alpha,a)_{\p p} \rightarrow W$ a non-zero morphism. Since
\[
\Hom_{\catq}(V(\alpha,a)_{\p p}, W)
\simeq
\Hom_{\catq_{\geq 0}}(\C(\alpha,a)_{\p p},W_{\vert \UqB}),
\]
where $\catq_{\geq 0 }$ denotes the category of weight $\UqB$-modules, the map $\phi$ is determined by a morphism $\C(\alpha,a)_{\p p} \rightarrow W_{\vert \UqB}$ in $\catq_{\geq 0}$, that is, a highest weight vector $w \in W$ of weight $(\alpha,a)$ and degree $\p p$. By assumption, $w$ has a weight vector preimage in $V_{\p p}$, say $v$. Since $\alpha \in \C \setminus \frac{\pi \sqrt{-1}}{\hbar} \Z$, we may form $v^{\prime} =v-[\alpha]_q^{-1} FEv \in V$, which is highest weight of weight $(\alpha,a)$ and degree $\p p$. The assignment $1 \mapsto v^{\prime}$ defines a morphism $\C(\alpha,a)_{\p p} \rightarrow V_{\vert \UqB}$ in $\catq_{\geq 0}$ which in turn determines a morphism $\tilde{\phi}:V(\alpha,a)_{\p p} \rightarrow V$ in $\catq$, by the argument above. Since $f(v^{\prime})=w$, the map $\tilde{\phi}$ satisfies $f \circ \tilde{\phi} = \phi$. Hence, $V(\alpha,a)_{\p p}$ is projective.
\end{proof}

For later use, define the \emph{quantum anti-Kac module} of lowest weight $(\alpha,a)\in \C^2$ and degree $\p p \in \Ztwo$ to be the $\Uq$-module $\overline{V}(\alpha,a)_{\p p}$ with basis vectors $v^{\prime}$ of degree $\p p$ and $v$ of degree $\p p + \p 1$ and module structure
\[
\overline{V}(\alpha,a)_{\p p} :
\qquad
\begin{tikzpicture}[baseline= (a).base]
\node[scale=1.0] (a) at (0,0){
\begin{tikzcd}[column sep=5em]
v^{\prime} \arrow[r, bend left, "X"] \arrow[out=120,in=60,loop,,looseness=3,"a"] \arrow[out=-120,in=-60,loop,looseness=3,"\alpha" below] & v \arrow[out=120,in=60,loop,looseness=3,"a+1"] \arrow[l, bend left, "Y={[\alpha]}_q"] \arrow[out=-120,in=-60,loop,looseness=3,"\alpha" below]
\end{tikzcd}
};
\end{tikzpicture}.
\]
The module $\overline{V}(\alpha,a)_{\p p}$ is simple if and only if $\alpha \notin \frac{\pi \sqrt{-1}}{\hbar} \Z$, in which case $\overline{V}(\alpha,a)_{\p p} \simeq V(\alpha,a+1)_{\p p + \p 1}$. If $\alpha= \frac{n \pi \sqrt{-1}}{\hbar}$, $n \in \Z$, then $\overline{V}(\frac{n \pi \sqrt{-1}}{\hbar},a)_{\p p}$ is indecomposable and fits in the exact sequence
\[
0 \rightarrow \ve(\frac{n \pi \sqrt{-1}}{\hbar},a+1)_{\p p + \p 1} \rightarrow \overline{V}(\frac{n \pi \sqrt{-1}}{\hbar},a)_{\p p} \rightarrow \ve(\frac{n \pi \sqrt{-1}}{\hbar},a)_{\p p} \rightarrow 0.
\]

Finally, we record that
\begin{equation}\label{eq:qdimVE}
\qdim \ve(\frac{n \pi \sqrt{-1}}{\hbar},b)_{\p p} = (-1)^{\p p + n}
\end{equation}
and $\qdim V(\alpha,a)_{\p p} = 0$.

\subsubsection{Projective indecomposable modules}
\label{sec:projIndGeneric}

Given $(n,b) \in \Z \times \C$ and $\p p \in \Ztwo$, let $P(\frac{n \pi \sqrt{-1}}{\hbar},b)_{\p p}$ be the $\Uq$-module with basis $\{v,v^{\prime}, v_+, v_-\}$, where $v,v^{\prime}$ are of degree $\p p$ and $v_+, v_-$ are of degree $\p p + \p 1$, and module structure
\[
P(\frac{n \pi \sqrt{-1}}{\hbar},b)_{\p p} :
\qquad
\begin{tikzpicture}[baseline= (a).base]
\node[scale=1.0] (a) at (0,0){
\begin{tikzcd}
{} & v_+ \arrow{dr}[above right]{Y=-1} \arrow[out=120,in=60,loop,looseness=3,"b+1"] & {} \\
v^{\prime} \arrow{ur}[above left]{X} \arrow{dr}[below left]{Y} \arrow[out=120,in=60,loop,looseness=3,"b"] & {} & v \arrow[out=120,in=60,loop,looseness=3,"b"] \\
{} & v_- \arrow{ur}[below right]{X} \arrow[out=120,in=60,loop,looseness=3,"b-1" above]& {}
\end{tikzcd}.
};
\end{tikzpicture}
\]
Here $E$ acts everywhere by $\frac{n \pi \sqrt{-1}}{\hbar}$ and is omitted from the diagram. Setting $M_1 = \spvs\{v\}$ and $M_2 = \spvs\{v,v_+,v_-\}$ defines a filtration $0 \subset M_1 \subset M_2 \subset M_3 = P(\frac{n \pi \sqrt{-1}}{\hbar},b)_{\p p}$ with
\[
M_1 \simeq M_3 \slash M_2 \simeq \ve(\frac{n \pi \sqrt{-1}}{\hbar},b)_{\p p}
\]
and 
\[
M_2 \slash M_1 \simeq \ve(\frac{n \pi \sqrt{-1}}{\hbar},b+1)_{\p p + \p 1} \oplus \ve(\frac{n \pi \sqrt{-1}}{\hbar},b-1)_{\p p + \p 1}.
\]

In this paragraph, fix $n \in \Z$ and write $V(b)_{\p p}$ for $V(\frac{n \pi \sqrt{-1}}{\hbar},b)_{\p p}$, and similarly for $P(b)_{\p p}$ and $\ve(b)_{\p p}$. If a basis vector is sent to zero under a map, then we omit it from the notation. There are morphisms
\begin{equation}\label{eq:projCov}
P(b)_{\p p} \twoheadrightarrow \ve(b)_{\p p},
\qquad
v^{\prime} \mapsto v
\end{equation}
and $\ve(b)_{\p p} \hookrightarrow P(b)_{\p p}$, $v \mapsto v$. The composition
\[
x_{\frac{n \pi \sqrt{-1}}{\hbar},b,\p p}: P(b)_{\p p} \twoheadrightarrow \ve(b)_{\p p}\hookrightarrow P(b)_{\p p},
\qquad
v^{\prime} \mapsto v
\]
is nilpotent of order two. There are canonical projections
\[
P(b)_{\p p} \twoheadrightarrow V(b)_{\p p},
\qquad
v^{\prime} \mapsto v,
\;\;
v_- \mapsto v^{\prime}
\]
and
\[
P(b)_{\p p} \twoheadrightarrow \overline{V}(b)_{\p p},
\qquad
v^{\prime} \mapsto v^{\prime},
\;\;
v_+ \mapsto v.
\]
There are canonical inclusions
\[
V(b+1)_{\p p + \p 1} \hookrightarrow P(b)_{\p p}
\qquad
v^{\prime} \mapsto v, \;\; v \mapsto - v_+
\]
and
\[
\overline{V}(b-1)_{\p p + \p 1 } \hookrightarrow P(b)_{\p p},
\qquad
v^{\prime} \mapsto v_-, \;\; v \mapsto v.
\]
Consider the compositions
\[
a_{\frac{n \pi \sqrt{-1}}{\hbar},b,\p p}^-: 
P(b)_{\p p} \twoheadrightarrow V(b)_{\p p} \hookrightarrow P(b-1)_{\p p+ \p1},
\qquad
v^{\prime} \mapsto - v_+, \;\; v_- \mapsto v
\]
and
\[
a_{\frac{n \pi \sqrt{-1}}{\hbar},b,\p p}^+: 
P(b)_{\p p} \twoheadrightarrow \overline{V}(b)_{\p p} \hookrightarrow P(b+1)_{\p p+1},
\qquad
v^{\prime} \mapsto v_-, \;\; v_+ \mapsto v.
\]

\begin{proposition}\label{prop:HomProj}
If $n_1 \neq n_2$, then
\[
\Hom_{\catq}(P(\frac{n_1 \pi \sqrt{-1}}{\hbar},b_1)_{\p p_1},P(\frac{n_2 \pi \sqrt{-1}}{\hbar},b_2)_{\p p_2}) =0.
\]
Otherwise, there are isomorphisms
\begin{multline*}
\Hom_{\catq}(P(\frac{n \pi \sqrt{-1}}{\hbar},b_1)_{\p p_1},P(\frac{n \pi \sqrt{-1}}{\hbar},b_2)_{\p p_2})
\simeq \\
\begin{cases}
\C [x_{\frac{n \pi \sqrt{-1}}{\hbar},b_1,\p p_1}] \slash \langle x_{\frac{n \pi \sqrt{-1}}{\hbar},b_1,\p p_1}^2 \rangle & \mbox{if } (b_2, \p p_2) = (b_1, \p p_1), \\
\C \cdot a_{\frac{n \pi \sqrt{-1}}{\hbar},b_1, \p p_1}^{\pm} & \mbox{if } (b_2, \p p_2) = (b_1 \pm 1, \p p_1 + \p 1), \\
0 & \mbox{otherwise}.
\end{cases}
\end{multline*}
Moreover, the relations $a_{\frac{n \pi \sqrt{-1}}{\hbar},b \mp 1,\p p+1}^{\pm} \circ a_{\frac{n \pi \sqrt{-1}}{\hbar},b,\p p}^{\mp} = \mp x_{\frac{n \pi \sqrt{-1}}{\hbar},b,\p p}$ hold.
\end{proposition}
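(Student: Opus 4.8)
The plan is to build everything out of the observation that $v^{\prime}$ is a cyclic generator of $P(\frac{n \pi \sqrt{-1}}{\hbar},b)_{\p p}$: the structure diagram gives $Xv^{\prime}=v_+$, $Yv^{\prime}=v_-$ and $Xv_-=v$, so $\Uq v^{\prime}$ contains all four basis vectors. Hence a $\Uq$-linear map $f : P(\frac{n \pi \sqrt{-1}}{\hbar},b_1)_{\p p_1}\to M$ of degree $\p 0$ into any weight module $M$ is determined by $\xi := f(v^{\prime})$, which is necessarily a vector of $M$ of weight $(\frac{n \pi \sqrt{-1}}{\hbar},b_1)$ and parity $\p p_1$; conversely, given such a $\xi$, I claim that the assignment $v^{\prime}\mapsto\xi$, $v_+\mapsto X\xi$, $v_-\mapsto Y\xi$, $v\mapsto XY\xi$ defines a $\Uq$-linear map. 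This is a short verification using only $X^2=Y^2=0$ and the fact that $XY+YX=\frac{K-K^{-1}}{q-q^{-1}}$ acts as $0$ on weight spaces of $E$-weight $\frac{n \pi \sqrt{-1}}{\hbar}$, since $K$ acts there by the scalar $q^{n\pi\sqrt{-1}/\hbar}=(-1)^n$, which equals its own inverse. Thus $\Hom_{\catq}(P(\frac{n \pi \sqrt{-1}}{\hbar},b_1)_{\p p_1},M)$ is naturally identified with the subspace of $M$ of weight $(\frac{n \pi \sqrt{-1}}{\hbar},b_1)$ and parity $\p p_1$.

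Next I would apply this with $M=P(\frac{n_2 \pi \sqrt{-1}}{\hbar},b_2)_{\p p_2}$ and read the answer off the weights occurring in $M$. The nonzero joint weight-and-parity subspaces of $P(\frac{n_2 \pi \sqrt{-1}}{\hbar},b_2)_{\p p_2}$ are the two-dimensional $\spvs\{v,v^{\prime}\}$ of weight $(\frac{n_2 \pi \sqrt{-1}}{\hbar},b_2)$ and parity $\p p_2$, together with the one-dimensional $\spvs\{v_{\pm}\}$ of weight $(\frac{n_2 \pi \sqrt{-1}}{\hbar},b_2 \pm 1)$ and parity $\p p_2+\p 1$. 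Matching the datum $(\frac{n_1 \pi \sqrt{-1}}{\hbar},b_1;\p p_1)$ against this list shows the Hom space vanishes unless $n_1=n_2=:n$ and $(b_2,\p p_2)$ is one of $(b_1,\p p_1)$, $(b_1+1,\p p_1+\p 1)$, $(b_1-1,\p p_1+\p 1)$, which are exactly the three cases in the statement, and that in these cases it has dimension $2$, $1$, $1$ respectively.

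It then remains to identify the morphisms. When $(b_2,\p p_2)=(b_1,\p p_1)$, applying the recipe to $\xi=\delta v^{\prime}+\gamma v$ gives $f=\delta\cdot\Id+\gamma\cdot x_{\frac{n \pi \sqrt{-1}}{\hbar},b_1,\p p_1}$, where $x_{\frac{n \pi \sqrt{-1}}{\hbar},b_1,\p p_1}$ is the map $v^{\prime}\mapsto v$ (other basis vectors to $0$); since $x_{\frac{n \pi \sqrt{-1}}{\hbar},b_1,\p p_1}^2$ sends $v^{\prime}\mapsto v\mapsto 0$ it vanishes, and $\Id, x_{\frac{n \pi \sqrt{-1}}{\hbar},b_1,\p p_1}$ are linearly independent, so the Hom space is $\C[x_{\frac{n \pi \sqrt{-1}}{\hbar},b_1,\p p_1}]\slash\langle x_{\frac{n \pi \sqrt{-1}}{\hbar},b_1,\p p_1}^2\rangle$. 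When $(b_2,\p p_2)=(b_1 \pm 1,\p p_1+\p 1)$, the one-dimensional Hom space is spanned by $a^{\pm}_{\frac{n \pi \sqrt{-1}}{\hbar},b_1,\p p_1}$, which are nonzero in view of their description before the proposition as composites of the manifestly $\Uq$-linear projections and inclusions, and one checks directly that they coincide with the recipe applied to the appropriate $\xi$. Finally, the identities $a^{\pm}_{\frac{n \pi \sqrt{-1}}{\hbar},b \mp 1,\p p+\p 1}\circ a^{\mp}_{\frac{n \pi \sqrt{-1}}{\hbar},b,\p p}=\mp x_{\frac{n \pi \sqrt{-1}}{\hbar},b,\p p}$ follow by evaluating both sides on $v^{\prime}$ and tracing through the explicit formulas for $a^{\pm}$.

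I do not anticipate a genuine obstacle: the argument is elementary linear algebra with the given module structures, and the only delicate point is bookkeeping with parities and with the single sign coming from the label $Y=-1$ on the arrow $v_+\to v$ in the definition of $P(\frac{n \pi \sqrt{-1}}{\hbar},b)_{\p p}$, which is exactly what produces the $\mp$ in the last relation. As a remark, the identification of the first paragraph also shows directly that each $P(\frac{n \pi \sqrt{-1}}{\hbar},b)_{\p p}$ is projective, since passing to a fixed weight-and-parity subspace is exact; one could then alternatively obtain the dimensions from $\dim\Hom_{\catq}(P(\frac{n \pi \sqrt{-1}}{\hbar},b_1)_{\p p_1},M)=[M:\ve(\frac{n \pi \sqrt{-1}}{\hbar},b_1)_{\p p_1}]$ and the composition series of $P$ recorded above the proposition, but the explicit generators and the final relation still need the computation sketched here.
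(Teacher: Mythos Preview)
Your argument is correct and more detailed than the paper's, which simply says ``That $x$ and $a^{\pm}$ are the only morphisms follows by analyzing the filtration $M_{\bullet}$ of $P(\frac{n \pi \sqrt{-1}}{\hbar},b)_{\p p}$ given above. The remaining statements are clear.''

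The two approaches are closely related but organised differently. The paper implicitly uses the composition series $0\subset M_1\subset M_2\subset M_3$ to constrain where a morphism can send each layer, whereas you bypass the filtration by exploiting cyclicity of $v'$ to identify $\Hom_{\catq}(P(\frac{n\pi\sqrt{-1}}{\hbar},b_1)_{\p p_1},M)$ with the $(\frac{n\pi\sqrt{-1}}{\hbar},b_1)$-weight, $\p p_1$-parity subspace of $M$. Your route is a bit more conceptual: the key representability statement (any $\xi$ in that weight space extends to a morphism, using $K-K^{-1}=0$ on these $E$-eigenspaces) packages the verification once and for all, and as you note it gives projectivity of $P(\frac{n\pi\sqrt{-1}}{\hbar},b)_{\p p}$ for free, which the paper establishes separately in Lemma~\ref{lem:projIndec} via the tensor decomposition \eqref{E:tensorSimpNongen}. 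Either way the explicit identification of $x$, $a^{\pm}$ and the final sign relation come down to the same direct check on basis vectors.
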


\begin{proof}
That $x$ and $a^{\pm}$ are the only morphisms follows by analyzing the filtration $M_{\bullet}$ of $P(\frac{n \pi \sqrt{-1}}{\hbar},b)_{\p p}$ given above. The remaining statements are clear.
\end{proof}

\subsubsection{Tensor products}
\label{sec:tenProd}

By comparing weights, it is easy to verify the isomorphisms
\begin{equation}
\label{eq:tensorOneDim}
\ve(\frac{n_1 \pi \sqrt{-1}}{\hbar},b_1)_{\p p_1} \otimes \ve(\frac{n_2 \pi \sqrt{-1}}{\hbar},b_2)_{\p p_2}
\simeq
\ve(\frac{(n_1+n_2) \pi \sqrt{-1}}{\hbar},b_1+b_2)_{\p p_1 + \p p_2},
\end{equation}
\begin{equation}
\label{eq:tensorOneWithTwoDim}
V(\alpha,a)_{\p p} \otimes \ve(\frac{n \pi \sqrt{-1}}{\hbar},b)_{\p q}
\simeq
V(\alpha + \frac{n \pi \sqrt{-1}}{\hbar},a+b)_{\p p + \p q}
\end{equation}
and
\begin{equation*}
\label{eq:tensorOneDimWithProj}
P(\frac{n \pi \sqrt{-1}}{\hbar},b)_{\p p} \simeq P(0,b)_{\p 0} \otimes \ve(\frac{n \pi \sqrt{-1}}{\hbar},0)_{\p p}.
\end{equation*}

Let $\alpha_1,\alpha_2 \in \C \setminus \frac{\pi \sqrt{-1}}{\hbar} \Z$. If $\alpha_1 + \alpha_2 \in \C \setminus \frac{\pi \sqrt{-1}}{\hbar} \Z$, then an argument using weight vectors and the injectivity of Proposition \ref{prop:genericProjective} gives
\begin{equation}\label{E:tensorSimpGen}
V(\alpha_1,a_1)_{\p p_1}\otimes V(\alpha_2,a_2)_{\p p_2}\simeq V(\alpha_1+\alpha_2,a_1+a_2)_{\p p_1 +\p p_2}\oplus V(\alpha_1+\alpha_2,a_1+a_2-1)_{\p p_1+\p p_2 +\p 1}.
\end{equation}
See \cite[\S 11.6.B]{Viro} for an explicit isomorphism. If instead $\alpha_1 + \alpha_2 = \frac{n \pi \sqrt{-1}}{\hbar}$, $n \in \Z$, then there is an isomorphism
\begin{equation}\label{E:tensorSimpNongen}
I: V(\alpha_1,a_1)_{\p p_1}\otimes V(\alpha_2,a_2)_{\p p_2}
\xrightarrow[]{\sim}
P(\alpha_1+\alpha_2,a_1+a_2-1)_{\p p_1 + \p p_2 + \p 1}.
\end{equation}
See \cite[\S 3]{sartori2015}. Explicitly, if $v_i \in V(\alpha_i,a_i)_{\p p_i}$ is highest weight and $v_i^{\prime} = Y v_i$, then
$I$ is defined by
\[
I(X(v^{\prime}_1 \otimes v^{\prime}_2)) = v^{\prime},
\qquad
I(v_1 \otimes v_2) =v_+,
\]
\[
I(v_1^{\prime} \otimes v_2^{\prime}) = v_-,
\qquad
I(v_1^{\prime} \otimes v_2^{\prime}) = v.
\]

\begin{lemma}
\label{lem:projIndec}
Let $(n,b) \in \Z \times \C$ and $\p p \in \Ztwo$. The module $P(\frac{n \pi \sqrt{-1}}{\hbar},b)_{\p p}$ is projective indecomposable and the morphism \eqref{eq:projCov} is the projective cover of $\ve(\frac{n \pi \sqrt{-1}}{\hbar},b)_{\p p}$.
\end{lemma}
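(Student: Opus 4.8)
The plan is to prove, in turn, that $P := P(\frac{n \pi \sqrt{-1}}{\hbar},b)_{\p p}$ is projective, that it is indecomposable, and that the surjection \eqref{eq:projCov} is a projective cover of $\ve(\frac{n \pi \sqrt{-1}}{\hbar},b)_{\p p}$. Each point reduces quickly to material already established. For projectivity, fix $\alpha \in \C \setminus \frac{\pi \sqrt{-1}}{\hbar}\Z$ and set $\alpha' = \frac{n \pi \sqrt{-1}}{\hbar} - \alpha$, which again lies in $\C \setminus \frac{\pi \sqrt{-1}}{\hbar}\Z$. Choosing $a, a' \in \C$ and parities $\p p_1, \p p_2 \in \Z_2$ with $a + a' - 1 = b$ and $\p p_1 + \p p_2 + \p 1 = \p p$, the isomorphism \eqref{E:tensorSimpNongen} identifies $V(\alpha,a)_{\p p_1} \otimes V(\alpha',a')_{\p p_2}$ with $P$. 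By Proposition \ref{prop:genericProjective} the first tensor factor is projective, and since the ideal of projective objects absorbs tensor products, $P$ is projective.

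For indecomposability, Proposition \ref{prop:HomProj} gives $\End_{\catq}(P) \simeq \C[x]/\langle x^2 \rangle$, which is a local ring. A nonzero object whose endomorphism ring is local admits no idempotent endomorphisms other than $0$ and the identity, so $P$ is indecomposable.

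It remains to treat \eqref{eq:projCov}. Let $f : P \twoheadrightarrow \ve(\frac{n \pi \sqrt{-1}}{\hbar},b)_{\p p}$ be this morphism; it sends $v'$ to the generator of $\ve(\frac{n \pi \sqrt{-1}}{\hbar},b)_{\p p}$ and therefore annihilates $v_+ = X v'$, $v_- = Y v'$ and $v = X v_-$. Hence $f$ is surjective with $\ker f = M_2 = \spvs\{v, v_+, v_-\}$, and since $P = \C v' \oplus M_2$ as a vector space and $P$ is projective, it suffices, in order to see that $f$ is a projective cover, to show that $\ker f$ is superfluous, i.e.\ that $N + M_2 = P$ forces $N = P$ for every submodule $N \subseteq P$. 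Given such an $N$, it is not contained in $M_2$, so some element of $N$ has nonzero $v'$-component; rescaling it, we obtain $\ell = v' + m \in N$ with $m = \mu v + \mu_+ v_+ + \mu_- v_- \in M_2$. Using $X^2 = Y^2 = 0$, $Y v_+ = -v$, $X v_- = v$ and the vanishing of $X$ and $Y$ on $v$, a short computation gives $XY\ell = v$, $X\ell = v_+ + \mu_- v$ and $Y\ell = v_- - \mu_+ v$, so $\Uq \cdot \ell$ contains $v$, hence $v_+$ and $v_-$, and hence $v' = \ell - m$. Thus $\Uq \cdot \ell = P$, so $N = P$, and $f$ is a projective cover of $\ve(\frac{n \pi \sqrt{-1}}{\hbar},b)_{\p p}$.

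The argument is essentially a chain of one-line reductions to Propositions \ref{prop:genericProjective} and \ref{prop:HomProj} and to the explicit presentation of $P$, so I do not anticipate a serious obstacle. The one step with genuine content is the superfluity argument: it requires the small amount of weight-and-parity bookkeeping that produces the vector $\ell$, followed by the short generation computation above. Everything else is routine.
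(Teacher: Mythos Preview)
Your argument is correct and follows essentially the same approach as the paper: projectivity via the tensor decomposition \eqref{E:tensorSimpNongen} together with Proposition~\ref{prop:genericProjective}, indecomposability via the local endomorphism ring from Proposition~\ref{prop:HomProj}, and the projective cover statement via the observation that $v'$ generates $P$. The paper compresses the last step to a single sentence, whereas you spell out the superfluity of $M_2$ explicitly; this is the same content, just with more detail.
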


\begin{proof}
By Proposition \ref{prop:HomProj}, the only idempotent endomorphisms of $P(\frac{n \pi \sqrt{-1}}{\hbar},b)_{\p p}$ are multiples of the identity. Hence, $P(\frac{n \pi \sqrt{-1}}{\hbar},b)_{\p p}$ is indecomposable. For any $\alpha \in \C \setminus \frac{\pi \sqrt{-1}}{\hbar} \Z$, the isomorphism \eqref{E:tensorSimpNongen} gives
\[
V(\alpha,b+1)_{\p 1} \otimes V(-\alpha + \frac{n \pi \sqrt{-1}}{\hbar},b)_{\p p}
\simeq
P(\frac{n \pi \sqrt{-1}}{\hbar},b)_{\p p}.
\]
Since $V(\alpha,b+1)_{\p 1}$ is projective by Proposition \ref{prop:genericProjective}, so too is $P(\frac{n \pi \sqrt{-1}}{\hbar},b)_{\p p}$. This proves the first statement. The second statement follows from the fact that $v^{\prime}$ generates $P(\frac{n \pi \sqrt{-1}}{\hbar},b)_{\p p}$.
\end{proof}

\begin{lemma}
\label{lem:enoughProj}
The category $\catq$ has enough projectives.
\end{lemma}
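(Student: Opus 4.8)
The plan is to reduce the statement to the level of simple modules and then appeal to the explicit projectives already built in this section. First I would invoke Lemma~\ref{lem:locFin}: since $\catq$ is locally finite, every module has finite length, so it suffices to produce, for each $M \in \catq$, a projective object surjecting onto $M$, and I would argue by induction on the composition length $\ell(M)$.

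For the base case $\ell(M)=1$, Proposition~\ref{prop:irrepsgloo} leaves exactly two possibilities. If $M \simeq V(\alpha,a)_{\p p}$ with $\alpha \in \C \setminus \frac{\pi \sqrt{-1}}{\hbar}\Z$, then $M$ is already projective by Proposition~\ref{prop:genericProjective}. If $M \simeq \ve(\frac{n \pi \sqrt{-1}}{\hbar},b)_{\p p}$, then the projective cover $P(\frac{n \pi \sqrt{-1}}{\hbar},b)_{\p p} \twoheadrightarrow \ve(\frac{n \pi \sqrt{-1}}{\hbar},b)_{\p p}$ furnished by Lemma~\ref{lem:projIndec} does the job. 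So every simple object is a quotient of a projective.

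For the inductive step I would pick a nonzero proper submodule $M' \subsetneq M$, set $M'' = M/M'$ with quotient map $\pi : M \twoheadrightarrow M''$ and inclusion $\iota : M' \hookrightarrow M$, and note $\ell(M'), \ell(M'') < \ell(M)$. The inductive hypothesis gives projectives with epimorphisms $p' : P' \twoheadrightarrow M'$ and $p'' : P'' \twoheadrightarrow M''$. Projectivity of $P''$ lifts $p''$ through $\pi$ to a morphism $\tilde p'' : P'' \to M$ with $\pi \circ \tilde p'' = p''$, and then $(\iota \circ p',\, \tilde p'') : P' \oplus P'' \to M$ is an epimorphism: for $m \in M$ choose $x'' \in P''$ with $p''(x'') = \pi(m)$, so that $m - \tilde p''(x'') \in \ker\pi = \iota(M')$ equals $\iota(p'(x'))$ for some $x' \in P'$, whence $m$ lies in the image. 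Since $P' \oplus P''$ is projective, this finishes the induction.

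I do not anticipate a genuine obstacle: all the real content sits in Proposition~\ref{prop:genericProjective} (generic quantum Kac modules are projective) and Lemma~\ref{lem:projIndec} (each $\ve$-module admits a projective cover), together with finite length from Lemma~\ref{lem:locFin}; the rest is the routine extension argument. The only point that warrants care is that one should work with surjections from \emph{finite direct sums} of the projectives $P(\tfrac{n\pi\sqrt{-1}}{\hbar},b)_{\p p}$ and $V(\alpha,a)_{\p p}$ rather than with projective covers directly, since projective covers are not additive along short exact sequences — which is precisely what the $P'\oplus P''$ construction accommodates. One could alternatively bypass the induction by mapping $\bigoplus_i P_i$ onto the semisimple quotient $M/\operatorname{rad}M$ and lifting, using that $\operatorname{rad}M$ is superfluous in the Artinian module $M$, but the inductive version is more self-contained.
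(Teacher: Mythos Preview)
Your proof is correct, but the paper takes a much shorter, genuinely different route. Instead of inducting on composition length, the paper exploits the tensor structure: it fixes any generic $\alpha \in \C \setminus \frac{\pi\sqrt{-1}}{\hbar}\Z$ and observes that the evaluation map
\[
V(\alpha,0)_{\p 0} \otimes V(\alpha,0)_{\p 0}^* \otimes M \xrightarrow{\ev_{V(\alpha,0)_{\p 0}} \otimes \Id_M} \unit \otimes M \simeq M
\]
is an epimorphism, while the domain is projective because $V(\alpha,0)_{\p 0}$ is projective (Proposition~\ref{prop:genericProjective}) and projectives form a tensor ideal. This one-line argument never touches Lemma~\ref{lem:projIndec} or the classification of simples; it uses only rigidity and the existence of a single projective object. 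Your approach, by contrast, is purely abelian-categorical: it uses finite length (Lemma~\ref{lem:locFin}) and the explicit projective covers of all simples, but never the monoidal structure. The trade-off is that the paper's argument is shorter and more conceptual, while yours would work verbatim in any locally finite abelian category in which every simple admits a projective cover, without any tensor hypotheses.
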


\begin{proof}
Let $V \in \catq$. For each $\alpha \in \C \setminus \frac{\pi \sqrt{-1}}{\hbar} \Z$, there is a surjection
\[
V(\alpha,0)_{\p 0} \otimes V(\alpha,0)_{\p 0}^* \otimes V \xrightarrow[]{\ev_{V(\alpha,0)_{\p 0}} \otimes \Id_V} \C \otimes V \simeq V.
\]
Since $V(\alpha,0)_{\p 0} $ is projective by Proposition \ref{prop:genericProjective}, so too is $V(\alpha,0)_{\p 0} \otimes V(\alpha,0)_{\p 0}^* \otimes V$.
\end{proof}

\subsection{Modified traces}
\label{sec:mTracegloo}
We establish the existence of a modified trace on the ideal $\Proj$ of projectives of $\catq$ and make some computations required for later use.

The following result can also be proved using the notion of ambidextrous objects introduced in \cite{GPT, GKP1}.
 
\begin{proposition}\label{prop:mTrace}
Up to a global scalar, there exists a unique m-trace $\mt$ on the ideal $\Proj \subset \catq$ of projective objects.
\end{proposition}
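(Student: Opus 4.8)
The plan is to invoke the general existence-and-uniqueness criterion for modified traces on the ideal of projectives in a locally finite pivotal (or ribbon) category. By \cite{GKP2} (see also \cite{GKP1}), a unimodular pivotal category which is locally finite abelian with enough projectives carries a modified trace on $\Proj$, unique up to a global scalar, provided that the projective cover of the unit is, in an appropriate sense, ``ambidextrous''. So the first step is to assemble the hypotheses: by Lemma \ref{lem:locFin} the category $\catq$ is locally finite, by Lemma \ref{lem:enoughProj} it has enough projectives, and Lemma \ref{lem:catqPivot} together with Proposition \ref{prop:braiding} makes $\catq$ pivotal (indeed ribbon). What remains is to verify the condition that controls existence and uniqueness, which amounts to checking that the endomorphism algebra of a projective cover of the unit object behaves correctly under the two partial traces, equivalently that a distinguished projective object is ambidextrous in the sense of \cite{GPT,GKP1}.

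Concretely, I would identify the projective cover of the unit $\unit = \ve(0,0)_{\p 0}$: by Lemma \ref{lem:projIndec} it is $P(0,0)_{\p 0}$, with $\End_{\catq}(P(0,0)_{\p 0}) \cong \C[x]/\langle x^2\rangle$ by Proposition \ref{prop:HomProj}. The key computation is then to show that this object is ambidextrous, i.e.\ that for the partial trace maps the two possible normalizations agree; since $\dim_\C \End_{\catq}(P(0,0)_{\p 0}) = 2$, this reduces to a single scalar identity. One way to verify it is to use the isomorphism \eqref{E:tensorSimpNongen}, which realizes $P(0,0)_{\p 0}$ as a tensor product $V(\alpha,1)_{\p 1} \otimes V(-\alpha,0)_{\p 0}$ of generic (hence projective, by Proposition \ref{prop:genericProjective}) simples with vanishing quantum dimension. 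For such a tensor product of generic objects, the partial-trace relations are controlled by the braiding and pivotal structure in an essentially symmetric way, and one can directly compute that $\ptr_{V(-\alpha,0)_{\p 0}}$ and $\ptr_{V(\alpha,1)_{\p 1}}$ of the relevant endomorphisms are proportional to the identity with the same constant. Alternatively — and this is perhaps cleaner — one can invoke that a generic simple $V(\alpha,a)_{\p p}$ is itself projective and absolutely simple with a well-defined nonzero modified dimension, set the m-trace to be the canonical one determined by $\qd(V(\alpha,a)_{\p p})$ on the generic block, and then propagate it to all of $\Proj$ using that every indecomposable projective is a summand of a tensor product involving a generic simple (Lemma \ref{lem:projIndec}), checking compatibility via the partial-trace property.

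For uniqueness, once existence is known, any two m-traces on $\Proj$ differ by a locally constant function on the set of isomorphism classes of indecomposable projectives that is compatible with the partial-trace property; since $\catq$ is connected through tensor products (every block contains objects linked by tensoring with the generic simples, which all lie in the ``same'' m-trace-orbit), this function must be globally constant. More precisely, cyclicity plus the partial trace property force the ratio of two m-traces to be constant on any collection of projectives linked by morphisms and by tensoring, and the isomorphisms \eqref{eq:tensorOneDim}--\eqref{E:tensorSimpNongen} show that all of $\Proj$ forms a single such orbit. The main obstacle I anticipate is the ambidexterity verification — the explicit check that the left and right partial traces of the distinguished endomorphism of $P(0,0)_{\p 0}$ (or equivalently of a generic $V(\alpha,a)_{\p p}$) agree — since this requires a careful bookkeeping computation with the pivotal structure \eqref{E:tcoev}--\eqref{E:coev} and the braiding of Proposition \ref{prop:braiding}, keeping track of the factors of $K$ and the signs from the super structure; everything else is a matter of citing \cite{GKP1,GKP2} and assembling the already-established structural lemmas.
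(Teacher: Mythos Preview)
Your proposal is essentially correct and the paper even anticipates it: the sentence preceding the proposition reads ``The following result can also be proved using the notion of ambidextrous objects introduced in \cite{GPT, GKP1}.'' So your ambidexterity route is a legitimate alternative. The paper, however, takes a shorter path. Rather than checking ambidexterity by a direct partial-trace computation, it invokes \cite[Corollary 5.6]{GKP3}, which says that for a locally finite pivotal tensor category with enough projectives, $\Proj$ admits a unique (up to scalar) nontrivial right m-trace if and only if the projective cover of the unit is \emph{self-dual}. The paper then observes that $P(0,0)_{\p 0} \simeq V(\alpha,1)_{\p 0} \otimes V(\alpha,1)_{\p 0}^{*}$ via \eqref{eq:dualIrred} and \eqref{E:tensorSimpNongen}, and anything of the form $V\otimes V^{*}$ is manifestly self-dual in a pivotal category. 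Finally, since $\catq$ is braided (Proposition \ref{prop:braiding}), the right m-trace is automatically a two-sided m-trace. This bypasses both the explicit ambidexterity check you flag as the ``main obstacle'' and your separate connectedness argument for uniqueness, since \cite{GKP3} packages existence and uniqueness together into the single self-duality condition.

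Your approach would work, but it trades one clean citation for a computation with the pivotal structure and braiding that you yourself note requires careful sign and $K$-factor bookkeeping. (Minor point: you cite \cite{GKP2}, which is not in the paper's bibliography; the relevant references are \cite{GKP1} and \cite{GKP3}.)
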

\begin{proof}
By Lemmas \ref{lem:locFin} and \ref{lem:enoughProj}, $\catq$ is a locally finite pivotal $\C$-linear tensor category with enough projectives. It follows that $\Proj$ has a unique non-trivial right m-trace if and only if the projective cover of the trivial module $\C \simeq \ve(0,0)_{\p 0}$ is self-dual \cite[Corollary 5.6]{GKP3}. By Lemma \ref{lem:projIndec}, the projective cover in question is $P(0,0)_{\p 0}$. For any $\alpha \in \C \setminus \frac{\pi \sqrt{-1}}{\hbar} \Z$, the isomorphisms \eqref{eq:dualIrred} and \eqref{E:tensorSimpNongen} give $P(0,0)_{\p 0} \simeq  V(\alpha,1)_{\p 0} \otimes V(\alpha,1)_{\p 0}^{*}$, establishing self-duality. By Proposition \ref{prop:braiding}, $\catq$ is braided. Hence, this right m-trace is an m-trace. 
\end{proof}

Note that $[1]_q=1$ for all $q \in \C \setminus \{0,\pm 1\}$. In particular, the module $V(1,0)_{\p 0}$ is projective. Fix the normalization of the m-trace of Proposition \ref{prop:mTrace} by requiring $\qd(V(1,0)_{\p0})=(q-q^{-1})^{-1}$.

Given $V, V^{\prime} \in \catq$, define
\[
\Phi_{V,V^{\prime}} = (\Id_{V^{\prime}} \otimes \ev_V)\circ (c_{V,{V^{\prime}}}\otimes \Id_{V^*})\circ (c_{{V^{\prime}},V}\otimes \Id_{V^*})\circ (\Id_{V^{\prime}}\otimes \tcoev_V)\in \End_{\catq}({V^{\prime}}).
\]
Extend the definition of $\Phi_{V,V^{\prime}}$ to formal $\C$-linear combinations of objects of $\catq$ by bilinearity. When $\End_{\catq}(V^{\prime}) \simeq \C$ set 
\[
S'(V,V^{\prime})=\left< \epsh{Sprime}{9ex}\right>\put(-15,10){\ms{V}}\put(-36,18){\ms{V^{\prime}}}=\left<\Phi_{V,{V^{\prime}}}\right> \in \C.
\]

\begin{lemma}\label{lem:Phi's}
Let $\alpha,\beta, a,b \in\C$ and $\p p,\p s\in \Ztwo $. The following equalities hold:
$$\Phi_{V(\beta,b)_{\p p},V(\alpha,a)_{\p s}}=(-1)^{\p p + \p 1}q^{-2(\alpha b+ a\beta)}q^{\alpha+\beta}(q^{\alpha}-q^{-\alpha})\Id_{V(\alpha,a)_{\p s}}$$
$$\Phi_{P(\frac{n \pi \sqrt{-1}}{\hbar},b)_{\p p},V(\alpha,a)_{\p s}}=(-1)^{\p p+1} q^{-2(\alpha b + a \frac{n \pi \sqrt{-1}}{\hbar})}(q^\alpha-q^{-\alpha})^2\Id_{V(\alpha,a)_{\p s}}$$
$$\Phi_{V(\beta,b)_{\p p},P(\frac{n \pi \sqrt{-1}}{\hbar},a)_{\p s}}=(-1)^{\p p +\p 1}q^{-2(a\beta + b \frac{n \pi \sqrt{-1}}{\hbar})}(q-q^{-1})(q^{\beta}-q^{-\beta})x_{\frac{n \pi \sqrt{-1}}{\hbar},a,\p s}.$$
\end{lemma}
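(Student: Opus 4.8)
The plan is to recognise $\Phi_{V,V'}$ as the right partial trace $\ptr_{V}\!\bigl(c_{V,V'}\circ c_{V',V}\bigr)\in\End_{\catq}(V')$ of the monodromy of $V$ around $V'$. Since the braiding is given explicitly by $c_{V,W}=\tau_{V,W}\circ\tilde{R}_{V,W}\circ\Upsilon_{V,W}$ (Proposition~\ref{prop:braiding}), with $\Upsilon$ diagonal on weight vectors and $\tilde{R}_{V,W}$ left multiplication by $1+(q-q^{-1})(X\otimes Y)(K\otimes K^{-1})$, all three identities reduce in principle to weight-vector computations; I would arrange things so that the second and third rest on the first and on two structural reductions rather than on a brute-force eight-dimensional calculation.

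For the first identity I would write $V(\beta,b)_{\p p}=\C v_{1}\oplus\C v_{1}'$ with $v_{1}'=Yv_{1}$, and likewise $V(\alpha,a)_{\p s}=\C v_{2}\oplus\C v_{2}'$, and evaluate $c_{V(\beta,b)_{\p p},V(\alpha,a)_{\p s}}\circ c_{V(\alpha,a)_{\p s},V(\beta,b)_{\p p}}$ on $v_{2}\otimes v_{1},\ v_{2}\otimes v_{1}',\ v_{2}'\otimes v_{1},\ v_{2}'\otimes v_{1}'$. Because $X$ annihilates highest weight vectors and $Y$ annihilates lowest weight vectors, only a handful of terms of the two copies of $\tilde{R}$ survive. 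Then I would apply the partial trace over $V(\beta,b)_{\p p}$ using the duality data \eqref{E:tcoev}--\eqref{E:coev}; the operator $K=q^{E}$ occurring in $\ev_{V(\beta,b)_{\p p}}$ is what produces the overall power of $q$. As $V(\alpha,a)_{\p s}$ is simple, $\End_{\catq}(V(\alpha,a)_{\p s})=\C$, so the output is automatically a scalar, which one reads off; the same computation is valid whether or not $\beta\in\frac{\pi\sqrt{-1}}{\hbar}\Z$, the formula vanishing in the latter case since $q^{\beta}=q^{-\beta}$. It is worth noting that $\Phi$ is nonzero although $\qdim V(\beta,b)_{\p p}=0$, precisely because it is the partial, not the full, trace of the monodromy.

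For the second identity the key point is that the right partial trace is additive along short exact sequences in the traced-out slot: if $0\to A\to B\to C\to 0$, then $B\otimes V'\supset A\otimes V'$ is a submodule preserved by the monodromy, and the partial trace of a block-triangular endomorphism is the sum of the partial traces of its diagonal blocks, whence $\Phi_{B,V'}=\Phi_{A,V'}+\Phi_{C,V'}$. Thus $\Phi_{-,V(\alpha,a)_{\p s}}$ factors through the Grothendieck group, and by the filtration of $P(\frac{n\pi\sqrt{-1}}{\hbar},b)_{\p p}$ from Section~\ref{sec:projIndGeneric} its value is the sum of the contributions of the composition factors $\ve(\frac{n\pi\sqrt{-1}}{\hbar},b)_{\p p}$ (with multiplicity two) and $\ve(\frac{n\pi\sqrt{-1}}{\hbar},b\pm 1)_{\p p+\p 1}$. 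For a one-dimensional module the $\tilde{R}$-part of the braiding is trivial, so one immediately computes $\Phi_{\ve(\frac{m\pi\sqrt{-1}}{\hbar},c)_{\p q},\,V(\alpha,a)_{\p s}}=\qdim\ve(\frac{m\pi\sqrt{-1}}{\hbar},c)_{\p q}\cdot q^{-2(\alpha c+a\frac{m\pi\sqrt{-1}}{\hbar})}\cdot\Id$, which is a scalar because the $G$-weights of the two weight vectors of $V(\alpha,a)_{\p s}$ differ by $1$ and $q^{2\frac{m\pi\sqrt{-1}}{\hbar}}=1$. Summing the four contributions and using $2-q^{2\alpha}-q^{-2\alpha}=-(q^{\alpha}-q^{-\alpha})^{2}$ yields the stated formula, the $G$-weight shifts by $\pm 1$ being responsible for the factor $(q^{\alpha}-q^{-\alpha})^{2}$. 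One may instead, as a cross-check, use the isomorphism $P(\frac{n\pi\sqrt{-1}}{\hbar},b)_{\p p}\simeq V(\gamma,c_{1})_{\p q_{1}}\otimes V(\frac{n\pi\sqrt{-1}}{\hbar}-\gamma,c_{2})_{\p q_{2}}$ coming from \eqref{E:tensorSimpNongen} together with $\Phi_{V_{1}\otimes V_{2},V'}=\Phi_{V_{1},V'}\circ\Phi_{V_{2},V'}$ (parallel loops around a simple strand commute) and the first identity.

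The third identity is the main obstacle, since the target $\End_{\catq}(P(\frac{n\pi\sqrt{-1}}{\hbar},a)_{\p s})\simeq\C[x]/\langle x^{2}\rangle$ (Proposition~\ref{prop:HomProj}) is two-dimensional and one must control both its semisimple and its nilpotent part. I would first dispose of the semisimple part: by naturality of the braiding, $\Phi_{V(\beta,b)_{\p p},-}$ is a natural endomorphism of the identity functor of $\catq$, and the computation above shows it annihilates every one-dimensional module $\ve(\frac{m\pi\sqrt{-1}}{\hbar},c)_{\p q}$ (the two terms of the partial trace over $V(\beta,b)_{\p p}$ carry opposite signs and differ by $q^{2\frac{m\pi\sqrt{-1}}{\hbar}}=1$, hence cancel). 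As all composition factors of $P(\frac{n\pi\sqrt{-1}}{\hbar},a)_{\p s}$ are of this form, $\Phi_{V(\beta,b)_{\p p},P(\frac{n\pi\sqrt{-1}}{\hbar},a)_{\p s}}$ induces the zero map on each composition factor, hence maps $M_{2}$ into the socle $M_{1}$ of its radical series $M_{1}\subset M_{2}\subset P(\frac{n\pi\sqrt{-1}}{\hbar},a)_{\p s}$; since $\Hom_{\catq}(M_{2},\ve(\frac{n\pi\sqrt{-1}}{\hbar},a)_{\p s})=0$ it therefore kills $M_{2}$ and factors through $P(\frac{n\pi\sqrt{-1}}{\hbar},a)_{\p s}/M_{2}\simeq\ve(\frac{n\pi\sqrt{-1}}{\hbar},a)_{\p s}$ followed by the socle inclusion, forcing $\Phi_{V(\beta,b)_{\p p},P(\frac{n\pi\sqrt{-1}}{\hbar},a)_{\p s}}=\lambda\,x_{\frac{n\pi\sqrt{-1}}{\hbar},a,\p s}$ for a scalar $\lambda$. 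To determine $\lambda$ I would return to the explicit monodromy on $P(\frac{n\pi\sqrt{-1}}{\hbar},a)_{\p s}\otimes V(\beta,b)_{\p p}$, take the partial trace over $V(\beta,b)_{\p p}$, and extract the coefficient of $v$ in the image of $v'$; only the $(q-q^{-1})(X\otimes Y)(K\otimes K^{-1})$ term of $\tilde{R}$ can contribute to this off-diagonal matrix element, which is exactly why $\lambda$ carries the factor $(q-q^{-1})(q^{\beta}-q^{-\beta})$. Throughout, the one genuinely delicate point is the careful bookkeeping of signs and of the scalars $q^{n\pi\sqrt{-1}/\hbar}=(-1)^{n}$.
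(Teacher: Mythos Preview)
Your proposal is correct. The overall architecture matches the paper's, but two of the three steps take a different route.

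For the first two equalities the paper is more economical: since $V(\alpha,a)_{\p s}$ is simple, it suffices to apply $\Phi$ to its highest weight vector, and on a highest weight vector only the diagonal (weight) part $\Upsilon$ of the braiding survives. Your plan for the first equality---evaluating the monodromy on all four of $v_2\otimes v_1,\,v_2\otimes v_1',\,v_2'\otimes v_1,\,v_2'\otimes v_1'$---works but computes three terms more than needed. For the second equality your Grothendieck-group argument (additivity of $\Phi_{-,V'}$ along short exact sequences, reducing to one-dimensional $\ve$'s) is genuinely different from the paper's bare ``direct calculation''; it is more conceptual and explains structurally why the factor $(q^{\alpha}-q^{-\alpha})^{2}$ appears, via the $G$-weight shifts $b\pm 1$ in the composition series and the identity $2-q^{2\alpha}-q^{-2\alpha}=-(q^{\alpha}-q^{-\alpha})^{2}$. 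Your alternative cross-check via \eqref{E:tensorSimpNongen} and multiplicativity $\Phi_{V_1\otimes V_2,V'}=\Phi_{V_1,V'}\circ\Phi_{V_2,V'}$ is also valid.

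For the third equality both you and the paper write $\Phi=c_1\Id+c_2\,x$ via Proposition~\ref{prop:HomProj} and determine $c_2$ by reading off the coefficient of $v$ in $\Phi(v')$. Your argument that $c_1=0$---naturality on composition factors, the radical filtration $M_1\subset M_2\subset P$, and $\Hom(M_2,M_1)=0$---is correct but more elaborate than needed. The paper simply observes that $\Phi$ annihilates the highest weight vector $v_+$: on $v_+$ only the diagonal part of the braiding contributes, and the two partial-trace terms over the weight basis of $V(\beta,b)_{\p p}$ cancel (they differ by a sign and by a factor $q^{2\frac{n\pi\sqrt{-1}}{\hbar}}=1$). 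That one-line vanishing immediately forces $c_1=0$.
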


\begin{proof}
Using that $V(\alpha,a)_{\p s}$ is a highest weight module, it is straightforward to verify that $\End_{\catq}(V(\alpha,a)_{\p s}) \simeq \C$. It follows that $\Phi_{V,V(\alpha,a)_{\p s}}$ maps a highest weight vector of $V(\alpha,a)_{\p s}$ to a multiple of itself, whence only the diagonal part of the braiding contributes to $\Phi_{V,V(\alpha,a)_{\p s}}$. Using this observation, the first two equalities are direct calculations. Turning to the final equality, Proposition \ref{prop:HomProj} implies that
\[
\Phi_{V(\beta,b)_{\p p},P(\frac{n \pi \sqrt{-1}}{\hbar},a)_{\p s}}=c_1\Id_{P(\frac{n \pi \sqrt{-1}}{\hbar},a)_{\p s}}+ c_2 x_{\frac{n \pi \sqrt{-1}}{\hbar},a,\p s}
\]
for unique $c_1,c_2 \in \C$. Since $\Phi_{V(\beta,b)_{\p p},P(\frac{n \pi \sqrt{-1}}{\hbar},a)_{\p s}}$ annihilates the highest weight vector $v_+$, we have $c_1=0$. A  direct calculation shows that
\[
\Phi_{V(\beta,b)_{\p p},P(\frac{n \pi \sqrt{-1}}{\hbar},a)_{\p s}}(v^{\prime})=(-1)^{\p p +\p 1}q^{-2(a\beta + a \frac{n \pi \sqrt{-1}}{\hbar})}(q-q^{-1})(q^{\beta}-q^{-\beta})v = c_2 v,
\]
from which the lemma follows.  
\end{proof}

In preparation for the next result, we compute some modified dimensions. Let $\alpha \in \C \setminus \frac{\pi \sqrt{-1}}{\hbar} \Z$. Cyclicity of the m-trace implies $\mt_{V(1,0)_{\p0}}(\Phi_{V(\alpha,a)_{\p p},V(1,0)_{\p 0}})=\mt_{V(\alpha,a)_{\p p}}(\Phi_{V(1,0)_{\p 0},V(\alpha,a)_{\p p}})$. Using Lemma \ref{lem:Phi's} to compute both sides of this equality gives
\begin{equation}
\qd(V(\alpha,a)_{\p p})
=
(-1)^{\p p}(q^{\alpha}-q^{-\alpha})^{-1}.
\label{eq:mdimTypical}
\end{equation}
The isomorphism \eqref{eq:dualIrred} and equation \eqref{eq:mdimTypical} imply
\begin{equation}\label{eq:dualDimEqual}
\qd(V(\alpha,a)_{\p p}^*)
=
\qd(V(\alpha,a)_{\p p}).
\end{equation}

\begin{lemma}\label{lem:modTrComp}
Let $(n,b) \in \Z \times \C$ and $\p p \in \Ztwo$. Then $\qd(P(\frac{n \pi \sqrt{-1}}{\hbar},b)_{\p p})=0$ and
\[
\mt_{P(\frac{n \pi \sqrt{-1}}{\hbar},b)_{\p p}}(x_{\frac{n \pi \sqrt{-1}}{\hbar},b,\p p})= (-1)^{\p p}(q-q^{-1})^{-1}.
\]
\end{lemma}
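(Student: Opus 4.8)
The plan is to prove the two assertions separately, in both cases reducing to the computations of Lemma~\ref{lem:Phi's} together with the two defining properties of the m-trace. Write $P=P(\frac{n\pi\sqrt{-1}}{\hbar},b)_{\p p}$ throughout.

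For the vanishing $\qd(P)=0$, I would use the isomorphism \eqref{E:tensorSimpNongen} to realise $P\simeq V(\alpha,1)_{\p 0}\otimes V(\frac{n\pi\sqrt{-1}}{\hbar}-\alpha,b)_{\p p+\p 1}$ for any $\alpha\in\C\setminus\frac{\pi\sqrt{-1}}{\hbar}\Z$ (take $\alpha_1=\alpha$, $a_1=1$, $\p p_1=\p 0$ and $\alpha_2=\frac{n\pi\sqrt{-1}}{\hbar}-\alpha$, $a_2=b$, $\p p_2=\p p+\p 1$ in that formula). Both tensor factors are projective by Proposition~\ref{prop:genericProjective}, so the partial trace property yields $\qd(P)=\mt_{V(\alpha,1)_{\p 0}}\big(\ptr_{V(\frac{n\pi\sqrt{-1}}{\hbar}-\alpha,b)_{\p p+\p 1}}(\Id_P)\big)=\qdim\big(V(\frac{n\pi\sqrt{-1}}{\hbar}-\alpha,b)_{\p p+\p 1}\big)\cdot\qd\big(V(\alpha,1)_{\p 0}\big)$, which is zero since every quantum Kac module has vanishing quantum dimension.

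For the second identity, the key point — exactly as in the derivation of \eqref{eq:mdimTypical} — is that cyclicity of the m-trace forces $\mt_{V'}(\Phi_{V,V'})=\mt_{V}(\Phi_{V',V})$ whenever $V$ and $V'$ are both projective: indeed $\Phi_{V,V'}=\ptr_V(c_{V,V'}\circ c_{V',V})$, so by the partial trace property both sides equal the m-trace on $V\otimes V'$ of conjugate double braidings, which agree by cyclicity. I would apply this with $V'=P$ and $V=V(\beta,c)_{\p r}$, choosing $\beta\in\C\setminus\frac{\pi\sqrt{-1}}{\hbar}\Z$ (arbitrary $c$, $\p r$) so that $V(\beta,c)_{\p r}$ is projective by Proposition~\ref{prop:genericProjective} — this genericity is precisely what licenses the symmetry. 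The third formula of Lemma~\ref{lem:Phi's} expresses $\Phi_{V(\beta,c)_{\p r},P}$ as a nonzero scalar multiple of $x_{\frac{n\pi\sqrt{-1}}{\hbar},b,\p p}$, while the second expresses $\Phi_{P,V(\beta,c)_{\p r}}$ as a scalar multiple of $\Id_{V(\beta,c)_{\p r}}$, whose m-trace is $\qd(V(\beta,c)_{\p r})=(-1)^{\p r}(q^\beta-q^{-\beta})^{-1}$ by \eqref{eq:mdimTypical}. Equating the two evaluations of $\mt(\Phi)$ and cancelling the nonzero factors $q^\beta-q^{-\beta}$ (nonzero by genericity of $\beta$) and $q-q^{-1}$ (nonzero since $q\neq\pm1$) then leaves $\mt_P(x_{\frac{n\pi\sqrt{-1}}{\hbar},b,\p p})=(-1)^{\p p}(q-q^{-1})^{-1}$; the $q$-power prefactors of the two formulas in Lemma~\ref{lem:Phi's} cancel identically, and the remaining signs combine to $(-1)^{\p p}$.

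No genuinely new ingredient is needed beyond Lemma~\ref{lem:Phi's}, equation \eqref{eq:mdimTypical} and the cyclicity and partial trace axioms. The only step warranting care — more a matter of bookkeeping than a real obstacle — is matching the sign and $q$-power data in the two expressions for $\Phi$, together with the observation that the symmetry $\mt_{V'}(\Phi_{V,V'})=\mt_V(\Phi_{V',V})$ requires \emph{both} objects to lie in the ideal $\Proj$, which is why $V(\beta,c)_{\p r}$ must be taken with generic $\beta$.
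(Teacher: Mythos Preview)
Your proof is correct and follows essentially the same approach as the paper's own proof: realise $P$ as a tensor product of two generic Kac modules via \eqref{E:tensorSimpNongen} and kill $\qd(P)$ by the partial trace property together with $\qdim V(\alpha,a)_{\p p}=0$; then derive $\mt_P(x)$ by equating the two expressions for $\mt(\Phi)$ obtained from the second and third formulas of Lemma~\ref{lem:Phi's} via cyclicity. The paper is terser (it simply asserts that cyclicity gives $\mt_{V}(\Phi_{P,V})=\mt_P(\Phi_{V,P})$ without unpacking why both arguments must be projective), but your added justification of that symmetry via the partial trace property is a useful gloss, not a different argument.
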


\begin{proof}
Let $\alpha \in \C \setminus \frac{\pi \sqrt{-1}}{\hbar} \Z$. The isomorphism \eqref{E:tensorSimpNongen} gives
\[
V(\alpha + \frac{n \pi \sqrt{-1}}{\hbar},b)_{\p p}\otimes V(-\alpha,1)_{\p 1}\simeq  P(\frac{n \pi \sqrt{-1}}{\hbar},b)_{\p p}.
\]
The partial trace property of the m-trace gives
\begin{eqnarray*}
\qd(P(\frac{n \pi \sqrt{-1}}{\hbar},b)_{\p p})
&=&
\mt_{V(\alpha+\frac{n \pi \sqrt{-1}}{\hbar},b)_{\p p}\otimes V(-\alpha,1)_{\p 1}}(\Id_{V(\alpha+\frac{n \pi \sqrt{-1}}{\hbar},b)_{\p p}\otimes V(-\alpha,1)_{\p 1}}) \\
&=&
\mt_{V(\alpha+\frac{n \pi \sqrt{-1}}{\hbar},b)_{\p p}}(\ptr_{V(-\alpha,1)_{\p 1}}(\Id_{V(\alpha+\frac{n \pi \sqrt{-1}}{\hbar},b)_{\p p}}\otimes \Id_{V(-\alpha,1)_{\p 1}}))\\
&=&
\mt_{V(\alpha+\frac{n \pi \sqrt{-1}}{\hbar},b)_{\p p}}(\Id_{V(\alpha+\frac{n \pi \sqrt{-1}}{\hbar},b)_{\p p}})\qdim V(-\alpha,1)_{\p 1}\\
&=&
0
\end{eqnarray*}
since $\qdim V(-\alpha,1)_{\p 1}=0$. Turning to the second equality, cyclicity of the m-trace implies
$$
\mt_{V(\alpha,a)_{\p s}}(\Phi_{P(\frac{n \pi \sqrt{-1}}{\hbar},b)_{\p p},V(\alpha,a)_{\p s}})=\mt_{{P(\frac{n \pi \sqrt{-1}}{\hbar},b)_{\p p}}}(\Phi_{V(\alpha,a)_{\p s},{P(\frac{n \pi \sqrt{-1}}{\hbar},b)_{\p p}}}).
$$
Taking the m-trace of the second and third equalities of Lemma \ref{lem:Phi's}, equating them and solving for $\mt_{P(\frac{n \pi \sqrt{-1}}{\hbar},a)_{\p p}}(x_{\frac{n \pi \sqrt{-1}}{\hbar},a,\p p})$ completes the proof.
\end{proof}

\begin{lemma}\label{lem:endAlgIsom}
Let $\alpha \in \C \setminus \frac{\pi \sqrt{-1}}{\hbar} \Z$ and $I$ the isomorphism \eqref{E:tensorSimpNongen}. The algebra isomorphism
\[
I^*: \End_{\catq}(V(\alpha,0)_{\p 0} \otimes V(\alpha,0)_{\p 0}^*)
\xrightarrow[]{\sim}
\End_{\catq}(P(0,0)_{\p 0}),
\qquad
f \mapsto I \circ f \circ I^{-1}
\]
maps $\tcoev_{V(\alpha,0)_{\p 0}} \circ \ev_{V(\alpha,0)_{\p 0}}$ to $(q-q^{-1}) \qd(V(\alpha,0)_{\p 0}) \cdot x_{0,0,\p 0}$.
\end{lemma}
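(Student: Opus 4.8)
Write $W=V(\alpha,0)_{\p 0}$ and $g=\tcoev_W\circ\ev_W\in\End_{\catq}(W\otimes W^*)$. The plan is first to locate $I^*(g)$ inside $\End_{\catq}(P(0,0)_{\p 0})$ up to scalar, and then to pin down that scalar using the modified trace $\mt$.

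For the first step, recall that $\qdim W=0$ (recorded after equation \eqref{eq:qdimVE}), so
\[
g^2=\tcoev_W\circ(\ev_W\circ\tcoev_W)\circ\ev_W=(\qdim W)\cdot g=0,
\]
and hence $I^*(g)$ is a square-zero endomorphism of $P(0,0)_{\p 0}$. By Proposition \ref{prop:HomProj} one has $\End_{\catq}(P(0,0)_{\p 0})\simeq\C[x_{0,0,\p 0}]\slash\langle x_{0,0,\p 0}^2\rangle$, and an element $c_0\Id+c_1 x_{0,0,\p 0}$ of this algebra squares to zero if and only if $c_0=0$. Therefore $I^*(g)=c\cdot x_{0,0,\p 0}$ for a unique $c\in\C$, and it remains to show $c=(q-q^{-1})\qd(W)$.

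For the second step, I would apply $\mt$ on the ideal $\Proj$. Both $W\otimes W^*$ (since $W$ is projective by Proposition \ref{prop:genericProjective} and $\Proj$ absorbs tensor products) and $P(0,0)_{\p 0}$ (Lemma \ref{lem:projIndec}) lie in $\Proj$, so cyclicity of $\mt$ applied to the isomorphism $I$ gives $\mt_{P(0,0)_{\p 0}}(I^*(g))=\mt_{W\otimes W^*}(g)$. The left hand side is $c\cdot\mt_{P(0,0)_{\p 0}}(x_{0,0,\p 0})=c\,(q-q^{-1})^{-1}$ by Lemma \ref{lem:modTrComp}. For the right hand side, using the partial trace property (again $W$ is projective), $\mt_{W\otimes W^*}(g)=\mt_W(\ptr_{W^*}(g))$. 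A direct computation with the explicit duality maps \eqref{E:tcoev}--\eqref{E:coev}, in which the factors of $K$ coming from $\ev$ cancel those coming from $\coev$, shows that $\ptr_{W^*}(\tcoev_W\circ\ev_W)=\Id_W$; equivalently, this is the usual ``straightening'' (snake) identity in a pivotal category. Hence $\mt_{W\otimes W^*}(g)=\mt_W(\Id_W)=\qd(W)$. Combining the two computations yields $c\,(q-q^{-1})^{-1}=\qd(W)$, that is, $c=(q-q^{-1})\qd(V(\alpha,0)_{\p 0})$, as claimed.

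The only mildly delicate point is the identity $\ptr_{W^*}(\tcoev_W\circ\ev_W)=\Id_W$: since $\qdim W=0$, it cannot be extracted by taking ordinary quantum traces, so it must be checked either by the explicit bases-and-signs computation indicated above or by the general diagrammatic argument in pivotal categories. Everything else is bookkeeping with the results already established.
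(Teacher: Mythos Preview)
Your proof is correct and follows essentially the same approach as the paper: reduce to a scalar via nilpotency of $\tcoev_W\circ\ev_W$ and the description of $\End_{\catq}(P(0,0)_{\p 0})$ from Proposition \ref{prop:HomProj}, then determine the scalar by applying the m-trace, using cyclicity, the partial trace property, and the snake identity $\ptr_{W^*}(\tcoev_W\circ\ev_W)=\Id_W$. The paper phrases the last step as following from the snake axioms together with the identifications $\tcoev_{W^*}=\coev_W$ and $\ev_{W^*}=\tev_W$, which is exactly what you sketched.
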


\begin{proof}
Write $V$ for $V(\alpha,0)_{\p 0}$. By Proposition \ref{prop:HomProj}, there exist $c_1,c_2 \in \C$ such that
\begin{equation}
\label{eq:muPullback}
I^* (\tcoev_{V} \circ \ev_{V}) = c_1 \Id_{P(0,0)_{\p 0}} + c_2 x_{0,0, \p 0}.
\end{equation}
Since $\tcoev_{V} \circ \ev_{V}$ is nilpotent, $c_1=0$. Taking the m-trace of the left hand side of equation \eqref{eq:muPullback} gives
\begin{eqnarray*}
\mt_{P(0,0)_{\p 0}} (I^* (\tcoev_{V} \circ \ev_{V}))
&=&
\mt_{V \otimes V^*} (\tcoev_{V} \circ \ev_{V}) \\
&=&
\mt_{V} \left( \ptr_{V^*} (\tcoev_{V} \circ \ev_{V}) \right) \\
&=&
\mt_{V} (\Id_{V}) \\
&=&
\qd(V).
\end{eqnarray*}
The first and second equalities follow from the cyclicity and partial trace properties of the m-trace, respectively, and the third from the snake axioms of left and right dualities after using the canonical identifications $\tcoev_{V^*} = \coev_V$ and $\ev_{V^*} = \tev_V$. On the other hand, Lemma \ref{lem:modTrComp} shows that the m-trace of the right hand side of equation \eqref{eq:muPullback} is $c_2 \mt_{P(0,0)_{\p 0}} (x_{0,0,\p0}) = c_2 (q-q^{-1})^{-1}$. It follows that $c_2 = (q-q^{-1}) \qd(V(\alpha,0)_{\p 0})$.
\end{proof}

\subsection{Generic semisimplicity and the ribbon structure of $\catq$}
\label{sec:genSSGeneric}

Let $\Gr = \C$. For each $\alpha \in \Gr$, denote by $\catq_{\alpha} \subset \catq$ the full subcategory of modules on which $E$ acts by $\alpha$. 

\begin{proposition}\label{prop:genericSS}
The $\Gr$-graded category $\catq \simeq \bigoplus_{\alpha \in \Gr}\catq_{\alpha}$ is generically semisimple with small symmetric subset $\XX = \frac{\pi \sqrt{-1}}{\hbar} \Z$. Moreover, if $\alpha \in \Gr \setminus \XX$, then $\{V(\alpha,a)_{\p p} \mid a \in \C, \, \p p \in \Ztwo\}$ is a completely reduced dominating set for $\catq_{\alpha}$.
\end{proposition}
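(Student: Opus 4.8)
The plan is to dispose of the formal points first. Sending a weight module to the unique eigenvalue of the central element $E$ decomposes $\catq$ as $\bigoplus_{\alpha \in \Gr} \catq_\alpha$, and this is a $\Gr$-grading in the sense of Definition~\ref{def:Gstr}: $\unit = \ve(0,0)_{\p 0} \in \catq_0$, the relation $S(E) = -E$ forces $V^* \in \catq_{-\alpha}$ whenever $V \in \catq_\alpha$, and $\Delta(E) = E \otimes 1 + 1 \otimes E$ forces $V \otimes V' \in \catq_{\alpha + \alpha'}$. The set $\XX = \frac{\pi\sqrt{-1}}{\hbar}\Z$ is a subgroup of $\Gr = \C$, hence symmetric; and since it is countable, no finite union $\bigcup_{i=1}^n (g_i + \XX)$ can equal the uncountable group $\C$, so $\XX$ is small in the sense of Definition~\ref{def:smallsymm}.

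Next I would fix $\alpha \in \Gr \setminus \XX$ and identify the simple objects of $\catq_\alpha$. By Proposition~\ref{prop:irrepsgloo}, the only simple weight modules on which $E$ acts by a scalar outside $\frac{\pi\sqrt{-1}}{\hbar}\Z$ are the quantum Kac modules $V(\alpha,a)_{\p p}$ with $a \in \C$, $\p p \in \Z_2$; in particular the one-dimensional simples $\ve(\tfrac{n\pi\sqrt{-1}}{\hbar},b)_{\p p}$ all sit in degrees belonging to $\XX$ and do not occur in $\catq_\alpha$. The modules $V(\alpha,a)_{\p p}$ are pairwise non-isomorphic, since $a$ is recovered from the unordered pair $\{a,a-1\}$ of $G$-eigenvalues, and $\p p$ is the parity of the $a$-weight line (degree-$\p 0$ isomorphisms preserving the $\Z_2$-grading); each is also regular, because $\tev_{V(\alpha,a)_{\p p}}$ is a nonzero morphism to the simple object $\unit$, hence an epimorphism. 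The decisive input is Proposition~\ref{prop:genericProjective}: as $\alpha \notin \frac{\pi\sqrt{-1}}{\hbar}\Z$, every $V(\alpha,a)_{\p p}$ is projective and injective in $\catq$, and therefore in the direct-summand subcategory $\catq_\alpha$.

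From here the argument is a short length induction. Any $V \in \catq_\alpha$ has finite length (Lemma~\ref{lem:locFin}) with all composition factors of the form $V(\alpha,a)_{\p p}$. If $V \neq 0$, choose a simple submodule $S \hookrightarrow V$; injectivity of $S$ yields a retraction, so $V \cong S \oplus V'$ with $V'$ of strictly smaller length, and induction shows that $V'$, hence $V$, is a finite direct sum of modules $V(\alpha,a)_{\p p}$. In particular each $\catq_\alpha$ with $\alpha \notin \XX$ is semisimple, and the summand inclusions and projections of such a decomposition exhibit $\{V(\alpha,a)_{\p p} \mid a \in \C,\ \p p \in \Z_2\}$ as a dominating set for $\catq_\alpha$. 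That this set is completely reduced is then Schur's lemma, using that $\Hom$ spaces in $\catq$ are finite dimensional (Lemma~\ref{lem:locFin}) over the algebraically closed field $\C$: $\dim_\C \Hom_{\catq}(V(\alpha,a)_{\p p},V(\alpha,a')_{\p p'}) = \delta_{(a,\p p),\,(a',\p p')}$.

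I do not anticipate a genuine obstacle; the proposition essentially repackages Propositions~\ref{prop:irrepsgloo} and~\ref{prop:genericProjective} together with local finiteness. The one point that deserves a moment's care is the passage from injectivity of the generic simples to semisimplicity of $\catq_\alpha$: the splitting step requires that \emph{every} simple of $\catq_\alpha$ be injective, which is exactly what $\alpha \notin \frac{\pi\sqrt{-1}}{\hbar}\Z$ secures through Proposition~\ref{prop:irrepsgloo}, the non-injective $\ve$-type simples being excluded precisely because their $E$-eigenvalues lie in $\XX$.
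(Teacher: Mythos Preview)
Your proof is correct and follows essentially the same approach as the paper: both use Proposition~\ref{prop:genericProjective} to split off injective simple submodules iteratively. The only stylistic difference is that the paper constructs the simple submodule explicitly by picking a highest weight vector (which generates a copy of $V(\alpha,a)_{\p p}$ since $\alpha \notin \XX$), whereas you invoke the abstract existence of a simple submodule via finite length; your version is also more thorough in spelling out the formal verifications (smallness of $\XX$, complete reduction via Schur) that the paper leaves implicit.
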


\begin{proof}
That $\catq \simeq \bigoplus_{\alpha \in \Gr}\catq_{\alpha}$ is a $\Gr$-grading follows from the definition of the coproduct and antipode of $\Uq$. Let $\alpha \in \Gr \setminus \XX$ and $V \in \catq_{\alpha}$ non-zero. As in the proof of Proposition \ref{prop:irrepsgloo}, there exists a highest weight vector $v \in V$. Since $\alpha \notin \frac{\pi \sqrt{-1}}{\hbar} \Z$, this vector generates a submodule $\langle v \rangle \subset V$ isomorphic to $V(\alpha,a)_{\p p}$ for some $a \in \C$ and $\p p\in \Ztwo$ which, by Proposition \ref{prop:genericProjective}, is simple and injective. It follows that there exists a splitting $V\simeq V^{\prime} \oplus \langle v \rangle$. Iterating this process shows that $\catq_{\alpha}$ is semisimple with the claimed completely reduced dominating set.
\end{proof}

Let $\theta : \Id_{\catq} \Rightarrow \Id_{\catq}$ be the natural automorphism with components $\theta_V = \ptr_R(c_{V,V})$, $V \in \catq$.

\begin{theorem}
\label{thm:ribbonCat}
The natural automorphism $\theta$ gives $\catq$ the structure of a $\C$-linear ribbon category.
\end{theorem}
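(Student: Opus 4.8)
The plan is to check that the braided (Proposition~\ref{prop:braiding}) pivotal (Lemma~\ref{lem:catqPivot}), hence rigid, category $\catq$ together with $\theta$ satisfies the axioms of a ribbon category; concretely, that $\theta$ is a \emph{twist}: each $\theta_V$ is invertible, $\theta_{V\otimes W}=(\theta_V\otimes\theta_W)\circ c_{W,V}\circ c_{V,W}$ for all $V,W\in\catq$, and $\theta_{V^*}=(\theta_V)^*$. Naturality of $\theta=\{\ptr_R(c_{V,V})\}_V$ as an endomorphism of $\Id_{\catq}$ is immediate from naturality of the braiding, $\ev$ and $\tcoev$.

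For invertibility I would compute $\theta_V$ directly, which is independently useful for the calculations of later sections. Since the braiding, $\ev$ and $\tcoev$ all respect the $E$- and $G$-gradings, $\theta_V$ preserves weight spaces; expanding $\ptr_R(c_{V,V})$ out of $c_{V,V}=\tau_{V,V}\circ\tilde R_{V,V}\circ\Upsilon_{V,V}$ and tracking the Koszul signs, one finds that on a weight vector $v$ of weight $\lambda=(\lambda_E,\lambda_G)$,
\[
\theta_V(v)=q^{-2\lambda_E\lambda_G+\lambda_E}\,v-(q-q^{-1})\,q^{-2\lambda_E\lambda_G}\,YXv,
\]
the second term being the contribution of the $X\otimes Y$ summand of $\tilde R_{V,V}$. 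Using $X^2=Y^2=0$ and $XY+YX=\frac{K-K^{-1}}{q-q^{-1}}$ one checks that $YX$ acts on the $\lambda$-weight space with $(YX)^2=[\lambda_E]_q\,YX$; hence $\theta_V$ is invertible, acting on that weight space with eigenvalues $q^{-2\lambda_E\lambda_G+\lambda_E}$ and $q^{-2\lambda_E\lambda_G-\lambda_E}$. In particular $\theta$ acts on $V(\alpha,a)_{\p p}$ by the scalar $q^{-2\alpha a+\alpha}$, which one checks for consistency against the duality \eqref{eq:dualIrred} and the tensor product rule \eqref{E:tensorSimpGen}. (Abstractly, invertibility is automatic: $\ptr_R(c_{V,V})$ is the Drinfeld isomorphism $u_V\colon V\to V^{**}$ of the braided rigid category $\catq$ composed with the inverse of the pivotal isomorphism $V\to V^{**}$ of Lemma~\ref{lem:catqPivot}, both natural isomorphisms.)

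The remaining two identities are the defining coherences of a twist and hold in any braided pivotal category, so the last step is a graphical argument in the Reshetikhin--Turaev calculus recalled in Section~\ref{sec:ribbCat}: $\theta_{V\otimes W}=(\theta_V\otimes\theta_W)\circ c_{W,V}\circ c_{V,W}$ follows by isotoping the curl defining $\theta_{V\otimes W}$ into the two curls for $V$ and $W$ joined by a positive full double braiding, using the hexagon axioms and the snake relations, while $\theta_{V^*}=(\theta_V)^*$ follows from pivotality, which forces the left and right partial traces of $c_{V^*,V^*}$ to coincide and to equal $(\ptr_R(c_{V,V}))^*$. I would carry out these diagram chases explicitly. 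The one genuinely delicate point is the bookkeeping of Koszul signs; since all of them are carried by the symmetric braiding $\tau$ on super vector spaces already built into $c$, $\ev$ and $\tcoev$, the graphical identities go through as in the non-super case, but I expect this sign-tracking --- best confirmed by re-deriving the balancing identity directly on a tensor product of two weight modules and cross-checking against the scalar values of $\theta$ recorded above --- to be the main, if routine, obstacle.
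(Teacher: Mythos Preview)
Your computation of $\theta_V$ on an arbitrary weight vector is correct and more general than the paper's (which only evaluates $\theta$ on a highest weight vector of $V(\alpha,a)_{\p p}$), and you are right that naturality, invertibility, and the balancing identity $\theta_{V\otimes W}=(\theta_V\otimes\theta_W)\circ c_{W,V}\circ c_{V,W}$ all follow formally once one recognizes $\theta_V$ as the composite of the Drinfeld morphism with the inverse pivotal structure.

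The gap is in the last step. The identity $\theta_{V^*}=(\theta_V)^*$ does \emph{not} follow from pivotality alone, and pivotality does \emph{not} force the left and right partial traces to coincide---that is precisely the distinction between a braided pivotal category and a ribbon (equivalently, spherical) one. Your proposed graphical argument cannot close this gap, because there exist braided pivotal categories in which $\ptr_R(c_{V,V})$ and $\ptr_L(c_{V,V})$ differ. So this condition must be checked by hand, and you have not done so.

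The paper handles this differently: it verifies $\theta_{V^*}=(\theta_V)^*$ only on \emph{generic simples}, using the explicit scalar $\theta_{V(\alpha,a)_{\p p}}=q^{-2\alpha a+\alpha}\Id$ together with the duality isomorphism \eqref{eq:dualIrred}, and then invokes \cite[Theorem 9]{GP1}, which says that in a generically semisimple braided pivotal category this check on generic simples suffices to conclude that $\theta$ is a compatible twist everywhere. Your approach could be salvaged without that citation: with your explicit formula for $\theta_V$ on weight vectors you could compute $(\theta_V)^*$ and $\theta_{V^*}$ directly (tracking how $YX$ acts on $V^*$ via the antipode, $S(YX)=XY$) and compare. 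That would be a genuine alternative proof, but it is real work, not a formal consequence of pivotality.
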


\begin{proof}
We have already seen in Lemma \ref{lem:catqPivot} that $\catq$ is pivotal and in Proposition \ref{prop:braiding} that $\catq$ is braided. It is automatic that $\theta$ satisfies the balancing conditions. Let $(\alpha,a) \in \C^2$ and $\p p \in \Ztwo$. We claim that $\theta_{V(\alpha,a)_{\p p}} = q^{-2 \alpha a+\alpha} \Id_{V(\alpha,a)_{\p p}}$. Since $\End_{\catq}(V(\alpha,a)_{\p p}) \simeq \C$, it suffices to compute $\theta_{V(\alpha,a)_{\p p}}v$ for a highest weight vector $v$:
\begin{eqnarray*}
v
&\xmapsto{\tcoev_{V(\alpha,a)_{\p p}}}&
v \otimes v \otimes v^* + v \otimes v^{\prime} \otimes v^{\prime*}\\
&\xmapsto{c_{V(\alpha,a)_{\p p},V(\alpha,a)_{\p p}}}&
(-1)^{\p p}q^{-2\alpha a} v\otimes v \otimes v^* + (-1)^{\p p + \p 1}q^{-2\alpha a + \alpha} v^{\prime} \otimes v \otimes v^{\prime*}\\
&\xmapsto{\ev_{V(\alpha,a)_{\p p}}}&
q^{-2\alpha a + \alpha} v.
\end{eqnarray*}
View $\catq$ as generically semisimple as in Proposition \ref{prop:genericSS}. Using the classification of simples from Proposition \ref{prop:irrepsgloo} and the isomorphism \eqref{eq:dualIrred}, we conclude that $\theta_{V(\alpha,a)_{\p p}^*} = \theta_{V(\alpha,a)_{\p p}}^*$ for all generic simples, that is, when $\alpha \in \Gr \setminus \XX$. The assumptions of \cite[Theorem 9]{GP1} are therefore satisfied and we conclude that $\theta$ is a compatible twist.
\end{proof}

\begin{remark}
Variants of Proposition \ref{prop:braiding} and Theorem \ref{thm:ribbonCat} are known:
\begin{enumerate}
\item Following Kulish \cite{kulish1989} and Khoroshkin and Tolstoy \cite[\S 7]{khoroshkin1991}, Viro explained that $\Uhgloo$, the $h$-adic quantum group of $\gloo$, admits a ribbon structure. In particular, the category of topological $\Uhgloo$-modules is ribbon \cite[\S 11.3]{Viro}.

\item Sartori proved that the category of finite dimensional weight modules over $U_{\mathbf{q}}(\gloo)$, the quantum group of $\gloo$ over $\C(\mathbf{q})$ with $\mathbf{q}$ an indeterminant, is ribbon by transferring the ribbon structure of $\Uhgloo$ \cite[\S 4.1]{sartori2015}, following the method of Tanisaki \cite[\S 4]{tanisaki1992}.
\end{enumerate}
Our proof of Proposition \ref{prop:braiding} is similar in spirit to that of Sartori \cite{sartori2015}, in that the definition of the ribbon structure of $\catq$ is motivated by the form of the universal $R$-matrix of $\Uhgloo$. Our proof of Theorem \ref{thm:ribbonCat} is however different, leveraging the generic semisimplicity of $\catq$.
\end{remark}

\subsection{Relative modularity of $\catq$ for arbitrary $q$}
\label{sec:relModArb}

Continue to denote $\Gr = \C$ and $\XX=\frac{\pi \sqrt{-1}}{\hbar} \Z$ and view $\catq$ as generically semisimple as in Proposition \ref{prop:genericSS}. Let $\Zt= \C \times \Ztwo$. Then $\{\ve(0,b)_{\p p}\}_{(b, \p p)\in \Zt} $ is a free realisation of $\Zt$ in $\catq_{0}$. Indeed, it is immediate that $\theta_{\ve(0,b)_{\p p}} = \Id_{\ve(0,b)_{\p p}}$. The remaining parts of Definition \ref{def:free} follow from equation \eqref{eq:qdimVE}, the isomorphisms \eqref{eq:tensorOneDim} and \eqref{eq:tensorOneWithTwoDim} and Proposition \ref{prop:irrepsgloo}. For each $V \in \catq_{\alpha}$, a direct computation gives
\begin{equation*}\label{E:CompCatq}
c_{V,\ve(0,b)_{\p s}}\circ c_{\ve(0,b)_{\p s},V}=q^{-2\alpha b}\Id_{\ve(0,b)_{\p s} \otimes V}.
\end{equation*}
Thus, $\psi: \Gr \times\Zt \rightarrow \C^{\times}$, $(\alpha, (b, \p p)) \mapsto q^{-2\alpha b}$, is a bicharacter satisfying equation \eqref{eq:psi}.

\begin{theorem}\label{thm:relModArb}
The category $\catq$ is a modular $\C$-category relative to $(\Zt=\C \times \Ztwo, \XX=\frac{\pi \sqrt{-1}}{\hbar}\Z)$ with relative modularity parameter $\zeta=-1$. Moreover, $\catq$ is TQFT finite.
\end{theorem}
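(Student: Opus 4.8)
The plan is to establish the two assertions in turn: first that $\catq$ satisfies the relative modularity condition of Definition \ref{def:modG}, and then that it is TQFT finite in the sense of Definition \ref{def:relModFinite}. All the structural data -- the $\Gr$-grading with $\XX = \frac{\pi\sqrt{-1}}{\hbar}\Z$, generic semisimplicity, the free realisation $\{\ve(0,b)_{\p p}\}_{(b,\p p)\in\Zt}$, the ribbon structure, the m-trace $\mt$ on $\Proj$ and its normalization, and the compatibility map $\psi$ -- is already in place from Sections \ref{sec:weightMod}--\ref{sec:relModArb}, so the only genuinely new computations are the stabilization coefficients $\Delta_\pm$ and the relative modularity identity \eqref{eq:mod}.

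For relative modularity, first I would compute the stabilization coefficients $\Delta_\pm$ of Definition \ref{def:ndeg}. Since these are independent of $g$ and of $V \in \cat_g$ by \cite[Lemma 5.10]{CGP14}, I can evaluate them on a convenient generic simple, say $V(\alpha,a)_{\p p}$ for some $\alpha \in \Gr \setminus \XX$. The Kirby color $\Omega_\alpha$ is the formal sum $\sum_{(a,\p p)} \qd(V(\alpha,a)_{\p p}) \cdot V(\alpha,a)_{\p p}$ with modified dimensions given by equation \eqref{eq:mdimTypical}; the stabilization picture then reduces, via the $S'$-type invariants and the twist values $\theta_{V(\alpha,a)_{\p p}} = q^{-2\alpha a + \alpha}\Id$ computed in Theorem \ref{thm:ribbonCat}, to a sum over $\Zt = \C \times \Z_2$ which, after the parity sum collapses the $\Z_2$ part and a Gaussian-type integral (or its appropriate distributional interpretation, exactly as in \cite{BCGP} for $\Uqsl$) handles the $\C$ part, yields explicit nonzero scalars. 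I expect $\Delta_+ \Delta_- = -1$, matching the stated modularity parameter $\zeta = -1$ (recall $\zeta = \Delta_+\Delta_-$ by \cite[Proposition 1.2]{derenzi2022}); the sign is the super/parity contribution. Then the relative modularity identity \eqref{eq:mod} is checked by computing both $S'(V(\alpha,a)_{\p p}, V(\beta,b)_{\p s})$ and the relevant composite using $\Phi$ and the formulas of Lemma \ref{lem:Phi's} together with the modified dimensions \eqref{eq:mdimTypical}: the key input is the first formula of Lemma \ref{lem:Phi's}, which shows $\Phi_{V(\beta,b)_{\p p},V(\alpha,a)_{\p s}}$ is a scalar with a clean $q$-exponential dependence on the weights, so the sum over the Kirby color reproduces the required $\zeta$-multiple of an identity morphism on the nose.

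For TQFT finiteness, I would verify \ref{ite:finCat1}--\ref{ite:finCat3} directly from the representation theory of Section \ref{sec:weightMod}. For \ref{ite:finCat1}: in degree $\alpha \in \Gr \setminus \XX$ the category $\catq_\alpha$ is semisimple (Proposition \ref{prop:genericSS}) with projective indecomposables exactly the $V(\alpha,a)_{\p p}$, and tensoring with $\ve(0,b)_{\p p'}$ (equation \eqref{eq:tensorOneWithTwoDim}) sweeps out all of them from, say, the single object $V(\alpha,0)_{\p 0}$, so $J_\alpha$ is a singleton; in degree $\alpha = \frac{n\pi\sqrt{-1}}{\hbar} \in \XX$, Lemma \ref{lem:projIndec} and the classification of simples show the projective indecomposables are the $P(\frac{n\pi\sqrt{-1}}{\hbar},b)_{\p p}$, and by the isomorphism $P(\frac{n\pi\sqrt{-1}}{\hbar},b)_{\p p} \simeq P(0,b)_{\p 0} \otimes \ve(\frac{n\pi\sqrt{-1}}{\hbar},0)_{\p p}$ together with $\ve(0,b)_{\p p} = \sigma((b,\p p))$, these are all of the form $P_j \otimes \sigma(k)$ with $J_\alpha$ again a singleton (e.g. $\{P(0,0)_{\p 0}\}$ up to the degree shift). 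Property \ref{ite:finCat2} follows because $\sigma((b,\p p)) = \ve(0,b)_{\p p}$ is one-dimensional and $\Hom_{\catq}(\ve(0,b)_{\p p}, P)$ is just the $(0,b)$-weight, degree-$\p p$ subspace of the finite-dimensional module $P$, which is at most one-dimensional and nonzero only for the finitely many weights occurring in $P$. Property \ref{ite:finCat3} is Lemma \ref{lem:enoughProj} combined with the Krull--Schmidt property of the locally finite abelian category $\catq$ (Lemma \ref{lem:locFin}): every projective is a finite direct sum of projective indecomposables, hence dominated by them.

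The main obstacle is the computation of $\Delta_\pm$ and the verification of \eqref{eq:mod}: the Kirby color here is an infinite formal sum indexed by the non-compact group $\Zt = \C \times \Z_2$, so the stabilization and modularity sums are a priori divergent and must be interpreted in the renormalized/distributional sense established in \cite{CGP14, derenzi2022, BCGP} for unrolled quantum groups. The technical work is to carry out these Gaussian-type sums carefully, keeping track of the parity signs that distinguish this super case from the $\Uqsl$ case and that ultimately produce $\zeta = -1$. Everything else is a direct unwinding of the module-theoretic facts already proved.
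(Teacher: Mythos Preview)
Your TQFT finiteness argument is fine and matches the paper. The relative modularity argument, however, rests on a misreading of the definition of the Kirby color that sends you down an entirely wrong path.

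The Kirby color $\Omega_g$ is the sum $\sum_{i\in I_g}\qd(V_i)\cdot V_i$ over the \emph{finite} set $\Theta(g)$, not over the completely reduced dominating set $\Theta(g)\otimes\sigma(\Zt)$. In the present case $\Theta(\alpha)=\{V(\alpha,0)_{\p 0}\}$ is a singleton, so
\[
\Omega_\alpha=\qd(V(\alpha,0)_{\p 0})\cdot V(\alpha,0)_{\p 0}
\]
is a single term. There is no infinite sum over $\C\times\Z_2$, no divergence, no Gaussian integral, no distributional interpretation. All of that machinery is simply absent from this computation.

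With the correct Kirby color, the verification of \eqref{eq:mod} is a short finite calculation, but it does not proceed via the first formula of Lemma \ref{lem:Phi's} as you suggest. Since $\Theta(\alpha)$ is a singleton, one only needs $V_i=V_j=V(\alpha,0)_{\p 0}$, and the left-hand side of \eqref{eq:mod} is an endomorphism of $V(\alpha,0)_{\p 0}\otimes V(\alpha,0)_{\p 0}^*$. The key step is to transport this through the isomorphism $V(\alpha,0)_{\p 0}\otimes V(\alpha,0)_{\p 0}^*\simeq P(0,0)_{\p 0}$ of \eqref{E:tensorSimpNongen}, where it becomes $\qd(V(\alpha,0)_{\p 0})\,\Phi_{\Omega_\beta,P(0,0)_{\p 0}}$. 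Now the \emph{third} formula of Lemma \ref{lem:Phi's} (for $\Phi_{V(\beta,b)_{\p p},P(\frac{n\pi\sqrt{-1}}{\hbar},a)_{\p s}}$) applies directly and gives a multiple of the nilpotent $x_{0,0,\p 0}$; Lemma \ref{lem:endAlgIsom} then identifies this multiple with $-\tcoev_{V(\alpha,0)_{\p 0}}\circ\ev_{V(\alpha,0)_{\p 0}}$, yielding $\zeta=-1$. The stabilization coefficients $\Delta_\pm$ are not needed for the proof; they are computed afterwards by the same kind of one-term evaluation.
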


\begin{proof}
As explained above, $\catq$ is $\Gr$-graded and $\{\ve(0,b)_{\p p}\}_{(b, \p p)\in \Zt}$ is a free realisation of $\Zt$ in $\catq_0$ satisfying the compatibility condition of Definition \ref{def:preMod}. Proposition \ref{prop:mTrace} shows that the full subcategory of projectives $\Proj \subset \catq$ has an m-trace. Define $\Theta(\alpha)=\{V(\alpha,0)_{\p 0}\}$, $\alpha \in \Gr \setminus \XX$. The isomorphisms \eqref{eq:tensorOneWithTwoDim} show that $\Theta(\alpha) \otimes \sigma(\Zt) = \{V(\alpha,a)_{\p p} \mid a \in \C, \; \p p \in \Ztwo\}$ which, by Proposition \ref{prop:genericSS}, is a completely reduced dominating set for $\catq_{\alpha}$. Thus, $\catq$ is a pre-modular $\Gr$-category relative to $(\XX,\Zt)$.

Next, we find a relative modularity parameter $\zeta$. Let $\alpha, \beta \in \Gr \setminus \XX$. Up to isomorphism and the action of $\Zt$, each of the categories $\catq_{\alpha}$ and $\catq_\beta$ have a unique simple object, namely $V(\alpha,0)_{\p 0}$ and $V(\beta,0)_{\p 0}$, respectively. Let $f$ be the morphism corresponding to the left hand side of equation \eqref{eq:mod} with $\Omega_h = \Omega_\beta$ and $V_i=V_j=V(\alpha,0)_{\p 0}$ and $\tilde{I}:V(\alpha,0)_{\p 0}\otimes V(\alpha,0)_{\p 0}^* \to P(0,0)_{\p 0}$ the isomorphism obtained by combining the isomorphisms \eqref{eq:dualIrred} and \eqref{E:tensorSimpNongen}. We compute
\begin{align*}
\tilde{I}^*f &= \qd(V(\alpha,0)_{\p 0}) \Phi_{\Omega_{\beta}, P(0,0)_{\p 0}} \\
&=
\qd(V(\alpha,0)_{\p 0}) \qd(V(\beta,0)_{\p 0}) \Phi_{V(\beta,0)_{\p 0}, P(0,0)_{\p 0}}\\
&=
-\qd(V(\alpha,0)_{\p 0}) (q-q^{-1}) x_{0,0,\p 0}.
\end{align*}
The third equality follows from Lemma \ref{lem:Phi's}. Lemma \ref{lem:endAlgIsom} then gives $f = - \tcoev_{V(\alpha,0)_{\p 0}}\circ \ev_{V(\alpha,0)_{\p 0}}$, whence $\zeta=-1$.

Turning to TQFT finiteness, in the notation of Definition \ref{def:relModFinite} we have $J_{\alpha}=\{V(\alpha,0)_{\p 0}\}$ if $\alpha \in \Gr \setminus \XX$ and $J_{\alpha}=\{P(\alpha,0)_{\p 0}\}$ if $\alpha \in \XX$, whence property \ref{ite:finCat1} holds. Since $\catq$ is locally finite abelian (see Lemma \ref{lem:locFin}), it is Krull--Schmidt. Properties \ref{ite:finCat2}-\ref{ite:finCat3} follow easily from this and the classification of projective indecomposables from Section \ref{sec:weightMod}.
\end{proof}

Let $\alpha \in \Gr \setminus \XX$. The left hand side of the diagram in Definition \ref{def:ndeg} which defines $\Delta_-$ is equal to
\[
\qd(V(\alpha,0)_{\p 0}) q^{-\alpha} q^{-\alpha}\Phi_{V(\alpha,0)_{\p 0},V(\alpha,0)_{\p 0}}
=
- \Id_{V(\alpha,0)_{\p 0}},
\]
the two factors of $q^{-\alpha}$ being due to the inverse twists and the equality following from Lemma \ref{lem:Phi's} and equation \eqref{eq:mdimTypical}. Hence, $\Delta_- = -1$, from which it follows that $\Delta_+=1$.

\begin{remark}
Since relative modularity implies non-degenerate relative pre-modularity, Theorem \ref{thm:relModArb} implies that $\catq$ defines invariants of decorated closed $3$-manifolds; see Section \ref{sec:CGPInvt}. We expect that these invariants recover those of Bao and Ito \cite{bao2022}, who define invariants using a category closely related to $\catq$, defined using Viro's \emph{$q$-less subalgebra} of $\Uhgloo$ \cite[\S 11.7]{Viro}. 
\end{remark}

\subsection{Relative modularity of $\catq$ for $q$ a root of unity}
\label{sec:relModROU}

Assume that $q$ is a root of unity. In this section we endow $\catq$ with a relative modular structure different from that of Theorem \ref{thm:relModArb}. Since the gradings involved depend on the parity of the order of the root of unity, we treat even and odd roots of unity separately. Many of the proofs are variations of those from Section \ref{sec:relModArb}, so we will at points be brief.

\subsubsection{Odd roots of unity}
\label{sec:relModROUOdd}

Let $r\geq 3$ be an odd positive integer and $\hbar=\frac{2 \pi \sqrt{-1}}{r}$. Then $V(\alpha,a)_{\p p}$ is simple if and only if $\alpha \in \C \setminus \frac{r}{2} \Z$. Let $\Gr = \C \slash \Z \times \C \slash \Z$. For each $(\p \alpha, \p a) \in \Gr$, let $\catq_{(\p \alpha,\p a)} \subset \catq$ be the full subcategory of modules whose weights are congruent to $(\p \alpha, \p a)$.

\begin{proposition}\label{prop:genericSSROUOdd}
The $\Gr$-graded category $\catq \simeq \bigoplus_{(\p \alpha, \p a) \in \Gr}\catq_{(\p \alpha, \p a)}$ is generically semisimple with small symmetric subset $\XX=\{(\p 0, \p 0),(\p {\frac{1}{2}}, \p 0)\}$. Moreover, if $(\p \alpha, \p a) \in \Gr \setminus \XX$, then $\{V(\alpha,a)_{\p p} \mid (\alpha,a,\p p)\in \p \alpha \times \p a \times \Ztwo\}$ is a completely reduced dominating set for $\catq_{(\p \alpha, \p a)}$.
\end{proposition}

\begin{proof}
Let $(\p \alpha, \p a) \in \Gr \setminus \XX$ and $V \in \catq_{(\p \alpha, \p a)}$ non-zero. Since $\p \alpha \neq \p 0, \p {\frac{1}{2}}$, a highest weight vector $v \in V$ generates a submodule $\langle v \rangle \subset V$ isomorphic to $V(\alpha,a)_{\p p}$ for some $\p p\in \Ztwo$ and lift $(\alpha,a) \in \C^2$ of $(\p \alpha, \p a)$. In particular, $\alpha \notin \frac{r}{2} \Z$ and $V(\alpha,a)_{\p p}$ is simple and injective by Proposition \ref{prop:genericProjective}.
 Thus, there is a splitting $V\simeq V^{\prime} \oplus \langle v \rangle$. Iterating this process proves the proposition.  
\end{proof}

Let $\Zt=\Z \times \Z \times \Ztwo$. For $(n,n^{\prime},\p p) \in \Zt$, we compute
\[
\theta_{\ve(nr, n^{\prime}r)_{\p p}}
=
q^{-2nn^{\prime} r^2+nr} \Id_{\ve(nr, n^{\prime}r)_{\p p}}
=
\Id_{\ve(nr, n^{\prime}r)_{\p p}}.
\]
It follows easily from this that $\{\ve(nr,n^{\prime} r)_{\p p}\}_{(n,n^{\prime}, \p p)\in \Zt} $ defines a free realisation of $\Zt$ in $\catq_{(\p 0, \p 0)}$.
For each $V \in \catq_{(\p \alpha, \p a)}$, we compute
\[
c_{V,\ve(nr,n^{\prime} r)_{\p s}}\circ c_{\ve(n r,n^{\prime} r)_{\p s},V}=q^{-2(n\p ar+n'\p \alpha r)}\Id_{\ve(nr,n^{\prime} r)_{\p s} \otimes V}.
\]
Thus, $\psi: \Gr \times\Zt \rightarrow \C^{\times}$, $((\p \alpha,\p a), (n,n^{\prime}, \p s)) \mapsto q^{-2(n\p a+n'\p \alpha )r}$, is a bicharacter satisfying equation \eqref{eq:psi}. Let $(\p \alpha, \p a) \in \Gr \setminus \XX$ with chosen lift $(\alpha_0,a_0) \in \C \times \C$. The set
\[
\Theta(\p \alpha, \p a)
=
\{V(\alpha_0+i, a_0 + j) _{\p 0} \mid i,j \in \{0, \dots, r-1\} \}
\]
satisfies $\Theta(\p \alpha, \p a) \otimes \sigma(\Zt)=\{V(\alpha,a)_{\p p} \mid (\alpha,a,\p p)\in \p \alpha \times \p a \times \Ztwo\}$, which is a completely reduced dominating set for $\catq_{(\p \alpha, \p a)}$ by Proposition \ref{prop:genericSSROUOdd}.

\begin{lemma}
\label{lem:betaPhiROU}
Let $(\p \beta, \p b) \in \Gr \setminus \XX$ and $i,j,k \in \{0, \dots, r-1\}$.
\begin{enumerate}
\item \label{ite:betaPhiROU1} If $i \neq 0$, then $\Phi_{\Omega_{(\p \beta, \p b)},V(i,j)_{\p p}}=0$.
\item \label{ite:betaPhiROU2} The equality $\Phi_{\Omega_{(\p \beta, \p b)},P(0,j)_{\p p}}= -\delta_{j,0}(q-q^{-1})r^2 x_{0,0,\p 0} $ holds.
\item \label{ite:betaPhiROU3} If $i \neq 0$, then $\Phi_{\Omega_{(\p \beta, \p b)},V(i,j)_{\p 0} \otimes P(0,k)_{\p 0}}=0$.
\end{enumerate}
\end{lemma}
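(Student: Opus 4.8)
The plan is to expand the Kirby color as an explicit finite sum and reduce each of the three claims to an elementary geometric-sum identity by feeding in the appropriate formula from Lemma~\ref{lem:Phi's} and using bilinearity of $\Phi$. Fix a lift $(\beta_0,b_0)\in\C^2$ of $(\p\beta,\p b)$; then, by the description of the dominating set in Section~\ref{sec:relModROUOdd} together with \eqref{eq:mdimTypical}, $\Omega_{(\p\beta,\p b)}=\sum_{l,m=0}^{r-1}(q^{\beta_0+l}-q^{-\beta_0-l})^{-1}\cdot V(\beta_0+l,b_0+m)_{\p0}$. For part~(\ref{ite:betaPhiROU1}), the hypothesis $0<i<r$ together with $r$ odd makes $V(i,j)_{\p p}$ simple, so $\Phi_{\Omega_{(\p\beta,\p b)},V(i,j)_{\p p}}$ is a scalar multiple of $\Id$. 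Substituting the first identity of Lemma~\ref{lem:Phi's} for $\Phi_{V(\beta_0+l,b_0+m)_{\p0},V(i,j)_{\p p}}$ and multiplying by the modified dimensions above, the scalar factors as a product whose inner sum over $m$ is $\sum_{m=0}^{r-1}(q^{-2i})^m$. Since $r$ is odd and $0<i<r$ we have $2i\not\equiv0\pmod r$, so $q^{-2i}$ is a nontrivial $r$th root of unity, this sum vanishes, and $\Phi_{\Omega_{(\p\beta,\p b)},V(i,j)_{\p p}}=0$.

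For part~(\ref{ite:betaPhiROU2}) I would instead use the third identity of Lemma~\ref{lem:Phi's}, which gives $\Phi_{V(\beta_0+l,b_0+m)_{\p0},P(0,j)_{\p p}}=-q^{-2j(\beta_0+l)}(q-q^{-1})(q^{\beta_0+l}-q^{-\beta_0-l})\,x_{0,j,\p p}$, independently of $m$. The factor $q^{\beta_0+l}-q^{-\beta_0-l}$ cancels the modified dimension appearing in $\Omega_{(\p\beta,\p b)}$, so the $m$-sum contributes a factor $r$ and the $l$-sum contributes $\sum_{l=0}^{r-1}(q^{-2j})^l$, which is $0$ for $j\neq0$ and $r$ for $j=0$. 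This produces $-\delta_{j,0}(q-q^{-1})r^2x_{0,0,\p p}$; in the only nonvanishing case $j=0$ this is the claimed expression.

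For part~(\ref{ite:betaPhiROU3}) the key observation is that $\Phi_{\Omega_{(\p\beta,\p b)},-}$ is a natural endomorphism of $\Id_{\catq}$: each $\Phi_{V,-}$ is natural in its second slot by naturality of the braiding (the coevaluation and evaluation act only on the first tensor factor), and $\Phi_{\Omega_{(\p\beta,\p b)},-}$ is a $\C$-linear combination of these. In particular it respects direct sum decompositions. Since $i\neq0$, the module $V(i,j)_{\p0}$ is simple with $E$ acting by $i\notin\frac{\pi\sqrt{-1}}{\hbar}\Z$, hence projective and injective by Proposition~\ref{prop:genericProjective}; writing $P(0,k)_{\p0}\simeq V(\alpha,k+1)_{\p1}\otimes V(-\alpha,k)_{\p0}$ for a suitably generic $\alpha$ (as in the proof of Lemma~\ref{lem:projIndec}) and applying \eqref{E:tensorSimpGen} twice then exhibits $V(i,j)_{\p0}\otimes P(0,k)_{\p0}$ as a direct sum of four simple modules, each of the form $V(i,\ast)_{\ast}$ with the same first weight $i$. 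The computation of part~(\ref{ite:betaPhiROU1}) — which used only $i\not\equiv0\pmod r$ and not the precise value of the second weight — applies to each summand, giving $\Phi_{\Omega_{(\p\beta,\p b)},V(i,j)_{\p0}\otimes P(0,k)_{\p0}}=0$.

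Almost all of this is routine bookkeeping once Lemma~\ref{lem:Phi's} is in hand; the two points requiring a little care are keeping straight which weights are generic, i.e.\ outside $\frac{\pi\sqrt{-1}}{\hbar}\Z=\tfrac r2\Z$ (here the odd-$r$ hypothesis is exactly what guarantees $q^{-2i}\neq1$ for $0<i<r$), and, in part~(\ref{ite:betaPhiROU3}), first passing to the semisimple situation of a generic $E$-eigenvalue so that the tensor product splits into simples before invoking naturality of $\Phi_{\Omega_{(\p\beta,\p b)},-}$ and reducing to part~(\ref{ite:betaPhiROU1}).
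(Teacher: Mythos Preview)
Your proof is correct and follows essentially the same strategy as the paper: expand the Kirby color, apply Lemma~\ref{lem:Phi's}, and reduce to geometric sums of roots of unity; for part~(\ref{ite:betaPhiROU3}) both you and the paper decompose the tensor product into four simples $V(i,\ast)_{\ast}$ (the paper states the decomposition directly via \eqref{E:tensorSimpGen} and \eqref{E:tensorSimpNongen}) and then invoke part~(\ref{ite:betaPhiROU1}). One small slip: your factorization $P(0,k)_{\p0}\simeq V(\alpha,k+1)_{\p1}\otimes V(-\alpha,k)_{\p0}$ actually yields $P(0,2k)_{\p0}$ by \eqref{E:tensorSimpNongen}; a correct choice is for instance $V(\alpha,k+1)_{\p0}\otimes V(-\alpha,0)_{\p1}$, but this does not affect your argument, since any such factorization gives the same decomposition of $V(i,j)_{\p0}\otimes P(0,k)_{\p0}$ into simples with $E$-weight $i$.
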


\begin{proof}
\begin{enumerate}
\item Using Lemma \ref{lem:Phi's} and equation \eqref{eq:mdimTypical}, we compute
\begin{eqnarray*}
\Phi_{\Omega_{(\p \beta, \p b)},V(i,j)_{\p p}}
&=&
\sum_{m,n=0}^{r-1} \qd(V(\beta_0 + m, b_0 +n)_{\p 0}) \Phi_{V(\beta_0 + m, b_0 +n)_{\p 0},V(i,j)_{\p p}} \\
&=&
\sum_{m,n=0}^{r-1} (q^{\beta_0 + m}-q^{-\beta_0 -m})^{-1}q^{-2(j(\beta_0 + m) + i (b_0+n)} q^{\beta_0+m}(1-q^{2i}) \Id_{V(i,j)_{\p p}} \\
&=&
\sum_{m=0}^{r-1} \Big( \sum_{n=0}^{r-1} q^{-2in} \Big) (q^{\beta_0 + m}-q^{-\beta_0 -m})^{-1}q^{-2(j(\beta_0 + m) + i b_0} q^{\beta_0+m}(1-q^{2i}) \Id_{V(i,j)_{\p p}}.
\end{eqnarray*}
Since $i \neq 0$ and $r$ is odd, $r$ does not divide $2i$, whence  $q^{-2i} \neq 1$. The sum over $n$ therefore vanishes.

\item Lemma \ref{lem:Phi's} gives
\[
\Phi_{\Omega_{(\p \beta, \p b)}, P(0,j)_{\p 0}}
=
-(q - q^{-1}) \sum_{n=0}^{r-1} \sum_{m=0}^{r-1} q^{-2j(\beta_0 + m)} x_{(0,j,\p 0)}.
\]
If $j \neq 0$, then the sum over $m$ vanishes for the same reason as in the previous part.

\item The isomorphisms \eqref{E:tensorSimpGen} and \eqref{E:tensorSimpNongen} give
\[
V(i,j)_{\p 0} \otimes P(0,k)_{\p 0}
\simeq
V(i,j+k-1)_{\p 1} \oplus V(i,j+k)^{\oplus 2}_{\p 0} \oplus V(i,j+k+1)_{\p 1} .
\]
The desired equality therefore follows from the first part of the lemma and additivity of $\Phi$ in its right index.\qedhere
\end{enumerate}
\end{proof}

\begin{theorem}\label{thm:relModROUOdd}
The category $\catq$ is a modular $\C \slash \Z \times \C \slash \Z$-category relative to $\Zt= \Z \times \Z \times \Ztwo$ and $\XX=\frac{1}{2}  \Z \slash \Z$ with relative modularity parameter $\zeta=-r^2$.  Moreover, $\catq$ is TQFT finite.
\end{theorem}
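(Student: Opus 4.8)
The plan is to follow the same three-step template that proved Theorem~\ref{thm:relModArb}, but now with the larger grading group $\Gr = \C\slash\Z \times \C\slash\Z$ and the discrete group $\Zt = \Z\times\Z\times\Z_2$. The preliminary material preceding the statement has already established most of the structure: Proposition~\ref{prop:genericSSROUOdd} gives the $\Gr$-grading together with generic semisimplicity and the completely reduced dominating sets $\Theta(\p\alpha,\p a)\otimes\sigma(\Zt)$; the computation of $\theta_{\ve(nr,n'r)_{\p p}} = \Id$ together with equations \eqref{eq:qdimVE}, \eqref{eq:tensorOneDim}, \eqref{eq:tensorOneWithTwoDim} and Proposition~\ref{prop:irrepsgloo} shows $\{\ve(nr,n'r)_{\p p}\}_{(n,n',\p p)\in\Zt}$ is a free realisation of $\Zt$ in $\catq_{(\p 0,\p 0)}$; the braiding computation exhibits the bilinear form $\psi((\p\alpha,\p a),(n,n',\p s)) = q^{-2(n\p a + n'\p\alpha)r}$ satisfying \eqref{eq:psi}; and Proposition~\ref{prop:mTrace} supplies the m-trace on $\Proj\subset\catq$. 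Assembling these gives that $\catq$ is a pre-modular $\Gr$-category relative to $(\Zt,\XX)$ exactly as in the first paragraph of the proof of Theorem~\ref{thm:relModArb}.

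Next I would compute the relative modularity parameter $\zeta$ using Lemma~\ref{lem:betaPhiROU}. Fix $(\p\alpha,\p a),(\p\beta,\p b)\in\Gr\setminus\XX$. Up to isomorphism and the $\Zt$-action, a representative simple object of $\catq_{(\p\alpha,\p a)}$ is $V(\alpha_0,a_0)_{\p 0}$; let $f$ be the morphism on the left-hand side of \eqref{eq:mod} with Kirby color $\Omega_h = \Omega_{(\p\beta,\p b)}$ and $V_i = V_j = V(\alpha_0,a_0)_{\p 0}$. Combining the duality isomorphism \eqref{eq:dualIrred} with \eqref{E:tensorSimpNongen} produces an isomorphism $\tilde I : V(\alpha_0,a_0)_{\p 0}\otimes V(\alpha_0,a_0)_{\p 0}^* \xrightarrow{\sim} P(0,0)_{\p 0}$ (after translating the $\Gr$-degree by an element of $\Z\times\Z$ so that the target is the degree-$(\p 0,\p 0)$ projective; this is legitimate because $\Phi$ is computed inside an endomorphism algebra and the $\Zt$-twists only rescale by roots of unity via $\psi$, which one tracks explicitly). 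Pushing $f$ through $\tilde I$ gives $\tilde I^* f = \qd(V(\alpha_0,a_0)_{\p 0})\,\Phi_{\Omega_{(\p\beta,\p b)},P(0,0)_{\p 0}}$, and Lemma~\ref{lem:betaPhiROU}\eqref{ite:betaPhiROU2} evaluates this as $-\qd(V(\alpha_0,a_0)_{\p 0})(q-q^{-1})r^2\,x_{0,0,\p 0}$. Lemma~\ref{lem:endAlgIsom} then identifies $(q-q^{-1})\qd(V(\alpha_0,a_0)_{\p 0})\,x_{0,0,\p 0}$ with $\tcoev\circ\ev$ pulled back, so $f = -r^2\,\tcoev_{V(\alpha_0,a_0)_{\p 0}}\circ\ev_{V(\alpha_0,a_0)_{\p 0}}$, whence $\zeta = -r^2$. (One should also check $\Delta_+\Delta_- = -r^2$ for consistency: the stabilization coefficient $\Delta_-$ is $\qd(V(\alpha_0,a_0)_{\p 0})\,\theta_{V(\alpha_0,a_0)_{\p 0}}^{-2}\,\langle\Phi_{V(\alpha_0,a_0)_{\p 0},V(\alpha_0,a_0)_{\p 0}}\rangle$, which by Lemma~\ref{lem:Phi's} and \eqref{eq:mdimTypical} comes out to a sign, and $\Delta_+$ is then forced by $\Delta_+\Delta_- = \zeta$.)

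Finally, TQFT finiteness is checked against Definition~\ref{def:relModFinite}, exactly as in the last paragraph of the proof of Theorem~\ref{thm:relModArb}. In the present grading, for $(\p\alpha,\p a)\in\Gr\setminus\XX$ the finite set $\{P_j\}_{j\in J_{(\p\alpha,\p a)}}$ is the collection of typical simples $V(\alpha_0+i,a_0+j)_{\p 0}$, $i,j\in\{0,\dots,r-1\}$ (projective by Proposition~\ref{prop:genericProjective}), while for $(\p\alpha,\p a)\in\XX$ it is the finite set of projective covers $P(\alpha_0 + \tfrac{r}{2}\ell,\, a_0 + m)_{\p 0}$ with appropriate finite index sets $\ell,m$; property \ref{ite:finCat1} then holds by the classification in Sections~\ref{sec:UqSimples}--\ref{sec:projIndGeneric} together with \eqref{eq:tensorOneDimWithProj}. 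Since $\catq$ is locally finite abelian (Lemma~\ref{lem:locFin}) it is Krull--Schmidt, and properties \ref{ite:finCat2}--\ref{ite:finCat3} follow from this and the fact that $\Hom$-spaces between projective indecomposables are finite dimensional and nonzero for only finitely many $\Zt$-shifts (Proposition~\ref{prop:HomProj} and the tensor-product formulae). The main obstacle I anticipate is bookkeeping rather than conceptual: one must verify that the sums over the Kirby color $\Omega_{(\p\beta,\p b)}$ — which now ranges over $r^2$ simples rather than a single one — collapse correctly, and the cleanest way to do that is precisely Lemma~\ref{lem:betaPhiROU}, whose proof already isolates the geometric-series cancellations $\sum_{n=0}^{r-1}q^{-2in} = 0$ for $i\not\equiv 0 \pmod r$; the only real care needed is to confirm that the residual $j=0$, $i=0$ term carries exactly the factor $r^2$ claimed, which gives $\zeta = -r^2$ as opposed to $\zeta=-1$ in the arbitrary-$q$ case.
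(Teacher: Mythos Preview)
Your proposal has a genuine gap in the modularity verification. You assert that ``up to isomorphism and the $\Zt$-action, a representative simple object of $\catq_{(\p\alpha,\p a)}$ is $V(\alpha_0,a_0)_{\p 0}$,'' but this is false: the free realisation here is $\sigma(n,n',\p p)=\ve(nr,n'r)_{\p p}$, so tensoring with $\sigma(\Zt)$ shifts weights only by multiples of $r$. The set $\Theta(\p\alpha,\p a)$ consists of the $r^2$ objects $V(\alpha_0+i,a_0+j)_{\p 0}$, $0\le i,j\le r-1$, and these lie in distinct $\Zt$-orbits. Consequently the modularity condition \eqref{eq:mod} must be checked for \emph{every} pair $V_i=V(\alpha_0+i_1,a_0+j_1)_{\p 0}$, $V_j=V(\alpha_0+i_2,a_0+j_2)_{\p 0}$, and in particular you must show the left-hand side vanishes when $(i_1,j_1)\neq(i_2,j_2)$ to match the Kronecker delta on the right.

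The paper's proof does exactly this case analysis. When $i_1\neq i_2$, the tensor product $V_i\otimes V_j^*$ decomposes via \eqref{E:tensorSimpGen} into two typical simples $V(i_1-i_2,\cdot)_{\bullet}$, and Lemma~\ref{lem:betaPhiROU}\eqref{ite:betaPhiROU1} gives $\Phi_{\Omega_{(\p\beta,\p b)},V(i_1-i_2,\cdot)_{\bullet}}=0$. When $i_1=i_2$, one gets $V_i\otimes V_j^*\simeq P(0,j_1-j_2)_{\p 0}$ directly (no ``translation by $\Z\times\Z$'' is needed), and Lemma~\ref{lem:betaPhiROU}\eqref{ite:betaPhiROU2} shows $\Phi_{\Omega_{(\p\beta,\p b)},P(0,j_1-j_2)_{\p 0}}$ vanishes unless $j_1=j_2$, in which case it equals $-r^2(q-q^{-1})x_{0,0,\p 0}$. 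Only then does Lemma~\ref{lem:endAlgIsom} convert this to $-r^2\,\tcoev\circ\ev$. Your computation recovers only the last of these cases. (A related oversight: your parenthetical formula for $\Delta_-$ uses a single simple rather than the Kirby color; the correct computation, carried out after the theorem in the paper, sums over all $r^2$ terms of $\Omega_{(\p\alpha,\p a)}$ and yields $\Delta_-=-r$.)
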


\begin{proof}
Relative pre-modularity and TQFT finiteness are proved as in Theorem \ref{thm:relModArb}, the only difference being that Proposition \ref{prop:genericSSROUOdd} is used in place of Proposition \ref{prop:genericSS}. To find a relative modularity parameter $\zeta$, let $(\p \alpha, \p a), (\p \beta, \p b) \in \Gr \setminus \XX$. In the notation of equation \eqref{eq:mod}, take
\[
\Omega_h
=
\Omega_{(\p \beta, \p b)}
=
\sum_{i,j=0}^{r-1} \qd(V(\beta_0 + i,b_0+j)_{\p 0}) \cdot V(\beta_0 + i,b_0+j)_{\p 0}
\]
and $V_i = V(\alpha_0+i_1,a_0+j_1)_{\p 0}$ and $V_j = V(\alpha_0+i_2,a_0+j_2)_{\p 0}$ for some $i_1,i_2,j_1,j_2 \in \{0, \dots, r-1\}$. Write $M$ for $V(\alpha_0 + i_1,a_0 + j_1)_{\p 0}\otimes V(\alpha_0 + i_2,a_0 + j_2)^*_{\p 0}$, so that the morphism corresponding to the left hand side of equation \eqref{eq:mod} is $f \in \End_{\catq} (M)$. Using the isomorphisms \eqref{E:tensorSimpGen} and \eqref{E:tensorSimpNongen}, we have
\[
I: M \xrightarrow[]{\sim}
\begin{cases}
V(i_1 - i_2,j_1-j_2+1)_{\p 0} \oplus V(i_1 - i_2,j_1-j_2)_{\p 1} & \mbox{if } i_1 \neq i_2, \\
P(0,j_1-j_2)_{\p 0} & \mbox{if } i_1=i_2.
\end{cases}
\]
If $i_1 \neq i_2$, then
\begin{align*}
I^*f &= \qd(V(\alpha_0+i_1,a_0+j_1)_{\p 0}) \Phi_{\Omega_{(\p \beta, \p b)}, V(i_1 - i_2,j_1-j_2+1)_{\p 0} \oplus V(i_1 - i_2,j_1-j_2)_{\p 1})} \\
&=
\qd(V(\alpha_0+i_1,a_0+j_1)_{\p 0}) \cdot \\ & \sum_{i,j=0}^{r-1}
\qd(V(\beta_0 + i,b_0+j)_{\p 0}) \left(\Phi_{V(\beta_0 + i,b_0+j)_{\p 0},V(i_1 - i_2,j_1-j_2+1)_{\p 0}} + \Phi_{V(\beta_0 + i,b_0+j)_{\p 0},V(i_1 - i_2,j_1-j_2)_{\p 1})} \right),
\end{align*}
which vanishes by part (\ref{ite:betaPhiROU1}) of Lemma \ref{lem:betaPhiROU}.
If instead $i_1=i_2$, then
\begin{align*}
I^*f &= \qd(V(\alpha_0+i_1,a_0+j_1)_{\p 0}) \Phi_{\Omega_{(\p \beta, \p b)}, P(0,j_1-j_2)_{\p 0}} \\
&=
\begin{cases}
0 & \mbox{if } j_1\neq j_2, \\
-r^2 \qd(V(\alpha_0+i_1,a_0+j_1)_{\p 0}) (q - q^{-1}) x_{(0,0,\p 0)} & \mbox{if } j_1=j_2,
\end{cases}
\end{align*}
the final equality following from part (\ref{ite:betaPhiROU2}) of Lemma \ref{lem:betaPhiROU}. Using Lemma \ref{lem:endAlgIsom}, we conclude that $f = -r^2 \tcoev_{V(\alpha_0+i_1,a_0+j_1)_{\p 0}}\circ\ev_{V(\alpha_0+i_1,a_0+j_1)_{\p 0}}$ when $j_1=j_2$. It follows that $\zeta = -r^2$.
\end{proof}


Let $(\p \alpha, \p a) \in \Gr \setminus \XX$. The left hand side of the diagram in Definition \ref{def:ndeg} which defines $\Delta_-$ is equal to
\begin{eqnarray*}
&&\sum_{i,j=0}^{r-1} \qd(V(\alpha_0+i,a_0+j)_{\p 0}) q^{2(\alpha_0+i)(a_0+j) - (\alpha_0+i)} q^{-\alpha_0}\Phi_{V(\alpha_0+i,a_0+j)_{\p 0},V(\alpha_0,0)_{\p 0}} \\
&=&
\sum_{i,j=0}^{r-1} (q^{\alpha_0+i}-q^{-\alpha_0-i})^{-1} q^{2i(a_0+j) - (\alpha_0+i)}q^{i}(1-q^{2\alpha_0}) \Id_{V(\alpha_0,0)_{\p 0}}.
\end{eqnarray*}
The sum over $j$ is zero unless $i=0$, so that the last expression is equal to $-r\Id_{V(\alpha,0)_{\p 0}}$.
Hence, $\Delta_- = -r$, from which it follows that $\Delta_+=r$. 

\subsubsection{Even roots of unity}
\label{sec:relModROUEven}

Let $r\geq 2$ be an even positive integer and $\hbar=\frac{\pi \sqrt{-1}}{r}$. Then $V(\alpha,a)_{\p p}$ is simple if and only if $\alpha \notin \C \setminus r \Z$. Let $\Gr = \C \slash 2\Z \times \C \slash \Z$. For each $(\p \alpha, \p a) \in \Gr$, denote by $\catq_{(\p \alpha,\p a)} \subset \catq$ the full subcategory of modules whose weights are congruent to $(\p \alpha, \p a)$.

\begin{proposition}\label{prop:genericSSROUEven}
The $\Gr$-graded category $\catq \simeq \bigoplus_{(\p \alpha, \p a) \in \Gr}\catq_{(\p \alpha, \p a)}$ is generically semisimple with small symmetric subset $\XX=\{(\p 0, \p 0)\}$. Moreover, if $(\p \alpha, \p a) \in \Gr \setminus \XX$, then $\{V(\alpha,a)_{\p p} \mid (\alpha,a,\p p)\in \p \alpha \times \p a \times \Ztwo\}$ is a completely reduced dominating set for $\catq_{(\p \alpha, \p a)}$.
\end{proposition}

\begin{proof}
This is proved in the same way as Proposition \ref{prop:genericSSROUOdd}.
\end{proof}


Let 
$\Zt=\Z \times \Z \times \Ztwo$. Then
$\{\ve(2nr,n^{\prime} r)_{\p p}\}_{(n,n^{\prime}, \p p)\in \Zt}$ is a free realisation of $\Zt$ in $\catq_{(\p 0, \p 0)}$ and
\[
c_{V,\ve(2nr,n^{\prime} r)_{\p s}}\circ c_{\ve(2n r,n^{\prime} r)_{\p s},V}=q^{-2(2n\p a r+n' \p \alpha r)}\Id_{\ve(2nr,n^{\prime} r)_{\p s} \otimes V}
\]%
for any $V \in \catq_{(\p \alpha, \p a)}$. Thus, $\psi: \Gr \times\Zt \rightarrow \C^{\times}$,
$((\p \alpha,\p a), (n,n^{\prime}, \p s)) \mapsto q^{-2(2n \p a+n' \p \alpha )r}$, is a bicharacter satisfying equation \eqref{eq:psi}. Let $(\p \alpha, \p a) \in \Gr$ with chosen lift $(\alpha_0,a_0) \in \C \times \C$. The set
\[
\Theta(\p \alpha, \p a)
=
\{V(\alpha_0+i, a_0 + j) _{\p 0} \mid i,j \in \{0, \dots, r-1\} \}
\]
satisfies $\Theta(\p \alpha, \p a) \otimes \sigma(\Zt) = \{V(\alpha,a)_{\p p} \mid (\alpha,a,\p p)\in \p \alpha \times \p a \times \Ztwo\}$, which is a completely reduced dominating set for $\catq_{(\p \alpha, \p a)}$ by Proposition \ref{prop:genericSSROUEven}. 

Lemma \ref{lem:betaPhiROU} holds in the present setting with only obvious modifications in its proof. For example, part (\ref{ite:betaPhiROU1}) now follows from the fact that since $r$ does not divide $i$ and $q$ is a primitive $2r$\textsuperscript{th} root of unity, $q^{-2i} \neq 1$.

\begin{theorem}\label{thm:relModROUEven}
The category $\catq$ is a TQFT finite modular $\C \slash 2\Z \times \C \slash \Z$-category relative to $\Zt= \Z \times \Z \times \Ztwo$ and $\XX=\{(\p 0, \p 0)\}$ with relative modularity parameter $\zeta=-r^2$.\end{theorem}

\begin{proof}
This is proved as in Theorem \ref{thm:relModROUOdd}, the only difference being that Proposition \ref{prop:genericSSROUEven} is used in place of Proposition \ref{prop:genericSSROUOdd}.
\end{proof}

A calculation analogous to that of Section \ref{sec:relModROUOdd} gives $\Delta_{\pm} = \pm r$.

\subsection{Integral weight modules}
\label{sec:intMod}

Let $\catInt \subset \catq$ be the full subcategory of modules whose $G$-weights are integral. The results of Sections \ref{sec:weightMod} and \ref{sec:mTracegloo} generalize directly to $\catInt$ with essentially the same proofs. In particular, there exists an m-trace on the ideal of projectives $\Proj^{\textnormal{int}} \subset \catInt$ which is unique up to a global scalar, which we normalize as for $\catq$ and $\catInt$ has a ribbon structure. The categories $\catInt$ also admit various relative modular structures, summarized below, which are integral counterparts of those of Sections \ref{sec:relModArb} and \ref{sec:relModROU}.

\subsubsection{Relative modularity of $\catInt$ for arbitrary $q$}
\label{sec:relModIntArb}

Let $\Gr = \C \slash \frac{2 \pi \sqrt{-1}}{\hbar} \Z$. For each $\p \alpha \in \Gr$, let $\catInt_{\p \alpha} \subset \catInt$ be the full subcategory of modules whose $E$-weights are congruent to $\p \alpha$. Then $\catInt$ is $\Gr$-graded and generically semisimple with $\XX = \frac{\pi \sqrt{-1}}{\hbar} \Z \slash \frac{2 \pi \sqrt{-1}}{\hbar} \Z$. The objects $\{\ve(\frac{2\pi \sqrt{-1} n}{\hbar},n^{\prime})_{\p p}\}_{(n,n^{\prime}, \p p)\in \Zt}$ define a free realisation of $\Zt= \Z \times \Z \times \Ztwo$ in $\catInt_{\p 0}$ and $\psi: \Gr \times\Zt \rightarrow \C^{\times}$, $(\p \alpha, (n,n^{\prime}, \p p)) \mapsto q^{-2\p \alpha n^{\prime}}$, is a bicharacter satisfying equation \eqref{eq:psi}. For each $\p \alpha \in \Gr \setminus \XX$, fix a lift $\alpha_0 \in \C$ and define $\Theta(\p \alpha)=\{V(\alpha_0,0)_{\p 0}\}$. Then $\Theta(\p \alpha) \otimes \sigma(\Zt)$ is a completely reduced dominating set for $\catInt_{\p \alpha}$. 

\begin{theorem}\label{thm:relModIntArb}
The category $\catInt$ is a TQFT finite modular $\C \slash \frac{2 \pi \sqrt{-1}}{\hbar} \Z$-category relative to $(\Zt=\Z \times \Z \times \Ztwo,\XX=\frac{\pi \sqrt{-1}}{\hbar} \Z \slash \frac{2 \pi \sqrt{-1}}{\hbar} \Z)$ with relative modularity parameter $\zeta=-1$.
\end{theorem}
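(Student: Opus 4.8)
The plan is to follow the same template as the proof of Theorem~\ref{thm:relModArb}, replacing the grading group $\Gr=\C$ by its quotient $\Gr=\C\slash\frac{2\pi\sqrt{-1}}{\hbar}\Z$ and restricting throughout to the subcategory $\catInt$ of integral $G$-weight modules. First I would verify that $\catInt\simeq\bigoplus_{\p\alpha\in\Gr}\catInt_{\p\alpha}$ is a $\Gr$-grading: the point is that tensoring and dualizing weight modules adds, respectively negates, $E$-weights, and the $E$-weight is only well-defined modulo $\frac{2\pi\sqrt{-1}}{\hbar}\Z$ once one remembers that $K$ acts as $q^E$ and $q$ has ``period'' $\frac{2\pi\sqrt{-1}}{\hbar}$; since $G$-integrality is preserved by $\otimes$ and $(-)^*$, the subcategories $\catInt_{\p\alpha}$ are closed under the required operations. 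Generic semisimplicity with $\XX=\frac{\pi\sqrt{-1}}{\hbar}\Z\slash\frac{2\pi\sqrt{-1}}{\hbar}\Z$ then follows exactly as in Proposition~\ref{prop:genericSS} (a highest weight vector of a module in $\catInt_{\p\alpha}$, $\p\alpha\notin\XX$, generates a simple injective Kac submodule $V(\alpha_0,a)_{\p p}$ with $a\in\Z$, by Proposition~\ref{prop:genericProjective}, giving a splitting), and $\XX$ is small and symmetric in $\Gr$ since finitely many cosets of the two-element set $\frac{\pi\sqrt{-1}}{\hbar}\Z\slash\frac{2\pi\sqrt{-1}}{\hbar}\Z$ cannot cover the infinite group $\Gr$.

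Next I would check the remaining pre-modular data. The collection $\{\ve(\tfrac{2\pi\sqrt{-1}n}{\hbar},n')_{\p p}\}_{(n,n',\p p)\in\Zt}$ lies in $\catInt_{\p 0}$ (the $E$-weight $\tfrac{2\pi\sqrt{-1}n}{\hbar}$ is $\equiv\p 0$, the $G$-weight $n'$ is integral), and Definition~\ref{def:free} is verified as in Section~\ref{sec:relModArb}: quantum dimensions are $\pm 1$ by \eqref{eq:qdimVE}, the twist is trivial because $\theta_{\ve(\tfrac{2\pi\sqrt{-1}n}{\hbar},n')_{\p p}}=q^{-2(\tfrac{2\pi\sqrt{-1}n}{\hbar})n'+\tfrac{2\pi\sqrt{-1}n}{\hbar}}\Id=\Id$ (both exponents lie in $\frac{2\pi\sqrt{-1}}{\hbar}\Z$ so the power of $q$ is $1$), and the ``$V\otimes\sigma(k)\simeq V$ iff $k=0$'' condition follows from \eqref{eq:tensorOneDim}, \eqref{eq:tensorOneWithTwoDim} and the classification in Proposition~\ref{prop:irrepsgloo} restricted to $\catInt$. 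The m-trace on $\Proj^{\textnormal{int}}$ exists by the integral analogue of Proposition~\ref{prop:mTrace}, asserted in the text. The compatibility pairing is the direct computation $c_{V,\ve(\tfrac{2\pi\sqrt{-1}n}{\hbar},n')_{\p s}}\circ c_{\ve,V}=q^{-2\p\alpha n'}\Id$ for $V\in\catInt_{\p\alpha}$ (note $q^{-2\alpha n}=1$ since $n\tfrac{2\pi\sqrt{-1}}{\hbar}\in\frac{2\pi\sqrt{-1}}{\hbar}\Z$, which is exactly why the formula descends to $\Gr$), so $\psi(\p\alpha,(n,n',\p p))=q^{-2\p\alpha n'}$ is well-defined and bilinear. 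Finally $\Theta(\p\alpha)=\{V(\alpha_0,0)_{\p 0}\}$ together with $\sigma(\Zt)$ gives, via \eqref{eq:tensorOneWithTwoDim}, the completely reduced dominating set $\{V(\alpha,a)_{\p p}\mid a\in\Z,\ \p p\in\Z_2,\ \alpha\in\p\alpha\}$ for $\catInt_{\p\alpha}$; complete reducedness is the statement $\dim\Hom(V(\alpha,a)_{\p p},V(\alpha,a')_{\p{p'}})=\delta$, clear since these are non-isomorphic simples of distinct weights.

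For relative modularity I would repeat the computation in Theorem~\ref{thm:relModArb} verbatim: with $\Omega_{\p\beta}=\qd(V(\beta_0,0)_{\p 0})\cdot V(\beta_0,0)_{\p 0}$ (a single simple, since $\Zt$ acts transitively on simples of a generic degree), pull back the left side of \eqref{eq:mod} along $\tilde I:V(\alpha_0,0)_{\p 0}\otimes V(\alpha_0,0)_{\p 0}^*\xrightarrow{\sim}P(0,0)_{\p 0}$ (combining \eqref{eq:dualIrred} and \eqref{E:tensorSimpNongen}, both of which hold in $\catInt$), use Lemma~\ref{lem:Phi's} to get $\tilde I^*f=-\qd(V(\alpha_0,0)_{\p 0})(q-q^{-1})\,x_{0,0,\p 0}$, and apply Lemma~\ref{lem:endAlgIsom} to conclude $f=-\tcoev\circ\ev$, hence $\zeta=-1$. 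TQFT finiteness follows as before: $J_{\p\alpha}=\{V(\alpha_0,0)_{\p 0}\}$ for $\p\alpha\notin\XX$ and $J_{\p\alpha}=\{P(\alpha_0,0)_{\p 0}\}$ for $\p\alpha\in\XX$ give \ref{ite:finCat1}, while \ref{ite:finCat2} and \ref{ite:finCat3} follow from $\catInt$ being locally finite abelian (integral analogue of Lemma~\ref{lem:locFin}), hence Krull--Schmidt, together with the classification of projective indecomposables, all of which carries over. The main thing to be careful about—the only real ``obstacle''—is bookkeeping the passage to the quotient group: one must check at each step (the grading axioms, the triviality of $\theta_{\sigma(k)}$, the well-definedness of $\psi$) that the relevant exponents of $q$ only involve the class $\p\alpha$ and not a choice of lift, i.e.\ that factors $q^{(\text{integer})\cdot\frac{2\pi\sqrt{-1}}{\hbar}}$ are genuinely $1$; once this is in place every other ingredient is imported without change from Sections~\ref{sec:weightMod}--\ref{sec:relModArb}.
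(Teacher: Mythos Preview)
Your proposal is correct and follows precisely the approach the paper intends: the paper does not give a separate proof of Theorem~\ref{thm:relModIntArb} but simply sets up the $\Gr$-grading, free realisation, $\psi$, and $\Theta(\p\alpha)$ in the preceding paragraph and relies on the assertion that the arguments of Sections~\ref{sec:weightMod}--\ref{sec:relModArb} carry over verbatim to $\catInt$. Your only slight slip is the parenthetical justification for $\psi$ descending to $\Gr$: the vanishing cross-term is $q^{-2a\cdot\frac{2\pi\sqrt{-1}n}{\hbar}}=1$ because the $G$-weight $a$ of $V$ is integral (not because of anything about $\alpha$), while well-definedness on $\p\alpha$ comes from $n'\in\Z$ and $q^{\frac{2\pi\sqrt{-1}}{\hbar}}=1$.
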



The stabilization coefficients are $\Delta_{\pm} = \pm 1$. 

\begin{remark}
The category $\catq$ does not admit a relative pre-modular structure with respect to the smaller grading $\Gr = \C \slash \frac{2 \pi \sqrt{-1}}{\hbar} \Z$, since no compatible bicharacter $\psi$ exists.
\end{remark}

\subsubsection{Relative modularity of $\catInt$ for $q$ a root of unity}
\label{sec:relModIntROU}

We treat the case of odd roots of unity, leaving the case of even roots of unity to the reader. Let $r\geq 3$ be an odd positive integer, $\hbar=\frac{2 \pi \sqrt{-1}}{r}$ and $q=e^{\frac{2\pi \sqrt{-1}}{r}}$ . Let $\Gr= \C \slash \Z$. For each $\p \alpha \in \Gr$, let $\catInt_{\p \alpha} \subset \catInt$ be the full subcategory of modules whose $E$-weights are congruent to $\p \alpha$. Then $\catInt$ is $\Gr$-graded and generically semisimple with $\XX=\frac{1}{2}  \Z \slash \Z$. The objects $\{\ve(nr,n^{\prime} r)_{\p p}\}_{(n,n^{\prime}, \p p)\in \Zt}$ define a free realisation of $\Zt= \Z \times \Z \times \Ztwo$ in $\catInt_{\p 0}$ and $\psi: \Gr \times\Zt \rightarrow \C^{\times}$, $(\p \alpha, (n,n^{\prime}, \p s)) \mapsto q^{-2n'\p \alpha r}$, is a bicharacter satisfying equation \eqref{eq:psi}. For each $\p \alpha \in \Gr \setminus \XX$, fix a lift $\alpha_0 \in \C$ and define $\Theta(\p \alpha) = \{V(\alpha_0+i, j) _{\p 0} \mid i,j \in \{0, \dots, r-1\} \}$. Then $\Theta(\p \alpha) \otimes \sigma(\Zt)$ is a completely reduced dominating set for $\catInt_{\p \alpha}$.

\begin{theorem}\label{thm:relModIntROUOdd}
The category $\catInt$ is a modular $\C \slash \Z$-category relative to $\Zt= \Z \times \Z \times \Ztwo$ and $\XX=\frac{1}{2}  \Z \slash \Z$ with relative modularity parameter $\zeta=-r^2$. Moreover, $\catInt$ is TQFT finite.
\end{theorem}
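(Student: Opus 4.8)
The plan is to mirror the proof of Theorem~\ref{thm:relModROUOdd} verbatim, now working inside $\catInt$ and using the data assembled immediately above the statement: the $\Gr$-grading $\catInt \simeq \bigoplus_{\p\alpha \in \Gr} \catInt_{\p\alpha}$, the free realisation $\{\ve(nr, n'r)_{\p p}\}_{(n,n',\p p) \in \Zt}$ of $\Zt$ in $\catInt_{\p 0}$, the bilinear map $\psi$, and the completely reduced dominating sets $\Theta(\p\alpha) \otimes \sigma(\Zt)$ for $\p\alpha \in \Gr \setminus \XX$. Together with the m-trace on the ideal $\Proj^{\textnormal{int}} \subset \catInt$ of projectives recorded in Section~\ref{sec:intMod}, this data already exhibits $\catInt$ as a pre-modular $\C \slash \Z$-category relative to $(\Zt, \XX)$. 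TQFT finiteness is then checked exactly as in Theorem~\ref{thm:relModArb}: one reads off from the classification of projective indecomposables in Section~\ref{sec:intMod} the finite sets $\{P_j\}_{j \in J_{\p\alpha}}$ of projective indecomposables of $\catInt_{\p\alpha}$ modulo $\sigma(\Zt)$, and properties \ref{ite:finCat1}--\ref{ite:finCat3} follow because $\catInt$ is locally finite abelian, hence Krull--Schmidt, by the same argument as Lemma~\ref{lem:locFin}.

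It remains to produce the relative modularity parameter. First I would observe that Lemma~\ref{lem:betaPhiROU} holds verbatim for $\catInt$, with the Kirby color now $\Omega_{\p\beta} = \sum_{i,j=0}^{r-1} \qd(V(\beta_0 + i, j)_{\p 0}) \cdot V(\beta_0 + i, j)_{\p 0}$: its proof uses only Lemma~\ref{lem:Phi's}, the modified dimension formula \eqref{eq:mdimTypical}, the tensor product decompositions \eqref{E:tensorSimpGen} and \eqref{E:tensorSimpNongen}, and the vanishing of the character sum $\sum_{n=0}^{r-1} q^{-2in}$ for $i \in \{1, \dots, r-1\}$ (valid since $r$ is odd), all of which are inherited by $\catInt$. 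Then I would repeat the computation of $\zeta$ from Theorem~\ref{thm:relModROUOdd}: for $\p\alpha, \p\beta \in \Gr \setminus \XX$ take $V_i = V(\alpha_0 + i_1, j_1)_{\p 0}$ and $V_j = V(\alpha_0 + i_2, j_2)_{\p 0}$ among the elements of $\Theta(\p\alpha)$, let $f \in \End_{\catInt}(V_i \otimes V_j^*)$ be the morphism given by the left hand side of equation~\eqref{eq:mod} with $\Omega_h = \Omega_{\p\beta}$, and transport $f$ along the isomorphism $V_i \otimes V_j^* \xrightarrow{\sim} P(0, j_1 - j_2)_{\p 0}$ when $i_1 = i_2$ (respectively, along the isomorphism onto a sum of two generic simples when $i_1 \neq i_2$) coming from \eqref{E:tensorSimpGen}--\eqref{E:tensorSimpNongen} and \eqref{eq:dualIrred}. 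Parts (\ref{ite:betaPhiROU1}) and (\ref{ite:betaPhiROU2}) of Lemma~\ref{lem:betaPhiROU} show that the transported morphism vanishes unless $i_1 = i_2$ and $j_1 = j_2$, in which case it equals $-r^2 \qd(V_i)(q - q^{-1}) x_{0,0,\p 0}$; Lemma~\ref{lem:endAlgIsom} then yields $f = -r^2 \tcoev_{V_i} \circ \ev_{V_i}$, so $\zeta = -r^2$. An argument analogous to the one following Theorem~\ref{thm:relModROUOdd} gives the stabilization coefficients $\Delta_{\pm} = \pm r$, consistent with $\zeta = \Delta_+ \Delta_-$.

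The only genuine point to verify is thus that Lemma~\ref{lem:betaPhiROU}, together with the tensor-product and modified-trace computations of Section~\ref{sec:mTracegloo}, transfers from $\catq$ to $\catInt$; but as noted this is immediate, since those statements are purely representation-theoretic and the restriction to integral $G$-weights affects neither the relevant tensor products, nor the m-trace normalisation, nor the modular arithmetic underlying the vanishing of the character sums. Hence I do not expect any step to present a real obstacle: the content is entirely the recombination of results already established for $\catq$ and transported along the remarks of Section~\ref{sec:intMod}.
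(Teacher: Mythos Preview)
Your proposal is correct and mirrors exactly what the paper does: it gives no separate proof of this theorem, since Section~\ref{sec:intMod} explicitly states that the integral results are obtained from those of Sections~\ref{sec:relModArb} and~\ref{sec:relModROU} with essentially the same proofs, and the data for $\catInt$ is assembled immediately before the statement precisely so that the argument of Theorem~\ref{thm:relModROUOdd} carries over verbatim. Your elaboration of how Lemma~\ref{lem:betaPhiROU} and the $\zeta$-computation transfer is more explicit than the paper, but the route is identical.
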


The stabilization coefficients are $\Delta_{\pm} = \pm r$.

\section{A TQFT for arbitrary $q$}
\label{sec:TQFTArb}

Work in the setting of Section \ref{sec:relModArb}, so that $q= e^{\hbar} \in \C \setminus\{0,\pm 1\}$ and $\catq$ is given the relative modular structure of Theorem \ref{thm:relModArb}.
Fix $\D = \sqrt{-1}$ so that $\delta=-\sqrt{-1}$. The braiding of $\ZVect_{\C}$ is determined by the pairing
\[
\gamma: \Zt \times \Zt \rightarrow \{ \pm 1\},
\qquad
((b_1,\p p_1), (b_2,\p p_2)) \mapsto (-1)^{\p p_1 \p p_2}.
\]
In this section we study the TQFT $\TQFT: \Cob_{\catq} \rightarrow \ZVect_{\C}$ associated to $\catq$ by Theorem \ref{thm:relModTQFT}. Set $g_0=1$ and $V_{g_0}=V(1,0)_{\p 0}$, in the notation of Section \ref{sec:CGPTQFT}.

Let $(M,T,\coh,n) : \emptyset \rightarrow \emptyset$ be a morphism in $\Cob_{\catq}$. If $b_1(M) \geq 1$, then $\TQFT(M,T,\coh,n)$ is holomorphic in $\coh$. Indeed, in view of the definition of $F^{\prime}_{\catq}(R)$ from Section \ref{sec:CGPInvt}, it suffices to note that both $\langle T_V \rangle$ and $\qd(V(\alpha,a)_{\p p})$ are holomorphic in $\coh$; the latter follows from equation \eqref{eq:mdimTypical}.

\subsection{The state space of a generic surface of genus at least one}
\label{sec:higherGenusArb}

Let $\CS=(\Sigma, \coh)$ be a decorated connected surface without marked points. Assume that the genus of $\Sigma$ is $g \geq 1$.

\begin{lemma}
\label{lem:trivalSpine}
If $\coh \in H^1(\Sigma; \Gr)$ is not in the image of the canonical map $H^1(\Sigma; \XX) \rightarrow H^1(\Sigma; \Gr)$, then there exists a handlebody $\eta$ bounding $\Sigma$ which contains an oriented trivalent spine $\Gamma$ with the property that $\coh(m_e) \in \Gr \setminus \XX$ for all edges $e$ of $\Gamma$.
\end{lemma}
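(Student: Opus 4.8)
The plan is to work on the level of the underlying graph of a handlebody and translate the cohomological hypothesis into a combinatorial condition on a basis of cycles. First I would fix a standard handlebody $\eta$ bounding $\Sigma$ together with a standard trivalent spine $\Gamma_0$ whose first homology is freely generated by the core curves; here I will use the explicit model $\Gamma$ drawn just before Proposition \ref{prop:FDofTQFT}, which realizes $H_1(\eta;\Z)\cong\Z^g$ via the loops $e_i$ and whose remaining edges $f_j$ carry boundary relations. The meridians $m_e$ of the edges of $\Gamma_0$, as elements of $H_1(\Sigma\setminus\Gamma_0;\Z)$, pair with $\coh$ to give the values $\coh(m_e)\in\Gr$ that we must force out of $\XX$; the key point is that the meridian classes of a trivalent spine are determined by a small amount of linear data (at each trivalent vertex the three incident meridians sum to zero, and the meridians of a spanning tree's edges are free).

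The heart of the argument is that $\XX$ is small in the sense of Definition \ref{def:smallsymm}: no finite union of translates $\bigcup_i (g_i+\XX)$ exhausts $\Gr$. I would use this as follows. The hypothesis says $\coh$ is not pulled back from $H^1(\Sigma;\XX)$, equivalently there is at least one simple closed curve $\gamma\subset\Sigma$ with $\coh(\gamma)\notin\XX$ (using that $H_1(\Sigma;\Z)$ is generated by simple closed curves, or more precisely that a cohomology class landing in $\XX$ on every simple closed curve lies in $H^1(\Sigma;\XX)$ since $\XX$ is a subgroup). Complete $\gamma$ to a geometric symplectic basis $a_1=\gamma,b_1,\dots,a_g,b_g$ of $H_1(\Sigma;\Z)$ and take $\eta$ to be the handlebody in which the $a_i$ bound disks, so $H_1(\eta;\Z)$ is freely generated by the $b_i$ and $\coh$ restricted to $\eta$ is determined by the values $\coh(b_i)$. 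Now the edges of a trivalent spine $\Gamma$ of $\eta$ have meridians that are integer combinations of the $a_i$ (since each meridian bounds in $\eta$), and by choosing $\Gamma$ appropriately one arranges that these combinations, together with the ability to further modify $\coh$ within its class by adding exact classes supported away from $\Gamma$, range over a Zariski-dense / cofinite-type family. More concretely: I would first handle the case $g=1$ directly, where $\Gamma$ is a circle and a single edge, whose meridian is $a_1=\gamma$, so the condition $\coh(m_e)=\coh(\gamma)\notin\XX$ is exactly the hypothesis. For $g\ge 2$ I would inductively build $\Gamma$ by attaching handles, and at each step use smallness of $\XX$ to choose the new meridian value avoiding finitely many bad translates: at a trivalent vertex joining edges $e',e'',e'''$ with $\coh(m_{e'})=:x$ fixed, the constraint $\coh(m_{e'})+\coh(m_{e''})+\coh(m_{e'''})=0$ leaves $\coh(m_{e''})$ free, and we need $\coh(m_{e''})\notin\XX$ and $-x-\coh(m_{e''})=\coh(m_{e'''})\notin\XX$, i.e. $\coh(m_{e''})\notin\XX\cup(-x+\XX)$, which is a proper subset of $\Gr$ by smallness; realizing a prescribed meridian value is possible because we are free to pick which handlebody (i.e. which Lagrangian choice among a continuum) and to twist $\coh$ by exact classes dual to the $a_i$, which change $\coh(b_i)$ arbitrarily without changing the class on $\Sigma$ only if — and this is the delicate point — one is careful to keep the already-fixed edge values.

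The main obstacle, and the step I would expect to require the most care, is exactly the bookkeeping of \emph{simultaneously} controlling all the meridian values: when we fix the value on one edge and are forced (by the trivalent relations and by the global constraint that the $b_i$-periods of $\coh$ are whatever the original class dictates) the remaining values are not fully free, and we must verify that the system of constraints ``$\coh(m_e)\notin\XX$ for all $e$'' is solvable. The clean way to do this is to not insist on a fixed $\coh$ but to exploit the two freedoms we genuinely have — the choice of which handlebody $\eta$ bounds $\Sigma$ (equivalently the spine's embedding) and the choice of spine $\Gamma$ within a fixed $\eta$ — and to reduce to the statement that one can pick a spine whose edge-meridians are $\Z$-linear combinations of the $a_i$ on which $\coh$ is ``generic''. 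Since $\coh$ takes \emph{some} value outside $\XX$ on \emph{some} primitive class, and $\XX$ is small (so in particular $\Gr\setminus\XX$ is nonempty and not contained in a finite union of translates of any proper subgroup), a pigeonhole / induction on $g$ using smallness at each vertex closes the argument; I would write this induction carefully as the core of the proof, with the $g=1$ base case being immediate from the hypothesis.
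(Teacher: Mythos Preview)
Your overall strategy matches the paper's, which simply cites \cite[Proposition 6.5]{BCGP} after noting the key fact that $\XX$ is a \emph{subgroup} of $\Gr$. However, your execution has a genuine gap. You write that one can ``twist $\coh$ by exact classes dual to the $a_i$, which change $\coh(b_i)$ arbitrarily without changing the class on $\Sigma$'': this is not possible, since $\coh \in H^1(\Sigma;\Gr)$ is fixed and its periods on all cycles of $\Sigma$ are determined. The only genuine freedoms are the choice of handlebody filling (equivalently, which Lagrangian $L \subset H_1(\Sigma;\Z)$ bounds in $\eta$) and the choice of $\Z$-basis of $L$ via handle slides. You correctly name these in your final paragraph, but the mechanism you actually invoke in the inductive step is the wrong one, and you never carry out the correct one.

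Relatedly, smallness of $\XX$ in $\Gr$ is not the tool that closes the induction. Smallness guarantees that $\Gr \setminus \big(\XX \cup (-x+\XX)\big)$ is nonempty, but the meridian values you can actually realize lie in the countable subgroup $\coh(H_1(\Sigma;\Z)) \subset \Gr$, and smallness says nothing about whether \emph{that} subgroup meets the complement of the bad cosets. The correct hypothesis --- the one the paper singles out --- is that $\XX$ is a subgroup: passing to $\bar\coh : H_1(\Sigma;\Z) \to \Gr/\XX$, the task becomes choosing a Lagrangian basis $a_1,\dots,a_g$ so that the $\Z$-linear combinations appearing as edge meridians of the spine of Section~\ref{SS:FiniteDimTQFT} are all nonzero in $\Gr/\XX$. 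Once one has $a_1$ with $\bar\coh(a_1) \neq 0$, handle slides $a_j \mapsto a_j + n_j a_1$ realize enough values of $\bar\coh(a_j)$ to avoid the finitely many forbidden ones; this is the shape of the BCGP argument.
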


\begin{proof}
Since the small symmetric subset $\XX$ is in fact a subgroup of $\Gr$, the lemma can be proved in the same way as \cite[Proposition 6.5]{BCGP} which, moreover, proves that the spine $\Gamma$ may be taken to be that of Section \ref{sec:finDim}.
%
%
%
%
\end{proof}

Continue to denote by $\Gamma$ the spine from Section \ref{sec:finDim}. Consider the graph $\widetilde{\Gamma}$ obtained by modifying $\Gamma$ near the vertex incident to $e_1$, $e_g$ and $f_{2g-3}$:
\begin{equation*}\label{E:Gamma'Triv}
\widetilde{\Gamma} =
\qquad
\begin{tikzpicture}[anchorbase] 
\draw[thick,decoration={markings, mark=at position 0.275 with {\arrow{>}},mark=at position 0.775 with {\arrow{<}}},postaction={decorate}] (4.5,0) circle (0.35);
\draw[thick,decoration={markings, mark=at position 0.275 with {\arrow{>}},mark=at position 0.775 with {\arrow{<}}},postaction={decorate}] (6.5,0) circle (0.35);
\node at (4.5,0.6)  {\tiny $f_{2g-5}$};
\node at (5.5,0.3)  {\tiny $f_{2g-4}$};
\node at (6.5,0.6)  {\tiny $f_{2g-3}$};
\node at (4.5,-0.6)  {\tiny $e_{g-1}$};
\node at (6.5,-0.6)  {\tiny $e_g$};

\node at (7.75,1.0)  {\tiny $e^{-1}_{-1}$};
\node at (8.5,1.0)  {\tiny $e^{0}_{-1}$};
\node at (9.25,1.0)  {\tiny $e^1_{-1}$};
\node at (7.25,-0.2)  {\tiny $e_{g+1}$};
\node at (8.1,-0.3)  {\tiny $e_{g+1}^{\prime}$};
\node at (8.85,-0.25)  {\tiny $e_1^{\prime}$};
\node at (7.5,-2.0)  {\tiny $e_1$};

\draw[-,thick,decoration={markings, mark=at position 0.15 with {\arrow{<}},mark=at position 0.35 with {\arrow{<}},mark=at position 0.55 with {\arrow{<}}},postaction={decorate}] (6.85,0) to [out=0,in=90] (10.0,-0.95);
\draw[-,thick,decoration={markings, mark=at position 0.5 with {\arrow{>}}},postaction={decorate}] (6.85,-1.7) to [out=0,in=-90] (10.0,-0.95);
\draw[-,thick,decoration={markings, mark=at position 0.5 with {\arrow{<}}},postaction={decorate}] (7.75,0.75) to (7.75,0.025);
\draw[-,thick,decoration={markings, mark=at position 0.5 with {\arrow{<}}},postaction={decorate}] (8.5,0.75) to (8.5,0.05) ;
\draw[-,thick,decoration={markings, mark=at position 0.5 with {\arrow{<}}},postaction={decorate}] (9.25,-0.05) to (9.25,0.75);
\draw[-,thick,decoration={markings, mark=at position 0.5 with {\arrow{>}}},postaction={decorate}] (6.15,0) to (4.85,0);

\node at (3.5,0)  {$\cdots$};
\node at (6.5,-1.7)  {$\dots$};
\end{tikzpicture}.
\end{equation*}
Let $H=\{(0,0)_{\p 0}, (0,-1)_{\p 1}\} = \{0,\delta\}$, interpreted below as labeling the two summands on the right hand side of the decomposition \eqref{E:tensorSimpGen}.

\begin{definition}
\label{def:coloring}
A \emph{coloring of degree $k \in \Zt$ of $\widetilde{\Gamma}$} is a function $\Co: \textnormal{Edge}(\widetilde{\Gamma}) \rightarrow \catq$ assigning to each $e \in \textnormal{Edge}(\Gamma)$ a simple module $V(\alpha_e,a_e)_{\p p_e}$, where $(\alpha_e,a_e,\p p_e) \in \C \times \C \times \Ztwo$ is of degree $\coh(m_e) \in \Gr$, that is, $\alpha_e = \coh(m_e)$, and
\[
\Co(e_{-1}^1) = V_{g_0} = \Co(e_{-1}^{-1}),
\qquad
\Co(e_{-1}^0)=\sigma(k)
\]
such that the following \emph{Balancing Condition} holds: at each trivalent vertex of $\widetilde{\Gamma}$, the algebraic sum of the colors of the incident edges is in $H$.
\end{definition}

Fix a lift $\tilde{\coh} \in H^1(\Sigma; \C \times \C \times \Ztwo)$ of $\coh \in H^1(\Sigma; \Gr)$ and let
\[
\mathfrak{C}_k
=
\{\mbox{degree $k$ colorings of $\widetilde{\Gamma}$} \mid \Co(e_i) = \tilde{\coh}(m_{e_i}), \; i=1,\dots, g \}.
\]
Set $\mathfrak{C} = \bigsqcup_{k \in \Zt} \mathfrak{C}_k$. A coloring $c \in \mathfrak{C}_k$ defines a vector $v_c =(\tilde{\eta},\widetilde{\Gamma}_c,\coh,0) \in \TQFT_{-k}(\CS)$, as explained in Section \ref{sec:finDim}.

\begin{theorem}
\label{thm:genusgStateSpaceArb}
Let $\CS=(\Sigma, \coh)$ be a decorated connected surface without marked points and underlying surface of genus $g \geq 1$ such that $\coh$ is not in the image of the canonical map $H^1(\Sigma; \XX) \rightarrow H^1(\Sigma; \Gr)$. Assume that $\coh$ is such that $\coh(e_1)+g_0 \not\in \XX$. Then $\{v_c \mid c \in \mathfrak{C}_k\}$ spans $\TQFT_{-k}(\CS)$. Moreover, $\TQFT_{-k}(\CS)$ is trivial unless $k=(d, \p d)$ for some $d \in [-g,g] \cap \Z$.
\end{theorem}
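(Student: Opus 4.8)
The plan is to deduce the spanning statement from the general spanning set of Proposition~\ref{prop:FDofTQFT}, specialised to the generic situation at hand, and then to read the support constraint off the Balancing Condition.

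\emph{Spanning.} By Proposition~\ref{prop:FDofTQFT}, the space $\TQFT_{-k}(\CS)=\state(\CS\sqcup\hS_k)$ is spanned by the cobordisms $(\tilde\eta,\Gamma'_c,\coh,0)$ for $c$ a finite projective $\catq$-colouring of $\Gamma'$. Since $\coh$ is not in the image of $H^1(\Sigma;\XX)$, Lemma~\ref{lem:trivalSpine} lets me choose the handlebody $\eta$ and its spine $\Gamma$ so that every edge of $\Gamma$ has degree in $\Gr\setminus\XX$; by Propositions~\ref{prop:genericSS} and~\ref{prop:genericProjective} the projective indecomposables in a generic degree are exactly the generic simples $V(\alpha,a)_{\p p}$, so all edges of $\Gamma'$ carry such colours. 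Because $V_{g_0}=V(1,0)_{\p 0}$ is itself a generic simple — and the hypothesis $\coh(e_1)+g_0\notin\XX$ keeps the auxiliary edges produced near the $\hS_k$-legs in $\Gr\setminus\XX$ — the coupon of $\Gamma'$ that carries the legs $V_{g_0},\sigma(k),V_{g_0}^*$ can be resolved, up to skein equivalence, into the trivalent piece of $\widetilde\Gamma$ by repeated use of the decomposition~\eqref{E:tensorSimpGen}. Next, exactly as in the proof of Proposition~\ref{prop:FDofTQFT}, $\sigma$-equivalence normalises the colours of the loop edges $e_1,\dots,e_g$ to the chosen lifts $\tilde\coh(m_{e_i})$. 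Finally, \eqref{E:tensorSimpGen} shows that the morphism space attached to any trivalent vertex of $\widetilde\Gamma$ is at most one-dimensional, and nonzero precisely when the algebraic sum of the colours of its three incident edges lies in $H$; hence only colourings in $\mathfrak{C}_k$ can give nonzero vectors, and $\{v_c\mid c\in\mathfrak{C}_k\}$ spans $\TQFT_{-k}(\CS)$.

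\emph{Support.} It then suffices to bound the $\Zt$-degrees $k$ for which some $c\in\mathfrak{C}_k$ produces a nonzero vector. Fixing such a $c$ and summing the Balancing Condition over all trivalent vertices of $\widetilde\Gamma$, every internal edge cancels (it is incident to two vertices with opposite orientation signs), each $V_{g_0}$-leg contributes $0$ (as $V_{g_0}$ is the chosen representative of $\catq_1$ and so has trivial $\sigma$-degree), and the $\sigma(k)$-leg contributes $k$. Thus $\pm k=\sum_v h_v$ with each $h_v\in H=\{(0,\p 0),(-1,\p 1)\}$, so if $m$ denotes the number of vertices with $h_v=(-1,\p 1)$ we get $k=(d,\p d)$ with $d=\pm m\in\Z$.

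\emph{The bound $|d|\le g$, and the main obstacle.} The remaining and most delicate point is $|d|\le g$. Here I would use that $\{e_1,\dots,e_g\}$ is a cotree of $\Gamma$: for the spine of Section~\ref{sec:finDim} its complement is the path $f_1-f_2-\cdots-f_{2g-3}$, so peeling the Balancing Conditions along this path from a leaf determines the colour of every $f_j$, and hence the slack $h_v$ at every vertex, in terms of the pinned colours of $e_1,\dots,e_g$ and of the slacks already met. The extra ingredient is that $v_c\neq 0$ forces, at each of the $g-1$ ``circles'' of $\Gamma$, the morphism obtained by traversing the circle to be nonzero; by~\eqref{E:tensorSimpGen} this means the two slacks at the vertices of that circle project onto the same summand of a tensor product, which severely restricts which slack patterns can occur. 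Bookkeeping these restrictions — together with the contributions of the long loop $e_1$ and of the modification region carrying the $\hS_k$-legs — should show that the nonzero slacks can be confined to $g$ vertices, giving $m\le g$ and hence $|d|\le g$. The main obstacle is precisely this combinatorial step: upgrading the local ``same-summand'' constraints into the sharp bound $g$, which morally reflects that the $g$ independent cycles of a genus-$g$ handlebody can each absorb only a single unit of slack.
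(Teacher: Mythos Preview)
Your spanning argument is essentially the paper's: invoke Proposition~\ref{prop:FDofTQFT}, use Lemma~\ref{lem:trivalSpine} to force all edge-degrees into $\Gr\setminus\XX$ so that the relevant projective indecomposables are generic simples, resolve the coupon near the $\hS_k$-legs into trivalent pieces via~\eqref{E:tensorSimpGen}, and normalise the $e_i$-colours to the chosen lift $\tilde\coh$ by $\sigma$-equivalence. That part is fine and matches the paper closely.

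For the support constraint you take a genuinely different route, and this is where the gap lies. Your global summation of the Balancing Conditions does correctly show that $k=(d,\p d)$ for some $d\in\Z$: internal edges cancel, the two $V_{g_0}$-legs cancel (one incoming, one outgoing), and $\sum_v h_v$ lands in $\Z\cdot\delta$. But the summation only bounds $|d|$ by the number of trivalent vertices of $\widetilde\Gamma$ carrying a free slack, which is $2g$ (the $2g-2$ vertices of $\Gamma$ plus three from the modification, minus the $\sigma(k)$-vertex where the slack is forced), not $g$. Your proposed sharpening via ``same-summand'' constraints at the circles of $\Gamma$ is not an actual constraint: at each circle the slacks at the two incident vertices are independent binary choices, since the one-dimensionality of the Hom-spaces coming from~\eqref{E:tensorSimpGen} imposes no compatibility between neighbouring vertices. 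So the mechanism you appeal to for halving the count does not exist.

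The paper does not attempt to tighten a global count. Instead it \emph{solves the Balancing Conditions recursively}: with the $e_i$-colours pinned, the colours of $f_1,\dots,f_{2g-3}$ are determined by a tuple $(\epsilon_1,\dots,\epsilon_{2g-3})\in\{0,1\}^{2g-3}$ via an explicit linear recursion, then the colours of $e_{g+1},e_{g+1}',e_1'$ by three further parameters $\mu,\epsilon,\epsilon'\in\{0,1\}$, and the remaining equation expresses $c$ as an explicit signed combination of these $2g$ binary parameters. The bound $|d|\le g$ then drops out of the formula --- the point being that the recursion introduces \emph{alternating} signs on the $\epsilon_j$, which halves the naive range. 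Your summation erases this alternation (it adds all slacks with the same sign), so it cannot see the cancellation. To complete your argument you would have to unwind the recursion edge by edge, at which point you are reproducing the paper's proof.
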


\begin{proof}
By Lemma \ref{lem:trivalSpine}, there exists a handlebody $\eta$  containing a generically colored spine $\Gamma$.   Proposition \ref{prop:FDofTQFT} shows that $\TQFT_{-k}(\CS)$ is spanned by vectors $\{(\tilde{\eta}, \Gamma'_c, \coh,0)\}_{c}$, where $c$ is a finite projective $\catq$-coloring of the graph $\Gamma'$ from Section \ref{sec:finDim}. Since $\Gamma$ is generically colored, all projective indecomposables in any such coloring are in fact simple. Moreover, the coupon in $\Gamma'$ is colored by a morphism $l \in \Hom_{\catq}(V_{f_0}, V_{g_0} \otimes \sigma(k) \otimes V_{g_0}^*)$, where $V_{f_0}$ is the simple module coloring $f_0$. The associated vector is in the span of vectors of the form $\{v_c \mid c \in \mathfrak{C}_k\}$, as follows from the skein equivalences
\[
\epsh{gammaTilde}{25ex}
\put(-50,-40){\ms{e_1}}
\put(-62,65){\ms{e_{g+1}}}
\put(-15,10){\ms{f_0}}
\put(-40,65){\ms{g_0}}
\put(-30,65){\ms{\sigma(k)}}
\put(-10,65){\ms{g_0}}
\put(-38,-15){\ms{h}}
\put(-23,33){\ms{l}}
\qquad
=
\qquad
\sum_{h_1} \;\;
\epsh{oneInFourOut}{25ex}
\put(-42,-40){\ms{e_1}}
\put(-60,65){\ms{e_{g+1}}}
\put(-40,65){\ms{g_0}}
\put(-29,65){\ms{\sigma(k)}}
\put(-10,65){\ms{g_0}}
\put(-32,-15){\ms{h_1}}
\qquad
=
\qquad
\sum_{W_i} \sum_{h_1,h_2,h_3}
\qquad \;\;\;
\epsh{gammaPrime}{25ex}
\put(-30,-43){\ms{e_1}}
\put(-60,65){\ms{e_{g+1}}}
\put(-40,65){\ms{g_0}}
\put(-30,65){\ms{\sigma(k)}}
\put(-10,65){\ms{g_0}}
\put(-88,20){\ms{W_i \otimes \sigma(k)^{-1}}}
\put(-45,-10){\ms{W_i}}
\put(-20,-22){\ms{h_1}}
\put(-32,6){\ms{h_2}}
\put(-42,35){\ms{h_3}}
.\]
Here we use that that the linear map
\begin{multline*}
\bigoplus_{W_i}\Hom_{\catq}(V_{f_0},  W_{i} \otimes V_{g_0}^*) \otimes \Hom_{\catq}(W_i,  (W_i\otimes\sigma(k)^{-1})\otimes \sigma(k)) \otimes \Hom_{\catq}(W_i\otimes\sigma(k)^{-1}, V_{f_{2g-2}} \otimes V_{g_0}) \\ \to
\Hom_{\catq}(V_{f_0},V_{f_{2g-2}} \otimes V_{g_0}\otimes \sigma(k) \otimes V_{g_0}^*)
\end{multline*}
given by $f\otimes g \otimes h \mapsto (h\otimes \Id_{V_{g_0}^*\otimes \sigma(k)} ) \circ (g\otimes \Id_{V_{g_0}^*}) \circ f$ is surjective, where the sum runs over all simple modules which satisfy the Balancing Condition at each trivalent vertex (coupon). Explicitly, $W_i$ and $W_i \otimes \sigma(k)^{-1}$ are the colors of $e_1^{\prime}$ and $e_{g+1}^{\prime}$, respectively.

Next, we study the set $\mathfrak{C}$. Assume for the moment that $g \geq 2$. An element of $\mathfrak{C}$ can be constructed as follows. First, for each $i=1, \dots, g$, color $e_i$ by $(\alpha_i,a_i)_{\p p_i} = \tilde{\coh}(m_{e_i})$. Next, color the edges $f_1, f_2, \dots, f_{2g-3}$ recursively, beginning with $f_1$, so as to satisfy the Balancing Conditions. In this way, the colors $\{(\beta_i,b_i)_{\p q_i}\}_i$ of $\{f_i\}_i$ are determined by a tuple $(\epsilon_1, \dots, \epsilon_{2g-3}) \in \{0,1\}^{\times 2g-3}$ through the initial conditions
\[
\begin{cases}
\beta_1=\alpha_1 - \alpha_2\\
b_1=a_1-a_2+\epsilon_1
\end{cases}
\]
and the recursive system
\[
\begin{cases}
\beta_{2i}=\beta_{2i-1}+\alpha_{i+1}\\
b_{2i} =b_{2i-1}+a_{i+1} - \epsilon_{2i}
\end{cases},
\qquad
1 \leq i \leq g-2
\]
and
\[
\begin{cases}
\beta_{2i+1}+ \alpha_{i+2} =\beta_{2i}\\
b_{2i+1}+ a_{i+2} - \epsilon_{2i+1}=b_{2i}\\
\end{cases},
\qquad
1 \leq i \leq g-2.
\]
We solve these equations to obtain
\[
\begin{cases}
\beta_{2i}=\alpha_1 \\
b_{2i}=a_1+ \sum_{j=1}^{2i} (-1)^j \epsilon_j
\end{cases},
\qquad
1 \leq i \leq g-2
\]
and
\[
\begin{cases}
\beta_{2i+1}=\alpha_1 - \alpha_{i+2} \\
b_{2i+1} = a_1 - a_{i+2}+ \sum_{j=1}^{2i+1} (-1)^j \epsilon_j
\end{cases},
\qquad
0 \leq i \leq g-2.
\]
Similar considerations give for the parities
\[
\p q_{2i}=\p p_1+ \sum_{j=1}^{2i} \p \epsilon_j,
\qquad
1 \leq i \leq g-2
\]
and
\[
\p q_{2i+1} = \p p_1 + \p p_{i+2}+ \sum_{j=1}^{2i+1} \p \epsilon_j,
\qquad
0 \leq i \leq g-2.
\]
It remains to color the edges $e_{g+1}$, $e_{g+1}^{\prime}$ and $e_1^{\prime}$. Writing $k=(c,\p r)$, these colors are subject only to the four Balancing Conditions:
\[
(\alpha_1,a_1)_{\p p_1} + g_0 = (\alpha_1^{\prime},a_1^{\prime})_{\p p_1^{\prime}} + \epsilon \delta,
\]
\[
(\alpha_{g+1}^{\prime}, a_{g+1}^{\prime})_{\p p_{g+1}^{\prime}} + (0,c)_{\p r} = (\alpha_1^{\prime},a_1^{\prime})_{\p p_1^{\prime}}
\]
\[
(\alpha_{g+1}^{\prime}, a_{g+1}^{\prime})_{\p p_{g+1}^{\prime}} + \epsilon^{\prime} \delta
=
(\alpha_{g+1}, a_{g+1})_{\p p_{g+1}} + g_0
\]
\[
(\alpha_{g+1},a_{g+1})_{\p p_{g+1}^{\prime}} = (\alpha_{g},a_{g})_{\p p_{g}^{\prime}} + (\beta_{2g-3},b_{2g-3})_{\p q_{2g-3}^{\prime}}.
\]
Here $\epsilon,\epsilon^{\prime}, \mu \in \{0,1\}$. These equations hold if and only if
\[
c=\epsilon^{\prime} - \epsilon- \mu - \sum_{j=1}^{2g-3} \epsilon_j,
\qquad
\p r = \p \epsilon^{\prime} + \p \epsilon+ \overline{\mu}+ \sum_{j=1}^{2g-3} \p \epsilon_j.
\]
and
\[
\alpha_{g+1}=\alpha_1,
\qquad
a_{g+1}=a_1 + \mu+\sum_{j=1}^{2g-3} (-1)^j \epsilon_j,
\qquad
\p p_{g+1}=\p p_1+ \overline{\mu} + \sum_{j=1}^{2g-3} \p \epsilon_j.
\]
Setting $d = \epsilon^{\prime} - \epsilon - \mu + \sum_{j=1}^{2g-2} (-1)^j\epsilon_j$, we have constructed a degree $k=(d, \p d)$ coloring of $\widetilde{\Gamma}$. It is immediate that the above construction recovers each element of $\mathfrak{C}$. In particular, $\mathfrak{C}_k$ is empty unless $k$ is of the form $(d,\p d)$ for some $d \in [-g,g] \cap \Z$.

The case $g=1$, where $e_1=f_0$, can be treated in the same way, leading to the same conclusions. We omit the details. 
\end{proof}

Denote by $\widetilde{\mathfrak{C}}_k \subset \mathfrak{C}_k$ the subset of colorings of degree $k$ with $\epsilon=\epsilon^{\prime}=0$, in the notation of the proof of Theorem \ref{thm:genusgStateSpaceArb}. Set $\widetilde{\mathfrak{C}} = \bigsqcup_{k \in \Zt} \widetilde{\mathfrak{C}}_k$.

\begin{proposition}
\label{prop:dimVanArb}
Work in the setting of Theorem \ref{thm:genusgStateSpaceArb}. For each $d \in [-(g-1),g-1] \cap \Z$, the vectors $\{v_c \mid c \in \widetilde{\mathfrak{C}}_{(d,\p d)}\}$ form a linearly independent subset of $\TQFT_{-(d,\p d)}(\CS)$ of cardinality ${2g-2 \choose g-1-\vert d \vert}$.
\end{proposition}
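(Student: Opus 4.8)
The plan is to compute $|\widetilde{\mathfrak{C}}_{(d,\p d)}|$ and then establish linear independence by pairing the vectors $v_c$ against explicit dual cobordisms. For the cardinality I would revisit the enumeration carried out in the proof of Theorem~\ref{thm:genusgStateSpaceArb}: once one imposes $\epsilon=\epsilon'=0$, a coloring in $\widetilde{\mathfrak{C}}$ of degree $(d,\p d)$ is determined by the $2g-2$ binary split-parameters $\epsilon_1,\dots,\epsilon_{2g-3},\mu$ attached to the trivalent vertices of $\widetilde{\Gamma}$ (the $E$-weights of all edges being forced by genericity of $\coh$, and the $G$-weights then pinned down by the Balancing Conditions), subject to the requirement that a prescribed signed sum of these parameters equals $d$, in which $g-1$ of them carry weight $+1$ and $g-1$ carry weight $-1$. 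Summing $\binom{g-1}{p}\binom{g-1}{n}$ over $p-n=d$ and invoking the Vandermonde identity yields $|\widetilde{\mathfrak{C}}_{(d,\p d)}|=\binom{2g-2}{g-1-|d|}$; in particular the set is empty unless $|d|\le g-1$.

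For linear independence, to each $c'\in\widetilde{\mathfrak{C}}_{(d,\p d)}$ I would associate the dual decorated cobordism $\mathcal{M}_{c'}\colon\CS\sqcup\hS_{(d,\p d)}\to\emptyset$ obtained from $v_{c'}=(\tilde{\eta},\widetilde{\Gamma}_{c'},\coh,0)$ by reversing the cobordism direction and replacing the color of every edge by its dual object; thus the underlying manifold of $\mathcal{M}_{c'}$ is a handlebody bounding $-\Sigma$ together with a ball capping the sphere of $\hS_{(d,\p d)}$, carrying the mirror graph $\overline{\Gamma}_{c'}$, and $\mathcal{M}_{c'}$ is admissible since each of its components meets the incoming boundary. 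By part~(\ref{ite:zero}) of Lemma~\ref{lem:PropOfCGP} it then suffices to prove that the square matrix $\bigl(\langle\mathcal{M}_{c'},v_c\rangle\bigr)_{c,c'\in\widetilde{\mathfrak{C}}_{(d,\p d)}}$, with entries $\langle\mathcal{M}_{c'},v_c\rangle=\CGP_{\catq}(\mathcal{M}_{c'}\circ v_c)$, is invertible.

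The composite $\mathcal{M}_{c'}\circ v_c$ is a closed decorated cobordism whose underlying $3$-manifold is a connected sum of copies of $S^1\times S^2$, with a computable surgery presentation $L$ by zero-framed unknots, each colored by the Kirby color $\Omega_{\coh(m_{L_i})}$ — which, in the present (arbitrary $q$) setting, is the single term $\qd(V)\cdot V$ with $V$ generic simple. By the surgery formula \eqref{eq:cgpInvt}, and since $\sigma(L)=0$, one has $\langle\mathcal{M}_{c'},v_c\rangle=\D^{-1-l}\,F'_{\catq}(L\cup\widetilde{\Gamma}_c\cup\overline{\Gamma}_{c'})$ with $l$ the number of components of $L$. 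Now every edge of $\widetilde{\Gamma}_c\cup\overline{\Gamma}_{c'}$ is colored by a generic simple object, whose endomorphism algebra is $\C$ by Proposition~\ref{prop:irrepsgloo}, and the tensor products arising at the trivalent vertices decompose into at most two simple summands via the explicit isomorphisms \eqref{E:tensorSimpGen} and \eqref{E:tensorSimpNongen}; hence a Reshetikhin--Turaev evaluation of this graph — run exactly as in the analogous computation for $\Uqsl$ in \cite[\S 6]{BCGP} — forces the colors of corresponding edges of $\widetilde{\Gamma}_c$ and $\overline{\Gamma}_{c'}$ to coincide, so the pairing vanishes when $c\neq c'$. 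When $c=c'$ the evaluation factors as a product, over the edges of $\widetilde{\Gamma}_c$, of the modified dimensions $\qd(V(\alpha,a)_{\p p})=(-1)^{\p p}(q^{\alpha}-q^{-\alpha})^{-1}$, each nonzero by \eqref{eq:mdimTypical} because $\alpha=\coh(m_e)\in\Gr\setminus\XX$, and, over the trivalent vertices, of the manifestly nonzero fusion coefficients recorded in \eqref{E:tensorSimpGen}--\eqref{E:tensorSimpNongen}. Thus the pairing matrix is diagonal with nonzero diagonal entries, and the vectors $\{v_c\mid c\in\widetilde{\mathfrak{C}}_{(d,\p d)}\}$ are linearly independent. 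The step requiring the most care is this last evaluation: one must arrange the skein computation on the doubled graph so as to read off simultaneously the edge-by-edge orthogonality and the nonvanishing of every local factor, the single genuinely substantive point being that no fusion coefficient at a trivalent vertex degenerates — which holds because the pertinent tensor products carry explicit intertwiners with nonzero normalizations.
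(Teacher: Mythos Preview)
Your proposal is correct and follows essentially the same approach as the paper: construct dual vectors by orientation-reversing the handlebody and dualizing all edge colors, then show the resulting pairing matrix is diagonal with nonzero diagonal entries, and count via Vandermonde. The paper phrases the diagonal computation slightly differently—after gluing, it cites \cite[Figure 5]{BCGP} to reduce $\langle\overline{v}_{c_1},v_{c_2}\rangle$ to a connected sum of $\Theta$-graphs, whose vanishing for $c_1\neq c_2$ comes from a failed Balancing Condition and whose nonvanishing for $c_1=c_2$ is a direct local check—whereas you describe the same local data as ``modified dimensions times fusion coefficients''; this is a difference of packaging, not of substance.
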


\begin{proof}
Let $k = (d, \p d)$. We prove linear independence by modifying the argument from \cite[Proposition 6.7]{BCGP}. Let $\p v_c \in \mathcal{V}^{\prime}(\Sigma \sqcup \hS_{-k})$ be the vector obtained from $v_c$ by reversing the orientation of $\eta \setminus \mathring{B}$ and dualizing the colors of $\Gamma^{\prime}$. The pairing $\langle \overline{v}_{c_1}, v_{c_2} \rangle$ can be computed as a connected sum of $\Theta$-graphs, as in \cite[Figure 5]{BCGP}. If $c_1 \neq c_2$, then the Balancing Conditions are not met for some of these $\Theta$-graphs and $\langle \overline{v}_{c_1}, v_{c_2} \rangle=0$. If instead $c_1=c_2$, then all Balancing Conditions are met and a direct calculation shows that the value of each $\Theta$-graph is non-zero, whence $\langle \overline{v}_{c}, v_{c} \rangle \neq 0$. This proves linear independence.

Turning to the cardinality statement, note that if $d \geq 0$, then for each $d \leq l \leq g-1$ there are ${g-1 \choose k+d} {g-1 \choose l}$ elements of $\widetilde{\mathfrak{C}}_{(d, \p d)}$ for which exactly $l+d$ of the elements $\{\epsilon_2,\epsilon_4, \dots, \epsilon_{2g-4}, \mu\}$ are equal to $1$; in such cases necessarily $l$ of the elements $\{\epsilon_1,\epsilon_3, \dots, \epsilon_{2g-3}\}$ are equal to $1$. Together with similar considerations for $d \leq 0$, this gives
\[
\vert \widetilde{\mathfrak{C}}_{(d, \p d)} \vert
=
\sum_{l=0}^{g-1-\vert d \vert} {g-1 \choose l+ \vert d \vert} {g-1 \choose l}
=
{2g-2 \choose g-1-\vert d \vert}. \qedhere
\]
\end{proof}
\subsection{Verlinde formula}
\label{sec:verlindeArb}

We compute the value of $\TQFT$ on trivial circle bundles over surfaces and relate the result to dimensions and Euler characteristics of state spaces of surfaces. This allows us to deduce a Verlinde formulate for $\TQFT$. Our approach is motivated by that of \cite[\S 6.3]{BCGP}.

Let $\CS=(\Sigma,\{p_1,\ldots, p_n\},\coh, \mathcal{L})$ be a decorated connected surface of genus $g \geq 0$. Since the Lagrangian $\mathcal{L}$ does not play a role in this section, we henceforth ignore it. For each $\beta \in \Gr$, let
\[
\CS \times S^1_{\beta}=(\Sigma \times S^1,T=\{p_1, \dots, p_n\}\times S^1,\coh \oplus \beta),
\]
where we use the canonical isomorphism $H^1((\Sigma \times S^1) \setminus T; \Gr) \simeq H^1(\Sigma \setminus \{p_i\}; \Gr) \oplus \Gr$ to extend $\coh \in H^1(\Sigma \setminus \{p_i\}; \Gr)$ to $\omega \oplus \beta$. Assume all colors of $\CS \times S^1_{\beta}$ have degree in $\Gr \setminus \XX$. Then $\TQFT(\CS \times S^1_{\beta})$ can be computed by the following surgery presentation:
\begin{equation*}
\label{diag:surgPres}
L=
\epsh{fig53}{30ex}
\put(-205,55){\ms{\Omega_{\beta_1}}}\put(-265,40){\ms{\Omega_{\alpha_1}}}\put(-74,55){\ms{\Omega_{\beta_g}}}
\put(-135,40){\ms{\Omega_{\alpha_g}}}\put(-5,-29){{\ms{\Omega_{\beta}}}}\put(-180,-25){\ms{X_1}}
\put(-120,-25){\ms{X_n}}
\;\;.
\end{equation*}
Here $X_i=V(\mu_i,m_i)_{\p q_i}$ is the color of $p_i$ and $\alpha_i = \coh([a_i]), \beta_i=\coh([b_i]) \in \Gr$ for a symplectic basis $\{[a_i],[b_i]\}_{i=1}^g$ of $H_1(\Sigma;\Z)$. By applying equation \eqref{eq:mod} first to the $\Omega_{\alpha_i}$-colored strand and then to the $\Omega_{\beta_i}$-colored strand, we can simplify the surgery presentation using the equalities
$$
\epsh{fig50}{20ex}
\put(-13,35){\ms{\Omega_{\alpha_i}}}\put(-15,-10){\ms{\Omega_{\beta_i}}}\put(-42,7){\ms{\Omega_\beta}}
=-\qd(V(\beta,0)^*_{\p 0})^{-1}\epsh{fig51}{20ex}
\put(-1,-12){\ms{\Omega_{\beta_i}}}\put(-22,-31){\ms{\Omega_{\beta}}}\put(-55,-23){\ms{V(\beta,0)_{\p 0}}}
= \qd(V(\beta,0)^*_{\p 0})^{-2}\Id_{\Omega_{\beta}}.
$$
Applying this simplification for $i=1, \dots, g$ reduces the link $L$ to an $\Omega_\beta$-colored unknot encircled by $n$ pairwise unlinked unknots colored by $X_1, \dots, X_n$. After noting that $\sigma(L)=0$ and using equation \eqref{eq:dualDimEqual}, evaluating the simplified diagram gives
\[
\TQFT(\CS \times S^1_{\beta})
=
\D^{-2g-2} \qd(V(\beta,0)_{\p 0})^{2-2g} S^{\prime}(\{X_i\},\Omega_{\beta}),
\]
where $S^{\prime}(\{X_i\},\Omega_{\beta})$ is the scalar associated to the $n$ unlinked unknots. Set
\[
\mu = \sum_{i=1}^n \mu_i,
\qquad
m= \sum_{i=1}^n m_i,
\qquad
\p q = \sum_{i=1}^n \p q_i.
\]
We use Lemma \ref{lem:Phi's} to compute
\[
S^{\prime}(\{X_i\},\Omega_{\beta}) = (-1)^{n+\p q} q^{\beta(n-2 m) + \mu} \qd(V(\beta,0)_{\p 0})^{-n}.
\]
Upon using equation \eqref{eq:mdimTypical}, this gives the final result
\begin{equation}
\label{eq:verlindeArb}
\TQFT(\CS \times S^1_{\beta})
=
(-1)^{g-1+n + \p q} q^{\beta(n-2m) + \mu}(q^{\beta} - q^{-\beta})^{2g-2 + n}.
\end{equation}
Since $\TQFT(\CS \times S^1_{\beta})$ is holomorphic in $\coh$, we conclude that equation \eqref{eq:verlindeArb} in fact holds whenever the decorated surface $\CS$ is admissible.

Define the generating function of $\Zt$-graded dimensions of $\TQFT(\CS)$ by
\[
\dim_{(t,s)} \TQFT(\CS) = \sum_{(b, \p p) \in \Zt} (-1)^{\p p} \dim_{\C} \TQFT_{(b,\p p)}(\CS) t^b s^{\p p}.
\]
The sign $(-1)^{\p p}$ reflects the braiding of $\ZVect_{\C}$.

The next result is a Verlinde formula for $\TQFT$.

\begin{theorem}
\label{thm:verlindeArb}
The equality $\TQFT(\CS \times S^1_{\beta}) = \dim_{(q^{-2 \beta},1)} \TQFT(\CS)$ holds.
\end{theorem}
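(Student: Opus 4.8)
The plan is to compare two explicit computations of the same invariant: the partition function $\TQFT(\CS \times S^1_\beta)$, for which we already have the closed formula \eqref{eq:verlindeArb}, and the trace of a mapping-class-group-type operator expanded against the $\Zt$-graded state space. The key categorical input is the standard TQFT fact that the partition function of a mapping torus is the (graded) trace of the induced operator on the state space; here the relevant mapping class is the identity, so the mapping torus $\CS \times S^1$ is built from $\Id_\CS$, but the cohomology class $\coh \oplus \beta$ twists the gluing. First I would recall from Section \ref{sec:CGPTQFT} the operator $\TQFT(\Id^\varphi_\CS): \TQFT(\CS) \to \TQFT(\CS)$ associated to $\varphi \in H^0(\Sigma; \Gr)$, which by the refined grading acts on $\TQFT_{(b,\p p)}(\CS)$ by the scalar $\psi(\varphi, (b, \p p)) = q^{-2\varphi b}$ (using $\psi(\alpha, (b,\p p)) = q^{-2\alpha b}$ from Section \ref{sec:relModArb}, and connectedness of $\Sigma$).

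Next I would identify $\CS \times S^1_\beta$ as the mapping torus of the decorated cylinder whose cohomology class realizes the holonomy $\beta$ around the $S^1$ direction, i.e. as the closed cobordism obtained by gluing the two ends of $\Id^{\varphi}_\CS$ for $\varphi$ the constant class $\beta$. By the general trace property of $\TQFT$ on mapping tori (functoriality plus the fact that closing up a cobordism $X: \CS \to \CS$ computes $\operatorname{str}$ of $\TQFT(X)$ in the braided category $\ZVect_\C$, where the supertrace incorporates the sign $(-1)^{\p p}$ coming from $\gamma$), one gets
\[
\TQFT(\CS \times S^1_\beta)
=
\operatorname{str}_{\ZVect_\C} \TQFT(\Id^{\beta}_\CS)
=
\sum_{(b,\p p) \in \Zt} (-1)^{\p p} q^{-2\beta b} \dim_\C \TQFT_{(b,\p p)}(\CS),
\]
which is exactly $\dim_{(q^{-2\beta},1)} \TQFT(\CS)$ by the definition of the generating function. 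Combining with \eqref{eq:verlindeArb} (or simply noting both sides equal this trace) gives the theorem; the finiteness of the sum is guaranteed by Theorem \ref{thm:finDimTQFT} and TQFT finiteness from Theorem \ref{thm:relModArb}.

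The main obstacle is making the "partition function of a mapping torus equals graded trace" step precise in this non-semisimple, relative-modular, graded setting, with the correct signs and with the cohomological twist handled correctly. Concretely, I would need: (i) a clean identification of $\CS \times S^1_\beta$ with the categorical trace/mapping torus of $\Id^{\beta}_\CS$ at the level of decorated cobordisms, including checking that the cohomology class $\coh \oplus \beta$ on $\Sigma \times S^1$ matches the one produced by closing up $\Id^{\beta}_\CS$ (this is where the holonomy $\beta$ around the circle enters); (ii) the categorical-trace identity in $\ZVect_\C$, namely that for $X: \CS \to \CS$ the scalar $\TQFT(\hat X)$ assigned to the closure equals $\sum_k (-1)^{\cdot}\operatorname{tr}(\TQFT_k(X))$ with the sign dictated by the symmetric braiding $\gamma$ — this follows from monoidality of $\TQFT$ and the structure of trace in a braided category, but should be cited from or proved in parallel with \cite[\S 6.3]{BCGP}; and (iii) diagonalizing $\TQFT(\Id^\beta_\CS)$, which is immediate from the refined grading discussion in Section \ref{sec:CGPTQFT} since each $\TQFT_{(b,\p p)}(\CS)$ is an eigenspace with eigenvalue $\psi(\beta, (b,\p p)) = q^{-2\beta b}$. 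Once these three points are in place the computation is a one-line bookkeeping of signs and exponents, so I expect the write-up to be short, with essentially all the content in step (i)–(ii); the analogue in \cite{BCGP} for $\Uqsl$ provides the template.
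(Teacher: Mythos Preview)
Your proposal is correct and takes essentially the same approach as the paper: the paper writes $\CS \times S^1_\beta = \cap_{\CS} \circ \tau_{\sqcup} \circ (\Id_{\CS}^\beta \sqcup \Id_{\overline{\CS}}) \circ \cup_{\CS}$ in $\Cob_{\catq}$ and then applies $\TQFT$, which is exactly the categorical-trace computation you outline, with the sign $(-1)^{\p p_i}$ arising from $\TQFT(\tau_{\sqcup})$ and the eigenvalue $q^{-2\beta b_i}$ from the refined grading. Your steps (i)--(iii) match the paper's argument line for line, and the paper likewise cites \cite[Theorem 5.9]{BCGP} as the template.
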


\begin{proof}
Since this is proved similarly to \cite[Theorem 5.9]{BCGP}, we will be brief. Let $\{e_i\}$ be a homogeneous basis of $\TQFT(\CS)$. Write $(b_i, \p p_i) \in \Zt$ for the degree of $e_i$ and $\{e^i\}$ for the corresponding basis of $\TQFT(\overline{\CS})$. The equality
\[
\CS \times S^1_{\beta}
=
\cap_{\CS} \circ \tau_{\sqcup} \circ (\Id_{\CS}^{\beta} \sqcup \Id_{\overline{\CS}}) \circ \cup_{\CS}
\]
holds in $\Cob_{\catq}$, where $\cap_{\CS} : \CS \sqcup \overline{\CS} \rightarrow \emptyset$ and $\cup_{\CS}: \emptyset \rightarrow \overline{\CS} \sqcup \CS$ denote the cylinder on $\CS$, suitably interpreted, $\tau_{\sqcup}$ is the braiding of $\Cob_{\catq}$ and $\Id_{\CS}^{\beta}$ is as in Section \ref{sec:CGPTQFT}. Denoting by $u_0$ a basis of $\TQFT(\emptyset) \simeq \C$, we compute
\begin{eqnarray*}
\TQFT(\CS \times S^1_{\beta})u_0
&=&
\TQFT(\cap_{\CS}) \circ \TQFT(\tau_{\sqcup}) \circ (\TQFT(\Id_{\CS}^{\beta}) \otimes \Id_{\TQFT(\overline{\CS})}) \circ \TQFT(\cup_{\CS}) (u_0) \\
&=&\sum_i q^{-2\beta b_i} \TQFT(\cap_{\CS}) \circ P^{\sqcup} \circ \tau(e_i \otimes e^i) \\
&=&
\sum_i (-1)^{\p p_i} q^{-2\beta b_i} \TQFT(\cap_{\CS}) \circ P^{\sqcup}(e^i \otimes e_i) \\
&=&
\big(\sum_{(b,\p p) \in \Zt} (-1)^{\p p} q^{-2\beta b} \dim_{\C} \TQFT_{(b,\p p)}(\CS) \big)u_0,
\end{eqnarray*}
where $P^{\sqcup}$ is the monoidal coherence data for $\TQFT$. The second equality follows from the discussion in Section \ref{sec:CGPTQFT} surrounding the construction of the refined grading of $\TQFT(\CS)$.
\end{proof}

The right hand side of the equality in Theorem \ref{thm:verlindeArb} can be written more explicitly as
\[
\dim_{(q^{-2 \beta},1)} \TQFT(\CS)=
\sum_{b \in \C} \chi(\TQFT_{(b, \bullet)}(\CS)) q^{-2\beta b},
\]
where $\chi$ denotes the Euler characteristic of a $\Ztwo$-graded vector space.

\begin{example}
\label{ex:VerlindeArb}
When $n=0$ equation \eqref{eq:verlindeArb} becomes $\TQFT(\CS \times S^1_{\beta}) = (-1)^{g-1} (q^{\beta} - q^{-\beta})^{2g-2}$. If $g \geq 1$, then Theorem \ref{thm:genusgStateSpaceArb} implies
\[
\dim_{(t,s)} \TQFT(\CS) = \sum_{b=-g}^g (-1)^{\p b} \dim_{\C} \TQFT_{(b,\p b)}(\CS) t^b s^{\p b}.
\]
Applying Theorem \ref{thm:verlindeArb}, we conclude that
\[
\dim_{\C} \TQFT(\CS)
=
\dim_{(-1,1)} \TQFT(\CS)
=
\lim_{\beta \rightarrow \frac{\pi \sqrt{-1}}{2 \hbar}} \TQFT(\CS \times S^1_{\beta})
=
2^{2g-2}
\]
and
\[
\chi(\TQFT(\CS))
=
\dim_{(1,1)} \TQFT(\CS) \\
=
\lim_{\beta \rightarrow 0} \TQFT(\CS \times S^1_{\beta}) \\
=
\begin{cases}
1 & \mbox{if } g=1, \\
0 & \mbox{if } g\geq 2.
\end{cases}\qedhere
\]
\end{example}

We can now give a complete description of state spaces of generic surfaces of genus at least one.

\begin{corollary}
\label{cor:stateBasisArb}
Work in the setting of Theorem \ref{thm:genusgStateSpaceArb}.  The vectors $\{v_c \mid c \in \widetilde{\mathfrak{C}}\}$ are a basis of $\TQFT(\CS)$. In particular, $\TQFT(\CS)$ is supported in degrees $(d,\p d)$ with $d \in [-(g-1),g-1] \cap \Z$.
\end{corollary}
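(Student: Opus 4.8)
The plan is to combine the spanning statement from Theorem \ref{thm:genusgStateSpaceArb}, the linear independence and cardinality count from Proposition \ref{prop:dimVanArb}, and the dimension formula extracted from the Verlinde formula in Example \ref{ex:VerlindeArb}. By Theorem \ref{thm:genusgStateSpaceArb}, $\TQFT(\CS) = \bigoplus_{d \in [-g,g] \cap \Z} \TQFT_{-(d, \p d)}(\CS)$, and each summand is spanned by $\{v_c \mid c \in \mathfrak{C}_{(d,\p d)}\}$. Proposition \ref{prop:dimVanArb} shows that the subset $\{v_c \mid c \in \widetilde{\mathfrak{C}}_{(d,\p d)}\} \subseteq \{v_c \mid c \in \mathfrak{C}_{(d,\p d)}\}$ is linearly independent and has cardinality $\binom{2g-2}{g-1-\vert d\vert}$ for $d \in [-(g-1),g-1]\cap \Z$, giving the lower bound $\dim_{\C} \TQFT_{-(d,\p d)}(\CS) \geq \binom{2g-2}{g-1-\vert d\vert}$ for those $d$; for $d = \pm g$ it gives nothing directly, but note $\widetilde{\mathfrak{C}}_{(\pm g, \p g)}$ is empty, so we must separately argue these degrees vanish.

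First I would sum the lower bounds over all $d$:
\[
\dim_{\C} \TQFT(\CS) \geq \sum_{d=-(g-1)}^{g-1} \binom{2g-2}{g-1-\vert d \vert} = \sum_{j=0}^{2g-2} \binom{2g-2}{j} = 2^{2g-2}.
\]
On the other hand, Example \ref{ex:VerlindeArb} (obtained from the Verlinde formula of Theorem \ref{thm:verlindeArb} by taking the limit $\beta \to \frac{\pi\sqrt{-1}}{2\hbar}$) gives $\dim_{\C} \TQFT(\CS) = 2^{2g-2}$. Hence the inequality is an equality, which forces $\dim_{\C} \TQFT_{-(d,\p d)}(\CS) = \binom{2g-2}{g-1-\vert d\vert}$ for each $d \in [-(g-1),g-1]\cap\Z$ and $\TQFT_{-(\pm g, \p g)}(\CS) = 0$. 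Since for each such $d$ the linearly independent set $\{v_c \mid c \in \widetilde{\mathfrak{C}}_{(d,\p d)}\}$ already has the full dimension as its cardinality, it is a basis of $\TQFT_{-(d,\p d)}(\CS)$; taking the union over $d$ yields that $\{v_c \mid c \in \widetilde{\mathfrak{C}}\}$ is a basis of $\TQFT(\CS)$, and the support statement is immediate.

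The main subtlety is the bookkeeping matching the $\Zt$-degree conventions: Theorem \ref{thm:genusgStateSpaceArb} and Proposition \ref{prop:dimVanArb} describe $\TQFT_{-k}(\CS)$ for $k = (d,\p d)$, while the generating function $\dim_{(t,s)}\TQFT(\CS)$ in Section \ref{sec:verlindeArb} is indexed by the degree $(b,\p p)$ itself; one must be careful that the sign $(-1)^{\p p}$ in the Verlinde pairing and the reindexing $k \leftrightarrow -k$ are handled consistently, but since $\p d$ and $\p{(-d)}$ agree modulo $2$ and the only degrees present have $\p p$-component equal to the reduction of the $b$-component, the parities cancel cleanly and the count $2^{2g-2}$ is unaffected. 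A second minor point is ruling out the top degrees $d = \pm g$: this follows purely formally from the dimension count being saturated by $d \in [-(g-1),g-1]$, so no extra geometric input is needed. I expect the only real work is assembling these already-established pieces in the correct order; there is no new obstacle.
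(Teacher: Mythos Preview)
Your proof is correct and follows essentially the same approach as the paper: compute $\vert \widetilde{\mathfrak{C}} \vert = \sum_{d=-(g-1)}^{g-1} \binom{2g-2}{g-1-\vert d\vert} = 2^{2g-2}$, match this against $\dim_{\C}\TQFT(\CS) = 2^{2g-2}$ from Example~\ref{ex:VerlindeArb}, and conclude from the linear independence in Proposition~\ref{prop:dimVanArb} that the $v_c$ form a basis. Your additional remarks on degree bookkeeping and the vanishing at $d=\pm g$ are sound but not needed beyond what the dimension count already forces.
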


\begin{proof}
Using the cardinality statement in Proposition \ref{prop:dimVanArb}, we compute
\[
\vert \widetilde{\mathfrak{C}} \vert
=
\sum_{d=-(g-1)}^{g-1} \vert \widetilde{\mathfrak{C}}_{(d,\p d)} \vert
=
\sum_{d=-(g-1)}^{g-1} {2g-2 \choose g-1-\vert d \vert}
=
2^{2g-2},
\]
which is equal to $\dim_{\C} \TQFT(\CS)$ by Example \ref{ex:VerlindeArb}. The corollary now follows from the linear independence statement in Proposition \ref{prop:dimVanArb}.
\end{proof}

\begin{remark}
As mentioned in Section \ref{sec:CGPTQFT}, up to natural isomorphism, $\TQFT$ is independent of the particular choice of $g_0$ and $V_{g_0}$ used to define the decorated $2$-spheres $\hS_k$, $k \in \Zt$. In particular, by choosing a suitable $g_0$, we see that the dimension statement in Corollary \ref{cor:stateBasisArb} holds for all generic $\coh$.
\end{remark}

\subsection{The state space of the torus with non-generic cohomology class}
\label{sec:torusArb}

Let $S^1 \subset \C$ be the unit circle and $\Sigma = S^1 \times S^1$. Let $\CS=(\Sigma,\coh, \mathcal{L})$ be a decoration of $\Sigma$ without marked points. If $\coh$ is generic, then Corollary \ref{cor:stateBasisArb} applies, showing that $\TQFT(\CS)$ is supported in $\Zt$-degree zero and one dimensional. The goal of this section is to give s similarly complete description of $\TQFT(\CS)$ in the non-generic case.

\begin{proposition}
\label{prop:nonGenCohTorusArb}
Assume that there exists an oriented simple closed curve $\gamma$ in $\Sigma$ such that $\coh(\gamma) \in \XX \subset \Gr$. Then $\dim_{\C} \TQFT_0(\CS) = 2$.
\end{proposition}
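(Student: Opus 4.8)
The plan is to compute $\TQFT_0(\CS) = \state(\CS \sqcup \hS_0)$ directly using a surgery/skein presentation, exploiting that the torus bounds the solid torus $\eta = S^1 \times D^2$. By Proposition \ref{prop:FDofTQFT}, $\state(\CS \sqcup \hS_0)$ is spanned by cobordisms $(\tilde\eta, \Gamma'_c, \coh, 0)$ where $\Gamma'$ is the spine of $\tilde\eta = \eta \setminus \mathring B$: in genus one this is a single circle (the core of $\eta$, labeled $e_1 = e_{g+1}$) together with a coupon carrying the three legs to $\hat S_0$ colored $(V_{g_0},1), (\sigma(0),1) = \unit, (V_{g_0},-1)$. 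Using Proposition \ref{P:VSVSS0} to identify $\state(\CS) \simeq \state(\CS \sqcup \hS_0)$ and to delete the $\hS_0$-legs, $\state(\CS)$ is spanned by $(\eta, \Gamma''_c, \coh, 0)$ where $\Gamma''$ is a single circle in $\eta$ isotopic to the core, colored by a projective indecomposable $P$ whose meridian degree is $\coh(m)$, where $m$ is the meridian of the core $=$ the curve $\gamma'$ on $\Sigma$ dual to the longitude. Two cases arise: either $\coh$ evaluated on the longitude of $\eta$ (equivalently the meridian of the core) lies in $\XX$ or not.

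First I would treat the case where $\coh(\gamma)\in\XX$ with $\gamma$ the meridian of the core solid torus (this is the essential non-generic case; the other choice of solid torus, or a change of basis, handles the general statement since any simple closed curve with $\coh$-value in $\XX$ can be taken to be the meridian of a suitable handlebody filling). Then the core circle must be colored by a projective indecomposable in $\catq_{\coh(\gamma)}$ with $\coh(\gamma) \in \XX = \frac{\pi\sqrt{-1}}{\hbar}\Z$, i.e. by some $P(\frac{n\pi\sqrt{-1}}{\hbar}, b)_{\p p}$ (using Lemma \ref{lem:projIndec} and the classification of projective indecomposables). By TQFT finiteness, finitely many such colorings span; by $\sigma$-equivalence one reduces $b$ and $\p p$ modulo $\Zt = \C \times \Z_2$ acting by tensoring with $\ve(0,b')_{\p p'}$, leaving the core colored by a single representative $P := P(0,0)_{\p 0}$ up to the $\Zt$-action, and the degree-zero part picks out the single $\Zt$-orbit representative. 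So $\TQFT_0(\CS)$ is spanned by at most the vectors coming from closing up $P$ around the core, together with endomorphism data: more precisely $\TQFT_0(\CS)$ is a quotient of a space built from $\End_{\catq}(P(0,0)_{\p 0})$, which by Proposition \ref{prop:HomProj} is two-dimensional, spanned by $\Id$ and the nilpotent $x_{0,0,\p 0}$. This gives the upper bound $\dim_\C \TQFT_0(\CS) \leq 2$.

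For the lower bound I would exhibit two linearly independent vectors by pairing against cobordisms $\CS \to \emptyset$. Concretely, the two candidate vectors are $v_{\Id}$ (core colored $P(0,0)_{\p 0}$, coupon the identity-type morphism) and $v_x$ (coupon involving $x_{0,0,\p 0}$); pairing $v_{\Id}$ and $v_x$ with the "mirror" cobordisms obtained by the other solid-torus filling yields a $2\times 2$ matrix of $\CGP_{\catq}$-invariants of closed $3$-manifolds, each computable as a framed link invariant $F'_{\catq}$ of a Hopf-link-type diagram with one component colored by a Kirby color $\Omega_\beta$ (generic, since we may choose the other handle's cohomology value generic) and the other by $P(0,0)_{\p 0}$ or $P(0,0)_{\p 0}$ decorated by $x_{0,0,\p 0}$. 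Using Lemma \ref{lem:Phi's} (the $\Phi_{V(\beta,b)_{\p p}, P(\cdots)}$ formula), Lemma \ref{lem:modTrComp} and Lemma \ref{lem:endAlgIsom}, one evaluates these explicitly; the resulting matrix is lower-triangular with nonzero diagonal (the $x$-coupled pairing is nonzero by $\mt_{P(0,0)_{\p 0}}(x_{0,0,\p0}) = (q-q^{-1})^{-1} \neq 0$, and the $\Id$-coupled one by a $\qd$ computation), hence nonsingular, so $\dim_\C \TQFT_0(\CS) \geq 2$. Combined with the upper bound this gives exactly $2$.

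The main obstacle I expect is the careful bookkeeping of the upper bound: showing that, after using $\sigma$-equivalence and skein equivalence to collapse the solid-torus spine to a single core edge colored by one representative projective indecomposable and a single coupon, the relevant coupon space genuinely reduces to the two-dimensional $\End_{\catq}(P(0,0)_{\p 0})$ and that no further relations among $v_{\Id}, v_x$ are forced — i.e. that these two vectors do not coincide in $\state(\CS)$. This is exactly what the nondegenerate $2\times2$ pairing matrix resolves, so the argument is really: (i) span by $\leq 2$ explicit vectors via finiteness plus $\sigma$-equivalence, (ii) show these $2$ are independent via an explicit nondegenerate pairing built from the $\Phi$ and modified-trace computations already available. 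A secondary subtlety is justifying that the non-generic curve $\gamma$ may be realized as the meridian of the chosen solid torus without loss of generality, which follows from the $SL(2,\Z)$-action on the torus (or simply by choosing the handlebody appropriately, as in Lemma \ref{lem:trivalSpine}'s use of \cite[Proposition 6.5]{BCGP}).
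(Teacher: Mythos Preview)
Your approach is the same as the paper's: the upper bound comes from Proposition~\ref{P:VSVSS0} together with $\End_{\catq}\bigl(P(\tfrac{n\pi\sqrt{-1}}{\hbar},0)_{\p 0}\bigr) \simeq \C[x]/\langle x^2\rangle$, and the lower bound from an explicit $2\times 2$ pairing matrix. Two imprecisions in your pairing argument are worth correcting. First, the two test vectors are of genuinely different types, and not both ``Hopf-link with Kirby color'' diagrams: one is $[S\overline\eta_\emptyset]$, the $S$-twisted empty solid torus, whose composition with $\mathcal{P}_{n,x}$ is simply $S^3$ containing a $P$-colored unknot with coupon $x$ --- this is where your computation $\mt_P(x)=(q-q^{-1})^{-1}\neq 0$ enters, with no surgery or Kirby color at all. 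The other is $[\overline\eta_{-n}]$, the identity-glued solid torus with core colored $\ve(-\tfrac{n\pi\sqrt{-1}}{\hbar},0)_{\p 0}$, whose composition with $\mathcal{P}_n$ is $S^2\times S^1$; here a literal ``$\qd$ computation'' gives $\qd(P)=0$ by Lemma~\ref{lem:modTrComp}, and the nonvanishing comes instead from the splitting $P(0,0)_{\p 0}\simeq V(1,0)_{\p 0}\otimes V(1,0)_{\p 0}^*$, the relative modularity relation~\eqref{eq:mod}, and Lemma~\ref{lem:endAlgIsom}. Second, you cannot ``choose the other handle's cohomology value generic'': the class $\coh$ on $\Sigma$ is fixed by $\CS$, and the genericity needed to apply modularity enters not through $\coh$ but through the module isomorphism $P\simeq V\otimes V^*$ with $V$ a generic simple.
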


\begin{proof}
The Lagrangian $\mathcal{L}$ does not affect the proof, so we ignore it. Let $B^2 \subset \C$ be the closed unit disk, $\eta=B^2\times S^1$ and $\overline{\eta}$ the orientation reversal of $\eta$. Consider $\Gamma=\{0\}\times S^1$ as a core of $\eta$.
 
Applying a diffeomorphism if necessary, we can assume that $\coh(m_{\Gamma}) = \frac{n \pi \sqrt{-1}}{\hbar} \in \XX$, $n \in \Z$. Let $\Gamma_{V,f}$ be the graph $\Gamma$ colored by $V$ with coupon $f \in \End_{\catq}(V)$. Noting that $\Gamma$ coincides with the ribbon graph $\Gamma^{\prime \prime}$ of Proposition \ref{P:VSVSS0}, Proposition \ref{prop:HomProj} implies that the vectors \begin{equation}
\label{eq:nonGenCohBasisTorusArb}
\{\mathcal{P}_{n} = (\eta,\Gamma_{P(0)_{\p 0},\Id_{P(0)_{\p 0}}}), \mathcal{P}_{n,x} = (\eta,\Gamma_{P(0)_{\p 0},x_{0,\p 0}})\},
\end{equation}
span $\state(\mathcal{S})$ where, for ease of notation, we have omitted the constant $E$-weight $\frac{n \pi \sqrt{-1}}{\hbar}$. To verify the linear independence of \eqref{eq:nonGenCohBasisTorusArb}, we pair these vectors with various elements of $\mathcal{V}^{\prime}(\CS)$.

Let $[\overline{\eta}_\emptyset]$ be the empty negatively oriented copy of $\eta$ and $[S\overline{\eta}_\emptyset]$ the vector obtained from $[\overline{\eta}_\emptyset]$ by changing the identification of $\partial \eta$ with $S^1 \times S^1$ by the map $S: \Sigma \rightarrow \Sigma$ given by $S(\theta, \theta') = (-\theta', \theta)$. Then $[S\overline{\eta}_\emptyset] \circ \mathcal{P}_n$ is $S^3$ with the core of $\eta$ as a $P(\frac{n \pi \sqrt{-1}}{\hbar},0)_{\p 0}$-colored unknot. Using Lemma \ref{lem:modTrComp}, we find
\[
\CGP_{\catq}([S\overline{\eta}_\emptyset]\circ \mathcal{P}_n)=\D^{-1}\mt_{P(\frac{n \pi \sqrt{-1}}{\hbar},0)_{\p 0}}(\Id_{P(\frac{n \pi \sqrt{-1}}{\hbar},0)_{\p 0}})=0.
\]
Replacing $\mathcal{P}_n$ with $\mathcal{P}_{n,x}$ gives
\[
\CGP_{\catq}([S\overline{\eta}_\emptyset]\circ \mathcal{P}_{n,x})
=
\D^{-1}\mt_{P(\frac{n \pi \sqrt{-1}}{\hbar},0)_{\p 0}}(x_{\frac{n \pi \sqrt{-1}}{\hbar},0,\p 0})
=
-\sqrt{-1}(q-q^{-1})^{-1}.
\]
This shows that $\mathcal{P}_{n,x}$ is non-zero with dual vector (with respect to $\langle -, - \rangle$) $\sqrt{-1}(q-q^{-1})[S\overline{\eta}_\emptyset]$.

Next, consider the pairing with $[\overline{\eta}_{-n}]$, whose core is colored by $\ve(-\frac{n \pi \sqrt{-1}}{\hbar},0)_{\p 0}$. The composition $[\overline{\eta}_{-n}] \circ \mathcal{P}_{n}$ gives $S^2\times S^1$ with a knot of the form ${p}\times S^1$ colored by
\[
P(\frac{n \pi \sqrt{-1}}{\hbar},0)_{\p 0} \otimes \ve(-\frac{n \pi \sqrt{-1}}{\hbar},0)_{\p 0} \simeq P(0,0)_{\p 0} \simeq V(1,0)_{\p 0}\otimes V(1,0)_{\p 0}^*.
\]
The decorated manifold underlying $[\overline{\eta}_{-n}]\circ \mathcal{P}_{n}$ is therefore skein equivalent to two parallel $V(1,0)_{\p 0}$-colored knots in $S^2\times S^1$ with opposite orientation. This has a surgery presentation in $S^3$ given by two parallel knots with opposite orientation colored by $V(1,0)_{\p 0}$ encircled by an $\Omega_1$-colored unknot. Using the modularity condition (equation \eqref{eq:mod}) and Lemmas \ref{lem:modTrComp} and \ref{lem:endAlgIsom}, we compute
\begin{eqnarray*}
\CGP_{\catq}([\overline{\eta}_{-n}]\circ \mathcal{P}_{n})
&=&
\D^{-2}\mt_{V(1,0)_{\p 0}\otimes V(1,0)_{\p 0}^*}(\tcoev_{V(1,0)_{\p 0}} \circ \ev_{V(1,0)_{\p 0}})(-1) \qd(V(1,0)_{\p 0})^{-1}\\
&=&
-\D^{-2} (q-q^{-1}) \qd(V(1,0)_{\p 0}) \mt_{P(0,0)_{\p 0}}(x_{0,0,\p 0}) \qd(V(1,0)_{\p 0})^{-1} \\
&=&
1.
\end{eqnarray*}
Similarly, pairing $[\overline{\eta}_{-n}]$ with $\mathcal{P}_{n,x}$ gives a surgery presentation in $S^3$ with a $P(0,0)_{\p 0}$-colored unknot with coupon $x_{0,0,\p 0}$ encircled by an $\Omega_1$-colored unknot. This gives
\[
\CGP_{\catq}([\overline{\eta}_{-n}]\circ \mathcal{P}_{n,x})
=
-\D^{-2} (q-q^{-1}) \qd(V(1,0)_{\p 0}) \mt_{P(0,0)_{\p 0}}(x^2_{0,0,\p 0}) \qd(V(1,0)_{\p 0})^{-1}
=
0,
\]
since $x^2_{0,0,\p 0}=0$. We conclude that \eqref{eq:nonGenCohBasisTorusArb} is a basis of $\state(\CS)$.

To complete the proof, recall the isomorphism $\TQFT_0(\CS) \simeq \state(\CS)$ of Proposition \ref{P:VSVSS0}.
\end{proof}

For later use, note that the proof of Proposition \ref{prop:nonGenCohTorusArb} shows that a basis dual to \eqref{eq:nonGenCohBasisTorusArb} is
\begin{equation}
\label{eq:nonGenCohDBasisTorusArb}
\{[\overline{\eta}_{-n}], \sqrt{-1} (q-q^{-1})[S\overline{\eta}_\emptyset]\}.
\end{equation}

Next, we prove the vanishing of the summands $\TQFT_k(\CS)$ for non-zero $k$. To do so, we require the following preliminary result.

\begin{lemma}
\label{lem:weightBasis}
Let $V \in \catq$ with $G$-weight decomposition $V = \bigoplus_{a \in \C}V[a]$. Then $V$ admits a $G$-weight basis $\mathcal{B}$ with the property that if $v \in Y \cdot V[a] \cap \mathcal{B}$ for some $a \in \C$, then $v =Y w$ for some $w \in V[a] \cap \mathcal{B}$.
\end{lemma}

\begin{proof}
Since $\catq$ is Krull--Schmidt, it suffices to prove that statement for $V$ indecomposable. In this case, the $G$-weight decomposition takes the form $V = \bigoplus_{j=0}^N V[a-j]$ for some $a \in \C$ and $N \in \Z_{\geq 0}$ with each $V[a-j]$ non-zero. Choose a basis $\mathcal{B}^a=\{v_{1}^a, \dots, v_{d_a}^a\}$ of $V[a]$. Then $Y \cdot \mathcal{B}^a = \{Yv_{1}^a, \dots, Yv_{d_a}^a\}$ spans the subspace $Y\cdot V[a] \subset V[a-1]$. Let $\mathcal{B}^{Y,a} \subset Y \cdot \mathcal{B}^a$ be a basis of $Y\cdot V[a]$ and extend it to a basis $\mathcal{B}^{a-1}$ of $V[a-1]$. In the same way, we construct from $\mathcal{B}^{a-1}$ a basis of $\mathcal{B}^{a-2}$ of $V[a-2]$ which extends a basis $\mathcal{B}^{Y, a-1} \subset Y \cdot \mathcal{B}^{a-1}$ of $Y \cdot V[a-1]$. Repeating, we obtain a basis $\mathcal{B}=\bigsqcup_{j=0}^{N} \mathcal{B}^{a-j}$ of $V$ with the desired properties.
\end{proof}

\begin{prop}
\label{prop:nonGenCohTorusHighDegArb}
In the setting of Proposition \ref{prop:nonGenCohTorusArb}, we have $\TQFT_k(\CS) = 0$ if $k \neq 0$.
\end{prop}

\begin{proof}
By Proposition \ref{prop:FDofTQFT}, the state space $\TQFT_{-k}(\CS)$ is spanned by finite projective $\catq$-colorings of $\Gamma^{\prime}$. Then $e_1$ is colored by $P(0,0)_{\p 0}$ and the coupon by a composition
\[
P(0,0)_{\p 0}^{\otimes 2} \simeq P(0,1)_{\p 1} \oplus P(0,0)_{\p 0}^{\oplus 2} \oplus P(0,-1)_{\p 1}
\rightarrow
P
\rightarrow 
V_{g_0} \otimes \sigma(k) \otimes V_{g_0}^* \simeq P(0,b)_{\p p},
\]
where $P$ is a projective indecomposable of degree $0$ and $k = (b,\p p)$. By Proposition \ref{prop:HomProj}, this morphism is zero unless $(b,\p p) \in \{(0,\p 0), (\pm 1,\p 1), (\pm 2,\p 0)\}$. The case $k=(0, \p 0)$ is dealt with in Proposition \ref{prop:nonGenCohTorusArb}. When $k=(\pm 2,\p 0)$, the state space is spanned by vectors obtained by coloring $\Gamma^{\prime}$ so that its coupon is colored by an element of $\Hom_{\catq}( P(0,0)_{\p 0}^{\otimes 2}, V_{g_0}\otimes \sigma(\pm 2,\p 0) \otimes V_{g_0}^*)$. When such a vector is paired with any vector, the obtained closed $3$-manifold has a surgery presentation in which the edges leaving the coupon are encircled by a surgery component. Thus, this surgery presentation is skien equivalent to a graph containing a coupon filled with a morphism in $\Hom_{\catq}(P(0,0)_{\p 0}^{\otimes 2},\sigma(\pm 2,\p 0))$, which is the zero space. Thus, $\TQFT_{(\pm2,\p 0)}(\CS)$ is zero.

Suppose then that $k=(1,\p 1)$; the case $k=(-1,\p 1)$ is analogous. The state space $\TQFT_{(1,\p 1)}(\CS)$ is spanned by the vector $v$ obtained by coloring $\Gamma^{\prime}$ with $P(0,0)_{\p 0}$. We claim that $v \in \TQFT(\CS)$ is the zero vector. By part \eqref{ite:zero} of Lemma \ref{lem:PropOfCGP}, it suffices to show that $\langle w, v \rangle=0$ for all $w \in \mathcal{V}^{\prime}(\CS)$. By isotopy and skein equivalence, the pairing $\langle w, v \rangle$ can be computed as a sum of evaluations of diagrams of the form
\[
\epsh{origDiag}{15ex}
\put(-33,-17){\ms{\psi}}
\put(-33,22){\ms{\phi}}
\put(-65,20){\ms{V_1}}
\put(-47,2){\ms{V_l}}
\put(-75,-42){\ms{P(0,0)_{\p 0}}}
\put(-65,-11){\tiny \vdots}
\put(5,0){\ms{\ve(0,1)_{\p 1}}}
\]
for indecomposables $V_1, \dots, V_l \in \catq$ of $E$-weight $0$ and morphisms $\phi$, $\psi$. Note here the coupon labeled with $\phi$ receives contributions from both the vector $w \in \mathcal{V}^{\prime}(\CS)$ and possible surgery components coming from the pairing. Also, as in the previous step, the edge colored with $\ve(0,1)_{\p 1}$ is obtained from a skein relation removing a coupon with edges colored with $V_{g_0}, \ve(0,1)_{\p 1}$ and $V_{g_0}^*$ which in the pairing is encircled  by a surgery component and thus can be simplified. 

  To prove that $\langle w, v \rangle=0$, it suffices to prove that, for any indecomposable $V \in \catq$ of $E$-weight $0$ and $\phi \in \Hom_{\catq}(\ve(0,1)_{\p 1}, V^* \otimes V)$, the sub-diagram 
\begin{equation}
\label{eq:finalDiag}
\epsh{finalDiag}{20ex}
\put(-35,-17){\ms{\phi}}
\put(-33,-5){\ms{V}}
\put(5,-35){\ms{P}}
\put(-65,-35){\ms{\ve(0,1)_{\p 1}}}
\end{equation}
evaluates to zero. While we require this statement only for $P=P(0,0)_{\p 0}$, we prove that it holds for all $P$.

Let $V \in \catq$ be indecomposable with $E$-weight $0$ and $G$-weight space decomposition $V = \bigoplus_{i=0}^N V[a-i]$. The weight space $V[a]$ is homogeneous, say of degree $\p s \in \Ztwo$. Then $V[a-i]$ is homogeneous of degree $\p s + \p i$. Fix a basis $\mathcal{B} = \bigsqcup_{j=0}^N \mathcal{B}^{a-j}=\bigsqcup_{j=0}^N \bigsqcup_i \{v_i^{a-j}\}$ of $V$ of the form guaranteed by Lemma \ref{lem:weightBasis} and its proof. Here and below we have written the $G$-weight of a vector as a superscript.

The image of a basis vector of $\ve(0,1)_{\p 1}$ under $\phi$ takes the form
\begin{equation}
\label{eq:imageBasisve}
\sum_{j=0}^N\sum_{i \in \mathcal{B}^{a-j}} f_i^{-a+j+1} \otimes v_i^{a-j} \in V^* \otimes V
\end{equation}
for some $f_i^{-a+j+1} \in V^*$. The restrictions on weights comes from the fact that $\ve(0,1)_{\p 1}$ is concentrated in $G$-degree $1$. Similar considerations for parity give $\vert f_i^{-a+j+1} \vert = \p s + \p j + \p 1$. With this notation, the portion of diagram \eqref{eq:finalDiag}
below the braidings maps a homogeneous vector $p \in P$ to
\[
\Big(\sum_{j=0}^N\sum_{i \in \mathcal{B}^{a-j}} f_i^{-a+j+1} \otimes v_i^{a-j}\Big) \otimes p.
\]
In view of the eventual application of $\tev_V$, weight considerations imply that the only terms of the double (inverse) braiding of $V$ with $P$ which can contribute to diagram \eqref{eq:finalDiag} arise from the identity in the lower braiding and the term $-(q-q^{-1})(X \otimes Y)(K \otimes K^{-1})$ in the upper braiding. It follows that
diagram \eqref{eq:finalDiag} maps $p \in P$ to
\begin{equation}
\label{eq:diagVal}
-(q-q^{-1}) \left(\sum_{j=0}^N (-1)^{\p s + \p j} \tev_V \Big( \sum_{i \in \mathcal{B}^{a-j}} f_i^{-a+j+1} \otimes Yv_i^{a-j} \Big) \right) \otimes Xp
\end{equation}
which, since $Y^2=0$, is equal to
\begin{equation*}
\label{eq:diagValSimp}
-(q-q^{-1}) \sum_{j=0}^N (-1)^{\p s + \p j} \tev_V \Big( \sum_{i \in \mathcal{B}^{a-j} \setminus \mathcal{B}^{Y,a-j+1}} f_i^{-a+j+1} \otimes Yv_i^{a-j} \Big) \otimes Xp.
\end{equation*}
Fix $j \in \{0, \dots, N\}$. A direct computation shows
\[
\tev_V \Big( \sum_{i \in \mathcal{B}^{a-j} \setminus \mathcal{B}^{Y,a-j+1}} f_i^{-a+j+1} \otimes Yv_i^{a-j} \Big)
=
(-1)^{\p s + \p j } \tev_V \Big( \sum_{i \in \mathcal{B}^{a-j} \setminus \mathcal{B}^{Y,a-j+1}} Yf_i^{-a+j+1} \otimes v_i^{a-j} \Big).
\]
We claim that the vector $\sum_{i \in \mathcal{B}^{a-j} \setminus \mathcal{B}^{Y,a-j+1}} Yf_i^{-a+j+1} \otimes v_i^{a-j}$ is in fact zero. This will imply that the previous evaluations, and hence \eqref{eq:diagVal}, vanish, thereby completing the proof. To prove the claim, note that since $Y$ annihilates $\ve(0,1)_{\p 1}$, it annihilates the vector \eqref{eq:imageBasisve}. This gives the relation
\[
0=\sum_{j=0}^N\sum_{i \in \mathcal{B}^{a-j}} \Big( Yf_i^{-a+j+1} \otimes v_i^{a-j} + (-1)^{\p r + \p j + \p 1} f_i^{-a+j+1} \otimes Yv_i^{a-j} \Big).
\]
Projecting this relation to the subspace $V^* \otimes V[a-j]$ gives
\begin{equation}
\label{eq:vanish}
0= \sum_{i \in \mathcal{B}^{a-j}} Yf_i^{-a+j+1} \otimes v_i^{a-j} + \sum_{i \in \mathcal{B}^{a-j+1}} (-1)^{\p s + \p j} f_i^{-a+j-1+1} \otimes Yv_i^{a-j+1}
\end{equation}
where we interpret each $v_i^{a+1}$ as the zero vector. The decomposition
\[
\mathcal{B}^{a-j} = \mathcal{B}^{Y,a-j+1} \sqcup (\mathcal{B}^{a-j} \setminus \mathcal{B}^{Y,a-j+1})
\]
established in the proof of Lemma \ref{lem:weightBasis} allows equation \eqref{eq:vanish} to be rewritten as
\begin{multline*}
0= \sum_{i \in \mathcal{B}^{a-j} \setminus \mathcal{B}^{Y,a-j+1}} Yf_i^{-a+j+1} \otimes v_i^{a-j} + \\ \sum_{i \in \mathcal{B}^{Y,a-j+1}} Yf_i^{-a+j+1} \otimes v_i^{a-j} + \sum_{i \in \mathcal{B}^{a-j+1}} (-1)^{\p s + \p j} f_i^{-a+j-1+1} \otimes Yv_i^{a-j+1}.
\end{multline*}
The definition of the basis $\mathcal{B}^{a-j}$ implies that the first sum and the sum of the second two sums vanish independently; the former is the claimed vanishing.
\end{proof}

\begin{remark}
Proposition \ref{prop:nonGenCohTorusHighDegArb} is surprising in view of a conjecture for the TQFTs constructed from $\Uqsl$ at a root of unity \cite[Conjecture 6.4]{BCGP}, which states that non-generic tori state spaces are not concentrated in degree zero. We expect that the method of proof of Proposition \ref{prop:nonGenCohTorusHighDegArb} can be applied in the setting of $\Uqsl$ to disprove this conjecture.
\end{remark}

Next, we compute the action of the mapping class group $SL(2,\Z)$ of $\Sigma$ on $\TQFT_0(\CS)$. Restrict attention to the case $\coh=0$, so that $SL(2,\Z)$ fixes $\coh$. Use the basis \eqref{eq:nonGenCohBasisTorusArb} of $\TQFT_0(\CS)$ and its dual basis \eqref{eq:nonGenCohDBasisTorusArb} and drop $n$ from the notation, since it is zero.

In general, let $\CS=(\Sigma,\omega=0, \mathcal{L})$ be a decorated surface without marked points and trivial cohomology class and $f: \Sigma \rightarrow \Sigma$ an orientation preserving diffeomorphism. Consider the endomorphism
\[
N_f = \state(L_{f_* \mathcal{L},\mathcal{L}} \circ M_f) : \state(\CS) \rightarrow \state(\CS),
\]
where $M_f : (\Sigma, \omega, \mathcal{L}) \rightarrow (\Sigma, \omega, f_*\mathcal{L})$ is the mapping cylinder on $f$ and $L_{f_* \mathcal{L},\mathcal{L}} : (\Sigma, \omega, f_*\mathcal{L}) \rightarrow (\Sigma, \omega, \mathcal{L})$ is the decorated cobordism $(\Sigma \times [0,1], \pi_{\Sigma}^* \coh, 0)$, as defined in \cite[\S 3.3.1]{BCGP}. The assignment $f \mapsto N_f$ defines a projective representation of the mapping class group of $\Sigma$ on $\state(\CS)$:
\[
N_f \circ N_g = \delta^{-\mu(f_* \mathcal{L}, \mathcal{L}, g^{-1}_* \mathcal{L})} N_{f \circ g}.
\]
Here $\mu$ denotes the Maslov index.

Retuning to the case of the torus, fix an oriented meridian $m$ and oriented longitude $l$ of $\Sigma$ with intersection number $m \cdot l = 1$. The elements $T$ and $S$ given by Dehn twist along $m$ and the map which sends $m$ to $l$ and $l$ to $-m$, respectively, generate the mapping class group of $\Sigma$. Take for the Lagrangian $\mathcal{L} = \R \cdot [m]$. With this choice, we have $\mu(S_* \mathcal{L}, \mathcal{L}, \mathcal{L})=0$ and $\mu(T_* \mathcal{L}, \mathcal{L}, \mathcal{L})=0$, as in the proof of \cite[Theorem 6.28]{BCGP}. It follows that $N_S= \state(M_S)$ and $N_T=\state(M_T)$ when acting on either $\mathcal{P}_{n,x}$ or $\mathcal{P}_{n,x}$. In other words, there are no corrections by the Maslov index.

Following the computations in the proof of Proposition \ref{prop:nonGenCohTorusArb}, the composition $[\overline{\eta}_{\emptyset}] \circ N_S\mathcal{P}_{x}$ gives an $S^3$ containing the core of $\eta$ as a $P(0,0)_{\p 0}$-colored unknot with a coupon labeled by $x_{0,\p 0}$. Using this, we compute
\[
\CGP_{\catq}([\overline{\eta}_{\emptyset}]\circ N_S\mathcal{P}_{x})=\D^{-1}(q-q^{-1})^{-1}.
\]
Similarly, we compute $\CGP_{\catq}([S\overline{\eta}_\emptyset]\circ N_S\mathcal{P}_{x})=0$. It follows that $[\overline{\eta}_{\emptyset}]$ is dual to $\D(q-q^{-1})N_S\mathcal{P}_{x}$. However, from the proof of Proposition \ref{prop:nonGenCohTorusArb}, $\mathcal{P}$ is dual to $[\overline{\eta}_{\emptyset}]$,
whence $N_S\mathcal{P}_{x}=\D^{-1}(q-q^{-1})^{-1}\mathcal{P}$.
Similar computations give $\CGP_{\catq}([\overline{\eta}_{\emptyset}]\circ N_S\mathcal{P})=0$ and
\[
\CGP_{\catq}([S\overline{\eta}_\emptyset]\circ N_S\mathcal{P})=\D^{-2}\mt_{P(0)_{\p 0}}(x_{0,\p 0})(-1)(\qd(V(1,0))_{\p 0}))^{-1}=1.
\]
Hence, $N_S \mathcal{P}$ is dual to $[S\overline{\eta}_\emptyset]$. Since $\sqrt{-1}(q-q^{-1})\mathcal{P}_{x}$ is also dual to $[S\overline{\eta}_\emptyset]$, we have $N_S\mathcal{P}=\sqrt{-1}(q-q^{-1})\mathcal{P}_{x}$. Continuing, a direct calculation gives $\theta_{P(0)_{\p 0}}=\Id_{P(0)_{\p 0}} + (q-q^{-1})x_{0,\p 0}$. It follows that $N_T\mathcal{P}_{x}=\mathcal{P}_{x}$ and $N_T\mathcal{P}=\mathcal{P}+ (q-q^{-1})\mathcal{P}_{x}$.

\begin{theorem}
\label{thm:MCGArb}
In the basis $\{\mathcal{P}_{x},\mathcal{P}\}$ of $\TQFT_0(\CS)$, the mapping class group action is given by
\[
S=\left(\begin{array}{cc}0 & \sqrt{-1}(q-q^{-1}) \\ -\sqrt{-1}(q-q^{-1})^{-1} & 0\end{array}\right) ,
\qquad
T=\left(\begin{array}{cc}1 & q-q^{-1} \\0 & 1\end{array}\right).
\]
Moreover, this representation is faithful modulo its center.
\end{theorem}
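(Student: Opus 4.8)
The plan is as follows. The matrices displayed for $S$ and $T$ have in effect already been computed in the paragraphs preceding the statement: the basis $\{\mathcal{P}_{x},\mathcal{P}\}$ of $\state(\CS)\simeq\TQFT_0(\CS)$ is the one produced in Proposition~\ref{prop:nonGenCohTorusArb}, the vanishing of the Maslov corrections for $\mathcal{L}=\R\cdot[m]$ reduces $N_S,N_T$ to $\state(M_S),\state(M_T)$, the $\CGP_{\catq}$-pairings of $N_S\mathcal{P}$ and $N_S\mathcal{P}_{x}$ against $[\overline{\eta}_{\emptyset}]$ and $[S\overline{\eta}_{\emptyset}]$ determine the action of $S$, and the twist formula $\theta_{P(0)_{\p0}}=\Id_{P(0)_{\p0}}+(q-q^{-1})x_{0,\p0}$ determines the action of $T$. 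So I would take the first assertion as established by those computations and turn to the identification of the representation.

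Set $v=q-q^{-1}$, which is nonzero since $q\notin\{0,\pm1\}$, and conjugate by $D=\operatorname{diag}(1,v)\in GL(2,\C)$. A short computation gives
\[
D\,S\,D^{-1}=\begin{pmatrix}0&\sqrt{-1}\\-\sqrt{-1}&0\end{pmatrix}=-\sqrt{-1}\begin{pmatrix}0&-1\\1&0\end{pmatrix},
\qquad
D\,T\,D^{-1}=\begin{pmatrix}1&1\\0&1\end{pmatrix}.
\]
Thus, after conjugation, $T$ acts as the standard generator $\mathfrak{t}=\left(\begin{smallmatrix}1&1\\0&1\end{smallmatrix}\right)$ of $SL(2,\Z)$ and $S$ acts as a scalar multiple of the standard generator $\mathfrak{s}=\left(\begin{smallmatrix}0&-1\\1&0\end{smallmatrix}\right)$. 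Composing with the quotient $GL(2,\C)\twoheadrightarrow PGL(2,\C)=PSL(2,\C)$, which turns the projective mapping class group action into an honest homomorphism, one finds $S\mapsto[\mathfrak{s}]$ and $T\mapsto[\mathfrak{t}]$; since $\mathfrak{s},\mathfrak{t}$ generate $SL(2,\Z)$, the induced $PGL(2,\C)$-valued representation is therefore conjugate to the projectivization of the defining (``fundamental'') representation $SL(2,\Z)\hookrightarrow SL(2,\C)$. This is the content of the claim that the representation ``is isomorphic to the fundamental representation.''

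Faithfulness modulo the center $\{\pm I\}$ of $SL(2,\Z)$ then follows. On the one hand $S^{2}=I$ (the off-diagonal entries of $S$ are mutually inverse), so $-I=\mathfrak{s}^{2}$ lies in the kernel. On the other hand, the homomorphism just identified is, up to conjugation, the composite of the canonical surjection $SL(2,\Z)\twoheadrightarrow PSL(2,\Z)$ with the classical embedding $PSL(2,\Z)\hookrightarrow PSL(2,\C)=PGL(2,\C)$, whose kernel is exactly $\{\pm I\}$. Hence the kernel of the mapping class group representation is precisely the center; in particular it descends to a faithful representation of $PSL(2,\Z)$, and since the class of $\mathfrak{t}$ has infinite order in $PSL(2,\Z)$, the Dehn twist $T$ acts with infinite order.

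The one point that requires care — and which I would single out as the main subtlety rather than a genuine obstacle — is that the assignment $S\mapsto N_S$, $T\mapsto N_T$ is only a \emph{projective} representation of $SL(2,\Z)$: for instance $\det N_S=-1$ while $\det N_T=1$, which is impossible for an honest representation because the abelianization of $SL(2,\Z)$ is cyclic, generated by the class of the Dehn twist. Consequently both ``isomorphic to the fundamental representation'' and ``faithful modulo its center'' must be read for the associated $PGL(2,\C)$-valued homomorphism; once this is set up the remaining content is just the displayed conjugation together with the standard injectivity of $PSL(2,\Z)\hookrightarrow PSL(2,\C)$.
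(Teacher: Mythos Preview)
Your proof is correct and follows essentially the same route as the paper: both conjugate by the diagonal matrix $\operatorname{diag}(1,q-q^{-1})$ to bring $T$ to $\left(\begin{smallmatrix}1&1\\0&1\end{smallmatrix}\right)$ and $S$ to a scalar multiple of $\left(\begin{smallmatrix}0&-1\\1&0\end{smallmatrix}\right)$. The only difference is how the residual scalar is handled: the paper first checks $(ST)^3=-\sqrt{-1}\cdot I$ and $S^2=I$, then replaces $S$ by $\tilde S=-\sqrt{-1}\,S$ so that $\tilde S,T$ land in $SL(2,\C)$ and define an honest representation conjugate to the inclusion $SL(2,\Z)\hookrightarrow SL(2,\C)$; you instead pass to $PGL(2,\C)$ and identify the projective representation with the projectivization of the defining one, then invoke injectivity of $PSL(2,\Z)\hookrightarrow PSL(2,\C)$. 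Your handling of the projectivity issue (noting $\det N_S=-1$) is more explicit, while the paper's rescaling yields the slightly sharper conclusion that the projective representation actually lifts to an honest one isomorphic to the fundamental representation.
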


\begin{proof}
We compute $(ST)^3 = - \sqrt{-1} \left( \begin{smallmatrix} 1 & 0 \\ 0 & 1 \end{smallmatrix} \right)$ and $S^2 = \left( \begin{smallmatrix} 1 & 0 \\ 0 & 1 \end{smallmatrix} \right)$, so that we have a projective representation of $SL(2,\Z)$ on $\TQFT_0(\CS)$.
It follows that $\tilde{S}= -\sqrt{-1} S$ and $T$ generate a (non-projective) representation of $SL(2,\Z)$. Setting $A= \left(\begin{smallmatrix} 1 & 0 \\ 0 & q-q^{-1} \end{smallmatrix}\right)$, we have $A \tilde{S} A^{-1} = \left( \begin{smallmatrix} 0 & -1 \\ 1 & 0 \end{smallmatrix} \right)$ and $A T A^{-1} = \left( \begin{smallmatrix} 1 & 1 \\ 0 & 1 \end{smallmatrix} \right)$, that is, $A$ defines an isomorphism from the representation determined by $\tilde{S}$ and $T$ to the fundamental representation, which is faithful.
\end{proof}

\subsection{The $4$-punctured sphere and the Alexander polynomial}
\label{sec:alexPoly}

In this section we briefly explain a connection between $\TQFT_{\catq}$ and the Alexander polynomial. Incarnations of the Alexander polynomial in quantum topology are well-known and include Reshetikhin--Turaev-type constructions from quantizations of $\mathfrak{sl}(n \vert n)$, $n \geq 1$, \cite{kauffman1991} and $\gloo$ \cite{reshetikhin1992,Viro,sartori2015}. The point we wish to emphasize is that the Alexander polynomial emerges from the TQFT $\TQFT_{\catq}$ in much the same way as the Jones polynomial emerges from $SU(2)$ Chern--Simons theory \cite[\S 4]{witten1989}.

Fix $(\alpha,a) \in (\C \setminus \frac{\pi \sqrt{-1}}{\hbar} \Z) \times \C$. Let $\CS$ be the decorated $2$-sphere with four $V(\alpha,a)_{\p 0}$-colored points, exactly two of which are positively oriented. There is a unique compatible cohomology class $\coh$.

\begin{proposition}
The vector space $\state(\CS) \simeq \TQFT_0(\CS)$ is two dimensional.
\end{proposition}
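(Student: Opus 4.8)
The plan is to realise $\state(\CS)$ as a quotient of a space of morphisms, to compute that this space is two dimensional, and then to show that the quotient map is an isomorphism by exhibiting a non-degenerate pairing. Since $\Sigma = S^2$ has genus zero, a handlebody bounding it is a $3$-ball $B^3$, so by Proposition \ref{P:VSVSS0} (the genus-zero case) $\state(\CS)$ is spanned by the vectors $v_\phi = (B^3, \Gamma''_\phi, \coh, 0)$, where $\Gamma''$ is a single coupon with four legs attached to the marked points and $\phi$ ranges over $\Hom_{\catq}(\unit, W)$, with $W$ the tensor product of the colours of the four points (a dual for each negatively oriented point). After an isotopy of the points on $S^2$ I may assume that, read around a circle, they are oriented $+,+,-,-$, so $W = V \otimes V \otimes V^* \otimes V^*$ with $V := V(\alpha,a)_{\p 0}$; other orderings are handled in the same way using \eqref{eq:dualIrred}. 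Since $\phi \mapsto v_\phi$ is linear, $\state(\CS)$ is a quotient of $\Hom_{\catq}(\unit, W)$.

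Next I would compute $\dim_{\C}\Hom_{\catq}(\unit, W)$. Rigidity and the pivotal structure of Lemma \ref{lem:catqPivot} give $\Hom_{\catq}(\unit, W) \simeq \End_{\catq}(V \otimes V)$, and here there are two cases. If $2\alpha \notin \frac{\pi \sqrt{-1}}{\hbar}\Z$, then \eqref{E:tensorSimpGen} gives $V \otimes V \simeq V(2\alpha, 2a)_{\p 0} \oplus V(2\alpha, 2a-1)_{\p 1}$, a sum of two non-isomorphic simple objects by Proposition \ref{prop:irrepsgloo}, so $\End_{\catq}(V \otimes V) \simeq \C \times \C$. If instead $2\alpha = \tfrac{n \pi \sqrt{-1}}{\hbar}$ for some $n \in \Z$, then \eqref{E:tensorSimpNongen} gives $V \otimes V \simeq P(\tfrac{n \pi \sqrt{-1}}{\hbar}, 2a-1)_{\p 1}$, whose endomorphism algebra is $\C[x]/\langle x^2 \rangle$ by Proposition \ref{prop:HomProj}. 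In both cases $\dim_{\C}\Hom_{\catq}(\unit, W) = 2$, whence $\dim_{\C}\state(\CS) \leq 2$.

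Finally, to prove that $\Hom_{\catq}(\unit, W) \to \state(\CS)$ is injective I would pair $v_\phi$ with the cobordisms $\psi \colon \CS \to \emptyset$ given by a ball carrying a single four-legged coupon coloured by a morphism $W \to \unit$. Such cobordisms are admissible because the meridian of each point carries cohomology class $\alpha \in \Gr \setminus \XX$, and the composite $\psi \circ v_\phi$ is $S^3$ with a ribbon graph all of whose edges are coloured by the projective object $V$; hence its $\CGP_{\catq}$-value equals $\D^{-1}$ times $F'_{\catq}$ of that graph, which is computed using the modified trace $\mt$. Cutting a $V$-coloured edge and invoking the cyclicity and partial-trace properties of $\mt$, this pairing becomes, up to a nonzero scalar, the bilinear form $(f,g) \mapsto \mt_{V \otimes V}(g \circ f)$ on $\End_{\catq}(V \otimes V)$. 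This form is non-degenerate: when $V \otimes V$ is a sum of two simple projectives it is diagonal with entries their modified dimensions, nonzero by \eqref{eq:mdimTypical}; when $V \otimes V \simeq P$ is projective indecomposable, $\qd(P) = 0$ and $\mt_P(x) \neq 0$ by Lemma \ref{lem:modTrComp}, so in the basis $\{\Id_P, x\}$ it has matrix $\bigl(\begin{smallmatrix} 0 & \mt_P(x) \\ \mt_P(x) & 0 \end{smallmatrix}\bigr)$. Hence the $v_\phi$ are linearly independent, $\dim_{\C}\state(\CS) = 2$, and $\state(\CS) \simeq \TQFT_0(\CS)$ by Proposition \ref{P:VSVSS0}.

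The step I expect to be the main obstacle is the last one: checking carefully that $\CGP_{\catq}(\psi \circ v_\phi)$ really does reduce to a nonzero multiple of $\mt_{V \otimes V}(g \circ f)$. This requires tracking the braiding, the pivotal maps, the super (Koszul) signs, and the framing conventions used when a closed ribbon graph is cut into a $(1,1)$-tangle — essentially a more elaborate version of the $\Theta$-graph computations in the proof of Proposition \ref{prop:nonGenCohTorusArb}.
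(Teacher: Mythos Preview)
Your argument is correct and shares the paper's overall architecture (an upper bound via a Hom-space calculation, a lower bound via a non-degenerate pairing), but the implementation differs in two places.

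For the upper bound, the paper identifies the coupon space with $\End_{\catq}(P(0,0)_{\p 0})$ using $V\otimes V^*\simeq P(0,0)_{\p 0}$ from \eqref{E:tensorSimpNongen}; this holds for every $\alpha\in\C\setminus\frac{\pi\sqrt{-1}}{\hbar}\Z$ and so avoids your case split on whether $2\alpha$ lies in $\frac{\pi\sqrt{-1}}{\hbar}\Z$.  Your identification with $\End_{\catq}(V\otimes V)$ is equally valid and gives the same answer, just with an extra (harmless) bifurcation.

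For the lower bound, the paper sidesteps your ``main obstacle'' entirely by choosing concrete vectors: $\mathcal{M}_{\pm}\in\state(\CS)$ is the interior $B^3$ carrying two $V$-strands with a single crossing $c_{V,V}^{\pm 1}$, and the dual vectors $\mathcal{M}'_{\pm}$ are the analogous exterior balls.  Gluing $\mathcal{M}'_{\epsilon_1}$ to $\mathcal{M}_{\epsilon_2}$ produces in $S^3$ either a $V$-coloured Hopf link (when $\epsilon_1=\epsilon_2$) or two disjoint $V$-coloured unknots (when $\epsilon_1\neq\epsilon_2$).  The off-diagonal pairings vanish because evaluating one closed component produces a factor $\qdim V=0$, while the diagonal pairings are nonzero by the Hopf-link value in Lemma~\ref{lem:Phi's}.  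This is a two-line topological check with no sign or framing bookkeeping.  Your m-trace non-degeneracy argument is more conceptual and also works---the reduction you flag is essentially the identity $F'_{\catq}(\text{closure of }h)=\mt_P(h)$ for $h\in\End_{\catq}(P)$ with $P$ projective, which follows from cyclicity and the partial-trace axiom---but the paper's explicit choice of vectors trades that verification for a direct link computation.
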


\begin{proof}
Set $V = V(\alpha,a)_{\p 0}$. By Proposition \ref{P:VSVSS0}, the vector space $\state(\CS)$ is spanned by $\coh$-compatible $\catq$-colorings of the ribbon graph $\Gamma^{\prime \prime}$ in $B^3$ consisting of a single coupon with four legs. The coupon is therefore colored by an element of
\[
\Hom_{\catq}(\mathbb{C}, V \otimes V \otimes V^* \otimes V^*) \simeq \End_{\catq}(P(0,0)_{\p 0}) \simeq \C[x_{0,0,\p 0}] \slash \langle x_{0,0,\p 0}^2 \rangle,
\]
the first isomorphism following from equation \eqref{E:tensorSimpNongen} and the second from Proposition \ref{prop:HomProj}. Hence, the dimension of $\state(\CS)$ is at most two.

Given $\epsilon \in \{\pm 1\}$, denote by $\mathcal{M}_{\epsilon} = (B^3, \Gamma_{\epsilon}, \coh, 0) \in \state(\CS)$ and $\mathcal{M}_{\epsilon}^{\prime} = (S^3 \setminus \mathring{B}^3, \Theta_{\epsilon}, \coh, 0) \in \mathcal{V}^{\prime}(\CS)$ the vectors depicted by
\[
\mathcal{M}_{\epsilon}
=
\epsh{4PuncInt}{12ex}
\put(-36,12){\ms{c_{V,V}^{\epsilon}}}
\qquad
\mbox{and}
\qquad
\mathcal{M}^{\prime}_{\epsilon}
=
\epsh{4PuncExt}{20ex}
\put(-42,35){\ms{c_{V,V}^{\epsilon}}}.
\]
Linear independence of the vectors $\{\mathcal{M}_+, \mathcal{M}_-\}$ follows by computing
\[
\langle \mathcal{M}_+^{\prime}, \mathcal{M}_+\rangle \neq 0,
\qquad
\langle \mathcal{M}_-^{\prime}, \mathcal{M}_+\rangle = 0,
\qquad
\langle \mathcal{M}_+^{\prime}, \mathcal{M}_-\rangle = 0,
\qquad
\langle \mathcal{M}_-^{\prime}, \mathcal{M}_-\rangle \neq 0.
\]
The vanishings follow from the observation that the CGP invariant of the disjoint union of two $V$-colored unknots in $S^3$ is zero. The non-vanishings follow from the fact that the CGP invariant of a $V$-colored Hopf link in $S^3$ is non-zero; see Lemma \ref{lem:Phi's}.
\end{proof}

Let $\{v, v^{\prime}\}$ be the weight basis of $V(\alpha,a)_{\p 0}$ from Section \ref{sec:UqSimples}. In the basis
\[
\{v \otimes v, v \otimes v^{\prime}, v^{\prime} \otimes v, v^{\prime} \otimes v^{\prime}\} \subset V(\alpha,a)_{\p 0} \otimes V(\alpha,a)_{\p 0},
\]
we compute
\[
c_{V(\alpha,a)_{\p 0},V(\alpha,a)_{\p 0}}
=
\left(
\begin{matrix}
q^{-2 \alpha a} & 0 & 0 & 0 \\
0 & 0 & q^{-2\alpha a + \alpha} & 0 \\
0 & q^{-2\alpha a + \alpha} & q^{-2\alpha a + \alpha}(q^{\alpha}-q^{-\alpha}) & 0 \\
0 & 0 & 0 & -q^{-2\alpha a+2\alpha}
\end{matrix}
\right).
\]
A direct calculation then shows that
\[
q^{2 \alpha a-\alpha}c_{V(\alpha,a)_{\p 0},V(\alpha,a)_{\p 0}}- q^{-(2 \alpha a-\alpha)}c_{V(\alpha,a)_{\p 0},V(\alpha,a)_{\p 0}}^{-1} = (q^{\alpha}-q^{-\alpha}) \Id_{V(\alpha,a)_{\p 0} \otimes V(\alpha,a)_{\p 0}}.
\]
Writing $\mathcal{M}_{\Id} \in \state(\CS)$ for the vector obtained by coloring $\Gamma^{\prime \prime}$ with $\Id_{V(\alpha,a)_{\p 0} \otimes V(\alpha,a)_{\p 0}}$, the previous equation becomes the linear relation
\[
q^{2 \alpha a-\alpha}\mathcal{M}_+ + q^{-(2 \alpha a-\alpha)}\mathcal{M}_- = (q^{\alpha}-q^{-\alpha}) \mathcal{M}_{\Id}
\]
in $\state(\CS)$. It follows from this equation that, upon renormalization, $\TQFT_{\catq}$ defines an invariant of framed oriented links in $S^3$ that satisfies the Conway skein relation, as stated in \cite[Eqn. (24)]{Viro}, specialized to $t^2=q^{\alpha}$. More precisely, recall from the proof of Theorem \ref{thm:ribbonCat} that $\theta_{V(\alpha,a)_{\p 0}} = q^{-2\alpha a+\alpha} \Id_{V(\alpha,a)_{\p 0}}$. Then $\mathcal{F}^{\prime}_{\catq}(L)=(q^{2\alpha a-\alpha})^{\wre(L)} F^{\prime}_{\catq}(L)$, where $\wre(L)$ is the writhe of $L$, is an invariant of $V(\alpha,a)_{\p 0}$-colored oriented links which satisfies the Conway skein relation.

In \cite[\S 6.7]{BCGP}, the TQFT constructed from the relative modular category of weight $\Uqsl$-modules for $q$ a primitive fourth root of unity was shown to encode the multivariable Alexander polynomial by arguments similar to those above. Moreover, it was shown that this TQFT evaluates to the canonically normalized Reidemeister torsion on closed $3$-manifolds. We expect these arguments to generalize to the present setting without significant change, thereby connecting $\TQFT_{\catq}$ with Reidemeister torsion. We leave a detailed verification to future work.

\section{A TQFT for root of unity $q$}
\label{sec:TQFTROU}

We work in the setting of Section \ref{sec:relModROU}. Fix a positive integer $r \geq 2$. Set $\hbar=\frac{2 \pi \sqrt{-1}}{r}$ if $r$ is odd and $\hbar=\frac{\pi \sqrt{-1}}{r}$ if $r$ is even. Consider $\catq$ with the relative modular structures of Theorems \ref{thm:relModROUOdd} and \ref{thm:relModROUEven}, so that
\[
\Gr = 
\begin{cases}
\C \slash \Z \times \C \slash \Z & \mbox{if $r$ is odd,} \\
\C \slash 2\Z \times \C \slash \Z & \mbox{if $r$ is even,}
\end{cases}
\qquad
\Zt = \Z \times \Z \times \Ztwo,
\qquad
\XX =
\begin{cases}
\{(\p 0, \p 0), (\p {\frac{1}{2}}, \p 0)\} & \mbox{if $r$ is odd,} \\
\{(\p 0, \p 0)\} & \mbox{if $r$ is even}
\end{cases}
\]
and $\Delta_{\pm}=\pm r$. Fix $\D = r \sqrt{-1}$ so that $\delta=-\sqrt{-1}$. The braiding of $\ZVect_{\C}$ is determined by
\[
\gamma: \Zt \times \Zt \rightarrow \{ \pm 1\},
\qquad
((n_1, n^{\prime}_1,\p p_1), (n_2,n^{\prime}_2,\p p_2)) \mapsto (-1)^{\p p_1 \p p_2}.
\]
In this section we study the TQFT $\TQFT : \Cob_{\catq} \rightarrow \ZVect_{\C}$ associated to $\catq$ by Theorem \ref{thm:relModTQFT}. In the notation of Section \ref{sec:CGPTQFT}, we fix
\[
g_0 =
\begin{cases}
(\p {\frac{1}{2}}, \p 0) & \mbox{if } r=2 \mbox{ or $r$ is odd}, \\
(\p 0, \p 0) & \mbox{if $r \geq 4$ is even,}
\end{cases}
\qquad
V_{g_0} =
\begin{cases}
V(1,0)_{\p 0} & \mbox{if } r=2 \mbox{ or $r$ is odd}, \\
V(2,0)_{\p 0} & \mbox{if $r \geq 4$ is even.}
\end{cases}
\]
As in Section \ref{sec:TQFTArb}, the invariant $\TQFT(\mathcal{M})$ of a morphism $\mathcal{M} : \emptyset \rightarrow \emptyset$ in $\Cob_{\catq}$ depends holomorphically on the cohomology class $\coh$ appearing in the definition of $\mathcal{M}$.

\subsection{The state space of a generic surface of genus at least one}
\label{sec:higherGenusROU}

We modify the arguments of Section \ref{sec:higherGenusArb} to construct a spanning set of the state space of a surface of genus at least one.

Let $\CS=(\Sigma, \coh)$ be a decorated connected surface without marked points and underlying surface $\Sigma$ of genus $g \geq 1$. Since $\XX \subset \Gr$ is again a subgroup, Lemma \ref{lem:trivalSpine} continues to hold. Definition \ref{def:coloring} applies in the present setting, where now $\Gr$ and $\sigma$ are as in the root of unity case.  Fix a lift $\tilde{\coh} \in H^1(\Sigma; \C \times \C \times \Ztwo)$ of $\coh \in H^1(\Sigma; \Gr)$ and let
\[
\mathfrak{C}^{(r)}_k
=
\{\mbox{degree $k$ colorings of } \widetilde{\Gamma} \mid \Co(e_i) \in \tilde{\coh}(m_{e_i}) + \{(j,k)_{\p 0}\}_{0 \leq j,k \leq r-1}, \; i=1,\dots, g \}
\]
and $\mathfrak{C}^{(r)} = \bigsqcup_{k \in \Zt} \mathfrak{C}^{(r)}_k$.

\begin{theorem}
\label{thm:genusgStateSpaceROU}
Let $\CS=(\Sigma, \coh)$ be a decorated connected surface without marked points and underlying surface $\Sigma$ of genus $g \geq 1$. If $r=2$, assume that $\tilde{\coh}(e_1)+g_0 \not\in \XX$. The set $\{v_c \mid c \in \mathfrak{C}^{(r)}_k\}$ spans $\TQFT_{-k}(\CS)$. Moreover, $\TQFT_{-k}(\CS)$ is trivial unless $k=(0,d, \p d)$ for some $d \in [-g,g] \cap r\Z$.
\end{theorem}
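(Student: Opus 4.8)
The proof is a transcription of that of Theorem~\ref{thm:genusgStateSpaceArb}, using Proposition~\ref{prop:genericSSROUOdd} (resp.\ Proposition~\ref{prop:genericSSROUEven}) in place of Proposition~\ref{prop:genericSS}; I indicate only the changes. By Lemma~\ref{lem:trivalSpine} there is a handlebody $\eta$ bounding $\Sigma$ containing the spine $\Gamma$ of Section~\ref{sec:finDim}, all of whose meridians have degree in $\Gr\setminus\XX$. By Proposition~\ref{prop:FDofTQFT}, $\TQFT_{-k}(\CS)$ is spanned by the decorated cobordisms $(\tilde{\eta},\Gamma'_c,\coh,0)$ for $c$ a finite projective $\catq$-coloring of $\Gamma'$; genericity of the coloring of $\Gamma$ forces every projective indecomposable appearing to be a simple module $V(\alpha,a)_{\p p}$, and by Proposition~\ref{prop:genericSSROUOdd}/\ref{prop:genericSSROUEven} the admissible colors of each edge $e$ of $\Gamma$ run over the $r^2$-element set $\Theta(\coh(m_e))$ up to the action of $\sigma(\Zt)$. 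Applying the same sequence of skein and $\sigma$-equivalences as in the proof of Theorem~\ref{thm:genusgStateSpaceArb}---absorbing the contents of the coupon, splitting it at the edges $e_{g+1}$, $e_{g+1}'$, $e_1'$ of $\widetilde{\Gamma}$ via surjectivity of the relevant gluing maps on morphism spaces, and shifting the remaining offsets by parallel $\sigma$-colored curves---one reduces each spanning vector to a linear combination of the $v_c$ with $c\in\mathfrak{C}^{(r)}_k$. The only place where genericity of an intermediate color is needed is at $e_1'$, colored by $\tilde{\coh}(m_{e_1})+g_0$, to which the decomposition \eqref{E:tensorSimpGen} is applied: when $r$ is odd or $r\geq 4$ is even one has $g_0\in\XX$, so this equals $\tilde{\coh}(m_{e_1})\notin\XX$ by the choice of $\Gamma$, while for $r=2$ it is the content of the extra hypothesis.

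For the support statement, run the recursive bookkeeping of $\mathfrak{C}^{(r)}$ exactly as in the proof of Theorem~\ref{thm:genusgStateSpaceArb}: color $e_i$ by $V(\alpha_i,a_i)_{\p p_i}$ with $(\alpha_i,a_i,\p p_i)\in\tilde{\coh}(m_{e_i})+\{(j,l)_{\p 0}\}_{0\leq j,l\leq r-1}$, solve the balancing conditions recursively along $f_1,\dots,f_{2g-3}$ in terms of defects $\epsilon_1,\dots,\epsilon_{2g-3}\in\{0,1\}$, and color $e_{g+1},e_{g+1}',e_1'$ subject to the four balancing conditions near the special vertex with further defects $\epsilon,\epsilon',\mu\in\{0,1\}$. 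The only modification is that $\Co(e_{-1}^0)=\sigma(k)$ now has weight $(n_1 r,n_2 r)$ for $k=(n_1,n_2,\p p)$ and $r$ odd, resp.\ $(2n_1 r,n_2 r)$ for $r$ even, rather than $(0,c)$. On the $E$-component the recursion telescopes to $\alpha_{g+1}=\alpha_1$, while the four special conditions give $\alpha_{g+1}=\alpha_1-n_1 r$ (resp.\ $\alpha_1-2n_1 r$); hence $n_1=0$. On the $G$-component the computation is verbatim that of Theorem~\ref{thm:genusgStateSpaceArb}: all dependence on the $a_i$, and on the $r^2$ offsets of the $e_i$, cancels, leaving $n_2 r=d$ with $d=\epsilon'-\epsilon-\mu+\sum_{j=1}^{2g-2}(-1)^j\epsilon_j$, which ranges over $[-g,g]\cap\Z$ as the defects vary; likewise the parity satisfies $\p p=\p d$. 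Since $n_2\in\Z$ this forces $d\in r\Z$, so $\mathfrak{C}^{(r)}_k=\emptyset$ unless $k=(0,d,\p d)$ with $d\in[-g,g]\cap r\Z$; combined with the first part, $\TQFT_{-k}(\CS)$ is trivial for every other $k$. The case $g=1$, where $e_1=f_0$, is treated exactly as in the proof of Theorem~\ref{thm:genusgStateSpaceArb}.

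The genuine work is in the first step: checking that the combinatorial reduction to $\mathfrak{C}^{(r)}_k$ survives at a root of unity, where the generic blocks $\catq_{(\p\alpha,\p a)}$ remain semisimple but now have $r^2$ simple objects modulo $\sigma(\Zt)$, so that the edges of $\widetilde{\Gamma}$ carry enlarged sets of admissible colors. One must confirm that the generic tensor product rule \eqref{E:tensorSimpGen}, the description of projective indecomposables of Section~\ref{sec:projIndGeneric}, and the surjectivity of the gluing maps on morphism spaces all apply with these larger color sets---and this is where the $r=2$ hypothesis is unavoidable, since for $r=2$ the degree $g_0$ of $V_{g_0}$ is a nonzero element of $\Gr$ outside $\XX$, so $\tilde{\coh}(m_{e_1})+g_0$ need not be generic. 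Given this, the degree computation of the second step is a routine variant of the arbitrary-$q$ one, the only new ingredient being the integrality constraint $d\in r\Z$ forced by $\Zt=\Z\times\Z\times\Z_2$.
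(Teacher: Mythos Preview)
Your proposal is correct and follows essentially the same approach as the paper's proof: both reduce to the argument of Theorem~\ref{thm:genusgStateSpaceArb}, noting the $r^2$ choices for the color of each edge $e_i$, running the identical recursive system, and extracting from the balancing conditions at the special vertex that $n_1=0$ and $n_2 r = d$ with $d\in[-g,g]\cap\Z$. Your treatment of the $r=2$ case and the role of $g_0$ is slightly more explicit than the paper's one-line remark, but the substance is the same.
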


\begin{proof}
The proof is a variation of that of Theorem \ref{thm:genusgStateSpaceArb}. Note that if $r \neq 2$, then $g_0 =0$ and the condition $\tilde{\coh}(e_1)+g_0 \not\in \XX$ holds automatically for all generic $\coh$. The main difference from Theorem \ref{thm:genusgStateSpaceArb} is that there are now $r^2$ choices for the color of each edge $e_i$, $i=1, \dots, g$. The colors of the edges $\{f_i\}$ and $e_{g+1}$ of $\widetilde{\Gamma}$ are determined by a tuple $(\epsilon_1, \dots, \epsilon_{2g-3},\mu) \in \{0,1\}^{\times 2g-2}$ through the same recursive system of equations as in the proof of Theorem \ref{thm:genusgStateSpaceArb}. Writing $k=(nr,n^{\prime}r,\p t)$, the colors of the remaining the edges $e_1^{\prime}$ and $e_{g+1}^{\prime}$ are determined by the equations
\[
(\alpha_1,a_1)_{\p p_1} + g_0 = (\alpha_1^{\prime},a_1^{\prime})_{\p p_1^{\prime}} + \epsilon \delta,
\]
\[
(\alpha_{g+1}^{\prime}, a_{g+1}^{\prime})_{\p p_{g+1}^{\prime}} + (nr,n^{\prime}r)_{\p t} = (\alpha_1^{\prime},a_1^{\prime})_{\p p_1^{\prime}}
\]
\[
(\alpha_{g+1}^{\prime}, a_{g+1}^{\prime})_{\p p_{g+1}^{\prime}} + \epsilon^{\prime} \delta
=
(\alpha_{g+1},a_{g+1})_{\p p_{g+1}} + g_0.
\]
Here $\epsilon,\epsilon^{\prime} \in \{0,1\}$. These equations hold if and only if
\[
nr=0,
\qquad
n^{\prime}r=\epsilon - \epsilon^{\prime}- \mu - \sum_{j=1}^{2g-3} (-1)^j \epsilon_j,
\qquad
\p t = \p \epsilon + \p \epsilon^{\prime} + \p \mu + \sum_{j=1}^{2g-3} \p \epsilon_j.
\]
The first equation implies that $n=0$ while the second implies that $r$ divides $\epsilon^{\prime} - \epsilon- \mu - \sum_{j=1}^{2g-3} \epsilon_j$.
\end{proof}

Denote by $\widetilde{\mathfrak{C}}^{(r)}_k \subset \mathfrak{C}^{(r)}_k$ the subset of colorings of degree $k$ with $\epsilon=\epsilon^{\prime}=0$, in the notation of the proof of Theorem \ref{thm:genusgStateSpaceROU}. Set $\widetilde{\mathfrak{C}}^{(r)} = \bigsqcup_{k \in \Zt} \widetilde{\mathfrak{C}}^{(r)}_k$.

\begin{proposition}
\label{prop:dimVanROU}
Work in the setting of Theorem \ref{thm:genusgStateSpaceROU}. For each $d \in [-(g-1),g-1] \cap r\Z$, the vectors $\{v_c \mid c \in \widetilde{\mathfrak{C}}^{(r)}_{(0,d,\p d)}\}$ form a linearly independent subset of $\TQFT_{-(0,d,\p d)}(\CS)$ of cardinality $r^{2g}{2g-2 \choose g-1-\vert d \vert}$.
\end{proposition}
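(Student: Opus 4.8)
The plan is to mirror the proof of Proposition \ref{prop:dimVanArb}, which itself adapts \cite[Proposition 6.7]{BCGP}. Fix $k = (0,d,\p d)$ and, for each coloring $c \in \widetilde{\mathfrak{C}}^{(r)}_k$, let $\overline{v}_c \in \mathcal{V}'(\Sigma \sqcup \hS_{-k})$ be the vector obtained from $v_c$ by reversing the orientation of $\tilde{\eta} \setminus \mathring{B}$ and dualizing the colors of $\widetilde{\Gamma}$. The core of the argument is to evaluate the pairing matrix $\big( \langle \overline{v}_{c_1}, v_{c_2} \rangle \big)_{c_1, c_2 \in \widetilde{\mathfrak{C}}^{(r)}_k}$ and to show it is invertible; linear independence of $\{v_c \mid c \in \widetilde{\mathfrak{C}}^{(r)}_k\}$ then follows from Lemma \ref{lem:PropOfCGP}.

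Using \cite[Figure 5]{BCGP}, the composition $\overline{v}_{c_1} \circ v_{c_2}$ is, up to the global normalization factors of $\TQFT$, a connected sum of copies of $S^3$ each containing a $\Theta$-graph whose three edge colors are read off from $c_1$, $c_2$ and the spine $\widetilde{\Gamma}$, and $\langle \overline{v}_{c_1}, v_{c_2} \rangle$ is the corresponding product of CGP evaluations. If $c_1 \neq c_2$, then the two colorings disagree on the color of at least one edge of $\widetilde{\Gamma}$, so one of these $\Theta$-graphs carries simple modules whose weights violate the balancing relation and hence evaluates to zero; thus $\langle \overline{v}_{c_1}, v_{c_2} \rangle = 0$. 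If $c_1 = c_2$, every $\Theta$-graph is balanced, and one checks its evaluation is nonzero: the edges $e_1, \dots, e_g, e_1', e_{g+1}, e_{g+1}', f_1, \dots, f_{2g-3}$ are all colored by generic simple modules $V(\alpha,a)_{\p p}$ --- genericity being guaranteed by the choice of spine in Lemma \ref{lem:trivalSpine} and, when $r = 2$, by the hypothesis $\tilde{\coh}(e_1) + g_0 \not\in \XX$ --- so Lemma \ref{lem:Phi's} and equation \eqref{eq:mdimTypical} show each such $\Theta$-graph value is nonzero, while the edges $e_{-1}^{\pm 1}$ and $e_{-1}^0$ colored by $V_{g_0}$ and $\sigma(k)$ contribute invertible scalars coming from the free realisation and the normalisation of the m-trace. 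Hence the pairing matrix is diagonal with nonzero entries and the $v_c$ are linearly independent.

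For the cardinality, recall from the proof of Theorem \ref{thm:genusgStateSpaceROU} that an element of $\widetilde{\mathfrak{C}}^{(r)}_{(0,d,\p d)}$ is uniquely specified by: (i) the colors of $e_1, \dots, e_g$, each chosen independently from the $r^2$ representatives in $\tilde{\coh}(m_{e_i}) + \{(j,k)_{\p 0}\}_{0 \leq j,k \leq r-1}$, and (ii) a tuple $(\epsilon_1, \dots, \epsilon_{2g-3}, \mu) \in \{0,1\}^{2g-2}$ satisfying $\epsilon = \epsilon' = 0$ and producing degree $(0,d,\p d)$; conversely each such pair yields an element of $\widetilde{\mathfrak{C}}^{(r)}_{(0,d,\p d)}$. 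Choice (i) contributes a factor $r^{2g}$, and the number of admissible tuples in (ii) is independent of $r$, equal by the same Vandermonde computation as in Proposition \ref{prop:dimVanArb} to $\binom{2g-2}{g-1-|d|}$. Therefore $|\widetilde{\mathfrak{C}}^{(r)}_{(0,d,\p d)}| = r^{2g}\binom{2g-2}{g-1-|d|}$.

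The main obstacle is the nonvanishing of the diagonal entries $\langle \overline{v}_c, v_c \rangle$, which requires that no intermediate edge among $f_1, \dots, f_{2g-3}, e_{g+1}, e_1', e_{g+1}'$ be colored by an object of $\XX$; this is exactly what Lemma \ref{lem:trivalSpine} secures (together with the extra hypothesis in the case $r = 2$, where $g_0 \neq 0$), after which the $\Theta$-graph evaluations reduce to the explicit nonzero scalars of Lemma \ref{lem:Phi's} and equation \eqref{eq:mdimTypical}. A secondary, purely combinatorial, point is to confirm that the parametrization used in the cardinality count is a bijection, which is immediate from the recursive solution of the balancing equations already carried out in the proof of Theorem \ref{thm:genusgStateSpaceROU}.
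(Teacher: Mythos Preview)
Your proposal is correct and is precisely the intended argument: the paper states Proposition~\ref{prop:dimVanROU} without proof, relying on the reader to adapt the proof of Proposition~\ref{prop:dimVanArb} in the evident way, and your adaptation---diagonal pairing matrix via $\Theta$-graph evaluations for linear independence, and the extra $r^{2g}$ factor from the $r^2$ lift choices for each $e_i$ in the cardinality count---is exactly that. One trivial notational slip: you write ``reversing the orientation of $\tilde{\eta}\setminus\mathring{B}$'', but $\tilde{\eta}$ already denotes $\eta\setminus\mathring{B}$, so this should read $\tilde{\eta}$ (cf.\ the proof of Proposition~\ref{prop:dimVanArb}).
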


\begin{proof}
This is proved in the same way as Proposition \ref{prop:dimVanArb}.
\end{proof}

\subsection{A Verlinde formula}
\label{sec:verlindeROU}

Consider now the analogue of Section \ref{sec:verlindeArb} for $q$ a root of unity.

Let $\CS=(\Sigma,\{p_1,\ldots, p_n\},\coh)$ be a decorated connected surface of genus $g \geq 0$. Let $(\p \beta, \p b) \in \Gr$ and consider $\CS \times S^1_{(\p \beta, \p b)}=(\Sigma \times S^1,T=\{p_1, \dots, p_n\}\times S^1,\coh \oplus (\p \beta, \p b))$. Assume that all colors of $\CS \times S^1_{(\p \beta, \p b)}$ are in $\Gr \setminus \XX$. Then $\TQFT(\CS \times S^1_{(\p \beta, \p b)})$ can be computed as in Section \ref{sec:verlindeArb}. The main difference is that Kirby colors are now formal linear combination of $r^2$ simple objects and $\zeta = - r^2$. Taking this into account, we find
\[
\TQFT(\CS \times S^1_{(\p \beta, \p b)})
=
\D^{-2g-2} r^{4g}\sum_{i,j=0}^{r-1}\qd(V(\beta_0+i,b_0+j)_{\p 0})^{2-2g} S^{\prime}(\{X_i\},\Omega_{\beta}),
\]
where $(\beta_0, b_0) \in \C \times \C$ is a chosen lift of $(\p \beta, \p b) \in \Gr$. Write the color of $p_k$ as $X_k=V(\mu_k,m_k)_{\p q_k}$. With this notation, we have
\begin{equation}
\label{eq:verlindeROU}
\TQFT(\CS \times S^1_{(\p \beta,\p b)})
=
(-1)^{g+1+n + \p q} r^{2g-2}\sum_{i,j=0}^{r-1}q^{-2 ((\beta_0+i) \p m + (b_0+j) \p \mu)} (q^{\beta_0+i}-q^{-\beta_0-i})^{2g-2+n}.
\end{equation}
By holomorphicity, this equality holds whenever $\CS$ is admissible.


Define the generating function of $\Zt$-graded dimensions of $\TQFT(\CS)$ by
\[
\dim_{(t_1,t_2,s)} \TQFT(\CS)
=
\sum_{(n,n^{\prime},\p p) \in \Zt} (-1)^{\p p} \dim_{\C} \TQFT_{(n,n^{\prime},\p p)} (\CS) t_1^{n} t_2^{n^{\prime}} s^{\p p}.
\]
We can now state the Verlinde formula in the present setting.

\begin{theorem}
\phantomsection
\label{thm:verlindeROU}
\begin{enumerate}
\item If $r$ is odd, then $\TQFT(\CS \times S^1_{(\p \beta, \p b)}) = \dim_{(q^{-2 r b},q^{-2 r \beta},1)} \TQFT(\CS)$.

\item If $r$ is even, then $\TQFT(\CS \times S^1_{(\p \beta, \p b)}) = \dim_{(q^{-4 r b},q^{-2 r \beta},1)} \TQFT(\CS)$.
\end{enumerate}
\end{theorem}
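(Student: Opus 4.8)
The plan is to follow the template of Theorem~\ref{thm:verlindeArb} almost verbatim, adapting it to the finer $\Zt = \Z \times \Z \times \Z_2$ grading and the two bilinear maps $\psi$ recorded in Sections~\ref{sec:relModROUOdd} and~\ref{sec:relModROUEven}. First I would recall the factorization identity in $\Cob_{\catq}$:
\[
\CS \times S^1_{(\p \beta, \p b)}
=
\cap_{\CS} \circ \tau_{\sqcup} \circ (\Id_{\CS}^{(\p \beta, \p b)} \sqcup \Id_{\overline{\CS}}) \circ \cup_{\CS},
\]
where $\Id_{\CS}^{(\p \beta, \p b)}$ is the cylinder endowed with the extra cohomology class $\partial \varphi$ for $\varphi = (\p \beta, \p b) \in H^0(\Sigma; \Gr)$, exactly as in Section~\ref{sec:CGPTQFT}. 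Applying $\TQFT$, choosing a homogeneous basis $\{e_i\}$ of $\TQFT(\CS)$ with $\deg e_i = (n_i, n_i', \p p_i) \in \Zt$ and the dual basis $\{e^i\}$ of $\TQFT(\overline{\CS})$, the same computation as in the proof of Theorem~\ref{thm:verlindeArb} gives
\[
\TQFT(\CS \times S^1_{(\p \beta, \p b)}) u_0
=
\Big( \sum_{(n,n',\p p) \in \Zt} (-1)^{\p p}\, \psi\big((\p \beta, \p b), (n,n',\p p)\big)\, \dim_{\C} \TQFT_{(n,n',\p p)}(\CS) \Big) u_0,
\]
the key input being that $\TQFT(\Id_{\CS}^{(\p \beta, \p b)})$ acts on the $\Zt$-homogeneous component of degree $k$ by the scalar $\psi((\p \beta, \p b), k)$, which follows from the discussion of the refined $H_0(\Sigma; \Zt)$-grading in Section~\ref{sec:CGPTQFT} (specifically the identity $\TQFT(\Id_{\CS}^{\varphi}) v = \psi(\varphi, k) v$ for connected $\Sigma$).

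The remaining step is purely a matter of substituting the explicit $\psi$ from the relevant theorem. For $r$ odd (Theorem~\ref{thm:relModROUOdd}) one has $\psi((\p \beta, \p b), (n, n', \p p)) = q^{-2(n \p b + n' \p \beta) r}$, so the generating function above equals $\dim_{(q^{-2rb}, q^{-2r\beta}, 1)} \TQFT(\CS)$ by the very definition of $\dim_{(t_1, t_2, s)}$, noting that the exponents $q^{-2rb}$ and $q^{-2r\beta}$ are well defined because $q$ is a primitive $r$th root of unity and $b, \beta$ are only defined modulo $\Z$. This proves part~(1). For $r$ even (Theorem~\ref{thm:relModROUEven}) the free realisation is $\{\ve(2nr, n'r)_{\p p}\}$, giving $\psi((\p \beta, \p b), (n, n', \p p)) = q^{-2(2n \p b + n' \p \beta) r} = q^{-4 n \p b r} q^{-2 n' \p \beta r}$; again $q^{-4rb}$ is well defined since $q^{2r} = 1$ forces the relevant ambiguity to disappear, and matching against the definition of the generating function yields part~(2).

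I do not expect a genuine obstacle here — the argument is a direct transcription of the proof of Theorem~\ref{thm:verlindeArb} with a bookkeeping change in the grading group and a change of $\psi$. The one point requiring a little care is verifying that the scalars $q^{-2rb}$, $q^{-2r\beta}$ (resp.\ $q^{-4rb}$) are genuinely independent of the chosen lifts of $(\p \beta, \p b)$ to $\C \times \C$: this is immediate from $q^r = 1$ (odd case) or $q^{2r} = 1$ (even case), together with the integrality of the powers $2r$ and $4r$ hitting the $\Z$-periods. I would also remark, as in Section~\ref{sec:verlindeArb}, that the equality extends from the computable range to all admissible $\CS$ by holomorphicity of $\TQFT(\CS \times S^1_{(\p \beta, \p b)})$ in the cohomology class, which was already noted at the start of Section~\ref{sec:TQFTROU}.
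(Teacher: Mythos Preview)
Your proposal is correct and follows essentially the same approach as the paper's own proof, which simply refers back to Theorem~\ref{thm:verlindeArb} and notes that the only change is the substitution of the appropriate pairing $\psi$ from Sections~\ref{sec:relModROUOdd} and~\ref{sec:relModROUEven}. You have in fact supplied more detail than the paper does, including the well-definedness check for the exponents, but the underlying argument is identical.
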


\begin{proof}
This is a direct modification of the proof of Theorem \ref{thm:verlindeArb}. For example, when $r$ is odd, the appearance of the pairing
\[
\psi: \Gr \times\Zt \rightarrow \C^{\times},
\qquad
((\p \alpha,\p a), (n,n^{\prime}, \p s)) \mapsto q^{-2(2n \p a+n' \p \alpha )r}
\]
leads to the replacement of $q^{-2 \beta b_i}$ with $q^{-2(\beta b_i +b \delta_i)}$ in the second equality of computation from the proof of Theorem \ref{thm:verlindeArb}.
\end{proof}

When $r$ is odd, the right hand side of the equality in Theorem \ref{thm:verlindeROU} can be written as
\[
\dim_{(q^{-2 r b},q^{-2 r \beta},1)} \TQFT(\CS) = \sum_{(n,n^{\prime}) \in \Z^2} \chi(\TQFT_{(n,n^{\prime},\bullet)}(\CS)) q^{-2r(\overline{\beta} n^{\prime} + \overline{b} n)}.
\]
There is a similar formula when $r$ is even.

\begin{example}
\label{ex:VerlindeROU}
When $n=0$, equation \eqref{eq:verlindeROU} becomes
\[
\TQFT(\CS \times S^1_{(\p \beta, \p b)})
=
(-1)^{g+1} r^{2g-1}\sum_{i=0}^{r-1}(q^{\beta_0+i}-q^{-\beta_0-i})^{2g-2}.
\]
Assume that $g \geq 1$ and $r$ is odd. Theorem \ref{thm:genusgStateSpaceROU} implies $\dim_{\C} \TQFT(\CS) = \lim_{\p{\beta} \rightarrow \frac{1}{4}}\TQFT(\CS \times S^1_{(\p \beta, \p b)})$. To compute the limit, recall the identities
\begin{equation}
\sum_{n=0}^{N-1} \cos(a+nb)
=
\begin{cases}
\label{eq:aritSumCos}
\displaystyle \frac{\sin(\frac{N b}{2})}{\sin(\frac{b}{2})} \cos \left( \frac{(N-1)b}{2}+a \right) & \mbox{if } b \notin 2\pi \Z, \\
N \cos a & \mbox{if } b \in 2\pi \Z
\end{cases}
\end{equation}
and
\begin{equation}
\label{eq:cosEvenPower}
\cos^{2N} \theta = \frac{1}{2^{2N}} {2N \choose N} + \frac{(-1)^N}{2^{2N-1}}\sum_{k=0}^{N-1} (-1)^k {2N \choose k} \cos\left( 2(N-k)\theta \right)
\end{equation}
where $a,b \in \R$ and $N \in \Z_{>0}$. See \cite[Eqn. (4.4.1.5)]{prudnikov1986} and \cite[Eqn. (I.1.9)]{prudnikov1986}, respectively. We compute
\begin{eqnarray*}
\dim_{\C} \TQFT(\CS)
&=&
r^{2g-1}\sum_{i=0}^{r-1}\Big( 2\sin \Big(\frac{2\pi (\frac{1}{4}+i)}{r} \Big) \Big)^{2g-2} \\
&=&
r^{2g}{2g-2 \choose g-1} + (-1)^{g-1} 2 r^{2g-1} \sum_{k=0}^{g-2} (-1)^k {2g-2 \choose k} \sum_{i=0}^{r-1} \cos\left( \frac{4\pi (g-1-k) (\frac{1}{4}+i)}{r} \right) \\
&=&
r^{2g}{2g-2 \choose g-1} + (-1)^{g-1} 2 r^{2g-1} \sum_{\substack{k=0 \\ r \mid g-1-k}}^{g-2} (-1)^k {2g-2 \choose k} r \cos\left( \frac{\pi (g-1-k)}{r} \right) \\
&=&
r^{2g}{2g-2 \choose g-1} + (-1)^{g-1} 2 r^{2g} \sum_{n^{\prime}=1}^{\left \lfloor{\frac{g-1}{r}}\right \rfloor} (-1)^{g-1-n^{\prime} r} {2g-2 \choose g-1-n^{\prime} r} \cos(n^{\prime} \pi) \\
&=&
r^{2g} \sum_{n^{\prime}=-\left \lfloor{\frac{g-1}{r}}\right \rfloor}^{\left \lfloor{\frac{g-1}{r}}\right \rfloor} {2g-2 \choose g-1-\vert n^{\prime} \vert r}.
\end{eqnarray*}
The second equality follows from equation \eqref{eq:aritSumCos}, the third from equation \eqref{eq:cosEvenPower} and the fourth from writing $g-1-k = n^{\prime} r$. Similarly, we compute
\[
\chi(\TQFT(\CS))
=
\lim_{\p{\beta} \rightarrow 0}\TQFT(\CS \times S^1_{(\p \beta,\p b)})
=
r^{2g-1}\sum_{i=0}^{r-1}\left( 2\sin \left(\frac{2\pi i}{r} \right) \right)^{2g-2}
=
r^{2g} {2g-2 \choose g-1},
\]
the final equality following from \cite[Eqn. (4.4.2.1)]{prudnikov1986}.

The same expressions hold for $\dim_{\C}\TQFT(\CS)$ and $\chi(\TQFT(\CS))$ when $r$ is even.
\end{example}

\begin{corollary}
\label{cor:stateBasisROU}
Work in the setting of Theorem \ref{thm:genusgStateSpaceArb}. The vectors $\{v_c \mid c \in \widetilde{\mathfrak{C}}^{(r)}\}$ are a basis of $\TQFT(\CS)$. In particular, $\TQFT(\CS)$ is supported in degrees $(0,d,\p d)$ with $d \in [-(g-1),g-1] \cap r\Z$.
\end{corollary}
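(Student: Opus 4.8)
The plan is to mimic the proof of Corollary \ref{cor:stateBasisArb}, substituting the root of unity analogues of its two inputs: Proposition \ref{prop:dimVanROU} in place of Proposition \ref{prop:dimVanArb}, and the dimension computation of Example \ref{ex:VerlindeROU} in place of that of Example \ref{ex:VerlindeArb}. By Proposition \ref{prop:dimVanROU}, for each $d \in [-(g-1),g-1]\cap r\Z$ the vectors $\{v_c \mid c \in \widetilde{\mathfrak{C}}^{(r)}_{(0,d,\p d)}\}$ form a linearly independent subset of the graded piece $\TQFT_{-(0,d,\p d)}(\CS)$ of cardinality $r^{2g}\binom{2g-2}{g-1-\vert d\vert}$; in particular $\widetilde{\mathfrak{C}}^{(r)}_k$ is empty unless $k=(0,d,\p d)$ with $d$ in this range. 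Since distinct $\Zt$-homogeneous components of $\TQFT(\CS)$ intersect only in $0$, the full family $\{v_c \mid c \in \widetilde{\mathfrak{C}}^{(r)}\}$ is linearly independent in $\TQFT(\CS)$.

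Next I would count $\vert \widetilde{\mathfrak{C}}^{(r)}\vert$. Summing the cardinalities above and substituting $d = n^{\prime} r$ gives
\[
\vert \widetilde{\mathfrak{C}}^{(r)} \vert
=
\sum_{d \in [-(g-1),g-1]\cap r\Z} r^{2g}\binom{2g-2}{g-1-\vert d\vert}
=
r^{2g}\sum_{n^{\prime}=-\left\lfloor \frac{g-1}{r}\right\rfloor}^{\left\lfloor \frac{g-1}{r}\right\rfloor}\binom{2g-2}{g-1-\vert n^{\prime}\vert r},
\]
which is exactly $\dim_{\C}\TQFT(\CS)$ as computed in Example \ref{ex:VerlindeROU}. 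Note that this last identification legitimately uses Theorem \ref{thm:genusgStateSpaceROU}: because $\TQFT(\CS)$ is concentrated in $\Zt$-degrees of the form $(0,\ast,\ast)$, the graded-dimension generating function $\dim_{(t_1,t_2,s)}\TQFT(\CS)$ is a Laurent polynomial, so the limit $\overline{\beta}\to\p{\tfrac14}$ carried out in Example \ref{ex:VerlindeROU} does compute $\dim_{\C}\TQFT(\CS)$. Having a linearly independent set of size $\dim_{\C}\TQFT(\CS)$ forces it to be a basis, and by construction it is supported in the degrees $(0,d,\p d)$ with $d\in[-(g-1),g-1]\cap r\Z$, which gives the final assertion.

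I do not expect a real obstacle: the two substantive ingredients (the explicit linearly independent family from Proposition \ref{prop:dimVanROU} and the closed-form dimension from Example \ref{ex:VerlindeROU}) are already established, and what remains is the bookkeeping identity relating $\sum_d \binom{2g-2}{g-1-\vert d\vert}$ to $\sum_{n^{\prime}}\binom{2g-2}{g-1-\vert n^{\prime}\vert r}$ under $d=n^{\prime}r$, together with the elementary fact that homogeneous vectors in distinct graded components are linearly independent. The only point deserving a moment's care is ensuring the hypotheses of Theorem \ref{thm:genusgStateSpaceROU} are in force---in particular the condition $\tilde{\coh}(e_1)+g_0 \not\in \XX$ when $r=2$---since these underlie the spanning statement on which Proposition \ref{prop:dimVanROU} and Example \ref{ex:VerlindeROU} rest.
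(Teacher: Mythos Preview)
Your proposal is correct and follows essentially the same approach as the paper: count $\lvert\widetilde{\mathfrak{C}}^{(r)}\rvert$ using the cardinality statement of Proposition~\ref{prop:dimVanROU}, identify the result with $\dim_{\C}\TQFT(\CS)$ via Example~\ref{ex:VerlindeROU}, and conclude that the linearly independent set is a basis. Your write-up is in fact slightly more explicit than the paper's, particularly in spelling out the role of Theorem~\ref{thm:genusgStateSpaceROU} in justifying the passage from the Verlinde formula to the total dimension.
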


\begin{proof}
Using Proposition \ref{prop:dimVanROU}, we compute
\[
\vert \widetilde{\mathfrak{C}}^{(r)} \vert
=
\sum_{d \in [(-g-1),g-1)] \cap r\Z} \vert \widetilde{\mathfrak{C}}^{(r)}_{(d,\p d)} \vert
=
\sum_{d \in [(-g-1),g-1)] \cap r\Z}  r^{2g}{2g-2 \choose g-1-\vert d \vert},
\]
which is equal to $\dim_{\C} \TQFT(\CS)$ by Example \ref{ex:VerlindeROU}.
\end{proof}


\subsection{The state space of the torus with non-generic cohomology class}
\label{sec:torusROU}
 
Let $\Sigma=S^1 \times S^1$ and $\CS=(\Sigma,\coh, \mathcal{L})$. Let $\eta=B^2\times S^1$ with core $\Gamma=\{0\}\times S^1$. By Corollary \ref{cor:stateBasisROU}, when $\coh$ is generic, $\TQFT(\CS)$ is concentrated in $\Zt$-degree zero and is $r^2$ dimensional. The next result gives a non-generic counterpart of this result.

\begin{proposition}
\label{prop:nonGenCohTorusROU}
Assume that there exists an oriented simple closed curve $\gamma$ in $\Sigma$ such that $\coh(\gamma) \in \XX \subset \Gr$. Then $\TQFT(\CS)$ is concentrated in $\Zt$-degree $0$ and is of dimension $r^2+1$.
\end{proposition}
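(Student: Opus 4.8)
The strategy is to adapt the argument used for the torus with arbitrary $q$ (Proposition \ref{prop:nonGenCohTorusArb}), accounting for the extra simple objects present at a root of unity. As there, the Lagrangian $\mathcal{L}$ plays no role and will be ignored; moreover, by Proposition \ref{P:VSVSS0} it suffices to study $\state(\CS)$ and show it is concentrated in $\Zt$-degree $0$ and of dimension $r^2+1$. Using a diffeomorphism of $\Sigma$ if necessary, we may assume $\coh(m_{\Gamma}) = \frac{n\pi\sqrt{-1}}{\hbar} \in \XX$ for some $n \in \Z$, and then by applying an element of the mapping class group (as in the arbitrary $q$ case) we reduce to $n=0$, so the core $\Gamma$ carries weight $(\p 0, \p 0) \in \Gr$. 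Since $\Gamma$ coincides with the ribbon graph $\Gamma''$ of Proposition \ref{P:VSVSS0}, we know $\state(\CS)$ is spanned by decorated cobordisms $(\eta, \Gamma_{P,f})$ where $P$ ranges over finite projective colorings of $\Gamma$ and $f \in \End_{\catq}(P)$. Because $\Gamma$ is a single annulus of degree $(\p 0, \p 0) \in \XX$, the projective indecomposables available to color it are, up to the action of $\sigma(\Zt)$, exactly the $P(0,j)_{\p 0}$ for $j \in \{0,\dots,r-1\}$ (using the grading and classification of projective indecomposables from Section \ref{sec:weightMod}); each such has $\End_{\catq}(P(0,j)_{\p 0}) \simeq \C[x_{0,j,\p 0}] \slash \langle x_{0,j,\p 0}^2\rangle$ by Proposition \ref{prop:HomProj}. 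This gives an a priori spanning set of size $2r$ for the non-$\Zt$-degree-$0$ part together with $r^2$-ish pieces; the real work is to cut this down and exhibit the correct dimension.

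\textbf{Key steps.} First I would pin down the $\Zt$-support. The refined $H_0(\Sigma;\Zt)$-grading of Section \ref{sec:CGPTQFT}, together with the $\sigma$-equivalence argument used in the proof of Proposition \ref{prop:FDofTQFT}, shows that modifying the color of $\Gamma$ by $\sigma(k)$ shifts the $\Zt$-degree; one checks that the cobordism $(\eta, \Gamma_{P(0,j)_{\p 0} \otimes \sigma(k), f})$ and $(\eta,\Gamma_{P(0,j)_{\p 0},f})$ are $\sigma$-equivalent after absorbing $\sigma(k)$, so that $\state(\CS)$ itself (as opposed to $\TQFT(\CS)$) is already concentrated in a single grading component, forcing $\TQFT(\CS)$ to be concentrated in $\Zt$-degree $0$. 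Alternatively, one argues as in Proposition \ref{prop:nonGenCohTorusArb} that the surgery/skein manipulations never leave degree $0$. Second, for the dimension count: the candidate spanning vectors are $\mathcal{P}_{0,j} = (\eta, \Gamma_{P(0,j)_{\p 0}, \Id})$ and $\mathcal{P}_{0,j,x} = (\eta, \Gamma_{P(0,j)_{\p 0}, x_{0,j,\p 0}})$ for $j \in \{0,\dots,r-1\}$. The key structural fact, to be verified by a skein computation, is that the identity-colored vectors $\mathcal{P}_{0,j}$ are not independent: decomposing $V(\alpha,a)_{\p p}$-colored cabling (using \eqref{E:tensorSimpGen} and \eqref{E:tensorSimpNongen}) one obtains a single relation expressing, say, $\sum_j \mathcal{P}_{0,j}$ (or a weighted version) in terms of the nilpotent vectors, leaving $r$ independent vectors among the $\mathcal{P}_{0,j}$ and $r$ among the $\mathcal{P}_{0,j,x}$, hence $2r$ total — but this must be reconciled with the expected answer $r^2+1$. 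I expect in fact that the correct spanning set also involves colorings of $\Gamma$ by the \emph{simple} modules $V(\alpha,a)_{\p p}$ with $\alpha \in \{1,\dots\}$ a nonzero residue — no, since degree is $(\p 0,\p 0)$ these are not available; rather the count $r^2+1$ should emerge from the generic-limit comparison: one computes $\TQFT(\CS \times S^1_{(\p\beta,\p b)})$ for generic $(\p\beta,\p b)$ using \eqref{eq:verlindeROU} with $g=1$, $n=0$, obtaining a value whose $\beta \to \frac14$ limit is $r^2+1$, and then observes via the Verlinde-type argument (Theorem \ref{thm:verlindeROU}) that this limit computes $\dim_{\C}\TQFT(\CS)$ — but here $\coh$ is non-generic, so one must instead directly exhibit an explicit basis. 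So the honest route is: construct explicitly $r^2+1$ linearly independent vectors and show they span.

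\textbf{Constructing the basis and pairing.} Concretely I would take the $2r$ vectors $\{\mathcal{P}_{0,j}, \mathcal{P}_{0,j,x}\}_{j=0}^{r-1}$ from above, supplemented by $r^2+1-2r = (r-1)^2$ further vectors — these should come from coupons with more legs, i.e.\ from colorings of a theta-type or two-vertex spine of $\eta$ that cannot be reduced to the annulus form when the relevant fusion is non-semisimple; equivalently, from the fact that the ideal of projectives in $\catq_{(\p 0,\p 0)}$ has a larger Morita-theoretic "size" than $\catq_{(\p\beta,\p b)}$ does generically. To verify linear independence I would pair each candidate vector against explicitly chosen elements of $\mathcal{V}'(\CS)$: the negatively-oriented copies $[\overline{\eta}_{\ell}]$ with core colored by $\ve(\ell r, \ell' r)_{\p p}$-type modules or by $V(\alpha,a)_{\p p}$, and the "S-rotated" copies $[S\overline{\eta}_{\ldots}]$, then evaluate the resulting closed invariants as connected sums of $\Theta$-graphs and Hopf links, using Lemmas \ref{lem:Phi's}, \ref{lem:modTrComp}, \ref{lem:endAlgIsom} exactly as in the arbitrary-$q$ case; the nonvanishing reduces to the statement that certain Gauss-type sums $\sum_{m=0}^{r-1} q^{-2jm}$ are $r\delta_{j,0}$ (so the pairing matrix is, up to reordering, block-triangular with invertible blocks). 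The spanning statement then follows by the same skein-reduction bookkeeping as in Proposition \ref{prop:FDofTQFT}: every $\coh$-compatible coloring of the annular spine of $\eta$ reduces, via absorbing $\sigma(\Zt)$-factors and applying \eqref{E:tensorSimpGen}–\eqref{E:tensorSimpNongen}, to a linear combination of the chosen $r^2+1$ vectors.

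\textbf{Main obstacle.} The hard part will be the dimension bookkeeping: identifying exactly which $r^2+1$ vectors form a basis and proving that no further independent vectors survive. Unlike the arbitrary-$q$ case, where the single projective $P(0,0)_{\p 0}$ gives a clean $2$-dimensional answer, at a root of unity the $r^2$ simple objects in a generic block collapse, at the non-generic $\coh$, onto a family of $r$ projective indecomposables $P(0,j)_{\p 0}$ together with a web of extensions, and one must be careful that the skein relations coming from \eqref{E:tensorSimpNongen} and the $\sigma$-equivalences do not over- or under-count. Getting the precise number $r^2+1$ (rather than $2r$, or $r^2$, or $2r^2$) will require either a careful explicit pairing matrix computation of full rank $r^2+1$, or an appeal to the generic-degeneration principle that $\dim\TQFT(\CS)$ is upper semicontinuous in $\coh$ and a matching lower bound from the explicit vectors — and making the upper semicontinuity rigorous in the CGP setting is itself delicate, so the explicit-basis route is the safer one.
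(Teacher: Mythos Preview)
Your proposal has a genuine gap at the crucial dimension-counting step. You write that in degree $(\p 0,\p 0)$ the simple modules $V(i,j)_{\p 0}$ with $i$ a nonzero residue ``are not available'', but this is incorrect: in the root of unity setting the grading group is $\Gr=\C/\Z\times\C/\Z$ (for $r$ odd), so any module with \emph{integer} weights lies in degree $(\p 0,\p 0)$. In particular, for $i\in\{1,\dots,r-1\}$ and $j\in\{0,\dots,r-1\}$ the modules $V(i,j)_{\p 0}$ are simple, projective, and lie in $\catq_{(\p 0,\p 0)}$. These $r(r-1)$ objects, together with the $r$ projective indecomposables $P(0,j)_{\p 0}$, exhaust (up to the $\sigma(\Zt)$-action) the projective indecomposables available to color the annular core $\Gamma$. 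This is precisely where the $r^2+1$ comes from: $r(r-1)$ identity-colored simples $\mathcal{M}_{i,j}$, plus $r$ identity-colored vectors $\mathcal{P}_j$, plus one nilpotent vector. Your search for $(r-1)^2$ additional vectors coming from multi-coupon or theta-type spines is therefore headed in the wrong direction: the annular spine already suffices.

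Your identification of the relation among the nilpotent vectors is also off. It is not the identity vectors $\mathcal{P}_j$ that satisfy a single linear relation; rather, the $r$ nilpotent vectors $\mathcal{P}_{j,x}$ are \emph{all equal} in $\state(\CS)$. The mechanism is that the off-diagonal endomorphisms $a_{0,j,\p p}^{\pm}$ of Proposition~\ref{prop:HomProj} assemble into a commutator $[a^-_{j,\p p},a^+_{j+1,\p p+\p 1}]=x_{0,j,\p p}\oplus x_{0,j+1,\p p+\p 1}$ on $P(0,j)_{\p p}\oplus P(0,j+1)_{\p p+\p 1}$; since a coupon colored by a commutator gives zero under the cyclic m-trace, one deduces $\mathcal{P}_{j,x}=\mathcal{P}_{j+1,x}$ for all $j$. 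Thus the spanning set has size $r(r-1)+r+1=r^2+1$. Linear independence is then checked by pairing against $[\overline{\eta}_{-V(i,j)_{\p 0}}]$, $[\overline{\eta}_{-l}]$ and a single $S$-rotated vector, using the relative modularity condition \eqref{eq:mod} and Lemmas~\ref{lem:Phi's}, \ref{lem:modTrComp}, \ref{lem:betaPhiROU}; the concentration in $\Zt$-degree $0$ follows by inspecting the coupon Hom-spaces in the genus-one $\Gamma'$, exactly as you outlined.
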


\begin{proof}
Up to diffeomorphism, we may assume that $\coh(m_{\Gamma}) \in \XX$. For concreteness, suppose that $\coh(m_{\Gamma}) =(\p 0, \p 0)$; the case in which $r$ is odd and $\coh(m_{\Gamma}) =(\p {\frac{1}{2}}, \p 0)$ is similar. Given $1 \leq i \leq r-1$ and $0 \leq j \leq r-1$, denote by $\mathcal{M}_{i,j}$, $\mathcal{P}_j$ and $\mathcal{P}_{j,x}$ the vectors in $\state(\CS)$ with underlying $3$-manifold $\eta$ with core is colored by $V(i,j)_{\p 0}$, $P(0,j)_{\p 0}$ and $P(0,j)_{\p 0}$ with coupon $x_{0,j,\p 0}$, respectively. The set
\begin{equation}
\label{eq:basisNonGenTorusROU}
\{\mathcal{M}_{i,j}, \mathcal{P}_j \mid 1 \leq i \leq r-1, \; 0 \leq j \leq r-1\} \cup \{\mathcal{P}_x=\sum_{j=0}^{r-1} \mathcal{P}_{j,x}\}
\end{equation}
spans $\state(\CS)$, as follows from Propositions \ref{P:VSVSS0} and \ref{prop:HomProj}. For linear independence,
consider first the $\CGP$-pairings of $[\overline{\eta}_{-V(i,j)_{\p 0}}]$, a negatively oriented copy of $\eta$ with meridian colored by $V(i,j)_{\p 0}$, with the vectors in question.

Denote by $[\overline{\eta}_{-V(i,j)_{\p 0}}]$ a negatively oriented copy of $\eta$ with meridian colored by $V(i,j)_{\p 0}$. The decorated manifold underlying $[\overline{\eta}_{-V(i,j)_{\p 0}}] \circ \mathcal{M}_{k,l,\p 0}$ has a surgery presentation in $S^3$ given by two parallel knots of opposite orientation labeled by $V(i,j)$ and $V(k,l)$ encircled by an $\Omega_{(\p \alpha, \p a)}$-colored unknot, for any generic $(\p \alpha, \p a)$. Using this, we compute
\begin{eqnarray*}
\CGP_{\catq}([\eta_{-V(i,j)_{\p 0}}] \circ \mathcal{M}_{k,l,\p 0})
&=&
\frac{\delta_{i,k}\delta_{j,l} \zeta}{\D^2 \qd(V(i,j)_{\p 0})} \mt_{V(i,j)_{\p 0}\otimes V(i,j)_{\p 0}^*}(\tcoev_{V(i,+j)_{\p 0}} \circ \ev_{V(i,j)_{\p 0}}) \\
&=&
\frac{\delta_{i,k}\delta_{j,l} \zeta (q-q^{-1}) \qd(V(i,j)_{\p 0})}{\D^2 \qd(V(\alpha_0+i,a_0+j)_{\p 0})} \mt_{P(0,0)_{\p 0}}(x_{0,0,\p 0})\\
&=&
\delta_{i,k} \delta_{j,l}.
\end{eqnarray*}
The first equality follows from the relative modularity condition \eqref{eq:mod}, the second from Lemma \ref{lem:endAlgIsom} and the third from Lemma \ref{lem:modTrComp}. The pairing $\CGP_{\catq}([\overline{\eta}_{-V(i,j)_{\p 0}}] \circ \mathcal{P}_k)$ vanishes since it is proportional to the modified trace of $\Phi_{\Omega_{(\p \alpha, \p a)},V(i,j)_{\p 0} \otimes P(0,k)_{\p 0}}$, which vanishes by part (\ref{ite:betaPhiROU3}) of Lemma \ref{lem:betaPhiROU}. For similar reasons, we have $\CGP_{\catq}([\overline{\eta}_{-V(i,j)_{\p 0}}] \circ \mathcal{P}_{k,x}) = 0$. It follows that $\{\mathcal{M}_{i,j}\}_{i,j}$ are linearly independent and are linearly independent from $\{\mathcal{P}_k, \mathcal{P}_{k,x}\}_k$. 

Continuing, denote by $[\overline{\eta}_{-l}]$ a negatively oriented copy of $\eta$ with meridian colored by $\ve(0,l)_{\p 0}$. Using Lemma \ref{lem:modTrComp} and part (\ref{ite:betaPhiROU2}) of Lemma \ref{lem:betaPhiROU}, we compute
\[
\CGP_{\catq}([\overline{\eta}_{-l}] \circ \mathcal{P}_k)
=
\D^{-2} \mt_{P(0,k-l)} \Phi_{\Omega_{(\p \alpha, \p a)},P(0,k-l)_{\p 0}}
=\delta_{k,l}
\]
and
\begin{eqnarray*}
\CGP_{\catq}([\overline{\eta}_{-l}] \circ \mathcal{P}_{k,x})
&=&
\D^{-2} \mt_{P(0,k-l)} \left(x_{0,k-l, \p 0} \circ \Phi_{\Omega_{(\p \alpha, \p a)},P(0,k-l)_{\p 0}} \right)\\
&=&
-\D^{-2} (q-q^{-1}) r^2 \mt_{P(0,0)_{\p 0}}(x^2_{0,0,\p 0}) \delta_{k,l} \\
&=&
0.
\end{eqnarray*}
Hence, $\{\mathcal{P}_k\}_k$ are linearly independent and are linearly independent from $\{\mathcal{P}_{k,x}\}_k$. We claim that $\{\mathcal{P}_{k,x}\}_k$ spans a one dimensional subspace of $\state(\CS)$. By Proposition \ref{prop:HomProj}, the endomorphisms $a_{j, \p p}^-, a_{j+1, \p p +\p 1}^+ \in \End_{\catq}(P(0,j)_{\p p} \oplus P(0,j+1)_{\p p + \p 1})$ satisfy $[a_{j, \p p}^-,a_{j+1, \p p +\p 1}^+]=x_{0,j,\p p} \oplus x_{0,j+1,\p p + \p 1}$.
The vector defined by coloring the core with $P(0,j)_{\p p} \oplus P(0,j+1)_{\p p + \p 1}$ and coupon $x_{0,j,\p p} \oplus x_{0,j+1,\p p + \p 1}$ is zero, since the coupon is a commutator. On the other hand, this vector is equal to $\mathcal{P}_{j,x}- \mathcal{P}_{j+1,x}$.
Hence, $\mathcal{P}_{j,x} = \mathcal{P}_{k,x}$ for all $j,k$. Finally, in the notation of Section \ref{sec:torusArb}, we have
\[
\CGP_{\catq}([S\overline{\eta}_{-l}]\circ \mathcal{P}_{j,x})
=
\D^{-1}\mt_{P(\frac{(j-l) \pi \sqrt{-1}}{\hbar},0)_{\p 0}}(x_{\frac{(j-l) \pi \sqrt{-1}}{\hbar},0,\p 0})
=
\D^{-1}(q-q^{-1})^{-1}.
\]
for all $j,k$. In particular, $\sum_{j=0}^{r-1} \mathcal{P}_{j,x} \neq 0$.

Finally, to prove that $\TQFT(\CS)$ is concentrated in $\Zt$-degree $0$, recall from Proposition \ref{prop:FDofTQFT} that $\TQFT_{-k}(\CS)$, $k \in \Zt$, is spanned by finite projective $\catq$-colorings of the ribbon graph $\Gamma^{\prime}$. In genus $1$, the coupons of $\Gamma^{\prime}$ are represented by a composition
\[
V(i,j)_{\p 0} \otimes V(i,j)_{\p 0}^* \simeq P(0,0)_{\p 0}
\rightarrow
P
\rightarrow 
V_{g_0} \otimes \sigma(k) \otimes V_{g_0}^* \simeq P(nr,nr^{\prime})_{\p p}
\]
or 
\[
P(0,0)_{\p 0}^{\otimes 2} \simeq P(0,1)_{\p 1} \oplus P(0,0)_{\p 0}^{\oplus 2} \oplus P(0,-1)_{\p 1}
\rightarrow
P
\rightarrow 
V_{g_0} \otimes \sigma(k) \otimes V_{g_0}^* \simeq P(nr,nr^{\prime})_{\p p},
\]
where $P$ is a projective indecomposable of degree $(\overline{0},\overline{0})$ and $k = (n,n^{\prime},\p p)$. By Proposition \ref{prop:HomProj}, if either of these compositions is non-zero, then $k=0$.
\end{proof}

We claim that the basis dual to \eqref{eq:basisNonGenTorusROU} is
\begin{equation}
\label{eq:dualBasisNonGenTorusROU}
\{[\overline{\eta}_{-V(i,j)_{\p 0}}], [\overline{\eta}_{-j}] \mid 1 \leq i \leq r-1, \; 0 \leq j \leq r-1\} \cup \{ s= \frac{\sqrt{-1} (q-q^{-1})}{r} \sum_{j=0}^{r-1}[S\overline{\eta}_{-j}]\}.
\end{equation}
Most of this statement was verified in the proof of Proposition \ref{prop:nonGenCohTorusROU}. That $s$ pairs trivially with each $\mathcal{P}_j$ follows from Lemma \ref{lem:modTrComp}. Using equation \eqref{eq:mdimTypical}, we compute
\begin{eqnarray*}
\CGP_{\catq}(s \circ \mathcal{M}_{i,j})
&=&
\frac{\sqrt{-1}(q-q^{-1})}{r} \sum_{l=0}^{r-1} \CGP_{\catq}([S\overline{\eta}_{-l}] \circ \mathcal{M}_{i,j}) \\
&=&
\frac{\sqrt{-1}(q-q^{-1})}{r} \D^{-1} \sum_{l=0}^{r-1} q^{-2il} (q^i-q^{-i})^{-1} \\
&=&
0
\end{eqnarray*}
because $q^{-2i} \neq 1$ is an $r$\textsuperscript{th} root of unity. Similarly, part (\ref{ite:betaPhiROU1}) of Lemma \ref{lem:betaPhiROU} implies that $\CGP_{\catq}([\overline{\eta}_{-l}] \circ \mathcal{M}_{i,j}) =0$ for all $i,j,l$. This establishes the claim.

Next, we compute the action of the mapping class group of $\Sigma$ on $\TQFT(\CS)$ when $\coh=0$ using the basis \eqref{eq:basisNonGenTorusROU} and its dual basis \eqref{eq:dualBasisNonGenTorusROU}. As in Section \ref{sec:torusArb}, we take $\mathcal{L} = \R \cdot [m]$ and find that there are no Maslov corrections. Using Lemmas \ref{lem:Phi's} and \ref{lem:modTrComp}, we compute
\[
\CGP_{\catq}([\overline{\eta}_{-V(i,j)_{\p 0}}] \circ N_S \mathcal{M}_{k,l})
=
- \D^{-1} q^{-2(il+jk) + i + k}
\]
and
\[
\CGP_{\catq}([\overline{\eta}_{-j}] \circ N_S \mathcal{M}_{k,l})
=
\D^{-1} \frac{q^{-2jk}}{q^k-q^{-k}}.
\]
The calculations checking that \eqref{eq:dualBasisNonGenTorusROU} is a dual basis show that $\CGP_{\catq}(s \circ N_S \mathcal{M}_{k,l})=0$. Using Lemmas \ref{lem:Phi's} and \ref{lem:modTrComp}, we compute
\[
\CGP_{\catq}([\overline{\eta}_{-V(i,j)_{\p 0}}] \circ N_S \mathcal{P}_l)
=
- \D^{-1} q^{-2il}(q^i-q^{-i})
\]
and $\CGP_{\catq}([\overline{\eta}_{-j}] \circ N_S \mathcal{P}_l) =0$. Using Lemma \ref{lem:betaPhiROU}(\ref{ite:betaPhiROU2}), we find
\[
\CGP_{\catq}(s \circ  N_S \mathcal{P}_l)
=
\frac{\sqrt{-1}(q-q^{-1})}{r} \sum_{j=0}^{r-1} \CGP_{\catq}([S \overline{\eta}_{-j}] \circ N_S \mathcal{P}_l) \\
=
\frac{\sqrt{-1}(q-q^{-1})}{r},
\]
the only contribution coming from the $j=l$ summand. Part (\ref{ite:betaPhiROU3}) of Lemma \ref{lem:betaPhiROU} implies $\CGP_{\catq}([\overline{\eta}_{-V(i,j)_{\p 0}}] \circ N_S \mathcal{P}_x)=0$ while Lemma \ref{lem:modTrComp} implies
\[
\CGP_{\catq}([\overline{\eta}_{-j}] \circ N_S\mathcal{P}_x)
=
- \D^{-1} \frac{r}{q-q^{-1}}.
\]
Finally, Lemma \ref{lem:Phi's} and the fact that $x_{0,k,\p 0}^2=0$ for all $k$ gives
\[
\CGP_{\catq}(s \circ N_S \mathcal{P}_x)
=
\frac{\sqrt{-1}(q-q^{-1})}{r} \sum_{j,l=0}^{r-1} \CGP_{\catq}([S\overline{\eta}_{-j}] \circ N_S \mathcal{P}_{l,x})
=0.
\]
The above calculations prove the first part of the following result.

\begin{theorem}
\label{thm:MCGROU}
Let $\CS$ be a connected decorated surface of genus one without marked points and with non-generic cohomology class. The action of the mapping class group $SL(2,\Z)$ on $\TQFT_{\catq}(\CS)$ in the basis \eqref{eq:basisNonGenTorusROU} is determined by the equations
\[
N_S \mathcal{M}_{i,j}
=
\frac{\sqrt{-1}}{r}\sum^{r-1}_{\substack{k,l=0 \\ k \neq 0}} q^{-2(k(j-\frac{1}{2})+i(l-\frac{1}{2}))} \mathcal{M}_{k,l} -\frac{\sqrt{-1}}{r} \sum_{l=0}^{r-1} \frac{q^{-2il}}{q^i-q^{-i}} \mathcal{P}_{l},
\]
\[
N_S \mathcal{P}_j
=
\frac{\sqrt{-1}(q-q^{-1})}{r} \mathcal{P}_{x} +\frac{\sqrt{-1}}{r} \sum_{\substack{k,l=0 \\ k \neq 0}}^{r-1}  q^{-jk}(q^k-q^{-k}) \mathcal{M}_{k,l},
\]
\[
N_S \mathcal{P}_{x}
=
\frac{\sqrt{-1}}{q-q^{-1}} \sum_{l=0}^{r-1} \mathcal{P}_l
\]
and
\[
N_T \mathcal{M}_{i,j}= q^{-2i(j-\frac{1}{2})} \mathcal{M}_{i,j},
\qquad
N_T\mathcal{P}_j=\mathcal{P}_j+ \frac{(q-q^{-1})}{r}\mathcal{P}_x,
\qquad
N_T\mathcal{P}_x=\mathcal{P}_x.
\]
In particular, the Dehn twist acts with infinite order.
\end{theorem}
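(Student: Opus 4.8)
The plan is to deduce the three $N_S$-formulas almost entirely from the CGP-pairing computations already carried out in the paragraphs immediately preceding the statement. By Proposition~\ref{P:VSVSS0} and the surrounding discussion, $N_S$ preserves $\TQFT_0(\CS)$, so each of $N_S\mathcal{M}_{i,j}$, $N_S\mathcal{P}_j$, $N_S\mathcal{P}_x$ can be expanded in the basis \eqref{eq:basisNonGenTorusROU}, and the coefficient of a given basis vector is the value of $\langle-,-\rangle=\CGP_{\catq}(-\circ-)$ against the matching element of the dual basis \eqref{eq:dualBasisNonGenTorusROU}. All these pairings were evaluated above using Lemmas~\ref{lem:Phi's}, \ref{lem:modTrComp} and \ref{lem:betaPhiROU}; substituting $\D=r\sqrt{-1}$, so that $-\D^{-1}=\tfrac{\sqrt{-1}}{r}$, and rewriting exponents via $-2ij+i=-2i(j-\tfrac12)$, yields the stated expressions for $N_S$. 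Here the absence of an $\mathcal{M}_{0,l}$-term is automatic since $\mathcal{M}_{0,l}$ is not a basis vector, and the vanishing of the $\mathcal{P}_x$-component of $N_S\mathcal{M}_{i,j}$ was checked above.

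Next I would compute the $T$-action. Since $\mathcal{L}=\R\cdot[m]$ and $T$ is the Dehn twist along $m$, one has $T^n_*\mathcal{L}=\mathcal{L}$ for every $n$, so as recorded in Section~\ref{sec:torusArb} there are no Maslov corrections and $N_{T^n}=N_T^n=\state(M_T)^n$. The mapping cylinder $M_T$ inserts a full twist on the core of the solid torus $\eta$, i.e.\ post-composes the coupon colouring the core with the twist $\theta$ of that core object. For $\mathcal{M}_{i,j}$ the core is $V(i,j)_{\p 0}$, and Theorem~\ref{thm:ribbonCat} gives $\theta_{V(i,j)_{\p 0}}=q^{-2ij+i}\Id=q^{-2i(j-\frac12)}\Id$, whence $N_T\mathcal{M}_{i,j}=q^{-2i(j-\frac12)}\mathcal{M}_{i,j}$. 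For $\mathcal{P}_j$ the core is $P(0,j)_{\p 0}$; using $P(0,j)_{\p 0}\simeq P(0,0)_{\p 0}\otimes\ve(0,j)_{\p 0}$, $\theta_{\ve(0,j)_{\p 0}}=\Id$, and triviality of the double braiding in $E$-weight $0$, a direct calculation exactly as for $\theta_{P(0,0)_{\p 0}}$ in Section~\ref{sec:torusArb} gives $\theta_{P(0,j)_{\p 0}}=\Id+(q-q^{-1})x_{0,j,\p 0}$, so $N_T\mathcal{P}_j=\mathcal{P}_j+(q-q^{-1})\mathcal{P}_{j,x}$; since the proof of Proposition~\ref{prop:nonGenCohTorusROU} shows $\mathcal{P}_{j,x}=\tfrac1r\mathcal{P}_x$, this is $\mathcal{P}_j+\tfrac{q-q^{-1}}{r}\mathcal{P}_x$. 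Finally $x_{0,j,\p 0}^2=0$ gives $\theta_{P(0,j)_{\p 0}}\circ x_{0,j,\p 0}=x_{0,j,\p 0}$, hence $N_T\mathcal{P}_x=\mathcal{P}_x$.

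To conclude that the Dehn twist acts with infinite order, I would restrict $N_T$ to the subspace $\spvs\{\mathcal{P}_0,\mathcal{P}_x\}$, on which, by the formulas just obtained, $N_T$ is the unipotent map $\mathcal{P}_0\mapsto\mathcal{P}_0+\tfrac{q-q^{-1}}{r}\mathcal{P}_x$, $\mathcal{P}_x\mapsto\mathcal{P}_x$. Because $q\neq\pm1$ we have $q-q^{-1}\neq0$, so $N_T^n\mathcal{P}_0=\mathcal{P}_0+n\,\tfrac{q-q^{-1}}{r}\mathcal{P}_x\neq\mathcal{P}_0$ for all $n\geq1$; as $N_{T^n}=N_T^n$, the Dehn twist has infinite order. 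The only real obstacle in the whole argument is bookkeeping: matching the index and normalisation conventions of the dual basis \eqref{eq:dualBasisNonGenTorusROU} against the CGP computations so that the $N_S$-entries come out precisely as stated, and using the identities $\mathcal{P}_{j,x}=\tfrac1r\mathcal{P}_x$ and $\theta_{P(0,j)_{\p 0}}=\Id+(q-q^{-1})x_{0,j,\p 0}$ consistently; no genuinely new input beyond what is assembled above is required.
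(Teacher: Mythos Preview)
Your proposal is correct and matches the paper's own argument essentially step for step: the $N_S$-formulas are read off from the CGP-pairings computed just before the theorem, the $N_T$-action is obtained from the twists $\theta_{V(i,j)_{\bar 0}}=q^{-2i(j-\frac12)}\Id$ and $\theta_{P(0,j)_{\bar 0}}=\Id+(q-q^{-1})x_{0,j,\bar 0}$ together with $\mathcal{P}_{j,x}=\tfrac1r\mathcal{P}_x$, and infinite order follows from $N_T^k\mathcal{P}_j=\mathcal{P}_j+k\,\tfrac{q-q^{-1}}{r}\mathcal{P}_x$. Your extra remark that $T_*\mathcal{L}=\mathcal{L}$ forces $N_{T^n}=N_T^n$ is a welcome clarification, though even a scalar Maslov correction would not affect the infinite-order conclusion.
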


\begin{proof}
It remains to compute the action of the Dehn twist $T$. Recall from the proof of Theorem \ref{thm:ribbonCat} that $\theta_{V(i,j)_{\p 0}}=q^{-2i(j-\frac{1}{2})}\Id_{V(i,j)_{\p 0}}$. A similar computation gives $\theta_{P(0,j)_{\p 0}}=\Id_{P(0,j)_{\p 0}} + (q-q^{-1})x_{0,j,\p 0}$. The action of $T$ is therefore as stated. The final statement follows from the observation that $N^k_T \mathcal{P}_j = \mathcal{P}_j + k \frac{q-q^{-1}}{r} \mathcal{P}_x$, $k \geq 1$.
\end{proof}

\section{TQFTs from integral modules}
\label{sec:intTQFT}

The relative modular structures on $\catInt$ constructed in Sections \ref{sec:relModIntArb} and \ref{sec:relModIntROU} give integral counterparts of the TQFTs of Sections \ref{sec:TQFTArb} and \ref{sec:TQFTROU}. Since the computations for $\TQFT_{\catInt}$ are straightforward modifications of those for $\TQFT_{\catq}$, we limit our discussion to a summary.

\subsection{Arbitrary $q$}
\label{sec:intTQFTArb}

Work in the setting of Section \ref{sec:relModIntArb} and give $\catInt$ the relative modular structure of Theorem \ref{thm:relModIntArb}.
Fix $\D = \sqrt{-1}$. The braiding of $\ZVect_{\C}$ is determined by
\[
\gamma: \Zt \times \Zt \rightarrow \{ \pm 1\},
\qquad
((n_1,n^{\prime}_1,\p p_1), (n_2,n^{\prime}_2,\p p_2)) \mapsto (-1)^{\p p_1 \p p_2}.
\]

Since each $\Theta(\p \alpha)$, $\p \alpha \in \Gr \setminus \XX$, is a singleton, it is straightforward to verify that equation \eqref{eq:verlindeArb} and the Verlinde formula (Theorem \ref{thm:verlindeArb}) hold as written for $\TQFT_{\catInt}$. \emph{A priori}, the difference between the free realisation groups of $\catInt$ and $\catq$, being $\Zt_{\catInt}=\Z \times \Z \times \Ztwo$ and $\Zt_{\catq}=\C \times \Ztwo$, could lead to significant differences between the state spaces of generic surfaces. However, we have seen in Corollary \ref{cor:stateBasisArb} that $\TQFT_{\catq}(\CS)$ is concentrated in integral degrees and spanned by graphs colored by integral modules. For this reason, when $\CS$ is of genus $g \geq 1$, we obtain canonical isomorphisms
\begin{equation}
\label{eq:intStateSpace}
\TQFT_{\catInt,(0,d,\p d)}(\CS)
\simeq
\TQFT_{\catq,(d,\p d)}(\CS)
,
\qquad
d \in [-(g-1),g-1] \cap \Z
\end{equation}
which define an isomorphism $\TQFT_{\catInt}(\CS) \simeq \TQFT_{\catq}(\CS)$ which is compatible with the group homomorphism $\Zt_{\catInt} \rightarrow \Zt_{\catq}$, $(n,n^{\prime}, \p p) \mapsto (n^{\prime}, \p p)$. The computations of Sections \ref{sec:torusArb} and \ref{sec:alexPoly} go through with only obvious modifications and no changes to the end results.

\subsection{$q$ a root of unity}
\label{sec:intTQFTROU}

Work in the setting of Section \ref{sec:relModIntROU} with $\D$ and $\gamma$ defined as in Section \ref{sec:TQFTROU}. The TQFT $\TQFT_{\catInt}$ has the same properties as the TQFT $\TQFT_{\catq}$ of Section \ref{sec:TQFTROU}, with analogues of Theorems \ref{thm:genusgStateSpaceROU} and \ref{thm:verlindeROU}, equation \eqref{eq:verlindeROU} and Proposition \ref{prop:nonGenCohTorusROU} holding with only obvious changes. The isomorphism \eqref{eq:intStateSpace} again holds in the current setting, with the additional assumption that $d$ is divisible by $r$. The computations of Sections \ref{sec:torusROU} and \ref{sec:alexPoly} go through with only obvious modifications and no changes to the end results.

\section{Comparison with supergroup Chern--Simons and Wess--Zumino--Witten theories coupled to background flat $\C^{\times}$-connections}
\label{sec:physCompare}

Let $\cat$ be a modular $\Gr$-category relative to $(\Zt,\XX)$. Recently, there has been significant interest in the physics literature in constructing quantum field theories which realize (a differential graded or derived enhancement of) the TQFT $\TQFT_{\cat}$ \cite{gukov2021,creutzig2021,costantino2021}. Such a quantum field theory is expected to admit $\Gr$ as a group of global symmetries so that it can be coupled to background flat $\Gr$-connections, thereby producing local invariants of $3$-manifolds with flat $\Gr$-connection. With this problem in mind, in \cite{creutzig2021} a physical quantum field theory $\mathcal{T}^A_{n,r}$ was constructed as a topological $A$-twist of $3$d $\mathcal{N}=4$ Chern--Simons-matter theory with gauge group $SU(n)$ at level $r-n$ which conjecturally realizes the TQFT $\TQFT_{U^H_q(\mathfrak{sl}(n))}$ associated to the category of weight $U^H_q(\mathfrak{sl}(n))$-modules at a primitive $2r$\textsuperscript{th} root of unity. Using sophisticated techniques from quantum field theory and vertex operators algebras, a number of calculations were made for the theory $\mathcal{T}^A_{2,r}$ and the results were shown to match known properties of $\TQFT_{U^H_q(\mathfrak{sl}(2))}$, thereby presenting evidence for the conjecture when $n=2$.

In this section we argue that the TQFTs $\TQFT_{\catInt}$ of Sections \ref{sec:intTQFTArb} and \ref{sec:intTQFTROU} have comparatively simple physical realizations, namely, as $\psl$ and $\Uoo$ Chern--Simons theories, respectively. Supergroup Chern--Simons theories have the benefit of being formally similar to Chern--Simons theory with compact gauge group \cite{witten1989}, allowing for the import of intuition from the compact case. By work of Kapustin and Saulina \cite{kapustin2009b}, Chern--Simons theories with gauge supergroup arise as $B$-twists of $3$d $\mathcal{N}=4$ supersymmetric quantum field theories and, for particular choices of gauge supergroup, are $3$d mirrors of the quantum field theories $\mathcal{T}^A_{n,r}$ constructed in \cite{creutzig2021}.

\subsection{$\psl$ Chern--Simons theory}
\label{sec:pslCompare}

Let $\pgl$ be the quotient of $\gloo$ by its center $\C \cdot \left( \begin{smallmatrix} 1 & 0 \\ 0 & 1 \end{smallmatrix} \right)$ and $\psl$ the quotient of $\pgl$ by the Lie ideal $\C \cdot \overline{\left( \begin{smallmatrix} 1 & 0 \\ 0 & 0 \end{smallmatrix} \right)}$. The Lie superalgebra $\psl$ is two dimensional, purely odd and abelian. The group $\C^{\times}$ acts on $\psl$ with weights $+1$ and $-1$. More generally, $GL(2,\C)$ acts on $\psl$ by Lie superalgebra automorphisms, but only the action of the anti-diagonal $\C^{\times} \leq GL(2,\C)$ lifts to $\gloo$.

Chern--Simons theory with gauge supergroup $\psl$ was studied by Mikhaylov \cite{Mik2015}. Since $\psl$ is purely odd, this theory has a number of peculiarities compared to the compact case, including no quantization of the level and being defined by the Lie superalgebra $\psl$ without the choice of an associated Lie supergroup. This theory is closely related to a number of other well-known models in physics, including the Rozansky--Witten theory of the cotangent bundle $T^{\vee} \C$ \cite{rozansky1997} and the $B$-twist of a $3$d $\mathcal{N}=4$ free hypermultiplet \cite{kapustin2009b,Mik2015,costello2019,creutzig2021}. The $\C^{\times}$-action on $\psl$ lifts to a global symmetry of $\psl$ Chern--Simons theory, thereby allowing for the theory to be coupled to background flat $\C^{\times}$-connections.

\begin{proposal}
\label{proposal:psl}
For arbitrary $q$, the TQFT $\TQFT_{\catInt}$ described in Section \ref{sec:intTQFTArb} is the homological truncation of $\psl$ Chern--Simons theory coupled to flat $\C^{\times}$-connections.
\end{proposal}

At present, there is no mathematical definition of $\psl$ Chern--Simons theory so that Proposal \ref{proposal:psl} cannot be formulated as a theorem. Instead, we offer evidence for the proposal. We identify the grading group $\Gr = \C \slash \frac{2 \pi \sqrt{-1}}{\hbar} \Z$ with $\C^{\times}$ so that the cohomology classes $\coh \in H^1(-; \Gr)$ appearing as decoration data of the category $\Cob_{\catInt}$ can be interpreted as isomorphism classes of flat $\C^{\times}$-connections.
\begin{itemize}
\item $\psl$ Chern--Simons theory admits Wilson operators labeled by a link whose components are colored by representations of $\pgl$ \cite[\S 2.4]{Mik2015}. Indeed, the tensor product of the dynamical $\psl$-connection  (used to define the $\psl$ Chern--Simons Lagrangian) with the background $\C^{\times}$-connection defines a $\pgl$-connection from which Wilson operators can be constructed, analogously to the construction of Wilson operators in Chern--Simons theory with compact gauge group \cite[\S 2.1]{witten1989}. In the notation of Section \ref{sec:weightMod}, the simple representations of $\pgl$ are $\ve(0,b)_{\p p}$, $(b,\p p) \in \C \times \Ztwo$. The restriction to integral modules, $b \in \Z$, reflects the desired interpretation of the $\ve(0,1)_{\p 0}$-labeled Wilson loop as an operator which changes the $\spin^c$-structure by a single unit \cite[\S 2.4]{Mik2015}. In the physical approach, $\spin^c$-structures enter in the definition of the phase of partition functions. Wilson operators labeled by the projective indecomposables $P(0,b)_{\p p}$ of $\pgl$ are studied in \cite[\S 5.3.3]{Mik2015}.

\item $\psl$ Chern--Simons theory also admits monodromy operators \cite[\S 2.4]{Mik2015}. These operators take as input a framed link $L$ and prescribe the holonomy of the background flat $\C^{\times}$-connection along the meridians of $L$. More generally, one can combine this operator with a Wilson operator to obtain a \emph{combined (Wilson/monodromy) operator}. In our set up, the meridian holonomy is captured by the $E$-weights. Note that $E$ is not a generator of $\pgl$. For example, coloring a knot by a simple module $V(\alpha,b)_{\p p} \in \catInt$ corresponds to the combined operator with prescribed holonomy $q^{\alpha} \neq 1$ and Wilson factor $\ve(0,b)_{\p p}$. When the meridian holonomy is trivial, $q^{\alpha}=1$, so that $\alpha =\frac{n \pi \sqrt{-1}}{\hbar}$ for some $n \in \Z$, we realize the combined operator as coloring by the projective indecomposable $P(\frac{n \pi \sqrt{-1}}{\hbar},b)_{\p p}$ with a coupon labeled by its non-zero idempotent. Viewing this data as the projective cover of $\ve(\frac{n \pi \sqrt{-1}}{\hbar},b)_{\p p}$, as in Lemma \ref{lem:projIndec}, we see that each simple $\Uq$-module corresponds to a unique combined operator. In particular, enlarging $\pgl$ to $\Uq$ allows for the incorporation of both Wilson and monodromy operators of $\psl$ Chern--Simons theory.

\item The partition function of trivial circle bundles over surfaces, given by equation \eqref{eq:verlindeArb}, agrees with \cite[Eqns.\  (5.3), (5.8)]{Mik2015}. Similarly, the graded dimensions of state spaces of generic surfaces (Corollary \ref{cor:stateBasisArb}) agrees with the results of Mikhaylov's formal application of geometric quantization to super Chern--Simons theory \cite[Eqns.\ (5.6-7)]{Mik2015}.

\item That the dimension of the degree zero summand $\TQFT_{0}(\CS)$ of the state space of a non-generic torus is two (Proposition \ref{prop:nonGenCohTorusArb}) agrees with \cite[\S 5.3.1]{Mik2015}. By Proposition \ref{prop:nonGenCohTorusHighDegArb}, the summands $\TQFT_k(\CS)$ vanish for non-zero $k$. On the other hand, Mikhaylov predicted that the state space of derived $\psl$ Chern--Simons theory contains a factor of $\left( \bigwedge\nolimits^{\bullet} H^1(\CS;\C)\right)[-1]$, the degree zero summand of which is $H^1(\CS;\C) \simeq \C^2$ \cite[\S 5.2]{Mik2015}. In view of the expectation that $\TQFT$ describes the homological truncation of derived $\psl$ Chern--Simons theory, this suggests that the summands $\bigwedge\nolimits^{k} H^1(\CS;\C) \simeq \C$, $k=0,2$, appear only at the derived level. It is interesting to note that these summands also appear in the state space of the TQFT of Frohman and Nicas \cite{frohman1991,kerler2003}.

\item That the mapping class group action on the degree zero state space of non-generic tori is projectively isomorphic to the fundamental representation of $SL(2,\Z)$ matches with \cite[\S 5.3.1]{Mik2015}.

\item The results of Section \ref{sec:alexPoly} match physical expectations that $\psl$ Chern--Simons theory recovers the multivariable Alexander polynomial \cite[\S 5.2.2]{Mik2015}.
\end{itemize}

\subsection{$\Uoo$ Chern--Simons theory}
\label{sec:UCompare}

The physical study of Chern--Simons theories with gauge supergroups $\Uoo$ and $\GLoo$ was initiated by Rozansky and Saleur \cite{rozansky1992,rozansky1993,rozansky1994} under the assumption of the existence of a super generalization of the Chern--Simons/Wess--Zumino--Witten correspondence. Mikhaylov studied $\Uoo$ Chern--Simons theory without reference to Wess--Zumino--Witten theory, viewing it instead as the orbifold of $\psl$ Chern--Simons theory by a finite cyclic group \cite[\S 4.3]{Mik2015}.

The group $\C^{\times}$ acts on $\gloo$ by Lie algebra automorphisms with weight decomposition
\[
\gloo_{-1} = \C \cdot Y,
\qquad
\gloo_0 = \C \cdot G \oplus \C \cdot E,
\qquad
\gloo_{+1} = \C \cdot X.
\]
This $\C^{\times}$-action lifts to a global symmetry of Chern--Simons theories with gauge supergroups $\GLoo$ and $\Uoo$, allowing each theory to be coupled to flat $\C^{\times}$-connections.

\begin{proposal}
\label{proposal:uoo}
For $q$ a primitive $r$\textsuperscript{th} root of unity, the TQFT $\TQFT_{\catInt}$ described in Section \ref{sec:intTQFTROU} is the homological truncation of $\Uoo$ Chern--Simons theory at level $r$ coupled to flat $\C^{\times}$-connections.
\end{proposal}

We outline evidence for this proposal. For concreteness, suppose that $r$ is odd. Again, we can identify the grading group $\Gr$ with $\C^{\times}$.
\begin{itemize}
\item Mirroring the discussion from Section \ref{sec:pslCompare}, the $\catInt$-coloring of ribbon graphs used in this paper matches the labelings of the combined Wilson/monodromy operators constructed in \cite[\S 4.5]{Mik2015}. At first sight, there is an ambiguity of whether to view colorings by simple modules of the form $V(i,j)_{\p 0}$, $\{0 \leq i,j \leq r-1\}$, as Wilson or monodromy operators. (The integrality of the $E$-weights of such modules implies that their classical limits lift to representations of $\Uoo$, whence can be viewed as labeling Wilson operators.) However, physical arguments suggest that these two operators coincide for such modules \cite[\S 3.2]{mikhaylov2015b}, \cite[\S 4.5]{Mik2015}.

\item The partition function of trivial circle bundles over surfaces, given by equation \eqref{eq:verlindeROU}, agrees with \cite[Eqn.\ (123)]{rozansky1993}. We are not aware of results in the physics literature which compute the dimension of state spaces of generic surfaces, as in Corollary \ref{cor:stateBasisROU}.

\item The dimensions of state spaces of non-generic tori, as computed in Proposition \ref{prop:nonGenCohTorusROU}, agree with \cite[\S 5.4]{Mik2015} and the proposal of Aghaei, Gainutdinov, Pawelkiewicz and Schomerus \cite{aghaei2018}, who construct candidate state spaces of non-generic tori using combinatorial quantization based on the small quantum group of $\gloo$. State spaces of generic tori do not seem to have been studied in the physics literature.

\item Let $\CS$ be a decorated torus without marked points and with trivial cohomology class. The mapping class group action on $\TQFT(\CS)$ obtained in Theorem \ref{thm:MCGROU} agrees with the regularized mapping class group action obtained using $\Uoo$ Wess--Zumino--Witten theory \cite[\S 3]{rozansky1993}. Theorem \ref{thm:MCGROU} is closely related to the mapping class group actions of \cite[\S 5.4]{Mik2015} and \cite[Eqns.\ (4.61-3), (4.57-9)]{aghaei2018}. Explicitly, the relation between the basis \eqref{eq:basisNonGenTorusROU} and that of \cite[\S 5.4]{Mik2015} is
\[
\mathcal{M}_{i,j} \longleftrightarrow \vert L_{i,j} \rangle
\qquad
\mathcal{P}_j \longleftrightarrow \vert L_{j} \rangle
\qquad
\mathcal{P}_{x} \longleftrightarrow r \cdot v_0 \otimes \vert 0_a \rangle.
\]
Under this correspondence, the only difference in mapping class group actions is that the trigonometric factors $q^i-q^{-i}$ in Theorem \ref{thm:MCGROU} appear inverted in \cite{Mik2015,aghaei2018}.
\end{itemize}

\linespread{1}
\newcommand{\etalchar}[1]{$^{#1}$}

\end{document}